\documentclass[a4paper]{amsart}

\usepackage{etex}
\usepackage{bbm}
\usepackage{pbox}

\newcommand{\bk}{\mathbbm{k}}

\newcommand{\cC}{\mathcal{C}}\newcommand{\cD}{\mathcal{D}}

\newcommand{\cI}{\mathcal{I}}\newcommand{\cJ}{\mathcal{J}}
\newcommand{\cL}{\mathcal{L}}
\newcommand{\cM}{\mathcal{M}}

\newcommand{\bN}{\mathbb{N}}

\newcommand{\bQ}{\mathbb{Q}}\newcommand{\bR}{\mathbb{R}}

\newcommand{\bZ}{\mathbb{Z}}

\usepackage{lmodern}
\usepackage{booktabs}
\usepackage[dvipsnames,svgnames,x11names,hyperref]{xcolor}
\usepackage{mathtools,url,verbatim,amssymb,enumerate,stmaryrd,nicefrac}
\usepackage[pagebackref,colorlinks,citecolor=Mahogany,linkcolor=Mahogany,urlcolor=Mahogany,filecolor=Mahogany]{hyperref}
\usepackage{microtype}
\usepackage[margin=1.5in]{geometry}

\usepackage{tikz, tikz-cd}
\usetikzlibrary{matrix,calc,arrows}

\newtheorem{theorem}{Theorem}[section]
\newtheorem{lemma}[theorem]{Lemma}
\newtheorem{proposition}[theorem]{Proposition}
\newtheorem{corollary}[theorem]{Corollary}
\newtheorem*{corollary*}{Corollary}

\newtheorem{atheorem}{Theorem}

\newtheorem{acorollary}[atheorem]{Corollary}

\theoremstyle{definition}
\newtheorem{definition}[theorem]{Definition}

\newtheorem{convention}[theorem]{Convention}
\theoremstyle{remark}
\newtheorem{example}[theorem]{Example}
\newtheorem{remark}[theorem]{Remark}

\newcommand{\half}{\nicefrac{1}{2}}
\newcommand{\cat}[1]{\mathsf{#1}}
\newcommand{\mr}[1]{{\rm #1}}

\newcommand{\ul}[1]{\underline{#1}}
\newcommand{\fS}{\mathfrak{S}}

\newcommand{\lra}{\longrightarrow}

\newcommand{\Sp}{\mathbf{Sp}}
\newcommand{\OO}{\mathbf{O}}
\newcommand{\SO}{\mathbf{SO}}
\newcommand{\GG}{\mathbf{G}}

\DeclareMathOperator*{\colim}{colim}

\AtBeginDocument{%
	\def\MR#1{}
}

\setcounter{tocdepth}{1}

\title{The cohomology of Torelli groups is algebraic}

\author{Alexander Kupers}
\email{kupers@math.harvard.edu}
\address{Department of Mathematics \\
	One Oxford Street \\
	Cambridge MA, 02138 \\USA}

\author{Oscar Randal-Williams}
\email{o.randal-williams@dpmms.cam.ac.uk}
\address{Centre for Mathematical Sciences\\
	Wilberforce Road\\
	Cambridge CB3 0WB\\
	UK}

\date{\today}

\begin{document}
	
	\begin{abstract}The Torelli group of $W_g = \#^g S^n \times S^n$ is the subgroup of the diffeomorphisms of $W_g$ fixing a disc which act trivially on $H_n(W_g;\bZ)$. The rational cohomology groups of the Torelli group are representations of an arithmetic subgroup of $\mr{Sp}_{2g}(\bZ)$ or $\mr{O}_{g,g}(\bZ)$. In this paper we prove that for $2n \geq 6$ and $g \geq 2$, they are in fact algebraic representations. Combined with previous work, this determines the rational cohomology of the Torelli group in a stable range. We further prove that the classifying space of the Torelli group is nilpotent.\end{abstract}
	
	\maketitle  

\section{Introduction} Let $W_{g}$ denote the $2n$-dimensional manifold $\#^g S^n \times S^n$, and $\mr{Diff}(W_g,D^{2n})$ denote the topological group of diffeomorphisms of $W_{g}$ fixing an open neighbourhood of a disc $D^{2n} \subset W_g$ pointwise, in the $C^\infty$-topology. Only certain automorphisms of $H_n(W_g;\bZ)$ can be realised by diffeomorphisms: in particular, they must respect the intersection form, giving a homomorphism
\[\alpha_g \colon \mr{Diff}(W_g,D^{2n}) \lra G_g \coloneqq \begin{cases} \mr{Sp}_{2g}(\bZ) & \text{if $n$ is odd,}\\
\mr{O}_{g,g}(\bZ) & \text{if $n$ is even,}\end{cases}\]
whose image we denote by $G'_g$. By work of Kreck, in dimension $2n \geq 6$ the only additional constraint is that the automorphism preserves a certain quadratic refinement of the intersection form \cite{kreckisotopy}. Thus $G'_g$ is a finite index subgroup of $G_g$ and hence an arithmetic subgroup associated to the algebraic group $\GG \in \{\Sp_{2g},\OO_{g,g}\}$. The kernel of $\alpha_g$ is called the \emph{Torelli group} and denoted by $\mr{Tor}(W_g,D^{2n})$; it is equipped with an outer action of $G'_g$.

In \cite{kupersdisk} the first author proved that the rational cohomology groups of $B\mr{Tor}(W_g,D^{2n})$ are finite-dimensional in each degree as long as $2n \geq 6$. It is then a consequence of Margulis super-rigidity that they are \emph{almost algebraic representations} of $G'_g$, i.e.\ there is a finite index subgroup of $G'_g$ such that the restriction of the representation to this subgroup extends to a rational representation of the algebraic group $\GG$ \cite[1.3.(9)]{serrearithmetic}. The purpose of this paper is to show that the rational cohomology groups are not just almost algebraic, but \emph{algebraic}: no restriction to a finite index subgroup is necessary.

\begin{atheorem}\label{thm:main} For $2n \geq 6$ and $g \geq 2$, the  $H^i(B\mr{Tor}(W_g,D^{2n});\bQ)$  are algebraic representations of $G'_g$.\end{atheorem}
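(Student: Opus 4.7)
The plan is to upgrade the almost-algebraicity supplied by \cite{kupersdisk} together with Margulis super-rigidity to genuine algebraicity, by showing that the defect measuring the extension from a finite-index subgroup to all of $G'_g$ must vanish. Write $V := H^i(B\mr{Tor}(W_g,D^{2n}); \bQ)$ with $G'_g$-action $\psi$, and fix a normal finite-index $\Gamma \triangleleft G'_g$ together with an algebraic representation $\rho \colon \GG \to \mr{GL}(V)$ such that $\psi|_\Gamma = \rho|_\Gamma$.

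Set $\chi(g) := \psi(g)\rho(g)^{-1} \in \mr{GL}(V)$. For $\gamma \in \Gamma$ and any $g \in G'_g$, normality of $\Gamma$ gives $g\gamma g^{-1} \in \Gamma$; applying $\chi$ to both sides of $g\gamma = (g\gamma g^{-1})g$ and using $\chi|_\Gamma = 1$ forces $\chi(g)$ to commute with $\rho(\gamma)$. Borel density (Zariski density of $\Gamma$ in $\GG$) then promotes this to commutation with all of $\rho(\GG)$, so $\chi$ takes values in the centraliser $C := Z_{\mr{GL}(V)}(\rho(\GG))$. A short calculation now shows $\chi$ is a genuine homomorphism $G'_g/\Gamma \to C$. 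Under the $\GG$-isotypic decomposition $V \cong \bigoplus_\lambda W_\lambda \otimes M_\lambda$ one has $C \cong \prod_\lambda \mr{GL}(M_\lambda)$, and $\psi$ is algebraic if and only if $\chi$ is trivial on each multiplicity space $M_\lambda$.

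The substantive task is therefore to identify the action of coset representatives of $G'_g/\Gamma$ on $V$ geometrically. One natural tool is the Serre fibration
\[B\mr{Tor}(W_g,D^{2n}) \lra B\diff(W_g,D^{2n}) \lra BG'_g,\]
compared with its pullback along $B\Gamma \to BG'_g$. The stable-range computation of $H^*(B\diff(W_g,D^{2n});\bQ)$ due to Galatius--Randal-Williams, combined with Borel's theorem identifying $H^*(G'_g;W)$ for algebraic coefficients $W$, constrains how $G'_g/\Gamma$ can twist the $\Gamma$-equivariant structure. A complementary and, I expect, cleaner approach uses the paper's separately-established nilpotence of $B\mr{Tor}(W_g,D^{2n})$: one passes to a minimal Sullivan model, on which the full $G'_g$-action is determined by its action on indecomposables in low degree; in such low degrees an explicit geometric description (for instance via Johnson-type homomorphisms and their higher analogues) should allow algebraicity to be verified directly.

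The main obstacle lies in this last step: \emph{a priori} $\chi$ could be any character of the finite group $G'_g/\Gamma$, and ruling this out requires a concrete computational handle on how non-$\Gamma$ elements act on $V$. The nilpotence route, reducing algebraicity to low-dimensional rational homotopy invariants where a direct geometric model is available, seems to me the most promising; everything else is formal manipulation of the cocycle $\chi$ and Borel density.
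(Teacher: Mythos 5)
Your proposal sets up a correct formal framework but leaves the crucial step unfilled, and the route you sketch for closing it would not work. The cocycle manipulation is fine: with $\psi|_\Gamma = \rho|_\Gamma$ and $\chi(g) = \psi(g)\rho(g)^{-1}$, Borel density does force $\chi$ into the centraliser $C = Z_{\mr{GL}(V)}(\rho(\GG))$, and then $\chi$ does become a homomorphism $G'_g/\Gamma \to C$. But you correctly identify that nothing so far prevents $\chi$ from being a nontrivial character of the finite quotient; in fact the paper's remark after Theorem~A shows this phenomenon is not illusory (for $g=1$, $n$ odd, extensions of algebraic representations need not split, and only $gr$-algebraicity can be concluded). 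So the ``substantive task'' you isolate is the entire content of the theorem, and your two proposed routes do not complete it.

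The Sullivan-model route has a concrete gap: for a nilpotent space, the indecomposables of a minimal model are $\pi_*\otimes\bQ$ in \emph{all} degrees, not just low ones, so knowing the $G'_g$-action on a finite list of low-degree generators does not determine it globally; and in any case $G'_g$ acts on $B\mr{Tor}$ only up to homotopy, so the equivariance one would need on the minimal model is not automatic. The stable-range/Borel route you mention also computes too little to pin down $\chi$ degree by degree. Meanwhile, the paper proves Theorem~A by an entirely different, constructive strategy that never invokes superrigidity: it deloops the Weiss fibration sequence to reduce from diffeomorphisms of $W_{g,1}$ to self-embeddings, applies the embedding calculus tower to the self-embedding space, shows via the Federer and Bousfield--Kan spectral sequences that the rational homotopy groups of each layer are \emph{$gr$-algebraic} $\Gamma_g$-representations, and then uses the ``equivariant Serre class'' machinery (closure of $gr$-algebraic representations under tensor, duals, extensions, and passage through spectral sequences and Postnikov towers) to propagate this to the cohomology of $B\mr{TorEmb}^{\cong}_{\half\partial}(W_{g,1})$ and finally to $B\mr{Tor}_\partial(W_{g,1})$, using closure of algebraic representations under extensions when $g\geq 2$. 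Nilpotence (Theorem~B) is a companion result established with the same machinery but is not a logical input to Theorem~A. In short: the formal part of your argument is correct but tautological, and the step you flag as the ``main obstacle'' is exactly where a genuinely different argument is needed, which your sketch does not supply.
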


\begin{remark}
	Of course in the case $g=0$, such a statement is trivial as $G'_g = \{e\}$. For $g=1$ and $n$ odd, extensions of algebraic representations need no longer split and our techniques only imply that these rational cohomology groups have a filtration whose associated graded consists of algebraic representations. For $g=1$ and $n$ even, we do not obtain any information.
\end{remark}

Theorem \ref{thm:main} may be used in conjunction with the main result of \cite{KR-WTorelli}: for $2n \geq 6$ that paper computes the maximal algebraic subrepresentation of $H^*(B\mr{Tor}(W_g,D^{2n});\bQ)$ in a stable range; by Theorem \ref{thm:main} this is in fact the entire cohomology.

If $G_g'' \leq G'_g$ is a finite index subgroup, let us write $\mr{Diff}^{G''_g}(W_g, D^{2n}) \coloneqq \alpha_g^{-1}(G_g'')$.  Theorem \ref{thm:main} implies that its cohomology with coefficients in any algebraic $G_g'$-representation (as usual, these are representations over the rationals) is independent of the choice of finite index subgroup in a stable range:

\begin{acorollary}\label{cor:main}
Let $2n \geq 6$, $g \geq 2$, $V$ be an algebraic $G'_g$-representation, and if $n$ is even then assume that $G''_g$ is not entirely contained in $\SO_{g,g}(\bZ)$. Then the natural map
\[H^*(B\mr{Diff}(W_g, D^{2n});V) \lra H^*(\mr{Diff}^{G''_g}(W_g, D^{2n});V),\]
which is a split injection by transfer, is an isomorphism in degrees $* < g-e$, with $e=0$ if $n$ is odd and $e=1$ if $n$ is even.
\end{acorollary}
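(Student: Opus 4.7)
The plan is to compare the Hochschild--Serre spectral sequences of the two group extensions
\[1 \lra \mr{Tor}(W_g, D^{2n}) \lra \mr{Diff}^{G^\star_g}(W_g, D^{2n}) \lra G^\star_g \lra 1\]
for $G^\star_g \in \{G'_g, G''_g\}$, the first case recovering $\mr{Diff}(W_g,D^{2n})$ itself. With coefficients in the algebraic $G'_g$-representation $V$, on which $\mr{Tor}(W_g, D^{2n})$ acts trivially, these spectral sequences have the form
\[E_2^{p,q} = H^p\!\left(G^\star_g;\, H^q(B\mr{Tor}(W_g, D^{2n}); \bQ) \otimes V\right) \Longrightarrow H^{p+q}(B\mr{Diff}^{G^\star_g}(W_g, D^{2n}); V),\]
and the inclusion $G''_g \hookrightarrow G'_g$ induces a map between them. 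By Theorem~\ref{thm:main} every coefficient module is an algebraic $G'_g$-representation, so it suffices to show that for every algebraic $G'_g$-representation $W$ the restriction
\[\mr{res} \colon H^p(G'_g; W) \lra H^p(G''_g; W)\]
is an isomorphism for $p < g - e$; the spectral sequence comparison theorem, combined with the injectivity furnished by the transfer on each coefficient module, then yields the asserted isomorphism on abutments.

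Since $[G'_g : G''_g] < \infty$, the transfer splits $\mr{res}$, which yields injectivity in all degrees and in particular the split injection asserted in the statement. For surjectivity in the claimed range I would invoke stability for the rational cohomology of arithmetic groups with algebraic coefficients. By Borel's theorem, $H^p(\Gamma; W)$ for an arithmetic subgroup $\Gamma \leq \GG(\bQ)$ and algebraic $W$ agrees with the relative Lie algebra cohomology $H^p(\mathfrak{g}_\bR, K_\infty; W \otimes_\bQ \bR)$ in a stable range, which for $\GG = \Sp_{2g}$ ($e=0$) and $\GG = \OO_{g,g}$ ($e=1$) can be taken to be at least $p < g - e$. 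The right-hand side depends only on the Zariski closure of $\Gamma$ in $\GG$, so $\mr{res}$ is an isomorphism in this range provided $G''_g$ and $G'_g$ have the same Zariski closure. This is automatic in the symplectic case; in the orthogonal case the hypothesis $G''_g \not\subseteq \SO_{g,g}(\bZ)$ ensures that both closures equal the full $\OO_{g,g}$, rather than merely its identity component $\SO_{g,g}$.

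The main obstacle is locating Borel's stability statement with the \emph{precise} uniform range $p < g - e$ and for \emph{arbitrary} algebraic coefficients $W$; the classical references treat trivial or irreducible coefficients, and the extension to general algebraic $W$ with the optimal range may require additional bookkeeping. A minor technicality is that $G'_g$ may contain torsion, which can be handled by first passing to a torsion-free finite-index subgroup contained in $G''_g$ and chaining two applications of the restriction map.
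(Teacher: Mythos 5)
Your proposal is correct and follows essentially the same route as the paper: compare the two Serre spectral sequences, use Theorem~\ref{thm:main} to know the $E_2$-coefficients are algebraic, and then invoke Borel's stability combined with the Zariski-density hypothesis to conclude that the restriction map on $H^p(G^\star_g;-)$ is an isomorphism for $p < g-e$. The paper handles your two worries at the end by citing \cite{borelstable,borelstable2,tshishikuBorel} (with the exposition in \cite[Theorem 2.3]{KR-WTorelli}) for the uniform range and arbitrary algebraic coefficients, and the torsion issue does not arise because Borel's theorem applies to arithmetic subgroups directly in rational cohomology.
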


Our techniques can also be used to prove a second property of Torelli groups. Recall that a based path-connected topological space is \emph{nilpotent} if its fundamental group is nilpotent and acts nilpotently on all higher homotopy groups.

\begin{atheorem}\label{thm:nilp} For $2n \geq 6$, the spaces $B\mr{Tor}(W_g,D^{2n})$ are nilpotent.
\end{atheorem}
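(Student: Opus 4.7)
I would verify the two defining conditions for nilpotence of the space $X := B\mr{Tor}(W_g,D^{2n})$ directly: first that its fundamental group $\pi_1(X) = \pi_0(\mr{Tor}(W_g,D^{2n}))$ is nilpotent, and second that it acts nilpotently on each higher homotopy group $\pi_n(X)$ for $n \geq 2$.

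For the fundamental group, I would invoke Kreck's classification of pseudo-isotopy classes \cite{kreckisotopy}: for $2n \geq 6$ the group $\pi_0(\mr{Tor}(W_g,D^{2n}))$ fits into a central extension of a finitely generated abelian group (the contribution of handle swaps and surgery obstructions) by a finite abelian group (coming from exotic $(2n+1)$-spheres), and is therefore 2-step nilpotent.

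For the action on higher homotopy groups I would argue rationally first and then promote integrally. Theorem \ref{thm:main}, together with a $G'_g$-equivariant Sullivan minimal model argument (which applies since $\mr{Tor}(W_g, D^{2n})$ is normal in $\mr{Diff}(W_g, D^{2n})$), would upgrade the algebraicity of $H^*(X;\bQ)$ to the statement that each $\pi_n(X)\otimes\bQ$ is an algebraic representation of $G'_g$. The $\pi_1(X)$-action on $\pi_n(X)\otimes\bQ$ arises by restriction from the action of $\pi_1(B\mr{Diff}(W_g,D^{2n}))$, which now acts via an algebraic representation of $\GG$. Because $\pi_1(X)$ sits in the kernel of the map to $G'_g$, its action on this algebraic representation must factor through a unipotent subgroup, hence is nilpotent. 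Integrally, finite generation of $\pi_n(X)$ (which follows from \cite{kupersdisk} by analysis of the Serre spectral sequence of the fibration $X \to B\mr{Diff}(W_g,D^{2n}) \to BG'_g$) combined with a bounded-torsion argument extends the rational nilpotence to an integral one.

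The main obstacle will be making the compatibility between the internal $\pi_1(X)$-action on $\pi_n(X)\otimes\bQ$ and the external algebraic $G'_g$-action precise enough to conclude unipotence. Technically, one must verify that the ambient $\pi_1(B\mr{Diff})$-action on rationalized homotopy groups really is governed by an algebraic representation compatible with the group extension $\pi_1(X) \hookrightarrow \pi_1(B\mr{Diff}) \to G'_g$, so that elements of the kernel act unipotently. An alternative route would be to combine Theorem \ref{thm:main} with the explicit computation of the stable rational homotopy type of $X$ in \cite{KR-WTorelli} to read nilpotence off directly from the resulting Sullivan model in the stable range, then descend via an inductive argument over the Postnikov tower to reach the unstable range.
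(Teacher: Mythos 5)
Your plan has two substantial gaps — one logical, one structural — and it does not match the paper's strategy.

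\textbf{The logical gap.} You write that the $\pi_1(B\mr{Diff})$--action on $\pi_n(X)\otimes\bQ$ is ``via an algebraic representation of $\GG$'' and that therefore the Torelli subgroup, sitting in the kernel of $\Gamma_g\to G'_g$, ``must factor through a unipotent subgroup.'' But the $\Gamma_g$-action on $\pi_n(X)$ does \emph{not} factor through $G'_g$. If it did, the kernel $I_g=\pi_1(X)$ would act \emph{trivially} (not merely unipotently) on $\pi_n(X)$, the space would be simple, and there would be nothing to prove. What is actually true (and is the paper's Theorem~\ref{thm:emb-alg}, Corollary~\ref{cor:id-comp-alg}) is that these are \emph{gr-algebraic} $\Gamma_g$-representations: they have a finite filtration whose associated graded carries an algebraic $G'_g$-action. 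That is a strictly weaker and different statement than yours, and only it implies the rational nilpotence of the $I_g$-action. Theorem~\ref{thm:main} alone (algebraicity of $H^*(X;\bQ)$ as a $G'_g$-representation) does not give you a handle on the $\pi_1(X)$-action on $\pi_*(X)$, which is precisely the structure that nilpotence is about.

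\textbf{The structural gap.} Even granting rational nilpotence of the $I_g$-action on $\pi_n(X)\otimes\bQ$, the proposed ``bounded-torsion argument'' to promote this to integral nilpotence does not exist. Rational nilpotence of a $\bZ[\Gamma]$-module action says nothing at all about the torsion: if $M=\bZ/3$ and $\bZ/2$ acts by $-1$, then $M\otimes\bQ=0$ is trivially nilpotent while the integral action is not a nilpotent $\bZ/2$-module. Finite generation of $\pi_n(X)$ by itself gives no control of this kind. The paper sidesteps this issue entirely by re-running the whole spectral-sequence argument over $\bZ$ rather than $\bQ$, using the fact (Lemmas~\ref{lem:NilpIsSerre} and~\ref{lem:NilpIsDualClosed}) that nilpotent $\bZ[\Gamma]$-modules which are finitely generated as abelian groups form an equivariant Serre class closed under duals. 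The integral analogues of Propositions~\ref{prop:conf-rational-cohomology-alg} and~\ref{prop:conf-rational-homotopy-alg} (Lemmas~\ref{lem:conf-cohomology-nilp},~\ref{lem:conf-homotopy-nilp}) then feed into an integral version of the Bousfield--Kan and Federer spectral-sequence argument (Proposition~\ref{prop:TorEmbNilpotent}), and $B\mr{Tor}_\partial(W_{g,1})$ is finally handled as the homotopy fibre of the delooped Weiss fibration, using Lemma~\ref{lem:nilp-fib} that a homotopy fibre of a nilpotent space is nilpotent. That last fibration step also disposes of the $\pi_1$-nilpotence claim uniformly, so you do not need to decide whether Kreck's extension $1\to\Theta_{2n+1}\to I_g\to J_g\to 1$ is central (which Kreck's theorem does not immediately give). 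Your first paragraph asserts this centrality without evidence.

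In short: the rational-then-integral strategy cannot work as stated, and the paper's proof of Theorem~\ref{thm:nilp} is a genuinely integral argument built on an equivariant Serre class of nilpotent modules — not a rationalization of Theorem~\ref{thm:main}.
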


The spaces $B\mr{Diff}(W_g, D^{2n})$ classify smooth fibre bundles with fibre $W_g$ containing a trivialised disc bundle, and can be considered as moduli spaces of such manifolds. In Section \ref{sec:tangential} we prove the natural generalisations of Theorem \ref{thm:main}, Corollary \ref{cor:main}, and Theorem \ref{thm:nilp} to moduli spaces of manifolds with certain tangential structures (such as framings).

\subsection*{Acknowledgements}
The authors are grateful to Manuel Krannich for his comments. AK was supported by the Danish National Research Foundation through the Centre for Symmetry and Deformation (DNRF92), the European Research Council (ERC) under the European Union's Horizon 2020 research and innovation programme (grant agreement No.\ 682922), and NSF grant DMS-1803766. ORW was supported by the ERC under the European Union's Horizon 2020 research and innovation programme (grant agreement No.\ 756444), and by a Philip Leverhulme Prize from the Leverhulme Trust.

\tableofcontents

\section{Algebraicity} \label{sec:algebraicity} We start by proving some qualitative results about algebraicity of representations, with the goal of passing such properties through spectral sequences and long exact sequences. A particular role is played by the following well-known groups. Define groups $\mr{O}_{g,g}(\bQ)$, resp.~$\mr{Sp}_{2g}(\bQ)$, as those automorphisms of $\bQ^{2g}$ preserving the symmetric, resp.~anti-symmetric, form with matrix
\[\begin{bsmallmatrix} 0 & \mr{id}_g \\
\mr{id}_g & 0\end{bsmallmatrix}, \qquad \text{resp.} \quad\begin{bsmallmatrix} 0 & \mr{id}_g \\
-\mr{id}_g & 0\end{bsmallmatrix}.\] These are the $\bQ$-points of algebraic groups $\OO_{g,g}$ and $\Sp_{2g}$, respectively. The former has two connected components, and we let $\SO_{g,g}$ denote that containing the identity.

In this section we shall take as given a short exact sequence of groups
\begin{equation} \label{eqn:setup} 1 \lra J \lra \Gamma \lra G \lra 1,\end{equation}
for $G$ an \emph{arithmetic subgroup} of an algebraic group $\GG \in \{\Sp_{2g},\OO_{g,g},\SO_{g,g}\}$, which in this paper shall mean a finite index subgroup of $\GG(\bZ)$ which is Zariski dense in $\GG(\bQ)$ (in contrast with \cite{serrearithmetic}, which imposes no such condition). For $\GG \in \{\Sp_{2g},\SO_{g,g}\}$ any such $G \leq \GG(\bZ)$ of finite index is Zariski dense: $\SO_{g,g}$ for $g \geq 2$ and $\Sp_{2g}$ for $g \geq 1$, are connected semisimple algebraic groups defined over $\bQ$ without compact factors, so by \cite[Theorem 7.8]{BHC} $G$ is a lattice in $\GG(\bR)$, and by the Borel Density Theorem \cite{BorelDensity} $G$ is Zariski dense in $\GG(\bR)$, hence also in $\GG(\bQ)$. For $\GG = \OO_{g,g}$, a subgroup $G \leq \GG(\bZ)$ of finite index is Zariski dense if and only if it is \emph{not} contained in $\mr{SO}_{g,g}(\bZ)$. As $\SO_{1,1}$ fails to be semisimple, we will exclude it from now on:

\begin{convention}\label{conv:eveng2} If $n$ is even, we will assume $g \geq 2$.\end{convention}

\subsection{Some representation theory}

\subsubsection{Algebraic representations} Let $G$ be an arithmetic subgroup of the algebraic group $\GG \in \{\Sp_{2g},\OO_{g,g},\SO_{g,g}\}$ as above. A representation $\phi \colon G \to \mr{GL}_n(\bQ)$ is said to be \emph{algebraic} if it is the restriction of a finite-dimensional representation of the algebraic group $\GG$ in the sense that there is a morphism of algebraic groups $\varphi \colon \GG \to \mathbf{GL}_n$ which on taking $\bQ$-points and restricting to $G$ yields $\phi$. An action of $G$ on an $n$-dimensional $\bQ$-vector space $V$ is then said to be \emph{algebraic} if $V$ admits a basis such that the resulting representation $G \to \mr{GL}_n(\bQ)$ is algebraic. We usually denote a representation $(\phi, V)$ by $V$, leaving the action of $G$ on $V$ implicit. Properties (a), (b) and (c) of algebraic representations listed below are obtained in Section 2.1 of \cite{KR-WTorelli} by combining several results in the literature, and properties (d) and (e) are direct consequences of the definition.

\begin{theorem}\label{thm:representations} The class of algebraic $G$-representations is closed under the following operations:
	\begin{enumerate}[\indent (a)]
		\item subrepresentations,
		\item quotients,
		\item extensions when $g \geq 2$,
		\item duals,
		\item tensor products.
	\end{enumerate}
\end{theorem}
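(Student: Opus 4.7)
Parts (d) and (e) are immediate from the definition: if $V$ and $W$ carry algebraic structures given by morphisms $\varphi\colon \GG\to\mathbf{GL}(V)$ and $\psi\colon \GG\to\mathbf{GL}(W)$ of algebraic groups, then $g\mapsto \varphi(g^{-1})^\top$ and $g\mapsto \varphi(g)\otimes\psi(g)$ are again morphisms of algebraic groups, inducing algebraic structures on $V^\ast$ and $V\otimes W$ whose restrictions to $G$ recover the given actions.

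Parts (a) and (b) are a direct application of the Zariski density built into the definition of arithmetic subgroup. Given $\varphi\colon\GG\to\mathbf{GL}(V)$ algebraic and a $G$-subrepresentation $W\leq V$, I would consider the locus $\{h\in\GG : \varphi(h)W\subseteq W\}$, which is a Zariski-closed subgroup scheme of $\GG$: in a basis of $V$ extending one of $W$ it is cut out by the vanishing of the off-diagonal block of $\varphi(h)$. It contains $G$, which is Zariski dense in $\GG(\bQ)$, and hence it is all of $\GG$; thus $\varphi$ restricts to an algebraic structure on $W$, proving (a). The induced map $\GG \to \mathbf{GL}(V/W)$ then gives (b).

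The main work is part (c). Given a short exact sequence $0\to V'\to V\to V''\to 0$ of $G$-representations with $V'$ and $V''$ algebraic, my plan is to show that $V$ splits as a direct sum $V'\oplus V''$ of $G$-representations, after which it is algebraic as a direct sum of two algebraic representations. Such an extension is classified by a class $[\xi]\in H^1(G;\mr{Hom}(V'',V'))$, whose coefficient module is algebraic by (a), (b), (d), (e); so it suffices to prove vanishing of $H^1(G;M)$ for every algebraic $G$-representation $M$. By the Zariski density used in (a), any $\GG$-decomposition of $M$ into irreducibles is also a decomposition as a $G$-module, so $M$ is $G$-semisimple and the vanishing reduces to two cases. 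For a non-trivial irreducible algebraic $M$, I would invoke the classical vanishing theorem of Raghunathan and Borel--Wallach: $H^1(G;M)=0$ whenever $G$ is an arithmetic lattice in a semisimple $\bQ$-group of real rank at least two, which under Convention~\ref{conv:eveng2} is precisely our setting as long as $g\geq 2$. For the trivial coefficient $M=\bQ$, vanishing of $H^1(G;\bQ)$ amounts to finiteness of the abelianisation of $G$, which follows from Margulis's normal subgroup theorem (or Kazhdan's property~(T)) for the higher-rank arithmetic groups in $\mr{Sp}_{2g}(\bZ)$ and $\mr{SO}_{g,g}(\bZ)$ under consideration.

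The principal obstacle is precisely the vanishing $H^1(G;M)=0$ for algebraic $M$; this is what fails in real rank one, and its failure explains the remark after Theorem~\ref{thm:main} that for $g=1$ and $n$ odd the extensions need not split, leaving only an associated-graded statement. Once this vanishing is established, the remaining assembly of (a)--(e) is formal.
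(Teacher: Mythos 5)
Your argument is essentially the one the paper has in mind: the paper itself does not give a proof here but explicitly defers (a)--(c) to ``Section 2.1 of \cite{KR-WTorelli}'', where the proof proceeds exactly along your lines --- Zariski density for (a) and (b), reduction of (c) to the vanishing of $H^1(G;M)$ for algebraic $M$, with the vanishing coming from higher-rank rigidity/cohomology-vanishing results; (d) and (e) the paper also calls ``direct consequences of the definition''. Your outline of that reduction (algebraic reps of $\GG$ are semisimple in characteristic zero; by (a) the decomposition into $\GG$-irreducibles is also a $G$-decomposition; hence it suffices to kill $H^1(G;M)$ for $M$ trivial and for $M$ a nontrivial algebraic irreducible) is the right one, and your observation that this is precisely what breaks down at $g=1$, $n$ odd matches the remark in the introduction.

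One point worth a closer look: the vanishing theorems of Raghunathan and Borel--Wallach that you invoke carry hypotheses on the \emph{simple} factors of the ambient group (or equivalently require the lattice to be irreducible), not merely that the total $\bR$-rank be $\geq 2$. For $\GG=\Sp_{2g}$ with $g\geq 2$ and for $\GG=\mathbf{SO}_{g,g}$ with $g\geq 3$ the group is $\bQ$-simple and the hypotheses hold, and for $M=\bQ$ the finiteness of the abelianisation is exactly the input the authors cite elsewhere in the paper (\cite[Corollary 7.6.17]{margulis}). But in the case $g=2$, $n$ even, the group $\mathbf{SO}_{2,2}$ is $\bQ$-isogenous to $\mathbf{SL}_2\times\mathbf{SL}_2$ and an arithmetic subgroup of $\mathbf{O}_{2,2}(\bZ)$ is a reducible lattice whose simple factors have rank one, so the off-the-shelf statements you name do not apply verbatim. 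You should therefore either quote the precise version of the vanishing theorem that covers this boundary case, or track through \cite{KR-WTorelli} how it is handled there; as written, this citation step is where your argument is least secure.
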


\subsubsection{$gr$-algebraic representations}

The vector spaces that show up in this paper will often \emph{not} be $G$-representations, let alone be algebraic. Instead they will be $\Gamma$-representations with the following property:

\begin{definition}A $\Gamma$-representation $V$ is \emph{$gr$-algebraic} if it admits a finite length filtration 
	\[0 \subset F_0(V) \subset F_1(V) \subset \cdots \subset F_p(V) = V\]
of subrepresentations such that each $F_i(V)/F_{i-1}(V)$ is the restriction to $\Gamma$ of an algebraic $G$-representation.
\end{definition}

\begin{remark}This class of representations has appeared before in the work of Hain (see e.g.~\cite[\S 5]{HainRelMalcev}); they play a role in his theory of relative unipotent completion.
\end{remark}

By definition a $gr$-algebraic $\Gamma$-representation is finite-dimensional. The class of $gr$-algebraic $\Gamma$-representations has all the closure properties of the algebraic $G$-representations themselves.

\begin{lemma}\label{lem:filteredalgebraic} The class of $gr$-algebraic $\Gamma$-representations is closed under the following operations:
	\begin{enumerate}[\indent (a)]
		\item subrepresentations,
		\item quotients,
		\item extensions,
		\item duals,
		\item tensor products.
\end{enumerate}\end{lemma}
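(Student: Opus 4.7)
The strategy is to lift each closure property from the algebraic $G$-representations of Theorem \ref{thm:representations} to the $gr$-algebraic setting by transporting or constructing appropriate filtrations. The crucial point is that if $V$ is a $\Gamma$-representation whose associated graded pieces $F_i(V)/F_{i-1}(V)$ are restrictions of algebraic $G$-representations, then $J$ acts trivially on each piece, so any natural construction on filtrations will produce pieces on which $J$ again acts trivially, reducing the problem to a statement about $G$-representations.

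For (a), if $W \subset V$ is a $\Gamma$-subrepresentation, take $F_i(W) \coloneqq W \cap F_i(V)$. The inclusion $F_i(W)/F_{i-1}(W) \hookrightarrow F_i(V)/F_{i-1}(V)$ identifies the left-hand side with a $\Gamma$-subrepresentation of the restriction of an algebraic $G$-representation; since $J$ acts trivially on the right-hand side it also does so on the left, so the left-hand side is the restriction of a $G$-subrepresentation and Theorem \ref{thm:representations}(a) applies. For (b), given a surjection $V \twoheadrightarrow W$, take $F_i(W)$ to be the image of $F_i(V)$; the associated graded quotients are quotients of those of $V$, and Theorem \ref{thm:representations}(b) applies. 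For (c), given an extension $0 \to V' \to V \to V'' \to 0$ of $\Gamma$-representations, concatenate the filtration of $V'$ with the preimages in $V$ of the filtration of $V''$ to get a filtration of $V$ whose associated graded pieces are exactly those of $V'$ and of $V''$, hence restrictions of algebraic $G$-representations (no appeal to Theorem \ref{thm:representations}(c) is needed).

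For (d), filter $V^*$ by $F_i(V^*) \coloneqq (V/F_{p-i}(V))^*$; then $F_i(V^*)/F_{i-1}(V^*) \cong (F_{p-i+1}(V)/F_{p-i}(V))^*$, which is algebraic by Theorem \ref{thm:representations}(d). For (e), given filtrations $F_\bullet V$ and $F_\bullet W$ of length $p$ and $q$, use the convolution filtration
\[ F_k(V \otimes W) \coloneqq \sum_{i+j \leq k} F_i(V) \otimes F_j(W), \]
whose associated graded in degree $k$ is $\bigoplus_{i+j=k} F_i(V)/F_{i-1}(V) \otimes F_j(W)/F_{j-1}(W)$, which is algebraic by Theorem \ref{thm:representations}(e).

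There is no real obstacle: everything is essentially formal manipulation of filtrations, with the content supplied by Theorem \ref{thm:representations}. The only mildly subtle point is verifying in (a) that the inherited filtration pieces really are $\Gamma$-subrepresentations whose associated graded descends to a $G$-representation, but this follows from the fact that $J$ acts trivially on any restriction of an algebraic $G$-representation, hence on any subquotient of such.
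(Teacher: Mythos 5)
Your proof is correct and follows essentially the same route as the paper: intersect/image filtrations for subobjects and quotients, a stitched filtration for extensions, the annihilator (equivalently, dual-of-quotient) filtration for duals, and the convolution filtration for tensor products, with the content reduced at each step to Theorem \ref{thm:representations}. Your observation in part (c) that no appeal to Theorem \ref{thm:representations}(c) is needed (and hence no $g\geq 2$ hypothesis) matches the remark the paper makes immediately after this lemma.
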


\begin{proof}For part (a) and (b), suppose that $V$ is a $gr$-algebraic $\Gamma$-representation with filtration $\{F_i(V)\}_{i=0}^p$, and $W \subset V$ is a subrepresentation. Firstly, $W$ inherits a filtration $F_i(W) \coloneqq W \cap F_i(V)$. Each filtration quotient $F_i(W)/F_{i-1}(W)$ is a subrepresentation of $F_i(V)/F_{i-1}(V)$; this guarantees that the $\Gamma$-action factors over $G$ and then by Theorem \ref{thm:representations} (a) the resulting $G$-representation is algebraic. Secondly, $V/W$ inherits a filtration $F_i(V/W) \coloneqq \mr{im}(F_i(V) \to V \to V/W)$. The filtration quotient $F_i(V/W)/F_{i-1}(V/W)$ is the quotient of $F_i(V)/F_{i-1}(V)$ by $F_i(W)/F_{i-1}(W)$. The $\Gamma$-action thus factors over $G$ and then by Theorem \ref{thm:representations} (b) the resulting $G$-representation is algebraic.

For part (c), suppose that $V$ is a $gr$-algebraic $\Gamma$-representation with filtration $\{F_i(V)\}_{i=0}^q$ and $W$ is a $gr$-algebraic $\Gamma$-representation with filtration $\{F_j(W)\}_{j=0}^p$, and there is a short exact sequence of $\Gamma$-representations
\[0 \lra W \overset{\iota}\lra U \overset{\pi}\lra V \lra 0.\]
Then $U$ admits a filtration $\{F_k(U)\}_{k=0}^{p+q+1}$ by stitching together the filtrations of $W$ and $V$. That is, we set $F_k(U) \coloneqq \iota(F_j(W))$ for $k \leq p$ and $F_k(U) \coloneqq \pi^{-1}(F_{k-p-1}(V))$ for $k>p$. The filtration quotient $F_k(U)/F_{k-1}(U)$ is $F_k(W)/F_{k-1}(W)$ for $k \leq p$ and $F_{k-p-1}(V)/F_{k-p-2}(V)$ for $k>p$. In particular, the $\Gamma$-action factors over $G$ and as such is algebraic.

For part (d), suppose $V$ is a $gr$-algebraic $\Gamma$-representation with filtration $\{F_i(V)\}_{i=0}^p$. Then $V^\vee$ is a filtered $\Gamma$-representation by $F_i(V^\vee) \coloneqq \mr{ann}(F_{p-i}(V))$, with associated graded $F_i(V^\vee)/F_{i-1}(V^\vee) = (F_{p-i}(V)/F_{p-i-1}(V))^\vee$. Thus the $\Gamma$-action factors over $G$, and as such is algebraic.

For part (e), suppose $V$ is a $gr$-algebraic $\Gamma$-representation with filtration $\{F_i(V)\}_{i=0}^p$ and $W$ is a $gr$-algebraic $\Gamma$-representation with filtration $\{F_j(W)\}_{j=0}^q$. Then we may filter $V \otimes W$ by $F_k(V \otimes W) \coloneqq \sum_{i+j = k} F_i(V) \otimes F_j(W)$, so that the filtration quotient is $F_k(V \otimes W)/F_{k-1}(V \otimes W) = \bigoplus_{i+j=k} F_i(V)/F_{i-1}(V) \otimes F_j(W)/F_{j-1}(W)$. This means that the $\Gamma$-action factors over $G$ and as such is algebraic.
\end{proof}

\begin{remark}
Note that in distinction with Theorem \ref{thm:representations}, case (c) does not require the assumption $g \geq 2$. By Convention \ref{conv:eveng2}, this is only relevant when $n$ is odd.
\end{remark}

If $V$ is a $\Gamma$-representation then each cohomology group $H^i(J;V)$ is a $\Gamma$-representation, via the action $\gamma \cdot (j, v) = (\gamma j \gamma^{-1}, \gamma v)$ of $\Gamma$ on the object $(J,V)$ in the category of groups equipped with a module, and functoriality of group cohomology on this category. As inner automorphisms act trivially on group cohomology, this action of $\Gamma$ descends to an action of $G$ on $H^i(J;V)$, see \cite[Corollary 8.2]{Brown}. The following lemma gives a condition under which such a $G$-representation is algebraic.

\begin{lemma}\label{lem:filt-j-cohom} 
Suppose that $g \geq 2$, that each $G$-representation $H^i(J;\bQ)$ is algebraic, and that $V$ is a $gr$-algebraic $\Gamma$-representation. Then each $G$-representation $H^i(J;V)$ is algebraic.
\end{lemma}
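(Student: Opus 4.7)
The plan is to induct on the length $p$ of the filtration $0 \subset F_0(V) \subset \cdots \subset F_p(V) = V$ witnessing that $V$ is $gr$-algebraic, proving that each $H^i(J; F_k(V))$ is an algebraic $G$-representation.

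For the base case $k=0$, the $\Gamma$-action on $F_0(V)$ factors through $G$, so $J = \ker(\Gamma \to G)$ acts trivially on $F_0(V)$. This lets me identify $H^i(J; F_0(V)) \cong H^i(J; \bQ) \otimes F_0(V)$, and standard naturality shows this is equivariant for the $G$-action given by conjugation on $J$ combined with the given action on $F_0(V)$. The first tensor factor is algebraic by hypothesis on $J$, the second by the assumption that $F_0(V)$ is algebraic as a $G$-representation, so by Theorem \ref{thm:representations}(e) the tensor product is algebraic.

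For the inductive step, the short exact sequence $0 \to F_{k-1}(V) \to F_k(V) \to F_k(V)/F_{k-1}(V) \to 0$ of $\Gamma$-modules yields a long exact sequence in $H^*(J;-)$. Since inner automorphisms act trivially on group cohomology, this is a long exact sequence of $G$-representations. The term $H^i(J; F_k(V)/F_{k-1}(V))$ is algebraic by the same argument as in the base case, using that $F_k(V)/F_{k-1}(V)$ is algebraic as a $G$-representation. Extracting from the long exact sequence a short exact sequence of $G$-representations
\[0 \lra Q \lra H^i(J; F_k(V)) \lra K \lra 0,\]
where $Q$ is a quotient of $H^i(J; F_{k-1}(V))$ (algebraic by the inductive hypothesis and Theorem \ref{thm:representations}(b)) and $K$ is a subrepresentation of $H^i(J; F_k(V)/F_{k-1}(V))$ (algebraic by Theorem \ref{thm:representations}(a)), I then invoke Theorem \ref{thm:representations}(c) to conclude that the extension $H^i(J; F_k(V))$ is algebraic. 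Iterating up to $k = p$ gives the conclusion.

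The only genuine subtlety is the $G$-equivariance of the universal coefficient identification and of the connecting maps in the long exact sequence, but both are formal consequences of functoriality of group cohomology on the category of groups with modules acted on by $\Gamma$ via conjugation on $J$ and the given action on coefficients, together with the vanishing of the inner-automorphism action. The essential use of the hypothesis $g \geq 2$ is to apply Theorem \ref{thm:representations}(c) at the inductive step; without it, one would only obtain that $H^i(J;V)$ is $gr$-algebraic as a $\Gamma$-representation.
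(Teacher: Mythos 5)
Your proof is correct and takes essentially the same approach as the paper: the paper passes $V$'s filtration into a spectral sequence $E_1^{p,q} = H^{p+q}(J;F_p(V)/F_{p-1}(V)) \Rightarrow H^{p+q}(J;V)$, identifies each $E_1$-term as $H^{p+q}(J;\bQ)\otimes F_p(V)/F_{p-1}(V)$ using that $J$ acts trivially on filtration quotients, and applies Theorem \ref{thm:representations}(a), (b), (c); your induction on filtration length with the long exact sequence in $H^*(J;-)$ is precisely this spectral sequence argument unrolled.
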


\begin{proof}For any filtered $\Gamma$-representation $V$ with filtration $\{F_p(V)\}$, there is a spectral sequence of $G$-representations
	\[E_1^{p,q} = H^{p+q}(J;F_p(V)/F_{p-1}(V)) \Longrightarrow H^{p+q}(J;V).\]
If the action of $\Gamma$ on $F_p(V)/F_{p-1}(V)$ factors over $G$, then $J$ acts trivally on $F_p(V)/F_{p-1}(V)$ and so we can identity $E_1^{p,q}$ with $H^{p+q}(J;\bQ) \otimes F_p(V)/F_{p-1}(V)$ as a $G$-representation.

The hypotheses imply that the $E_1$-page consists of algebraic $G$-representations. Using Theorem \ref{thm:representations} (a) and (b) the $E_\infty$-page consists of algebraic $G$-representations, and using Theorem \ref{thm:representations} (c) and the assumption that $g \geq 2$ so does the abutment.\end{proof}

\subsection{Equivariant Serre classes} \label{sec:equiv-serre} We will use a version of Serre's mod $\cC$ theory for spaces with actions of a group $\Gamma$. 

\begin{definition}Fix a localisation $\bk$ of $\bZ$. A \emph{Serre class} $\cC$ of $\bk[\Gamma]$-modules is a collection of $\bk[\Gamma]$-modules satisfying the following properties:
	\begin{enumerate}[\indent (i)]
		\item \label{enum:equiv-serre-1} For every short exact sequence of $\bk[\Gamma]$-modules
		\[0 \lra A \lra B \lra C \lra 0,\]
		$A,C \in \cC$ if and only if $B \in \cC$.
		\item \label{enum:equiv-serre-2} $A,B \in \cC$ implies $A \otimes_\bk B \in \cC$ and $\mr{Tor}_1^\bk(A,B) \in \cC$.
		\item \label{enum:equiv-serre-3} $A \in \cC$ implies $H_p(K(A,n);\bk) \in \cC$ for all $n \geq 1$ and $p \geq 0$.
	\end{enumerate}
\end{definition}

Property (iii) can be weakened to 
\begin{enumerate}[\indent (i')]
\setcounter{enumi}{2}
\item \label{enum:equiv-serre-3p} $A \in \cC$ implies that $H_p(K(A,1);\bk) \in \cC$ for all $p \geq 0$,
\end{enumerate}
by the following lemma.

\begin{lemma}\label{lem:iii-prime} Given properties \eqref{enum:equiv-serre-1} and \eqref{enum:equiv-serre-2}, property \emph{(\ref{enum:equiv-serre-3p}')} implies property \eqref{enum:equiv-serre-3}.\end{lemma}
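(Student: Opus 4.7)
The plan is to induct on $n$. The base case $n=1$ is simply hypothesis \emph{(\ref{enum:equiv-serre-3p}')}. For the inductive step, I would assume $H_q(K(A,n-1);\bk) \in \cC$ for all $q \geq 0$ and exploit the path-loop fibration
\[ K(A,n-1) \lra PK(A,n) \lra K(A,n), \]
whose total space is contractible. Since $n \geq 2$ in this step, $K(A,n)$ is simply connected, so the associated Serre spectral sequence has trivial local coefficients and takes the form
\[ E^2_{p,q} = H_p(K(A,n);H_q(K(A,n-1);\bk)) \Longrightarrow H_{p+q}(\mr{pt};\bk). \]

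Inside this step I would run a secondary induction on $p$ to prove $H_p(K(A,n);\bk) \in \cC$. The base $p = 0$ reduces to $\bk \in \cC$, which is the $n=1$, $p=0$ instance of \emph{(\ref{enum:equiv-serre-3p}')}. For the inductive step on $p$, the universal coefficient theorem expresses $E^2_{i,j}$ as an extension of $H_i(K(A,n);\bk) \otimes_\bk H_j(K(A,n-1);\bk)$ by $\mr{Tor}_1^\bk(H_{i-1}(K(A,n);\bk), H_j(K(A,n-1);\bk))$. For $i < p$ the induction on $p$ places both $H_i$ and $H_{i-1}$ of $K(A,n)$ in $\cC$, and the induction on $n$ does the same for $H_j(K(A,n-1);\bk)$, so properties \eqref{enum:equiv-serre-1} and \eqref{enum:equiv-serre-2} place every such $E^2_{i,j}$ in $\cC$.

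The entry $E^2_{p,0} = H_p(K(A,n);\bk)$ receives no nonzero incoming differentials (they would originate in negative $q$-degree), so it inherits a decreasing filtration $E^2_{p,0} \supseteq E^3_{p,0} \supseteq \cdots \supseteq E^\infty_{p,0} = 0$ whose successive quotients $E^r_{p,0}/E^{r+1}_{p,0} = \mr{im}(d_r)$ are subquotients of the entries $E^r_{p-r, r-1}$, already known to lie in $\cC$ by the previous paragraph. Only finitely many of the $d_r$ out of $(p,0)$ can be nonzero, so the filtration reaches $0$ in finite length; iterating \eqref{enum:equiv-serre-1} then yields $H_p(K(A,n);\bk) \in \cC$, completing the nested induction.

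The main obstacle is organisational rather than conceptual: orchestrating the two nested inductions so that every $E^2_{i,j}$ in the required region is certified to lie in $\cC$ before it is invoked, and making sure that the Tor clause of \eqref{enum:equiv-serre-2} is used to transport the in-$\cC$ property through the universal coefficient theorem when changing the coefficient module on $K(A,n)$.
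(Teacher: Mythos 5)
Your proof is correct and is essentially the paper's argument: the same double induction on $n$ and $p$, the same Serre spectral sequence for $K(A,n-1)\to \ast \to K(A,n)$, the same use of the universal coefficient theorem together with property \eqref{enum:equiv-serre-2} to handle the non-$\bk$ coefficients on the $E^2$-page, and the same finite filtration of $E^2_{p,0}$ by the $E^r_{p,0}$ terminating at $E^\infty_{p,0}=0$. The only difference is cosmetic (nested versus simultaneous double induction).
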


\begin{proof}
We will prove that $H_p(K(A,n);\bk) \in \cC$ by double induction over $n \geq 1$ and $p \geq 0$, the initial cases $n=1$ being (\ref{enum:equiv-serre-3p}') and $p=0$ following from $H_0(K(A,n);\bk) = \bk = H_0(K(A,1);\bk)$ which lies in $\cC$ by (\ref{enum:equiv-serre-3p}').
	
	For the case $(n,p)$ with $n > 1$ and $p >0$, we assume the result is proven for $n-1$ and all $p$, and for $n$ and all $p'<p$. We will use the Serre spectral sequence for the homotopy fibration sequence
	\[K(A,n-1) \lra * \lra K(A,n),\]
	which takes the form 
	\[E^2_{s,t} = H_s(K(A,n);H_t(K(A,n-1);\bk)) \Longrightarrow H_{s+t}(\ast;\bk).\]
	By the universal coefficient theorem, the group $E^2_{s,t}$ is an extension of 
	\[H_s(K(A,n);\bk) \otimes_{\bk} H_t(K(A,n-1);\bk) \quad \text{and}\] 
	\[\mr{Tor}_1^\bk(H_{s-1}(K(A,n);\bk), H_t(K(A,n-1);\bk)).\]
	Thus the inductive hypothesis and property \eqref{enum:equiv-serre-2} imply that $E^2_{p',q} \in \cC$ for $p' < p$ and for all $q$. Since $E^r_{p',q}$ is obtained from this by taking subquotients, by property \eqref{enum:equiv-serre-1} it also lies in $\cC$.
	
	We wish to prove that $E^2_{p,0} \in \cC$. Using the exact sequences
	\[0 \lra E^{r+1}_{p,0} \lra E^r_{p,0} \overset{d^r}\lra E^r_{p-r,r-1}\]
	and the fact that $E^r_{p-r,r-1}$ lies in $\cC$ so the image of $d^r$ does too, we see that $E^r_{p,0}$ lies in $\cC$ as long as $E^{r+1}_{p,0}$ does. Since $E^p_{p,0} = E^\infty_{p,0} = 0 \in \cC$, this concludes the proof of the induction step.
\end{proof}

Property \eqref{enum:equiv-serre-1} implies that belonging to the class $\cC$ passes through spectral sequences in the following sense:

\begin{lemma}\label{lem:serre-class-ss} Suppose we have a spectral sequence $\{E^r_{p,q}\}$ of $\bk[\Gamma]$-modules such that 
	\begin{enumerate}[\indent (a)]
		\item each $(p,q)$ has only finitely many non-zero differentials into and out of it,
		\item for each $n \in \bZ$ only finitely many entries $E^\infty_{p,q}$ with $p+q=n$ are non-zero,
		\item for each $(p,q)$ there exists an $r \geq 1$ such that $E^r_{p,q} \in \cC$.
	\end{enumerate}
Then the abutment consists of $\bk[\Gamma]$-modules which lie in $\cC$.\end{lemma}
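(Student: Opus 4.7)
The plan is to apply property \eqref{enum:equiv-serre-1} twice: first to propagate the hypothesis from some page $E^r_{p,q}$ forward to the $E^\infty$-page, and then to reassemble the $E^\infty$-page into the abutment through a finite filtration.

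As a preliminary observation I would note that property \eqref{enum:equiv-serre-1} implies that any subobject, quotient, and hence subquotient of a $\bk[\Gamma]$-module in $\cC$ again lies in $\cC$: if $0 \to A \to B \to C \to 0$ is exact with $B \in \cC$, then the ``only if'' direction of \eqref{enum:equiv-serre-1} forces $A, C \in \cC$. Now fix a bidegree $(p,q)$ and choose $r_0 \geq 1$ as in (c) so that $E^{r_0}_{p,q} \in \cC$. Since $E^{r+1}_{p,q}$ is a subquotient of $E^r_{p,q}$ for every $r \geq 1$, induction on $r$ together with the preliminary observation shows that $E^r_{p,q} \in \cC$ for all $r \geq r_0$. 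By hypothesis (a) there exists $r_1 \geq r_0$ beyond which no differentials into or out of $(p,q)$ are nonzero, so $E^{r_1}_{p,q} = E^\infty_{p,q}$, and therefore $E^\infty_{p,q} \in \cC$.

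Finally, for fixed $n$ the abutment $H_n$ carries a filtration of $\bk[\Gamma]$-modules whose associated graded is $\bigoplus_p E^\infty_{p,n-p}$. By hypothesis (b) this filtration has only finitely many nontrivial subquotients, so $H_n$ is obtained from objects of $\cC$ by iterated $\bk[\Gamma]$-linear extensions. Applying the ``if'' direction of property \eqref{enum:equiv-serre-1} finitely many times yields $H_n \in \cC$. There is no real obstacle in the argument; the only point requiring care is that the spectral sequence, its differentials, its filtration on the abutment, and the identifications of successive subquotients are all taken in the category of $\bk[\Gamma]$-modules, which is part of the standing hypothesis.
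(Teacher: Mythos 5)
Your proof is correct and follows essentially the same two-stage argument as the paper: first observe that membership in $\cC$ is closed under subquotients via property \eqref{enum:equiv-serre-1}, propagate from some $E^{r_0}_{p,q}$ to $E^\infty_{p,q}$ using hypothesis (a), and then reassemble the finite filtration on the abutment using hypothesis (b) and the extension closure in \eqref{enum:equiv-serre-1}. Your version just spells out the preliminary observation and the induction a little more explicitly than the paper does.
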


\begin{proof}We first note that property \eqref{enum:equiv-serre-1} implies that being in $\cC$ is preserved by passing to subquotients. Using (c) it follows that each $E^{r'}_{p,q}$ for $r' > r$ also lies in $\cC$. By property (a), for each $(p,q)$ there exists an $r'$ such that $E^{r'}_{p,q} = E^\infty_{p,q}$. Thus each $E^\infty_{p,q}$ lies in $\cC$. 

Finally, the abutment in total degree $n$ has a filtration with associated graded given by the terms $E^\infty_{p,q}$ with $p+q=n$, which all lie in $\cC$. This filtration is finite by (b), and using property \eqref{enum:equiv-serre-1} a number of times we conclude that the abutment also lies in $\cC$.	
\end{proof}

Recall that a \emph{local system} on a space $X$ is a functor $\Pi(X) \to \cat{Ab}$, where $\Pi(X)$ denotes the fundamental groupoid of $X$. If $\pi \colon E \to B$ is a fibration then there is a local system $\ul{H}_q(\pi;\bk)$ on $B$ given by
\begin{align*} 
\ul{H}_q(\pi;\bk) \colon \Pi(B) &\lra \cat{Mod}_\bk \\
b_0 &\longmapsto H_q(\pi^{-1}(b_0);\bk).
\end{align*}
If the fibres of $\pi$ are typically called $F$, sometimes one writes $\ul{H}_q(F;\bQ)$ for this local system. Dress' construction \cite{Dress} of the homology Serre spectral sequence for a fibration $\pi \colon E \to B$, is given by
\[E^2_{p,q} = H_p(B;\ul{H}_q(\pi;\bk)) \Longrightarrow H_{p+q}(E;\bk)\]
and does not use a choice of basepoint in $B$. Traditionally, for $B$ path-connected one chooses a basepoint $b_0 \in B$ and replaces this local system with the $\bk[\pi_1(B,b_0)]$-module $H_q(\pi^{-1}(b_0);\bk)$. At that point the spectral sequence is only functorial in based maps of fibrations. The advantage of Dress' formulation is that it is natural in \emph{all} maps of fibrations: any commutative diagram
\[\begin{tikzcd} E  \rar{g}  \dar{\pi} & E' \dar{\pi'} \\
B \rar{f} & B' \end{tikzcd}\]
induces a map of Serre spectral sequence given on the $E_2$-page by
\[ H_p(B;\ul{H}_q(\pi;\bk))\overset{\ul{g}_*}\lra H_p(B;f^*\ul{H}_q(\pi';\bk))\overset{f_*}\lra H_p(B';\ul{H}_q(\pi';\bk)) \]
where $\ul{g}_* \colon f^*\ul{H}_q(\pi;\bk) \to \ul{H}_q(\pi';\bk)$ is the map of local systems induced by restricting $g$ to fibres.

We will often want to transfer results about rational homotopy groups to results about rational cohomology, and vice versa, which will only be possible if the action of the fundamental group on higher homotopy groups is under control. 

For each $i \geq 2$, there is a local system
\begin{equation}\label{eq:LocSysPiStar} 
\begin{aligned}
\ul{\pi_i}(X) \colon \Pi(X) &\lra \cat{Ab} \\
x_0 &\longmapsto \pi_i(X,x_0),
\end{aligned}
\end{equation}
and each continuous map $f \colon X \to Y$ induces a natural transformation $\ul{\pi_i}(X) \to f^* \ul{\pi_i}(Y)$. Recall that a path-connected space $X$ is called \emph{simple} if its fundamental group is abelian and acts trivially on higher homotopy groups (this is true for any basepoint if it is true for a single basepoint). If $X$ is simple, not only does \eqref{eq:LocSysPiStar} makes sense for $i=1$ as well, but each local system \eqref{eq:LocSysPiStar} for $i \geq 1$ has the following property: the isomorphism $\pi_i(X,x_0) \to \pi_i(X,x_1)$ is independent of the choice of morphism from $x_0$ to $x_1$ in $\Pi(X)$. Equivalently, $\ul{\pi_i}(X)$ is naturally isomorphic to a constant functor on an abelian group. We can make a canonical choice of such a group by
\[\pi_i(X) \coloneqq \colim \ul{\pi_i}(X).\]
By definition of the colimit $\pi_i(X)$ receives a natural map from $\pi_i(X,x_0)$ for any basepoint $x_0 \in X$, and  this is an isomorphism. If $f \colon X \to Y$ is a map between simple spaces, we therefore obtain a homomorphism $f_* \colon \pi_i(X) \to \pi_i(Y)$. 

Suppose now we have a $\Gamma$-action up to homotopy, given by a homomorphism $\Gamma \to [X,X]$ to the monoid of homotopy classes of maps. This induces an action of $\Gamma$ on the homology groups $H_i(X;\bk)$, and if $X$ is simple it also induces an action of $\Gamma$ on $\pi_i(X) \otimes \bk$. We can ask for either of these to lie in $\cC$. The following will be used in Section \ref{sec:proofs}.

\begin{lemma}\label{lem:filtered-algebraic-homotopy-to-homology}
	Let $X$ be a path-connected simple space with an action of $\Gamma$ up to homotopy. If all $\bk[\Gamma]$-modules $\pi_i(X) \otimes \bk$ lie in $\cC$, then so do all $H_i(X;\bk)$.\end{lemma}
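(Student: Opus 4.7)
The plan is to argue by induction along a functorial principal Postnikov tower
\[ \cdots \to X^{(n)} \to X^{(n-1)} \to \cdots \to X^{(1)} \simeq K(\pi_1(X),1) \]
of $X$, which exists since $X$ is simple and whose stages sit in principal fibrations $K(\pi_n(X),n) \to X^{(n)} \to X^{(n-1)}$. Functoriality of the tower ensures that the homotopy $\Gamma$-action on $X$ descends to compatible homotopy $\Gamma$-actions on each $X^{(n)}$ and on each Eilenberg--MacLane fibre, recovering the given $\Gamma$-action on $\pi_n(X)$ after taking homotopy groups. The goal is to prove by induction on $n$ that $H_p(X^{(n)};\bk) \in \cC$ for all $p$; the lemma then follows because, for each fixed $i$, the map $X \to X^{(i)}$ is an $(i+1)$-equivalence and hence a $\bk$-homology isomorphism in degree $i$.

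For the base case, the $\Gamma$-equivariant localisation map $\pi_1(X) \to \pi_1(X)\otimes\bk$ induces a $\Gamma$-equivariant $\bk$-homology equivalence $K(\pi_1(X),1) \to K(\pi_1(X)\otimes\bk,1)$, so combining this with property (iii) applied to the $\bk[\Gamma]$-module $\pi_1(X)\otimes\bk \in \cC$ yields $H_p(X^{(1)};\bk) \in \cC$. For the inductive step, simplicity of $X$ (and hence of $X^{(n-1)}$) means the monodromy action of $\pi_1(X^{(n-1)})$ on $\pi_n(X)$ is trivial, so the local system $\ul{H}_q(K(\pi_n(X),n);\bk)$ on $X^{(n-1)}$ is trivial. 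Dress' construction then supplies a $\Gamma$-equivariant first-quadrant Serre spectral sequence
\[ E^2_{p,q} = H_p(X^{(n-1)};\, M_q) \Longrightarrow H_{p+q}(X^{(n)};\bk), \qquad M_q \coloneqq H_q(K(\pi_n(X),n);\bk). \]
Property (iii) applied to $\pi_n(X)\otimes\bk \in \cC$, together with the analogous $\Gamma$-equivariant $\bk$-homology equivalence $K(\pi_n(X),n) \to K(\pi_n(X)\otimes\bk,n)$, places each $M_q$ in $\cC$. The universal coefficient short exact sequence then exhibits $E^2_{p,q}$ as an extension of the $\bk[\Gamma]$-modules $H_p(X^{(n-1)};\bk)\otimes_\bk M_q$ and $\mr{Tor}_1^\bk(H_{p-1}(X^{(n-1)};\bk),\, M_q)$, both of which lie in $\cC$ by the inductive hypothesis and properties (i) and (ii). Lemma \ref{lem:serre-class-ss} then forces $H_{p+q}(X^{(n)};\bk) \in \cC$, completing the induction.

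The main difficulty will be maintaining $\Gamma$-equivariance throughout, despite only a homotopy-level action on $X$ being specified. This is handled by choosing a functorial model for the Postnikov tower, so that self-equivalences of $X$ induce compatible self-equivalences on each $X^{(n)}$ and on each fibre up to homotopy, and by the fact that Dress' Serre spectral sequence is natural in arbitrary maps of fibrations; together these upgrade the homotopy $\Gamma$-action on the entire tower to an honest $\Gamma$-action on each $E^2$-term and on the abutments.
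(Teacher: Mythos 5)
Your proof follows essentially the same route as the paper: induction along a functorial Postnikov tower, the $\Gamma$-equivariant (Dress) Serre spectral sequence for each stage, the universal coefficient theorem to split the $E^2$-page into tensor and Tor pieces, and Lemma \ref{lem:serre-class-ss}. You additionally make explicit a small step the paper leaves implicit, namely that $K(\pi_n(X),n) \to K(\pi_n(X)\otimes\bk,n)$ is a $\Gamma$-equivariant $\bk$-homology equivalence, so that axiom (iii) applied to $\pi_n(X)\otimes\bk \in \cC$ really does yield $H_q(K(\pi_n(X),n);\bk) \in \cC$; this is a welcome clarification but not a different approach.
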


\begin{proof}We use Postnikov towers, which can be produced functorially in all maps by first replacing $X$ by the naturally weakly equivalent space $|\mr{Sing}(X)|$ and letting the $n$th stage $\tau_{\leq n} X$ be the $n$th coskeleton $|\mr{cosk}_n(\mr{Sing}(X))|$, e.g.~ \cite{DwyerKanObstr}. The result is a natural tower of maps 
	\[\cdots \xrightarrow{q_{n+1}} {\tau_{\leq n} X} \xrightarrow{q_{n}} {\tau_{\leq n-1} X} \xrightarrow{q_{n-1}} \cdots \xrightarrow{q_1} \tau_{\leq 0} X\]
	under $X$. The homotopy groups $\pi_i(\tau_{\leq n} X)$ vanish for $i>n$ and the map $X \to \tau_{\leq n} X$ induces an isomorphism on $\pi_i$ for $i \leq n$. Given a point $x_0 \in X$, we obtain a point in $\tau_{n-1} X$ which we shall denote the same. The homotopy fibre of $q_{n}$ over this point in $\tau_{\leq n-1} X$ can be identified with the Eilenberg--Mac~Lane space $K(\pi_{n}(X,x_0),n)$.
	
	We apply the above version of the Serre spectral sequence to the fibration $q_{n}$ for $n \geq 1$, and get a spectral sequence of $\Gamma$-modules
	\[E^2_{p,q} = H_p(\tau_{\leq n-1} X;\ul{H}_q(q_{n};\bk)) \Longrightarrow H_{p+q}(\tau_{\leq n} X;\bk),\]
	where the coefficients are taken in the local system 
	\begin{align*} \ul{H}_q(q_{n};\bk) \colon \Pi(\tau_{\leq n-1} X) &\lra \cat{Mod}_\bk \\
	x_0 &\longmapsto H_q(q_{n}^{-1}(x_0);\bk).
	\end{align*}
	As $q_{n}^{-1}(x_0) \simeq K(\pi_{n}(X,x_0),n)$, homotopy classes of maps between these fibres are determined by their effect on $\pi_n$, so as $X$ is simple the fibre transport map from $q_{n}^{-1}(x_0)$ to $q_{n}^{-1}(x_1)$ is independent of the choice of path from $x_0$ to $x_1$. Thus $\ul{H}_q(q_{n};\bk)$ is canonically isomorphic to the trivial coefficient system on the abelian group
	$$\colim \ul{H}_q(q_{n};\bk) \cong H_q(K(\pi_n(X),n);\bk).$$
Thus the Serre spectral sequence simplifies to 
	\[E^2_{p,q} = H_p(\tau_{\leq n-1} X;H_q(K(\pi_n(X),n);\bk)) \Longrightarrow H_{p+q}(\tau_{\leq n} X;\bk).\]
	The universal coefficient theorem says that the $\Gamma$-module 
	\[H_p(\tau_{\leq n-1} X;H_q(K(\pi_n(X),n);\bk))\] is naturally an extension of the $\Gamma$-modules \[\mr{Tor}_1^\bk(H_{p-1}(\tau_{\leq n-1} X),H_q(K(\pi_n(X),n);\bk))\,\, \text{and}\,\, H_p(\tau_{\leq n-1} X;\bk) \otimes_\bk H_q(K(\pi_n(X),n);\bk).\] Under this identification, the $\Gamma$-action is given by the diagonal action of the evident action on $H_p(\tau_{\leq n-1} X;\bk)$ and the action on $H_q(K(\pi_n(X),n);\bk)$ induced by the action of $\Gamma$ on $\pi_n(X)$.
	
	\vspace{.5 em} 
	
After this preparation we now prove the proposition. We will show by induction over $n$ that each of the homology groups $H_*(\tau_{\leq n} X;\bk)$ lies in $\cC$. The initial case $n=1$ follows from the identification $\tau_{\leq 1} X \simeq K(\pi_1(X),1)$ as a space with $\Gamma$-action. The homology groups $H_q(K(\pi_1(X),1);\bk)$ lie in $\cC$ by property \eqref{enum:equiv-serre-3} of equivariant Serre classes.
	
	For the induction step, we use the above Serre spectral sequence and again use property \eqref{enum:equiv-serre-3} of equivariant Serre classes to see that $H_q(K(\pi_n(X),n);\bk)$ lies in $\cC$. By property \eqref{enum:equiv-serre-2} the $E^2$-page of the Serre spectral sequence also lies in $\cC$, so by Lemma \ref{lem:serre-class-ss} so does its abutment.
\end{proof}

We will only use the full strength of the previous lemma in Section \ref{sec:tangential}. In all other applications $X$ is in fact $1$-connected, and in this case any action of $\Gamma$ on $X$ up to homotopy can be replaced by a based action up to homotopy, i.e.~ a homomorphism $\Gamma \to [X, X]_\ast$ to the monoid of based homotopy classes of based maps. In this case one may ignore the technical discussion about local systems and use the ordinary Serre spectral sequence instead. It also allows one to apply the following converse result, which will be used in Sections \ref{sec:homotopy-cohomology-conf} and \ref{sec:nilpotent}.

\begin{lemma}\label{lem:filtered-algebraic-homology-to-homotopy}
	Let $X$ be a based $1$-connected space with a based action of $\Gamma$ up to homotopy. If all $\Gamma$-modules $H_i(X;\bk)$ lie in $\cC$, so do all $\pi_i(X) \otimes \bk$.\end{lemma}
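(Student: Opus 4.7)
The plan is to use the Whitehead tower of $X$ together with the Hurewicz theorem to convert the homological hypothesis into information about homotopy groups. Let $X\langle n\rangle$ denote a functorial $n$-connected cover of $X$, built as the functorial homotopy fibre of the Postnikov section $X \to \tau_{\leq n}X$ from the proof of the previous lemma; then each $X\langle n\rangle$ inherits a based $\Gamma$-action up to homotopy, and there is a principal fibration
\[K(\pi_n(X), n-1) \lra X\langle n\rangle \lra X\langle n-1\rangle\]
of $\Gamma$-equivariant objects up to homotopy, with $X\langle 1\rangle = X$. I would induct on $n \geq 2$, proving simultaneously that $\pi_n(X)\otimes \bk \in \cC$ and $H_*(X\langle n-1\rangle;\bk) \in \cC$.

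The base case $n = 2$ is immediate: since $X$ is $1$-connected, Hurewicz identifies $\pi_2(X)$ with $H_2(X;\bZ)$ as $\Gamma$-modules, and tensoring with the localisation $\bk$ (which is flat over $\bZ$) gives $\pi_2(X) \otimes \bk \cong H_2(X;\bk) \in \cC$.

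For the inductive step, the Serre spectral sequence of the displayed fibration is a spectral sequence of $\bk[\Gamma]$-modules
\[E^2_{p,q} = H_p(X\langle n-1\rangle; \ul{H}_q(K(\pi_n(X), n-1); \bk)) \Longrightarrow H_{p+q}(X\langle n\rangle; \bk).\]
Since $X\langle n-1\rangle$ is $1$-connected, the local system on $E^2$ is constant, and universal coefficients present $E^2_{p,q}$ as an extension of $H_p(X\langle n-1\rangle; \bk) \otimes_\bk H_q(K(\pi_n(X), n-1); \bk)$ and a $\mr{Tor}_1^\bk$ of neighbouring groups. By the inductive hypothesis $H_p(X\langle n-1\rangle;\bk) \in \cC$; moreover, since the natural map $K(\pi_n(X), n-1) \to K(\pi_n(X)\otimes\bk, n-1)$ is a $\bk$-homology equivalence, property \eqref{enum:equiv-serre-3} applied to $\pi_n(X)\otimes\bk \in \cC$ yields $H_q(K(\pi_n(X), n-1);\bk) \in \cC$. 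Properties \eqref{enum:equiv-serre-1} and \eqref{enum:equiv-serre-2} then give $E^2 \in \cC$, so Lemma \ref{lem:serre-class-ss} produces $H_*(X\langle n\rangle;\bk) \in \cC$. Finally, since $X\langle n\rangle$ is $n$-connected, Hurewicz together with flatness of $\bk$ yields $\pi_{n+1}(X) \otimes \bk \cong H_{n+1}(X\langle n\rangle;\bk) \in \cC$, completing the induction.

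The main obstacle is the functoriality of the Whitehead tower: one must ensure that each $X\langle n\rangle$ is actually a $\Gamma$-space up to homotopy and that the Serre spectral sequence is one of $\bk[\Gamma]$-modules, not merely of abelian groups. Building $X\langle n\rangle$ as the functorial homotopy fibre of the functorial Postnikov truncation already invoked in Lemma \ref{lem:filtered-algebraic-homotopy-to-homology} resolves this, and all remaining steps are then the natural duals of those in that earlier argument.
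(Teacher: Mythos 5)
Your argument is correct, but it takes a genuinely different route from the one in the paper. You use the Whitehead tower $X\langle n\rangle = \mr{hofib}(X \to \tau_{\leq n}X)$, the principal fibrations $K(\pi_n X, n-1) \to X\langle n\rangle \to X\langle n-1\rangle$, and the absolute Hurewicz theorem, re-running the Serre spectral sequence argument directly. The paper instead works entirely with the Postnikov tower: it invokes Lemma~\ref{lem:filtered-algebraic-homotopy-to-homology} as a black box to pass from the inductive hypothesis on $\pi_*(\tau_{\leq n-1}X)\otimes\bk$ to $H_*(\tau_{\leq n-1}X;\bk)$, then uses the long exact sequence of the pair $(\tau_{\leq n-1}X, X)$ and the relative Hurewicz isomorphism $\pi_{n+1}(\tau_{\leq n-1}X, X)\otimes\bk \cong H_{n+1}(\tau_{\leq n-1}X, X;\bk)$ to extract $\pi_n(X)\otimes\bk$. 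What your route buys is self-containment and a more textbook mod-$\cC$-Hurewicz flavour; what the paper's route buys is brevity (it reuses Lemma~\ref{lem:filtered-algebraic-homotopy-to-homology} rather than repeating the Dress–Serre spectral sequence analysis) and it sidesteps the one extra step your proof needs: since $\cC$ contains $\pi_n(X)\otimes\bk$ but not $\pi_n(X)$ itself, you must observe that $K(\pi_n(X), n-1)\to K(\pi_n(X)\otimes\bk, n-1)$ is a $\bk$-homology equivalence before property~\eqref{enum:equiv-serre-3} can be applied. That observation is correct (it is the standard statement that $\bk$-localisation of nilpotent spaces tensors homotopy groups with $\bk$, and $\bk$ is a subring of $\bQ$ here), and it is natural in $\Gamma$, so your $E^2$-page is indeed a page of $\bk[\Gamma]$-modules in $\cC$; but the paper's argument never has to confront this point.
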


\begin{proof}We will again use a Postnikov tower, and will prove by induction over $n$ that each of the homotopy groups $\pi_q(\tau_{\leq n} X) \otimes \bk$ lies in $\cC$. Since $\pi_n(X) \cong \pi_n(\tau_{\leq n} X)$ this in particular proves the lemma.
	
For induction step, we suppose that the homotopy groups $\pi_i(\tau_{\leq n-1} X) \otimes \bk$ lie in $\cC$ for all $i$. By the previous lemma it follows that each homology group $H_i(\tau_{\leq n-1} X;\bk)$ lies in $\cC$. Form the long exact sequence of homotopy groups for the pair $(\tau_{\leq n-1} X,X)$ we obtain an isomorphism of abelian groups
\[\pi_{n+1}(\tau_{\leq n-1} X,X) \overset{\cong}{\lra} \pi_n(X),\]
and since $X \to \tau_{\leq n-1} X$ is an $n$-connected map between $1$-connected spaces, the Hurewicz theorem gives us an isomorphism
\[\pi_{n+1}(\tau_{\leq n-1} X,X) \otimes \bk  \overset{\cong}\lra H_{n+1}(\tau_{\leq n-1} X,X;\bk).\]
It thus suffices proves that $H_{n+1}(\tau_{\leq n-1} X,X;\bk)$ lies in $\cC$. This follows from the long exact sequence of a pair
\[\cdots \to H_{n+1}(X;\bk) \to H_{n+1}(\tau_{\leq n-1} X;\bk) \to H_{n+1}(\tau_{\leq n-1} X,X;\bk) \to H_{n}(X;\bk) \to \cdots,\]
by property \eqref{enum:equiv-serre-1} of equivariant Serre casses and the fact that the homology groups $H_*(X;\bk)$ and $H_*(\tau_{\leq n-1} X;\bk)$ lie in $\cC$.
\end{proof}

If one is interested in cohomology instead of homology, one should impose a further axiom on the Serre class $\cC$: that it is closed under $\bk$-linear duals in a derived sense. Specifically, we impose that:
\begin{enumerate}[\indent (i)]
\setcounter{enumi}{3}
\item \label{enum:equiv-serre-4} $A \in \cC$ if and only if $\mr{Hom}_\bk(A,\bk) \in \cC$ and $\mr{Ext}^1_\bk(A,\bk) \in \cC$.
\end{enumerate}
The following will also be used in Sections \ref{sec:homotopy-cohomology-conf} and \ref{sec:nilpotent}.

\begin{lemma}\label{lem:homology-to-cohomology}
Let $\cC$ be an equivariant Serre class which also satisfies property \eqref{enum:equiv-serre-4}. Let $X$ be a space with an action of $\Gamma$ up to homotopy. Then all $\bk[\Gamma]$-modules $H_i(X;\bk)$ lie in $\cC$ if and only if all $H^i(X;\bk)$ do.\end{lemma}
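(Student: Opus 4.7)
The plan is to deduce both implications from the Universal Coefficient Theorem together with axioms \eqref{enum:equiv-serre-1} and \eqref{enum:equiv-serre-4}. The only external input needed is the natural short exact sequence of $\bk$-modules
\[0 \lra \mr{Ext}^1_\bk(H_{i-1}(X;\bk),\bk) \lra H^i(X;\bk) \lra \mr{Hom}_\bk(H_i(X;\bk),\bk) \lra 0,\]
which I would first upgrade to a short exact sequence of $\bk[\Gamma]$-modules. Since all three terms depend only on the homotopy type of $X$ and since the UCT sequence is natural in $X$, the $\Gamma$-action on $X$ up to homotopy induces compatible $\Gamma$-actions on each term for which the maps are equivariant.

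For the ``only if'' direction (homology in $\cC$ implies cohomology in $\cC$), I would argue as follows: if each $H_i(X;\bk) \in \cC$, then axiom \eqref{enum:equiv-serre-4} gives $\mr{Hom}_\bk(H_i(X;\bk),\bk) \in \cC$ and $\mr{Ext}^1_\bk(H_i(X;\bk),\bk) \in \cC$ for every $i$, and then axiom \eqref{enum:equiv-serre-1} applied to the UCT sequence shows that $H^i(X;\bk)$ is an extension of two modules in $\cC$, hence lies in $\cC$.

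For the ``if'' direction I would use the observation that the \emph{iff} in axiom \eqref{enum:equiv-serre-1} implies the class $\cC$ is closed under passing to subobjects and quotients in a short exact sequence: from $B \in \cC$ in $0 \to A \to B \to C \to 0$ one concludes both $A \in \cC$ and $C \in \cC$. Applying this to the UCT sequence, the assumption $H^i(X;\bk) \in \cC$ yields $\mr{Ext}^1_\bk(H_{i-1}(X;\bk),\bk) \in \cC$ as a subobject and $\mr{Hom}_\bk(H_i(X;\bk),\bk) \in \cC$ as a quotient. Letting $i$ range over all integers, for every fixed $i$ both $\mr{Hom}_\bk(H_i(X;\bk),\bk)$ and $\mr{Ext}^1_\bk(H_i(X;\bk),\bk)$ lie in $\cC$, so the converse direction of axiom \eqref{enum:equiv-serre-4} gives $H_i(X;\bk) \in \cC$.

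The only point that requires a little care is the $\Gamma$-equivariance of the UCT sequence, but this is essentially formal from naturality once one replaces $X$ by a functorial CW-model such as $|\mr{Sing}(X)|$ so that the action up to homotopy is realised by genuine self-maps; no further technical obstacle arises.
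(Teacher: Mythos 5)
Your proof is correct and follows essentially the same route as the paper's: both apply the universal coefficient short exact sequence together with axioms (i) and (iv). The paper's proof is just a terser version of yours; your elaboration of the two directions and the note on equivariance of the UCT sequence are accurate but not different in substance.
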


\begin{proof}The universal coefficient theorem gives a natural short exact sequence
	\[0 \lra \mr{Ext}^1_\bk(H_{i-1}(X;\bk),\bk) \lra H^i(X;\bk) \lra \mr{Hom}_\bk(H_i(X;\bk),\bk) \lra 0.\]
The result follows from property \eqref{enum:equiv-serre-1} of equivariant Serre classes along with the additional property \eqref{enum:equiv-serre-4}.\end{proof}

\subsubsection{$gr$-algebraic representations}\label{sec:algebr-cohomology} The first equivariant Serre class is that of $gr$-algebraic representations:

\begin{lemma}\label{lem:gr-algebraic-serre} The $\bQ[\Gamma]$-modules which are $gr$-algebraic representations form an equivariant Serre class satisfying the additional property \eqref{enum:equiv-serre-4}.\end{lemma}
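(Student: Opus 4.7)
The plan is to verify each of the four axioms directly from the closure properties for $gr$-algebraic representations established in Lemma \ref{lem:filteredalgebraic}, using Lemma \ref{lem:iii-prime} to reduce axiom \eqref{enum:equiv-serre-3} to its weaker form (\ref{enum:equiv-serre-3p}').

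First I would dispatch axiom \eqref{enum:equiv-serre-1} as an immediate consequence of Lemma \ref{lem:filteredalgebraic} (a), (b), and (c); it is worth remarking here that the extension case did not require $g \geq 2$, so this axiom holds without any assumption beyond Convention \ref{conv:eveng2}. For axiom \eqref{enum:equiv-serre-2}, the tensor product part is Lemma \ref{lem:filteredalgebraic} (e), and $\mr{Tor}_1^\bQ(A,B) = 0$ since $\bQ$ is a field.

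The substantive axiom is \eqref{enum:equiv-serre-3}, which by Lemma \ref{lem:iii-prime} reduces to verifying (\ref{enum:equiv-serre-3p}'). Here I would use the fact that for a finite-dimensional $\bQ$-vector space $A$, the rational homology of $K(A,1)$ is the exterior algebra on $A$ placed in degree $1$; that is, $H_p(K(A,1);\bQ) \cong \Lambda^p A$ naturally as $\bQ[\Gamma]$-modules, where the $\Gamma$-action comes from the diagonal action on tensor powers. Since $\bQ$ has characteristic zero, $\Lambda^p A$ can be realised as a subrepresentation of $A^{\otimes p}$ (the antisymmetric tensors), and $A^{\otimes p}$ is $gr$-algebraic by iterated application of Lemma \ref{lem:filteredalgebraic} (e). By Lemma \ref{lem:filteredalgebraic} (a), the subrepresentation $\Lambda^p A$ is also $gr$-algebraic.

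Finally, axiom \eqref{enum:equiv-serre-4} is immediate: $\mr{Hom}_\bQ(A,\bQ) = A^\vee$ is $gr$-algebraic by Lemma \ref{lem:filteredalgebraic} (d), while $\mr{Ext}^1_\bQ(A,\bQ) = 0$ as $\bQ$ is a field. The ``only if'' direction of \eqref{enum:equiv-serre-4} follows from Lemma \ref{lem:filteredalgebraic} (d); for the converse one observes that $A$ is finite-dimensional (being a $gr$-algebraic candidate must be verified, but in fact we can characterise membership differently: since any finite-dimensional $\bQ$-vector space satisfies $A \cong A^{\vee\vee}$, $gr$-algebraicity of $A^\vee$ implies $gr$-algebraicity of $A$ by another application of (d)). The only mild obstacle is formulating axiom \eqref{enum:equiv-serre-4} so that the implicit finite-dimensionality hypothesis is handled correctly, but this is automatic since membership in the class of $gr$-algebraic representations already entails finite-dimensionality.
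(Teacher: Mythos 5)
Your proof is correct, and the structure is largely the same as the paper's: axioms \eqref{enum:equiv-serre-1}, \eqref{enum:equiv-serre-2}, and \eqref{enum:equiv-serre-4} are dispatched via Lemma \ref{lem:filteredalgebraic} in the same way (the paper also omits the trivial $\mr{Tor}^\bQ_1 = 0$ observation, which you are right to flag). The one place where you take a slightly different route is axiom \eqref{enum:equiv-serre-3}: the paper verifies it directly for all $n$ by identifying $H^*(K(A,n);\bQ)$ with the free graded-commutative algebra on $A^\vee[n]$ and then dualizing via property \eqref{enum:equiv-serre-4}, whereas you invoke Lemma \ref{lem:iii-prime} to reduce to $n=1$ and work with homology, so that $H_p(K(A,1);\bQ) \cong \Lambda^p A \subset A^{\otimes p}$ follows from closure under tensor products and subrepresentations with no need for dualization at that step. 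Both arguments rest on the same underlying inputs (Eilenberg--Mac Lane cohomology in characteristic zero is free graded-commutative, and $gr$-algebraicity is closed under the operations of Lemma \ref{lem:filteredalgebraic}); yours is marginally more economical as it needs only the $n=1$ case and avoids the extra dualization, at the cost of first invoking Lemma \ref{lem:iii-prime}.
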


\begin{proof}Property \eqref{enum:equiv-serre-1} is Lemma \ref{lem:filteredalgebraic} (a), (b) and (c), and property \eqref{enum:equiv-serre-4} is (d).  Property \eqref{enum:equiv-serre-2} is Lemma \ref{lem:filteredalgebraic} (e). Property \eqref{enum:equiv-serre-3} follows by dualization, which is allowed by property \eqref{eq:FibSeq}, from the identification of the cohomology $H^*(K(A,n);\bQ)$ with the free graded-commutative algebra on $(A \otimes \bQ)^\vee[n]$, as an algebra in $\bQ[\Gamma]$-modules.
\end{proof}

Thus the hypotheses of Lemmas \ref{lem:serre-class-ss}, \ref{lem:filtered-algebraic-homotopy-to-homology}, and  \ref{lem:filtered-algebraic-homology-to-homotopy} hold when we take $\bk = \bQ$ and $\cC$ to be the $gr$-algebraic representations. Since $gr$-algebraic representations are closed under duals, the hypothesis of Lemma \ref{lem:homology-to-cohomology} holds too.

One way in which a space with a $\Gamma$-action can arise is as the universal cover $Y\langle e \rangle$ of a based path-connected space $Y$ having $\pi_1(Y, y_0) \cong \Gamma$. The Torelli spaces we will be studying arise similarly as covers $Y \langle J \rangle$ corresponding to a normal subgroup $J \lhd \Gamma$ given by the kernel of a homomorphism $\Gamma \to G$, i.e.~the homotopy fibre of the composition $Y \to B\Gamma \to BG$.

\begin{lemma}\label{lem:i-connective} 
	In this situation, suppose that $g \geq 2$, each $G$-representation $H^i(J;\bQ)$ is algebraic, and each $\Gamma$-representation $H^i(Y \langle e \rangle;\bQ)$ is $gr$-algebraic. Then each $G$-representation $H^{i}(Y \langle J \rangle;\bQ)$ is algebraic.
\end{lemma}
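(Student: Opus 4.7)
The plan is to exploit the fibration sequence
\[Y\langle e\rangle \lra Y\langle J\rangle \lra BJ.\]
Using the long exact sequence of homotopy groups for $Y\langle J\rangle \to Y \to BG$, one sees that $\pi_1(Y\langle J\rangle) = J$ and $\pi_n(Y\langle J\rangle) = \pi_n(Y)$ for $n \geq 2$, so $Y\langle J\rangle$ is the cover of $Y$ associated to $J \leq \Gamma$, its universal cover is $Y\langle e\rangle$, and the deck group of that cover is $J$, which yields the displayed fibration. Moreover, since $J \lhd \Gamma$, the map $Y\langle J\rangle \to Y$ is a regular $G$-cover, equipping $Y\langle J\rangle$ with a $G$-action up to homotopy.

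I would then run the cohomological Serre spectral sequence of this fibration as a spectral sequence of $G$-representations,
\[E_2^{p,q} = H^p(J; H^q(Y\langle e\rangle; \bQ)) \Longrightarrow H^{p+q}(Y\langle J\rangle; \bQ),\]
where the $G$-action on the $E_2$-page is the one described in the paragraph before Lemma \ref{lem:filt-j-cohom}: the $\Gamma$-action on the pair $(J, H^q(Y\langle e\rangle; \bQ))$ descends to $G$ because inner automorphisms act trivially on group cohomology. Each $H^q(Y\langle e\rangle; \bQ)$ is a $gr$-algebraic $\Gamma$-representation by hypothesis, so Lemma \ref{lem:filt-j-cohom} (whose hypotheses $g \geq 2$ and the algebraicity of the $H^i(J;\bQ)$ are both in force) shows that each $E_2^{p,q}$ is an algebraic $G$-representation. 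Theorem \ref{thm:representations} (a)--(b) then keep this property on subsequent pages as subquotients, and Theorem \ref{thm:representations} (c) assembles the algebraic $E_\infty^{p, n-p}$ into an algebraic $G$-representation $H^n(Y\langle J\rangle; \bQ)$ via the finite filtration on the abutment in each total degree.

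I anticipate no serious obstacle. The one point worth pinning down is that the $G$-action induced on the Serre spectral sequence by $G$ acting on $Y\langle J\rangle$ agrees with the $G$-action on $H^p(J; V)$ used in Lemma \ref{lem:filt-j-cohom}; this is a routine naturality argument together with the standard fact that inner automorphisms act trivially on group cohomology, already invoked in the excerpt. No passage between homology and cohomology is needed, because Lemma \ref{lem:filt-j-cohom} is phrased directly in cohomology.
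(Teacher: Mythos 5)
Your proof is correct and follows essentially the same route as the paper: both use the Serre spectral sequence of $Y\langle e\rangle \to Y\langle J\rangle \to BJ$, giving $E_2^{p,q}=H^p(J;H^q(Y\langle e\rangle;\bQ))$ as a spectral sequence of $G$-representations, then invoke Lemma~\ref{lem:filt-j-cohom} to make the $E_2$-page algebraic and Theorem~\ref{thm:representations} to push this to the abutment. The extra details you supply (the identification of $Y\langle J\rangle$ as a cover, the compatibility of $G$-actions) are exactly the routine naturality points the paper leaves implicit.
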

\begin{proof}
	Naturality of the Serre spectral sequence implies that
	\[E_2^{p,q} = H^p(J;\ul{H}^q(Y \langle e \rangle)) \Longrightarrow H^{p+q}(Y \langle J \rangle)\]
	is a spectral sequence of $G$-representations. As long as $g \geq 2$, we may apply Lemma \ref{lem:filt-j-cohom} and Theorem \ref{thm:representations} to obtain the conclusion.
\end{proof}

\subsubsection{Nilpotent modules} \label{sec:serre-nilp} The second equivariant Serre class we will consider is that of nilpotent $\Gamma$-modules which are finitely-generated as abelian groups.

\begin{definition}
For a group $\Gamma$, a $\Gamma$-module (i.e.\ $\bZ[\Gamma]$-module) $M$ is said to be \emph{nilpotent} if it has a finite filtration by sub-$\Gamma$-modules whose associated graded is a trivial $\Gamma$-module (i.e.\ has trivial $\Gamma$-action).
\end{definition}

\begin{lemma}\label{lem:NilpIsSerre}
The $\bZ[\Gamma]$-modules which are finitely-generated as abelian groups and nilpotent as $\Gamma$-modules form an equivariant Serre class.
\end{lemma}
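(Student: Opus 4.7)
The plan is to verify the three axioms (i), (ii), (iii) of an equivariant Serre class, using Lemma \ref{lem:iii-prime} to reduce (iii) to the weaker property (iii'). Throughout, finite generation as an abelian group is preserved by all the relevant operations (short exact sequences, tensor, Tor, and group homology of finitely generated abelian groups), so the content of the lemma is that the subcategory of nilpotent $\Gamma$-modules is preserved as well.

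For property (i), I would verify closure under the three-term exact sequences directly from the definition of nilpotence. Given $0 \to A \to B \to C \to 0$ of $\Gamma$-modules: if $A$ has a finite filtration $F_\bullet A$ by sub-$\Gamma$-modules with trivial graded and $C$ has such a filtration $F_\bullet C$, splice them to a filtration on $B$ by taking the image of $F_\bullet A$ followed by the preimages of $F_\bullet C$. Conversely, if $F_\bullet B$ is such a filtration on $B$, then $F_\bullet B \cap A$ filters $A$ and the image of $F_\bullet B$ in $C$ filters $C$; all graded pieces are submodules or quotients of trivial $\Gamma$-modules, hence trivial.

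For property (ii), if $A$ and $B$ are nilpotent $\Gamma$-modules with filtrations $F_\bullet A$ and $F_\bullet B$, then the product filtration $F_k(A \otimes B) = \sum_{i+j=k} F_i A \otimes F_j B$ has associated graded $\bigoplus_{i+j=k} \mr{gr}_i A \otimes_\bZ \mr{gr}_j B$, on which $\Gamma$ acts trivially via the diagonal action. For Tor, I would induct on the filtration length of $A$: the base case is when $A$ is a trivial $\Gamma$-module, in which case a $\bZ$-free resolution of $A$ by finitely generated free abelian groups with trivial $\Gamma$-action shows that $\mr{Tor}_1^\bZ(A, B)$ is a sub-$\Gamma$-module of the nilpotent module $F_1 \otimes B$, hence nilpotent by property (i) already established. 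The inductive step uses the long exact sequence in Tor arising from $0 \to F_{k-1} A \to F_k A \to \mr{gr}_k A \to 0$, combined with property (i).

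The key step is property (iii'), namely that $H_p(K(A,1);\bZ)$ is nilpotent and finitely generated when $A$ is. I would prove this by induction on the filtration length of $A$. The base case, when $A$ is a trivial $\Gamma$-module, is immediate because the functor $A \mapsto BA$ is natural in $A$, so $\Gamma$ acts trivially on $BA$ and hence on $H_*(BA;\bZ)$. For the inductive step, choose a sub-$\Gamma$-module $A' \subset A$ such that $A/A'$ is a trivial $\Gamma$-module and $A'$ has shorter filtration, and apply the Serre spectral sequence to the fibration sequence $BA' \to BA \to B(A/A')$. Since the extension $0 \to A' \to A \to A/A' \to 0$ is abelian, the action of $\pi_1(B(A/A')) = A/A'$ on $H_*(BA';\bZ)$ is trivial, so by the universal coefficient theorem
\[
E^2_{p,q} \,\text{ is an extension of }\, H_p(B(A/A');\bZ) \otimes H_q(BA';\bZ) \,\text{ and }\, \mr{Tor}_1^\bZ(H_{p-1}(B(A/A');\bZ), H_q(BA';\bZ))
\]
as $\Gamma$-modules with the diagonal action. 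By the inductive hypothesis, property (ii), and property (i), the $E^2$-page consists of nilpotent finitely generated $\Gamma$-modules, so the same holds for $E^\infty$ and, again by property (i), for the abutment. The main obstacle is setting up the induction on this filtration correctly and verifying that the Serre spectral sequence is a spectral sequence of $\Gamma$-modules, which follows from naturality of the bar construction and the functoriality of the Serre spectral sequence discussed earlier in this section.
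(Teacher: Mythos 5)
Your proof follows essentially the same skeleton as the paper's: verify \eqref{enum:equiv-serre-1} directly by splicing/restricting filtrations, verify \eqref{enum:equiv-serre-2} by reducing to the case of trivial $\Gamma$-action, and reduce \eqref{enum:equiv-serre-3} to (\ref{enum:equiv-serre-3p}') via Lemma \ref{lem:iii-prime} and then prove (\ref{enum:equiv-serre-3p}') by induction on filtration length using the Serre spectral sequence of $K(F_{m-1}A,1) \to K(A,1) \to K(A/F_{m-1}A,1)$ --- that last step is identical to the paper's. The one genuinely different piece is your treatment of \eqref{enum:equiv-serre-2}: the paper sets up two spectral sequences from filtered chain complexes of $\Gamma$-equivariant resolutions (handling $\otimes$ and $\mr{Tor}_1$ uniformly as $\mr{Tor}_*$), whereas you handle the tensor product directly with the product filtration $F_k(A\otimes B)=\sum_{i+j=k}\mr{im}(F_iA\otimes F_jB)$ and handle $\mr{Tor}_1$ by a separate induction on filtration length, using the base case (a resolution by free abelian groups with trivial $\Gamma$-action) and the long exact sequence in $\mr{Tor}$ for $0\to F_{k-1}A\to F_kA\to \mr{gr}_kA\to 0$; both are valid and yours is arguably the more elementary route. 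One small imprecision worth noting: the associated graded of the product filtration on $A\otimes B$ is in general only a \emph{quotient} of $\bigoplus_{i+j=k}\mr{gr}_iA\otimes\mr{gr}_jB$, not isomorphic to it --- the natural surjection can fail to be injective in the presence of torsion (e.g.\ $A=\bZ/4$ with $F_0 A = 2\bZ/4$, $B=F_0 B=\bZ/2$ gives $\mr{gr}_0(A\otimes B)=0$ while $\mr{gr}_0 A\otimes\mr{gr}_0 B=\bZ/2$). Your argument is unaffected, since a quotient of a trivial $\Gamma$-module is still trivial, but the claim of equality should be softened.
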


\begin{proof}Property \eqref{enum:equiv-serre-1} is straightfoward. 
	
Now consider property \eqref{enum:equiv-serre-2}. Let $0 \subset F_0(A) \subset F_1(A) \subset \cdots \subset F_p(A) = A$ be a finite filtration of $A$ by $\bZ[\Gamma]$-modules such that each $F_i(A)/F_{i-1}(A)$ has a trivial $\Gamma$-action. Choose a resolution $B_* \to B$ by $\bZ[\Gamma]$-modules which are free as abelian groups, and form the filtered chain complex $\{F_i(A) \otimes B_*\}$. This gives a spectral sequence of $\bZ[\Gamma]$-modules 
\[{}^I E^1_{p,q} = \mr{Tor}_{p+q}^\bZ(F_q(A)/F_{q-1}(A),B) \Rightarrow \mr{Tor}^\bZ_{p+q}(A,B)\]
Similarly, letting $0 \subset F_0(B) \subset F_1(B) \subset \cdots \subset F_r(B) = B$ be a finite filtration whose filtration quotients have trivial $\Gamma$-action, and  $Q_* \to F_q(A)/F_{q-1}(A)$ be a resolution by free $\bZ$-modules, gives a spectral sequence of $\bZ[\Gamma]$-modules
\[{}^{II} E^1_{s,t} =  \mr{Tor}^\bZ_{s+t}(F_q(A)/F_{q-1}(A), F_t(B)/F_{t-1}(B)) \Rightarrow \mr{Tor}^\bZ_{s+t}(F_q(A)/F_{q-1}(A),B).\]
Now the groups ${}^{II} E^1_{s,t}$ are finitely-generated and have trivial $\Gamma$-action, so by property \eqref{enum:equiv-serre-1} each ${}^I E^1_{p,q}$ is finitely-generated and has a nilpotent $\Gamma$-action, and by property \eqref{enum:equiv-serre-1} again each $\mr{Tor}^\bZ_{i}(A,B)$ is finitely-generated and has a nilpotent $\Gamma$-action, as required. 
	
For property \eqref{enum:equiv-serre-3}, we instead prove property (\ref{enum:equiv-serre-3p}') and invoke Lemma \ref{lem:iii-prime}. We prove this by induction over the length $m$ of the filtration $0 \subset F_0(A) \subset \cdots F_m(A) = A$. In the initial case $m=0$, the action of $\Gamma$ on $A$ is trivial and the result follows. For the induction step, we apply the Serre spectral sequence to the fibration sequence
\[K(F_{m-1}(A),1) \lra K(A,1) \lra K(A/F_{m-1}(A),1).\]
The $E^2$-page is given by $E^2_{p,q} = H_p(K(A/F_{m-1}(A),1);H_q(K(F_{m-1}(A),1));\bZ)$, which are naturally an extension of the $\Gamma$-modules \[\mr{Tor}_1^\bk(H_{p-1}(K(A/F_{m-1}(A),1);\bZ),H_q(K(F_{m-1}(A),1);\bk))\,\, \text{and}\]
\[H_p(K(A/F_{m-1}(A),1) \otimes_\bk H_q(K(F_{m-1}(A),1);\bZ).\] These are in $\cC$ by property \eqref{enum:equiv-serre-2}, and hence so is the abutment by property \eqref{enum:equiv-serre-1}.
\end{proof}

Thus the hypotheses of Lemmas \ref{lem:serre-class-ss}, \ref{lem:filtered-algebraic-homotopy-to-homology}, and  \ref{lem:filtered-algebraic-homology-to-homotopy} hold when we take $\bk = \bZ$ and $\cC$ to be the nilpotent $\Gamma$-modules which are finitely generated as abelian groups. Furthermore, the hypothesis of Lemma \ref{lem:homology-to-cohomology} holds by the following:

\begin{lemma}\label{lem:NilpIsDualClosed}
Let $\cC$ be the class of $\bZ[\Gamma]$-modules which are finitely-generated as abelian groups and nilpotent as $\Gamma$-modules. Then property \eqref{enum:equiv-serre-4} holds.
\end{lemma}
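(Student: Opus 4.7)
The plan is to use the universal coefficient six-term exact sequence of $\Gamma$-modules
\[0 \to \mr{Hom}_\bZ(C,\bZ) \to \mr{Hom}_\bZ(B,\bZ) \to \mr{Hom}_\bZ(A,\bZ) \to \mr{Ext}^1_\bZ(C,\bZ) \to \mr{Ext}^1_\bZ(B,\bZ) \to \mr{Ext}^1_\bZ(A,\bZ) \to 0\]
attached to any short exact sequence $0 \to A \to B \to C \to 0$ of $\Gamma$-modules. This is $\Gamma$-equivariant because $\bZ$ has trivial $\Gamma$-action, and property \eqref{enum:equiv-serre-1} (broken into three short exact sequences) reduces membership in $\cC$ for the middle terms to membership for the outer ones.

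For the forward direction, suppose $A \in \cC$ with nilpotency filtration $\{F_iA\}_{i=0}^m$. I would induct on $m$. The base case $m=0$ is immediate: $\Gamma$ acts trivially on $A$, hence by functoriality on $\mr{Hom}_\bZ(A,\bZ)$ and $\mr{Ext}^1_\bZ(A,\bZ)$, and both are finitely generated because $A$ is. For the inductive step, apply the six-term sequence to $0 \to F_{m-1}A \to A \to A/F_{m-1}A \to 0$: the outer terms lie in $\cC$ by induction for $F_{m-1}A$ and by the base case for the trivial quotient $A/F_{m-1}A$, and property \eqref{enum:equiv-serre-1} finishes the step.

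For the backward direction, assume $\mr{Hom}_\bZ(A,\bZ), \mr{Ext}^1_\bZ(A,\bZ) \in \cC$. I would first show $A$ is finitely generated as an abelian group: applying the six-term sequence to the torsion short exact sequence $0 \to T(A) \to A \to A/T(A) \to 0$ and using $\mr{Hom}_\bZ(T(A),\bZ) = 0$ gives $\mr{Hom}_\bZ(A/T(A),\bZ) \cong \mr{Hom}_\bZ(A,\bZ)$ and realizes $\mr{Ext}^1_\bZ(T(A),\bZ)$ as a quotient of $\mr{Ext}^1_\bZ(A,\bZ)$, so both are finitely generated. A primary decomposition argument rules out infinite torsion in $T(A)$ (as $\mr{Ext}^1_\bZ$ of a Pr\"ufer group $\bZ/p^\infty$ or of an infinite sum of copies of $\bZ/p$ fails to be finitely generated), and the injection $A/T(A) \hookrightarrow \mr{Hom}_\bZ(\mr{Hom}_\bZ(A/T(A),\bZ),\bZ)$ into a finitely generated free abelian group rules out an infinite-rank torsion-free part. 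Granting that $A$ is finitely generated, there are natural $\Gamma$-equivariant isomorphisms $T(A) \cong \mr{Ext}^1_\bZ(A,\bZ)$ and $A/T(A) \cong \mr{Hom}_\bZ(\mr{Hom}_\bZ(A,\bZ),\bZ)$; the latter lies in $\cC$ by the forward direction applied to $\mr{Hom}_\bZ(A,\bZ)$, so property \eqref{enum:equiv-serre-1} applied to the torsion short exact sequence concludes $A \in \cC$.

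The main obstacle will be the finite-generation step in the backward direction, which requires a careful case analysis on $T(A)$ and $A/T(A)$ using the structure theory of abelian groups; by contrast, the nilpotency content of the lemma flows quite mechanically from the six-term sequence and property \eqref{enum:equiv-serre-1}.
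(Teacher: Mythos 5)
Your forward direction is correct and takes a genuinely different route from the paper's: you induct directly on the length of the nilpotency filtration, applying the six-term universal coefficient sequence to $0 \to F_{m-1}A \to A \to A/F_{m-1}A \to 0$ together with the closure property (i), and never separate torsion from torsion-free. The paper instead reduces via $0 \to \mr{tors}(A) \to A \to A/\mr{tors}(A) \to 0$ to the torsion case (where $\mr{Ext}^1_\bZ(-,\bZ)$ is exact) and the free case (where $\mr{Hom}_\bZ(-,\bZ)$ is used), running a filtration argument in each case separately. Your single induction is more uniform.

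Your backward direction has a gap at the finite-generation step. The map $A/T(A) \to \mr{Hom}_\bZ(\mr{Hom}_\bZ(A/T(A),\bZ),\bZ)$ is \emph{not} injective for a general torsion-free abelian group --- for $A/T(A) \cong \bQ$ the double dual is $0$ --- so the claimed ``injection into a finitely generated free abelian group'' does not exist and cannot rule out infinite torsion-free rank. You would need a further argument here: for instance, passing to the quotient by the radical $R = \bigcap_{f\colon A/T(A)\to \bZ}\ker f$, which \emph{does} inject into the double dual and hence is finitely generated free and splits off, and then showing separately that $R$ itself vanishes because a nonzero torsion-free group with $\mr{Hom}_\bZ(R,\bZ)=0$ has $\mr{Ext}^1_\bZ(R,\bZ)\cong \mr{Hom}_\bZ(R,\bQ/\bZ)/\mr{Hom}_\bZ(R,\bQ)$ too large to be finitely generated. (The paper's own backward direction does not address this either: it invokes the identifications $\mr{tors}(A)\cong \mr{Ext}^1_\bZ(\mr{Ext}^1_\bZ(A,\bZ),\bZ)$ and $A/\mr{tors}(A)\cong \mr{Hom}_\bZ(\mr{Hom}_\bZ(A,\bZ),\bZ)$, which already presuppose $A$ finitely generated.) A separate, smaller slip: the isomorphism you assert, $T(A)\cong \mr{Ext}^1_\bZ(A,\bZ)$, cannot be natural, hence cannot be $\Gamma$-equivariant by naturality, since $T$ is covariant while $\mr{Ext}^1_\bZ(-,\bZ)$ is contravariant; the naturally $\Gamma$-equivariant statement is $T(A)\cong \mr{Ext}^1_\bZ(\mr{Ext}^1_\bZ(A,\bZ),\bZ)$, as in the paper, where the two contravariant applications cancel.
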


\begin{proof}We need to verify that $A \in \cC$ if and only if $\mr{Hom}_\bZ(A,\bZ) \in \cC$ and $\mr{Ext}^1_\bZ(A,\bZ) \in \cC$. Suppose first that $A \in \cC$. There is a natural short exact sequence
\[0 \lra \mr{tors}(A) \lra A \lra A/\mr{tors}(A) \lra 0,\]
of $\bZ[\Gamma]$-modules from which one obtains natural isomorphisms
\[\mr{Hom}_\bZ(A,\bZ) \overset{\cong}\lra \mr{Hom}_\bZ(A/\mr{tors}(A),\bZ),\qquad \mr{Ext}^1_\bZ(A,\bZ) \overset{\cong}\lra \mr{Ext}^1_\bZ(\mr{tors}(A),\bZ)\]
Thus it suffices to prove the result separately for the $\bZ[\Gamma]$-modules which are torsion or are free as abelian groups.

First suppose $A$ is torsion. We will use the fact that the functor $\mr{Ext}^1_\bZ(-,\bZ)$ is exact on torsion abelian groups. A finite filtration $0 \subset F_0(A) \subset F_1(A) \subset \cdots \subset F_p(A) = A$ by $\bZ[\Gamma]$-modules such that each $F_i(A)/F_{i-1}(A)$ has a trivial $\Gamma$-action gives a collection of short exact sequences
\[0 \to \mr{Ext}^1_\bZ(F_i(A)/F_{i-1}(A),\bZ) \to \mr{Ext}^1_\bZ(F_i(A),\bZ) \to \mr{Ext}^1_\bZ(F_{i-1}(A),\bZ) \to 0,\]
where the left term has a trivial $\Gamma$-action. By applying property \eqref{enum:equiv-serre-1} we can then inductively prove that $\mr{Ext}^1_\bZ(F_i(A),\bZ)$ is a nilpotent $\Gamma$-module, so $\mr{Ext}^1_\bZ(A,\bZ) = \mr{Ext}^1_\bZ(F_p(A),\bZ)$ is too.

Next suppose $A$ is free and finitely-generated. We will use the fact that $\mr{Hom}_\bZ(-,\bZ)$ is exact on free and finitely-generated abelian groups. Applying $\mr{Hom}_\bZ(-,\bZ)$ to the filtration as before, we get short exact sequences
\[0 \to \mr{Hom}_\bZ(F_i(A)/F_{i-1}(A),\bZ) \to \mr{Hom}_\bZ(F_i(A),\bZ) \to \mr{Hom}_\bZ(F_{i-1}(A),\bZ) \to 0\]
where the left term has a trivial $\Gamma$-action. This again shows inductively that $\mr{Hom}(F_i(A),\bZ)$ is a nilpotent $\Gamma$-module, so $\mr{Hom}_\bZ(A,\bZ) = \mr{Hom}_\bZ(F_p(A),\bZ)$ is too.

For the reverse direction, supposing $\mr{Hom}_\bZ(A,\bZ) \in \cC$ and $\mr{Ext}^1_\bZ(A,\bZ) \in \cC$, it suffices to prove that $\mr{tors}(A)$ and $A/\mr{tors}(A)$ lie in $\cC$. Now use the natural isomorphisms 
\[\mr{tors}(A) \cong \mr{Ext}^1_\bZ(\mr{Ext}^1_\bZ(A,\bZ),\bZ), \qquad A/\mr{tors}(A) \cong \mr{Hom}_\bZ(\mr{Hom}_\bZ(A,\bZ),\bZ),\]
and apply the same arguments as above.\end{proof}

\section{Spaces of self-embeddings} 

Diffeomorphisms of the manifold $W_g := \#_g S^n \times S^n$ relative to an open neighbourhood of a disc $D^{2n}$ are the same as diffeomorphisms of $W_{g,1} = \#_g S^n \times S^n \setminus \mr{int}(D^{2n})$ relative to an open neighbourhood of its boundary $\partial W_{g,1} \cong S^{2n-1}$. We shall study diffeomorphisms of $W_{g,1}$ by thinking of them as self-embeddings.

\subsection{The Weiss fibration sequence} A diffeomorphism of $W_{g,1}$ fixing a neighbourhood of the boundary pointwise is the same as a self-embedding $W_{g,1} \hookrightarrow W_{g,1}$ fixing a neighbourhood of the boundary pointwise. We shall relax this boundary condition, and to do so fix an embedded disc $D^{2n-1} \subset \partial W_{g,1}$. 

\begin{definition}Let $\smash{\mr{Emb}^{\cong}_{\half \partial}}(W_{g,1})$ be the group-like topological monoid of embeddings $W_{g,1} \hookrightarrow W_{g,1}$ which fix a neighbourhood of $D^{2n-1} \subset \partial W_{g,1}$ pointwise and are isotopic through such embeddings to a diffeomorphism of $W_{g,1}$ fixing a neighbourhood of $\partial W_{g,1}$ pointwise. These are topologised in the $C^\infty$-topology.\end{definition}

The \emph{Weiss fibration sequence}, implicit in \cite{weissdalian}, takes the form
\begin{equation}\label{eqn:weiss}
B \mr{Diff}_\partial(D^{2n}) \lra B\mr{Diff}_\partial(W_{g,1}) \lra B\mr{Emb}^{\cong}_{\half \partial}(W_{g,1}).
\end{equation}
In \cite[Theorem 4.17]{kupersdisk} it was proven that this fibration sequence may be delooped once. In other words, \eqref{eqn:weiss} is a principal $B \mr{Diff}_\partial(D^{2n})$-bundle.  Its base is then $B^2\mr{Diff}_\partial(D^{2n})$, obtained by delooping $B\mr{Diff}_\partial(D^{2n})$ using its $E_{2n}$-algebra structure given by boundary connect-sum (see Remark \ref{rem:e2n-algebra} below). 

We will prove algebraicity properties for $B\mr{Diff}_\partial(W_{g,1})$ by first proving them for $B\mr{Emb}^{\cong}_{\half \partial}(W_{g,1})$ and leveraging \eqref{eqn:weiss}. In this section we set up the background necessary to implement this strategy, which will then be done in the remaining sections of this paper.

\begin{remark}\label{rem:e2n-algebra}
This $E_{2n}$-structure is well known: one construction is given in \cite[Lemma 6.1]{Krannich}. We will have use for a version of this $E_{2n}$-algebra structure for moduli spaces of manifolds with tangential structure, so we outline a construction. 

Our preferred model for $B\mr{Diff}_\partial(D^{2n})$ is the moduli space $\cM_\partial(D^{2n})$ of submanifolds of $D^{2n} \times \bR^{\infty}$ which (i) coincide with $D^{2n} \times \{0\}$ on an open neighbourhood of $\partial D^{2n} \times \bR^\infty$ and (ii) are diffeomorphic to $D^{2n}$ rel boundary (see \cite[Section 2]{grwmonoids} for details on the topology). Our preferred model for the $E_{2n}$-operad is the little $2n$-discs operad $\cD_{2n}$, whose space $\cD_n(r)$ of $r$-ary operations consists of ordered $r$-tuples $e = (e_1,\ldots,e_r)$ of embeddings $D^{2n} \hookrightarrow D^{2n}$ with disjoint interior that are each a composition of translation and dilation.

The structure maps of the $\cD_{2n}$-algebra structure on $\cM_\partial(D^{2n})$ are then given as follows: given $(e;X_1,\ldots,X_r) \in \cD_{2n}(r) \times \cM_{\partial}(D^{2n})^r$ we define a new element $X$ of $\cM_\partial(D^{2n})$ by ``inserting $X_i$ on the image of the $i$th disc.'' That is, the submanifold determined by
	\begin{align*}X \cap (e_i(D^{2n}) \times \bR^\infty) &= (e_i \times \mr{id}_{\bR^\infty})(X_i) \qquad \text{for $i =1,\ldots,r$,} \\
	X \cap (D^{2n} \setminus \cup_{i=1}^r e_i(D^{2n}) \times \bR^\infty) & = (D^{2n} \times \{0\}) \cap (D^{2n} \setminus \cup_{i=1}^r e_i(D^{2n}) \times \bR^\infty).\end{align*}
\end{remark}

\subsection{The group of path components}\label{sec:emb-path-components} We start with a computation of the group of path components of $\mr{Emb}^{\cong}_{\half \partial}(W_{g,1})$. This will take the form of a short exact sequence of groups as in \eqref{eqn:setup}. Recall from the introduction the homomorphism
\[\alpha_g \colon \mr{Diff}_\partial(W_{g,1}) \lra G_g \coloneqq \begin{cases} \mr{Sp}_{2g}(\bZ) & \text{if $n$ is odd,}\\
\mr{O}_{g,g}(\bZ) & \text{if $n$ is even,}\end{cases}\]
recording the action of a diffeomorphism on the middle-dimensional homology group $H_n(W_{g,1};\bZ)$. This lands in the symplectic or orthogonal group because the middle homology is equipped with a nondegenerate $(-1)^n$-symmetric intersection form. The intersection form is also equipped with a quadratic refinement, given by counting self-intersections of embedded spheres representing $n$-dimensional homology classes: see e.g.~\cite[Theorem 5.2]{wallscm} or \cite[Section 5]{grwstab1} for details of its construction. This quadratic refinement contains no further information unless $n$ is odd but not $1,3,7$, in which case in terms of the standard hyperbolic basis $e_1, f_1, e_2, f_2, \ldots, e_g, f_g$ of $H_n(W_{g,1};\bZ) \cong \bZ^{2g}$ it is given by 
\begin{equation}\label{eq:StdQuadForm}
\sum_{i=1}^g X_i e_i + Y_i f_i \longmapsto \sum_{i=1}^g X_i Y_i \in \bZ/2.
\end{equation}
 It follows from the work of Kreck \cite{kreckisotopy}, Theorem \ref{thm:kreck} below, that for $2n \geq 6$, the image $G'_g$ of $\alpha_g$ is given by
\[G'_g = \begin{cases} \mr{Sp}_{2g}(\bZ) & \text{if $n$ is 1, 3, or 7,} \\
\mr{Sp}_{2g}^q(\bZ) & \text{if $n$ is odd but not 1, 3, or 7,} \\
\mr{O}_{g,g}(\bZ) & \text{if $n$ is even,}\end{cases}\]
where $\mr{Sp}_{2g}^q(\bZ) \leq \mr{Sp}_{2g}(\bZ)$ is the proper subgroup of symplectic matrices which preserve the quadratic form \eqref{eq:StdQuadForm}. (This was presumably known earlier and, as the referee pointed out, may be deduced by combining \cite[Lemma 10]{wall2} with \cite[Corollaire  1]{cerf} and the fact that $W_g$'s have trivial inertia groups for $n \geq 3$ \cite[Theorem]{WallAction} \cite[Corollary 3.2]{Kosinski}.) We will write $H_n \coloneqq H_n(W_{g,1};\bZ)$. 

The kernel of $\alpha_g$ was also determined by Kreck \cite[Theorem 2]{kreckisotopy}:
\begin{theorem}[Kreck] \label{thm:kreck} For $2n \geq 6$, the mapping class group $\Gamma_g \coloneqq \pi_0(\mr{Diff}_\partial(W_{g,1}))$ is described by the pair of extensions
	\[1 \lra I_{g} \lra \Gamma_g \overset{\alpha_g}\lra G'_g \lra 1,\]
	\[1 \lra \Theta_{2n+1} \lra I_{g} \overset{\chi}\lra \mr{Hom}(H_n,S\pi_n(SO(n))) \lra 1.\]
\end{theorem}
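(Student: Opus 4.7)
The statement is Kreck's classification of the mapping class group of $W_{g,1}$, so the plan is really to reproduce Kreck's argument from \cite{kreckisotopy}. I would organize it in three stages: identifying the image of $\alpha_g$ (yielding $G'_g$ and hence the first exact sequence), constructing and analysing the homomorphism $\chi$, and identifying its kernel with $\Theta_{2n+1}$.

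For the first extension, $I_g$ is defined as the kernel of $\alpha_g$, so only the identification of the image is substantive. Any diffeomorphism must preserve the intersection form, so the image lies in $G_g$. When $n$ is odd and distinct from $1,3,7$, the tangent bundle of $S^n$ is nontrivial and one gets a genuine $\bZ/2$-valued quadratic refinement by counting self-intersections of embedded representatives, giving the further constraint landing in $\Sp^q_{2g}(\bZ)$. Conversely, realisability of any form-preserving (and $q$-preserving) automorphism is proved by a handle-theoretic construction: one writes $W_{g,1}$ with a single $0$-handle and $2g$ $n$-handles, realises the desired automorphism on the co-core spheres, and extends using Wall's classification of $(n{-}1)$-connected $2n$-manifolds; this is where the hypothesis $2n \geq 6$ is needed so the Whitney trick applies.

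For the second extension, the homomorphism $\chi$ is constructed as follows. A class $x \in H_n$ is represented by an embedded $n$-sphere $S_x \subset W_{g,1}$ with trivial normal bundle. For $\phi \in I_g$, the embedded sphere $\phi(S_x)$ is homologous to $S_x$, hence regularly homotopic and, by general position in the stable range, isotopic to $S_x$ through embeddings. The isotopy is not canonical, but the \emph{difference of stable framings} of the normal bundle is a well-defined element of $S\pi_n(SO(n))$; I would check that this assignment is a homomorphism $\chi(\phi) \in \mr{Hom}(H_n, S\pi_n(SO(n)))$ independent of representatives, which comes down to standard obstruction theory for sections of $SO$-bundles over $S^n$. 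Surjectivity follows by realising any assignment as a Dehn-twist-like ``variation along $S_x$'' diffeomorphism built from a given element of $\pi_n(SO(n))$.

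The kernel of $\chi$ should then be identified with $\Theta_{2n+1}$, and this is the main obstacle: it requires showing that if $\phi$ fixes every middle-dimensional sphere up to framed isotopy, then $\phi$ can be modified by an isotopy so as to be supported in a collar of a disc, where it becomes a diffeomorphism of $D^{2n}$ fixing $\partial D^{2n}$ pointwise, i.e.\ by the Cerf--Gromoll identification $\pi_0(\mr{Diff}_\partial(D^{2n})) \cong \Theta_{2n+1}$ produces a homotopy $(2n{+}1)$-sphere. The tool for this is the pseudo-isotopy/$h$-cobordism theorem applied to a handle decomposition: one uses the fact that $\chi(\phi) = 0$ to cancel obstructions on the $n$-handles one by one, reducing to a diffeomorphism of the complement of a tubular neighbourhood of the $n$-skeleton, which is a disc up to homotopy spheres. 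Injectivity of $\Theta_{2n+1} \to I_g$ reduces to the observation that distinct homotopy spheres glued onto $W_{g,1}$ give non-diffeomorphic relative diffeomorphisms, using $W_g$'s trivial inertia group \cite{WallAction,Kosinski}. Since the result is Kreck's, I would simply cite \cite[Theorem~2]{kreckisotopy} in the actual write-up, noting only the small supplement needed to pin down $G'_g$ precisely via \cite[Lemma~10]{wall2} and \cite[Corollaire~1]{cerf}.
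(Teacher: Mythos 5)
Your sketch reproduces Kreck's argument, which is exactly what the paper does — the paper simply cites \cite[Theorem 2]{kreckisotopy} with an explanatory paraphrase, and your three-stage plan (image of $\alpha_g$, construction of $\chi$, identification of $\ker\chi$ with $\Theta_{2n+1}$) tracks that paraphrase closely, with your ``isotope $\phi(S_x)$ back and compare framings'' being equivalent to the paper's ``fix the core pointwise and take $Df|_{\nu_i}$''. Two small slips worth tidying: not every $x \in H_n$ is represented by a sphere with trivial normal bundle (only the classes with $q(x)=0$, which does include the standard core basis $a_1,\dots,a_{2g}$, so define $\chi$ on that basis); and the framing difference should be called the \emph{once}-stabilised framing, not ``stable'' — the codomain $S\pi_n(SO(n)) = \mathrm{im}(\pi_n(SO(n)) \to \pi_n(SO(n+1)))$ sits strictly between $\pi_n(SO(n))$ and the stable group $\pi_n(SO)$, and this distinction matters (cf.\ Table \ref{tab.spino} and Lemma \ref{lem:levine}).
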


Let us explain the groups and homomorphisms in Theorem \ref{thm:kreck} (the reference for the following discussion is \cite[\S 2]{kreckisotopy}, but see \cite{KrannichMCG} for a similar explanation as well as further information about these extensions). Recall that $W_{g,1}$ is given by the connected sum $\#_g S^n \times S^n \setminus \mr{int}(D^{2n})$, and hence has a standard handle decomposition with a single $0$-handle and $2g$ $n$-handles. Let us introduce terminology for the cores of the $n$-handles. Writing $S^n = D^n/S^{n-1}$, we may assume that the connected sums are performed along discs in $S^n \times S^n$ avoiding the subsets $S^n \times \{0\}$ and $\{0\} \times S^n$. Similarly, we may assume that the disc which is removed from $\#_g S^n \times S^n$ is disjoint from these subsets. We call the $2g$ subsets of $W_{g,1}$ obtained from the subsets $S^n \times \{0\}$ and $\{0\} \times S^n$ the \emph{standard cores}. These come in pairs, which intersect transversally in a single point, and we pick $g$ disjoint embedded arcs connecting these intersection points to $\partial W_{g,1}$. Up to isotopy, we may assume without loss generality that all diffeomorphisms are the identity on a neighbourhood of each of these arcs.

As a consequence of a result of Haefliger \cite{Haefliger}, each element $f \in I_g$ is represented by a diffeomorphism which fixes pointwise the $2g$ standard cores. Let us choose orientation-preserving trivialisations $\tau_i \colon \nu_i \oplus \bR \cong C_i \times \bR^{n+1}$, $1 \leq i \leq 2g$, of the once-stabilised normal bundles of each of these cores. The derivative of $f$ gives $2g$ elements 
\[[\tau_i \circ (Df|_{\nu_i} \oplus \mr{id}) \circ \tau_i^{-1}] \in \pi_n(SO(n+1)),\]
each of which is in the image of $\pi_n(SO(n))$ under stabilisation. Since the cores represent a basis $a_1,\ldots,a_{2g}$ of $H_n$, we can record this data as an element of $\mr{Hom}(H_n,S\pi_n(SO(n)))$, with $S\pi_n(SO(n)) \coloneqq \mr{im}(\pi_n(SO(n)) \to \pi_n(SO(n+1)))$ as given in Table \ref{tab.spino}. 
Kreck shows that this is independent of the choices of trivialisations $\tau_i$ and gives a homomorphism
\[\chi \colon I_{g} \lra \mr{Hom}(H_n,S\pi_n(SO(n))).\]

\begin{table}[h]
	\centering
	\caption{(\cite[p.~644]{kreckisotopy}) The abelian groups $S\pi_n(SO(n))$ for $n \geq 2$, with the exception that $S\pi_6(SO(6)) = 0$. }
	\label{tab.spino}
	\begin{tabular}{lcccccccc}
		\toprule
		$n \pmod 8$ & 0          & 1          & 2 & 3     & 4 & 5 & 6 & 7     \\ \midrule
		$S\pi_n(SO(n))$  & $(\bZ/2)^2$ & $\bZ/2$ & $\bZ/2$ & $\bZ$ & $\bZ/2$ & 0 & $\bZ/2$ & $\bZ$ \\ \bottomrule
	\end{tabular}
\end{table}

An element in the kernel of $\chi$ can be represented by a diffeomorphism which is the identity on an open neighbourhood of each of the standard cores in addition to open neighbourhoods of each of the aforementioned arcs. Thus it is supported in a disc and hence represented by an element of $\pi_0(\mr{Diff}_\partial(D^{2n}))$, which is identified with the group $\Theta_{2n+1}$ of homotopy $(2n+1)$-spheres. Finally, Kreck proved that the homomorphism $\Theta_{2n+1} \to I_g$ is injective.

We use this to study the group $\Lambda_g \coloneqq \pi_0(\mr{Emb}^{\cong}_{\half \partial}(W_{g,1}))$. By definition, the inclusion $\mr{Diff}_\partial(W_{g,1}) \to \smash{\mr{Emb}^{\cong}_{\half \partial}}(W_{g,1})$ is surjective on $\pi_0$ and there is a commutative diagram
\[\begin{tikzcd}\Gamma_g \arrow[two heads]{rr} \arrow[two heads]{rd}[swap]{\alpha_g} & & \Lambda_g \arrow{ld}{\beta_g} \\[-2pt]
& G'_g. & \end{tikzcd}\]
We conclude that the homomorphism $\Lambda_g \to G'_g$ is surjective. By \eqref{eqn:weiss}, the kernel of the homomorphism $\Gamma_g \to \Lambda_g$ is $\Theta_{2n+1}$. Writing $J_g \coloneqq \mr{Hom}(H_n,S\pi_n(SO(n)))$, we conclude that there is a short exact sequence of groups as in \eqref{eqn:setup}
\begin{equation}\label{eqn:lambda-setup} 1 \lra J_g \lra \Lambda_g \lra G'_g \lra 1.\end{equation}

\subsection{Recollection of embedding calculus} \label{sec:embedding-calculus} Embedding calculus is a method to study spaces of embeddings via a tower of approximations, whose layers can be described in homotopy-theoretic terms. Our exposition mostly follows \cite{weisspedrosheaves}, but also refers to the older paper \cite{weissembeddings,weissembeddingserratum}. Though some of the theorems in these papers are stated for manifolds without boundary, they also hold with boundary per \cite[Section 9]{weisspedrosheaves} and \cite[Section 10]{weissembeddings}. Other models for the embedding calculus Taylor tower can be found in \cite{goodwilliekleinweiss,turchincontext,boavidaweisscats}.

\subsubsection{The embedding calculus Taylor tower}\label{sec:EmbTower}

Fix two $d$-dimensional manifolds $M$ and $N$ with the same boundary $\partial M = K = \partial N$. Then the space $\mr{Emb}_K(M,N)$ is the value on $N$ of a continuous functor $\mr{Emb}_K(-,N) \colon \cat{Mfd}_{d,K}^\mr{op} \to \cat{Top}$ (with the weak $C^\infty$-topology, cf.~\cite[\S 1.2]{weisspedrosheaves}). Here $\cat{Mfd}_{d,K}$ is the category enriched in topological spaces with objects given by $d$-dimensional smooth manifolds having boundary identified with $K$, and morphisms given by spaces of embeddings rel boundary, and $\cat{Top}$ is the enriched category of spaces. The category $\cat{Mfd}_{d,K}$ admits a collection of Grothendieck topologies $\cJ_k$ for $k \geq 1$; in $\cJ_k$ a collection $\{U_i\}$ of open subsets of $M$ is a cover if every subset of the interior of $M$ of cardinality $\leq k$ is contained in some $U_i$. 

The $k$th Taylor approximation $T_k(\mr{Emb}_K(-,N))$ is the homotopy sheafification of the presheaf $\mr{Emb}_K(-,N)$ with respect to $\cJ_k$. This means it is up to homotopy the best approximation to $\mr{Emb}_K(M,N)$ built out of the restrictions of embeddings to $\leq k$ discs in $M$, and hence is explicitly given by a right homotopy Kan extension, cf.\ \cite[Definition 4.2]{weisspedrosheaves}: $T_k(\mr{Emb}_K(M,N))$ is the derived mapping space, with respect to the objectwise weak equivalences,
\begin{equation}\label{eqn:derived-map}\bR \mr{map}_{\cat{PSh}(\cat{Disc}_{\leq k,K})}(\mr{Emb}_K(-, M),\mr{Emb}_K(-,N))\end{equation}
between the objects $\mr{Emb}_K(-, M)$ and $\mr{Emb}_K(-,N)$ of the topological category of space-valued presheaves on the full subcategory $\cat{Disc}_{\leq k,K} \subset \cat{Mfd}_{d,K}$ on $d$-dimensional manifolds diffeomorphic rel boundary to a disjoint union of $\leq k$ discs and a collar on $K$. Derived mapping spaces are only well-defined up to homotopy; if we need a point-set model we can pick the Dwyer--Kan mapping spaces \cite[3.1]{DwyerKan}, or pick cofibrant-fibrant replacements in the projective model structure of \cite[Section 3.0.1]{weisspedrosheaves} and take the strict mapping space. By \cite[Corollary 4.7]{DwyerKan} these are equivalent.

Since every $\cJ_k$-cover is a $\cJ_{k-1}$-cover, there is a Taylor tower \cite[Section 3.1]{weisspedrosheaves}
\[\begin{tikzcd} & \vdots \dar\\[-2pt]
\mr{Emb}_K(M,N) \arrow{ru} \rar \arrow{rd} \arrow{rdd} & T_k(\mr{Emb}_K(M,N)) \dar \\[-2pt]
& T_{k-1}(\mr{Emb}_K(M,N)) \dar \\[-2pt]
& \vdots \end{tikzcd}\]
starting at $T_1(\mr{Emb}_K(M,N))$. 

Using Goodwillie's multiple disjunction results \cite{goodwilliethesis}, Goodwillie--Weiss \cite{goodwillieweiss} and Goodwillie--Klein \cite{goodwillieklein} proved that if the handle dimension $h$ of $M$ rel $K$ satisfies $h \leq d-3$, then the map 
\[\mr{Emb}_K(M,N) \lra \underset{k \to \infty}{\mr{holim}}\, T_{k}(\mr{Emb}_K(M,N))\]
is a weak equivalence. More precisely, $\mr{Emb}_K(M,N) \to T_{k} (\mr{Emb}_K(M,N))$ is $(-(d-1)+k(d-2-h))$-connected by \cite[Corollary 2.5]{goodwillieweiss}. Strictly speaking their results apply to an older model \cite[page 84]{weissembeddings} of the embedding calculus tower, but by \cite[Proposition 8.3]{weisspedrosheaves} that model is equivalent to the one described here.

In the case $M=N$, the space $\mr{Emb}_K(M) \coloneqq \mr{Emb}_K(M)$ has a composition law making it into a topological monoid and in particular an $H$-space with strict unit given by the identity map. The functoriality of the above construction makes the Taylor tower into a tower of $H$-spaces with units up to homotopy. More precisely, up to homotopy there is a well-defined composition of derived mapping spaces of objects in $\cat{PSh}(\cat{Disc}_{\leq k,K})$ as in \eqref{eqn:derived-map}; taking $M=N$ gives the multiplication of the $H$-space structure with unit up to homotopy. Furthermore, restriction gives a functor $\cat{PSh}(\cat{Disc}_{\leq k,K}) \to \cat{PSh}(\cat{Disc}_{\leq k-1,K})$; this induces a map $T_k(\mr{Emb}_K(M)) \to T_{k-1}(\mr{Emb}_K(M))$ of $H$-spaces with units up to homotopy. In fact, if one is willing to pick models one can use the Dwyer--Kan mapping spaces of \cite{DwyerKan} to make the tower one of unital topological monoids.

\subsubsection{The layers} \label{sec:layers-bundles}

Fixing an embedding $\iota \colon M \to N$, we obtain a basepoint in each approximation $T_{k}(\mr{Emb}_K(M,N))$, which we call $T_k(\iota)$. We now describe more explicitly the layers
\[L_k(\mr{Emb}_K(M,N)_\iota) \coloneqq \begin{cases} T_1(\mr{Emb}_K(M,N)) & \text{if $k=1$,} \\	\underset{{T_{k-1}(\iota)}}{\mr{hofib}}\,\left[T_{k}(\mr{Emb}_K(M,N)) \to T_{k-1}(\mr{Emb}_K(M,N))\right] & \text{if $k \geq 2$.} \end{cases}\]

The first layer $T_{1}(\mr{Emb}_K(M,N))$ is given by formal embeddings: it is the space $\mr{Bun}_K(TM,TN)$ of bundle maps $TM \to TN$ that are the identity near $K$. This follows from \cite[Proposition 7.6]{weisspedrosheaves}. By definition $\mr{Bun}_K(TM,TN)$ is independent of the basepoint $\iota$, so we shall write $L_1(\mr{Emb}_K(M,N))$ instead of $L_1(\mr{Emb}_K(M,N)_\iota)$.

For $k \geq 2$, the $k$th layer is weakly equivalent to the relative section space of a particular locally trivial fibre bundle built from configuration spaces, which we will now describe. Let $\ul{k} \coloneqq \{1,\ldots,k\}$ and consider the ordered configuration space $\mr{Emb}(\ul{k},N)$ of $k$ points in $N$. For each $I \subset J \subset \ul{k}$ there is a forgetful map $\mr{Emb}(J,N) \to \mr{Emb}(I,N)$. We can combine these into a cubical diagram
\[\{0 < 1\}^{k} \ni I \longmapsto \mr{Emb}(I,N) \in \cat{Top}.\]

We will consider a space of sections of a fibre bundle whose fibres are homeomorphic to total homotopy fibres of this cubical diagram taken over certain basepoints; this is the homotopy fibre of the map
\begin{equation}\label{eqn:tohofib} \mr{Emb}(\ul{k},N) \lra \mr{holim}_{I \subsetneq \ul{k}}\, \mr{Emb}(I,N)\end{equation}
over a certain basepoint. To see it is natural in $N$, we use an explicit model for the total homotopy fibre of this cubical diagram \cite[Proposition 5.5.8]{munsonvolic}. 

\begin{definition}\label{def:tohofib-model}  
The \emph{total homotopy fibre $\mr{tohofib}_{I \subset x_{\ul{k}}} \mr{Emb}(-,N)$} over an ordered configuration $x_{\ul{k}} = (x_1,\ldots,x_k) \in \mr{Emb}(\ul{k},N)$ consists of collections of continuous maps 
	\[\left\{[0,1]^{\ul{k} \setminus I} \overset{f_I}\lra \mr{Emb}(I,N)\right\}_{I \subset \ul{k}}\]
which satisfy
	\begin{enumerate}[\indent (i)]
		\item for each $I \subset J$, extension by zero gives an inclusion $[0,1]^{\ul{k} \setminus J} \hookrightarrow [0,1]^{\ul{k} \setminus I}$, and the following diagram should commute
		\[\begin{tikzcd} {[0,1]}^{\ul{k} \setminus J} \rar[hook] \dar{f_J} & {[0,1]}^{\ul{k} \setminus I} \dar{f_I} \\
		\mr{Emb}(J,N) \rar & \mr{Emb}(I,N), \end{tikzcd}\]
		\item for each $I$, if $d \in [0,1]^{\ul{k} \setminus I}$ has at least one entry equal to $1$ then $f_I(d)(i) = x_i$ for all $i \in \ul{k} \setminus I$.
	\end{enumerate}
	This is topologised as a subspace of the product $\prod_{I \subset \ul{k}} \mr{map}([0,1]^{\ul{k} \setminus I},\mr{Emb}(I,N))$ of mapping spaces with the compact-open topology. A basepoint is given by the collection of maps $\{f_I\}$ satisfying $f_I(d)(i) = x_i$ for all $d \in [0,1]^I$ and $i \in \ul{k} \setminus I$.
\end{definition}

These are the fibres of a space over $\mr{Emb}(\ul{k},N)$:

\begin{definition}
	Let $\tilde{Z}_k(N)$
	be the subspace of those collections of maps
	\[\left\{[0,1]^{\ul{k} \setminus I} \overset{g_I}{\lra} \mr{Emb}(I,N)\right\}_{I \subset \ul{k}}\]
	in $\prod_{I \subset \ul{k}} \mr{map}([0,1]^{\ul{k} \setminus I},\mr{Emb}(I,N))$ that satisfy conditions (i) and (ii) of Definition \ref{def:tohofib-model} for some configuration $x_{\ul{k}} \in \mr{Emb}(\ul{k},N)$. 
	
	There is a map 
	\[\tilde{t}^\mr{id}_k \colon \tilde{Z}_k(N) \lra \mr{Emb}(\ul{k},N)\]
	given by mapping $(g_I)_{I \subset \ul{k}}$ to the unique $x_{\ul{k}}$ of condition (ii).\end{definition}

By the isotopy extension theorem, the map $\tilde{t}^\mr{id}_k$ is a locally trivial fibre bundle, with the total homotopy fibres of Definition \ref{def:tohofib-model} as its fibres. It has a section $\tilde{s}^\mr{id}_k$ given by sending $x_{\ul{k}}$ to the basepoint in the corresponding fibre, so that we have constructed a bundle with section
\[\begin{tikzcd} \tilde{Z}_k(N) \rar[swap]{\tilde{t}^\mr{id}_k} &
	\mr{Emb}(\ul{k},N). \lar[bend right=25,swap]{\tilde{s}^\mr{id}_k} \end{tikzcd}\]

The permutation action of the symmetric group $\fS_k$ on $\mr{Emb}(\ul{k},N)$ naturally extends to an action on the total space $\tilde{Z}_k(N)$ as follows: a permutation of $\ul{k}$ sends the total homotopy fibre of $x_{\ul{k}}$ to that over $x_{\sigma(\ul{k})}$ by simultaneously acting on the indexing sets and the domains of the embeddings. For brevity we shall use the notation
\[C_k(N) \coloneqq \mr{Emb}(\ul{k},N)/\fS_k \qquad \text{and} \qquad {Z}_k(N) \coloneqq \tilde{Z}_k(N)/\fS_k\]
for the quotients. The maps $\tilde{t}_k$ and $\tilde{s}^\mr{id}_k$ are equivariant for these actions, so we may take the quotient by the $\fS_k$-action to get another locally trivial fibre bundle with the same fibres and a section:
\[\begin{tikzcd}{Z}_k(N) \rar[swap]{t^\mr{id}_k} &
	C_k(N). \lar[bend right=25,swap]{s^\mr{id}_k} \end{tikzcd}\]

Finally, pulling this bundle back along the map $\iota_* \colon C_k(M) \to C_k(N)$ induced by an embedding $\iota \colon M \to N$ we get a locally trivial fibre bundle with section
\[\begin{tikzcd}\iota^* {Z}_k(N) \rar[swap]{t^\iota_k} &
	C_k(M) \lar[bend right=25,swap]{s^\iota_k}.\end{tikzcd}\]
We can think of $C_k(M)$ as a subspace of $M^k/\fS_k$, and demand that sections satisfy properties on open neighbourhoods of subsets of $M^k/\fS_k$. Given a section $s \colon C_k(M) \to \iota^* Z_k(N)$, we let $\mr{supp}(s) \subset C_k(M)$ be the closure of the subset where $s \neq s_k^\iota$. If the inverse image of $C_k(M) \setminus \mr{supp}(s)$ in $M^k$ contains an open neighbourhood of 
\[\Delta_\partial \coloneqq \left\{(x_1,\ldots,x_k) \in M^k \middle| \parbox{3.7cm}{\centering $x_i = x_j$ for some $i \neq j$ \\
	or $x_i \in K$ for some $i$}\right\},\]
we say that ``$s$ is equal to $s_k^\iota$ near the fat diagonal or when at least one particle is near $K$". The following appears in \cite[Theorem 9.2]{weissembeddings} (with the necessary modifications for boundary conditions explained in \cite[Section 10]{weissembeddings}, in particular \cite[Example 10.3]{weissembeddings}). A homotopy equivalent way of phrasing the boundary condition is given in Section \ref{sec:SSLayers}.

\begin{proposition}\label{prop:emb-calc-fibers}
The homotopy fibre 
\[\mr{hofib}_{T_{k-1}(\iota)}\left[T_{k}(\mr{Emb}_K(M,N)) \lra T_{k-1}(\mr{Emb}_K(M,N))\right]\]
is weakly equivalent to the space of sections of the pullback bundle $(\iota)^* {Z}_k(N)  \to C_k(M)$, which equal $s_k^\iota$ near the fat diagonal or when at least one particle is near $K$.\end{proposition}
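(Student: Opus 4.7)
The plan is to follow Weiss's original strategy for computing layers of the embedding calculus tower. The starting point is the description \eqref{eqn:derived-map} of $T_k(\mr{Emb}_K(M,N))$ as a derived mapping space of presheaves on $\cat{Disc}_{\leq k,K}$, with the map $T_k \to T_{k-1}$ induced by restriction along the inclusion $\cat{Disc}_{\leq k-1,K} \hookrightarrow \cat{Disc}_{\leq k,K}$. The homotopy fibre over $T_{k-1}(\iota)$ can therefore be identified with the space of derived natural transformations $\mr{Emb}_K(-,M) \Rightarrow \mr{Emb}_K(-,N)$ on $\cat{Disc}_{\leq k,K}$ whose restriction to $\cat{Disc}_{\leq k-1,K}$ is the restriction of $\iota_*$.

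First I would isolate the new data appearing at stratum $k$. Every object of $\cat{Disc}_{\leq k,K}$ is a disjoint union of at most $k$ discs together with a collar on $K$, so by general Morita/Kan-extension considerations, the layer depends only on the value of the natural transformation on a ``universal'' $\fS_k$-equivariant object consisting of $k$ ordered (arbitrarily small) discs together with the collar. Shrinking the $k$ discs to points identifies the $\fS_k$-equivariant presheaf $\mr{Emb}_K(\coprod_k D^d \sqcup \mr{collar}(K), X)$ with the frame bundle over $\mr{Emb}(\ul{k}, X)$ up to equivalence. Taking derived mapping spaces $\fS_k$-equivariantly over the framing data, and then passing to $\fS_k$-orbits, reduces the problem to the study of lifts covering $\iota_* \colon C_k(M) \to C_k(N)$.

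Next I would interpret the constraint that the natural transformation agrees with the restriction of $\iota_*$ on $\cat{Disc}_{\leq k-1,K}$. The relevant substrata of the $k$-disc object correspond to either \emph{absorbing} a disc into the collar on $K$ or \emph{merging} two discs, and under the identifications above these are exactly the codimension-one faces of the cubical diagram $I \mapsto \mr{Emb}(I,N)$ for $I \subsetneq \ul{k}$. The condition that the transformation equal $\iota_*$ on all proper subsets of the $k$ discs therefore pins down the entire sub-cube, and the remaining datum is precisely a section of the fibre bundle whose fibres are the total homotopy fibres of this cube as modelled by Definition \ref{def:tohofib-model}; that is, a section of $\iota^*Z_k(N) \to C_k(M)$. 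The boundary conditions ``equal to $s_k^\iota$ near the fat diagonal'' and ``when at least one particle is near $K$'' come respectively from the merging-discs and absorbing-into-collar degenerations, on which the natural transformation is forced to equal the given restriction of $\iota_*$.

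The main obstacle is the technical verification that the derived natural transformation problem on $\cat{Disc}_{\leq k,K}$ rel $\cat{Disc}_{\leq k-1,K}$ really is modelled by the section space described, including correct handling of $\fS_k$-equivariance, the identification of the intrinsic total homotopy fibre of the cube of configuration spaces with the explicit model of Definition \ref{def:tohofib-model}, and the translation of the collared-boundary condition on embeddings into the ``at least one particle near $K$'' condition on sections. These points are dealt with in \cite[Sections 9--10]{weissembeddings} (see in particular \cite[Example 10.3]{weissembeddings}) and \cite[Section 8]{weisspedrosheaves}, and I would largely follow those arguments, noting only that for our purposes the $H$-space/basepoint structure coming from the identity embedding lets us take $\iota = \mr{id}$ for constructing the bundle $Z_k(N) \to C_k(N)$ and pull back along a general $\iota$ afterwards.
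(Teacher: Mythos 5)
The paper itself gives no proof of this proposition: immediately before the statement it writes that \emph{``The following appears in \cite[Theorem 9.2]{weissembeddings} (with the necessary modifications for boundary conditions explained in \cite[Section 10]{weissembeddings}, in particular \cite[Example 10.3]{weissembeddings})''}, so the proposition is being quoted from Weiss rather than re-proved. Your proposal is therefore doing strictly more than the authors: you are reconstructing an outline of the argument behind Weiss's Theorem 9.2 instead of merely citing it.

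Your reconstruction is essentially correct and follows the route Weiss actually takes, repackaged in the language of \cite{weisspedrosheaves}. In particular, you identify the right mechanism: the layer at stage $k$ is governed by the new data on the generic $\fS_k$-equivariant object of $k$ discs plus collar, discretizing to (framed) ordered configurations, passing to $\fS_k$-orbits to land over $C_k(M)$, and then to total homotopy fibres of the cube $I \mapsto \mr{Emb}(I,N)$ since the smaller faces are pinned to $\iota_*$. You also correctly match the two support conditions with the two degenerations (merging $\leftrightarrow$ near fat diagonal, absorption into the collar $\leftrightarrow$ a particle near $K$), and correctly note that $Z_k(N)\to C_k(N)$ is built at $\iota=\mr{id}$ and pulled back along $\iota_*\colon C_k(M)\to C_k(N)$. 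A few steps are glossed over where you should be careful if you flesh this out: (i) the identification with a homotopy fibre requires not merely agreement with $\iota_*$ on $\cat{Disc}_{\leq k-1,K}$ but a chosen path to $T_{k-1}(\iota)$, which is what Weiss's homogeneous-functor classification \cite[Theorem 8.5]{weissembeddings} combined with \cite[Proposition 9.1]{weissembeddings} handles; and (ii) the frame/derivative data present in the discretized presheaf must be seen to cancel in the layer for $k\geq 2$, which is why the resulting fibre is the total homotopy fibre of \emph{unframed} configuration spaces as in Definition \ref{def:tohofib-model}. You flag these as the technical obstacles and correctly defer to Weiss for them, so I regard the proposal as a faithful, if condensed, account of the cited proof rather than a genuinely different argument.
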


From \eqref{eqn:derived-map}, it is clear that embedding calculus tower is natural, with respect to embeddings relative to the boundary $K$, in the variables $M$ and $N$. As a consequence the layers $L_k(\mr{Emb}_K(M,N))$ are contravariantly functorial in $M$ and covariantly functorial in $N$:
\begin{align*}f \colon M' \hookrightarrow M \qquad &\rightsquigarrow \qquad L_k(\mr{Emb}_K(M',N)_{\iota \circ f}) \xrightarrow{L_k(f)^*} L_k(\mr{Emb}_K(M,N)_\iota), \\
g \colon N \hookrightarrow N' \qquad &\rightsquigarrow \qquad L_k(\mr{Emb}_K(M,N)_{\iota}) \xrightarrow{L_k(g)_*} L_k(\mr{Emb}_K(M,N')_{g \circ \iota}).
\end{align*} 
We claim  that these operations are induced by the naturality of the fibre bundles in $M$ and $N$. Firstly, given an embedding $f \colon M' \hookrightarrow M$ that is the identity near $K$ we can pull back along $f$ to get a map of fibre bundles with section
\begin{equation}\label{eq:Var1stVariable}
\begin{tikzcd}(\iota \circ f)^* {Z}_k(N) \dar[swap]{t_k^{\iota \circ f}} \rar & \iota^* {Z}_k(N) \dar[swap]{t^\iota_k} \\
C_k(M') \rar{f_*} \uar[bend right=25,swap]{s_k^{\iota \circ f}} & C_k(M) \uar[bend right=25,swap]{s^\iota_k}.\end{tikzcd}
\end{equation}
Secondly, given an embedding $g \colon N \hookrightarrow N'$ that is the identity near $K$ there is a natural transformation $g_* \colon \mr{Emb}(-,N) \Rightarrow \mr{Emb}(-,N')$, which induces a map of fibre bundles with section
\begin{equation}\label{eq:Var2ndVariable}
\begin{tikzcd}\iota^* {Z}_k(N) \dar[swap]{t^\iota_k} \rar{g_*} & (g \circ \iota)^*{Z}_k(N') \dar[swap]{t_k^{g \circ \iota}} \\
C_k(M) \rar{\mr{id}} \uar[bend right=25,swap]{s^\iota_k} & C_k(M) \uar[bend right=25,swap]{s_k^{g \circ \iota}}.\end{tikzcd}
\end{equation}

To see that these maps induce $L_k(f)^*$ and $L_k(g)_*$, we need to trace through the arguments in \cite{weissembeddings}. To identify $L_k(g)_*$, we observe that the classification of homogeneous functors as relative sections of a fibration in \cite[Theorem 8.5]{weissembeddings} as well as the identification of that fibration for the layers of a good functor in \cite[Proposition 9.1]{weissembeddings}, naturally depend on the input functor, in this case $\mr{Emb}_K(-,N)$. To identify $L_k(g)_*$, one observes that $M$ enters in these classification results through $|\cI^{(k)}|$. This space and its identification with $\mr{Emb}(\ul{k},M)$ in the proof of \cite[Theorem 9.2]{weissembeddings} is natural in embeddings.

\subsubsection{Applying embedding calculus to $W_{g,1}$}\label{sec:trick} Embedding calculus as explained above does not directly apply to the space of self-embeddings $\mr{Emb}^{\cong}_{\half \partial}(W_{g,1})$ because we are not working relative to the entire boundary. However, this is easily fixed by removing those points which do not lie in the interior of the subset $D^{2n-1} \subset \partial W_{g,1}$. That is, following Weiss \cite{weissdalian} we shall apply embedding calculus to the non-compact manifold 
\[W_{g,1}^\circ \coloneqq W_{g,1} \setminus (\partial W_{g,1} \setminus \mr{int}(D^{2n-1})).\] This manifold is isotopy equivalent to $W_{g,1}$ rel $D^{2n-1}$ and by \cite[Section 3.1]{kupersdisk}, there is a homotopy equivalence
\[\mr{Emb}^{\cong}_{\half \partial}(W_{g,1}) \simeq \mr{Emb}^{\cong}_{\partial}(W_{g,1}^\circ)\] 
as topological monoids (as above, we use the weak $C^\infty$-topology even when the manifolds involved are non-compact). Often it is the case that replacing $W_{g,1}$ by $W_{g,1}^\circ$ does not affect the homotopy type of various mapping and section spaces as long as one works relative to $D^{2n-1}$. (Of course it does when one works with respect to the full boundary $\partial W_{g,1}$ of $W_{g,1}$.) Unless the difference is relevant for the argument, we shall use the notation $W_{g,1}$ for simplicity.

\section{The first layer: bundle maps}\label{sec:first-layer} The goal of this section is to prove Proposition \ref{prop:filteredlayer1}, concerning the rational homotopy groups of the first layer $L_1(\mr{Emb}_{\half \partial}(W_{g,1}))$. This first layer is given by the space $\mr{Bun}_{\half \partial}(TW_{g,1})$ of bundle maps $TW_{g,1} \to TW_{g,1}$ which are the identity near $\half \partial W_{g,1} \subset W_{g,1}$, cf.~Section \ref{sec:layers-bundles}. (Following Section \ref{sec:trick}, we implicitly replace $W_{g,1}$ with $W_{g,1}^\circ$ to apply embedding calculus as described in Section \ref{sec:EmbTower}.)

\subsection{Trivialising the tangent bundle}\label{sec:TrivTang}
The tangent bundle $TW_{g,1}$ is trivialisable via an orientation-preserving isomorphism of vector bundles which we denote $\tau \colon TW_{g,1} \to W_{g,1} \times \bR^{2n}$; this is not unique, even up to homotopy, so we shall keep track of its effect. 

Let us denote an element of $\mr{Bun}_{\half \partial}(TW_{g,1})$ by a pair $(f,\ell)$ of a continuous map $f \colon W_{g,1} \to W_{g,1}$ and a bundle map $\ell \colon TW_{g,1} \to TW_{g,1}$ covering it. The framing $\tau$ allows us to identify such data with maps $W_{g,1} \to W_{g,1} \times \mr{GL}_{2n}(\bR)$, whose first component $W_{g,1} \to W_{g,1}$ is equal to the identity of $W_{g,1}$ near $\half \partial W_{g,1}$  and the second component $W_{g,1} \to \mr{GL}_{2n}(\bR)$ is constant equal to $\mr{id} \in \mr{GL}_{2n}(\bR)$ near $\half \partial W_{g,1}$ . More precisely there are homeomorphisms
\begin{equation}\label{eqn:bun-ident}\begin{aligned} \kappa_\tau \colon \mr{Bun}_{\half \partial}(TW_{g,1}) &\overset{\cong}\lra \mr{map}_{\half \partial}(W_{g,1},W_{g,1} \times \mr{GL}_{2n}(\bR))\\
(f,\ell) &\longmapsto \left( w \mapsto (f(w),\tau_{f(w)} \circ \ell_w \circ \tau^{-1}_w)\right),\\
\left((w,v) \mapsto (f(w),\tau_{f(w)}^{-1} \circ \lambda_w \circ \tau_w(v) \right)&\longmapsfrom (f,\lambda)\end{aligned}\end{equation}
where the subscript on the right hand side indicates that the maps satisfy the boundary conditions indicated above.

To prevent any confusion about the monoid structure used, we shall use $\circledast$ to denote the monoid structure on the right-hand side which corresponds to composition of bundle maps.

\begin{lemma}\label{lem:SDaction}
Under the homeomorphism of \eqref{eqn:bun-ident}, the monoid structure given by composition of bundle maps is described on $\mr{map}_{\half \partial}(W_{g,1},W_{g,1} \times \mr{GL}_{2n}(\bR))$ by
\[(f,\lambda) \circledast (g,\rho) = (f \circ g,(\lambda \circ g) \cdot \rho),\]
where $\cdot$ denotes pointwise multiplication of maps $W_{g,1} \to \mr{GL}_{2n}(\bR)$. 
\end{lemma}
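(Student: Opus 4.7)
The plan is simply to unwind the definition of $\kappa_\tau$ and track everything fibrewise. Let $(f,\ell)$ and $(g,m)$ be elements of $\mr{Bun}_{\half \partial}(TW_{g,1})$, so that $\ell \colon TW_{g,1} \to TW_{g,1}$ is a bundle map covering $f$ and $m$ is a bundle map covering $g$. Their composition in the monoid $\mr{Bun}_{\half \partial}(TW_{g,1})$ is $(f \circ g,\ell \circ m)$, where fibrewise $(\ell \circ m)_w = \ell_{g(w)} \circ m_w \colon T_w W_{g,1} \to T_{g(w)} W_{g,1} \to T_{f(g(w))}W_{g,1}$.

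Now write $(f,\lambda) \coloneqq \kappa_\tau(f,\ell)$ and $(g,\rho) \coloneqq \kappa_\tau(g,m)$, so by definition $\lambda_w = \tau_{f(w)} \circ \ell_w \circ \tau_w^{-1}$ and $\rho_w = \tau_{g(w)} \circ m_w \circ \tau_w^{-1}$. Applying $\kappa_\tau$ to the composite gives at each point $w \in W_{g,1}$ the element
\[
\tau_{f(g(w))} \circ (\ell \circ m)_w \circ \tau_w^{-1} = \tau_{f(g(w))} \circ \ell_{g(w)} \circ m_w \circ \tau_w^{-1}
\]
of $\mr{GL}_{2n}(\bR)$. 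Inserting $\tau_{g(w)}^{-1} \circ \tau_{g(w)}$ between $\ell_{g(w)}$ and $m_w$ rewrites this as
\[
\bigl(\tau_{f(g(w))} \circ \ell_{g(w)} \circ \tau_{g(w)}^{-1}\bigr) \circ \bigl(\tau_{g(w)} \circ m_w \circ \tau_w^{-1}\bigr) = \lambda_{g(w)} \cdot \rho_w,
\]
which is exactly the $w$-value of $(\lambda \circ g)\cdot \rho$. The first component of the composite is $f \circ g$ by definition, giving $(f \circ g,(\lambda \circ g)\cdot \rho)$ as desired.

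There is no real obstacle here; the only thing to be careful about is keeping track of which fibre each bundle map is acting on, since the framings $\tau_w$ at different points cancel in a non-trivial pattern. The boundary conditions (that $f$ is the identity near $\half \partial W_{g,1}$ and $\lambda$ is constant equal to $\mr{id}$ there, and likewise for $g,\rho$) are automatic for the composite from the corresponding properties of the factors, so there is nothing to check on that front.
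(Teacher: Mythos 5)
Your proof is correct and is essentially the same computation as the paper's: you verify the formula by applying $\kappa_\tau$ to the composite bundle map and inserting $\tau_{g(w)}^{-1}\circ\tau_{g(w)}$, whereas the paper equivalently applies $\kappa_\tau^{-1}$ to both sides and checks they agree. The two are just the same unwinding read in opposite directions.
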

\begin{proof}We have
\begin{align*}
\kappa_\tau^{-1}(f,\lambda) \circ \kappa_\tau^{-1}(g,\rho) &= \left((w,v) \mapsto (f(g(w)),\tau^{-1}_{f(g(w))} \circ \lambda_{g(w)} \circ \tau_{g(w)}(\tau^{-1}_{g(w)} \circ \rho_w \circ \tau_w(v)))\right)\\
&=  \left((w,v) \mapsto (f(g(w)),\tau^{-1}_{f(g(w))} \circ \lambda_{g(w)} \circ \rho_w \circ \tau_w(v))\right)\\
&= \kappa^{-1}_\tau(f \circ g, (\lambda \circ g) \cdot \rho)
\end{align*}\end{proof}

\subsection{The group of homotopy-invertible path components}\label{sec:lbun}
Using this identification we can describe the group 
\[\Upsilon_g \coloneqq \pi_0(\mr{Bun}_{\half \partial}(TW_{g,1}))^{\times}\] of homotopy invertible path components under composition. Recall that $H_n$ is shorthand for $H_n(W_{g,1};\bZ)$.

\begin{lemma}\label{lem:semi-direct-1}
The homeomorphism $\kappa_\tau$ induces an isomorphism of groups
\[(\kappa_\tau)_* \colon \Upsilon_g = \pi_0(\mr{Bun}_{\half \partial}(TW_{g,1}))^{\times} \overset{\cong}\lra \mr{GL}(H_n) \ltimes \mr{Hom}(H_n,\pi_n(SO(2n))),\]
where $\mr{GL}(H_n)$ acts on $\mr{Hom}(H_n,\pi_n(SO(2n)))$ by precomposition.
\end{lemma}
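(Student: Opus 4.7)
The plan is to use the homeomorphism $\kappa_\tau$ to decompose $\mr{Bun}_{\half \partial}(TW_{g,1})$ as a product of two mapping spaces, compute $\pi_0$ of each using standard pointed homotopy theory, and then read off the resulting monoid structure from Lemma \ref{lem:SDaction}.

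Because the target of the right-hand side of \eqref{eqn:bun-ident} is a product, there is an obvious homeomorphism
\[\mr{map}_{\half \partial}(W_{g,1}, W_{g,1} \times \mr{GL}_{2n}(\bR)) \cong \mr{map}_{\half \partial}(W_{g,1}, W_{g,1}) \times \mr{map}_{\half \partial}(W_{g,1}, \mr{GL}_{2n}(\bR)),\]
where the first factor carries the boundary condition ``equal to $\mr{id}_{W_{g,1}}$ near $\half \partial$'' and the second ``equal to $\mr{id} \in \mr{GL}_{2n}(\bR)$ near $\half \partial$''. Since $\half \partial W_{g,1} \cong D^{2n-1}$ is contractible and its inclusion into $W_{g,1}$ is a cofibration, both mapping spaces are weakly equivalent to the corresponding spaces of \emph{based} maps with basepoint in $\half \partial$. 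Now $W_{g,1}$ admits a handle decomposition with a single $0$-handle and $2g$ $n$-handles, and so has the pointed homotopy type of $\vee^{2g} S^n$; as $2n \geq 6$ gives $n \geq 3$, the Hurewicz theorem provides $\pi_n(\vee^{2g} S^n) \cong H_n$. Combined with the standard identification $\pi_n(\mr{GL}_{2n}(\bR)) \cong \pi_n(SO(2n))$ (from the deformation retraction of $\mr{GL}_{2n}(\bR)$ onto $O(2n)$ whose identity component is $SO(2n)$), this yields
\[\pi_0(\mr{map}_{\half \partial}(W_{g,1}, W_{g,1})) \cong \mr{End}(H_n), \qquad \pi_0(\mr{map}_{\half \partial}(W_{g,1}, \mr{GL}_{2n}(\bR))) \cong \mr{Hom}(H_n, \pi_n(SO(2n))).\]

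To conclude, I transport the monoid structure across these identifications using Lemma \ref{lem:SDaction}. If $(f, \lambda) \leftrightarrow (A, \varphi)$ and $(g, \rho) \leftrightarrow (B, \psi)$, then $f \circ g$ corresponds to $AB$ by the functoriality of $\pi_n$, while $(\lambda \circ g) \cdot \rho$ corresponds to $\varphi \circ B + \psi$: precomposition by $g$ on the $n$-skeleton is precomposition by $B \in \mr{End}(H_n)$, and by Eckmann--Hilton the pointwise multiplication in the topological group $\mr{GL}_{2n}(\bR)$ coincides with addition in $\pi_n(SO(2n))$. Hence
\[(A, \varphi) \circledast (B, \psi) = (AB, \varphi \circ B + \psi),\]
which identifies $\pi_0(\mr{Bun}_{\half \partial}(TW_{g,1}))$ as the monoid $\mr{End}(H_n) \ltimes \mr{Hom}(H_n, \pi_n(SO(2n)))$ under the precomposition action. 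Passing to units then gives the claimed isomorphism, since the units of $\mr{End}(H_n)$ are exactly $\mr{GL}(H_n)$ while the additive group $\mr{Hom}(H_n, \pi_n(SO(2n)))$ consists entirely of invertible elements. The one subtle point I expect to spell out carefully is the first reduction of the half-boundary condition to the based condition, which relies on the contractibility of $\half \partial$ and the cofibrancy of its inclusion; after that the rest of the argument is essentially routine.
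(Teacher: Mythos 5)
Your proof is correct and follows essentially the same route as the paper's: decompose via $\kappa_\tau$ into the two factors of the mapping space, identify each $\pi_0$ using the pointed model $(W_{g,1},\half\partial W_{g,1}) \simeq (\vee^{2g}S^n,\ast)$, and then read off the $\circledast$-law from Lemma~\ref{lem:SDaction} before passing to units. The only cosmetic difference is that the paper pins down the second factor via the explicit standard cores $C_i$ (which it reuses later in Lemma~\ref{lem:semi-direct-2}), whereas you phrase the identification through the pointed mapping space and Eckmann--Hilton directly; the content is the same.
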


\begin{proof}
The homeomorphism $\kappa_\tau$ of \eqref{eqn:bun-ident} gives a bijection from $\pi_0(\mr{Bun}_{\half \partial}(TW_{g,1}))$
to the set
\[\pi_0(\mr{map}_{\half \partial}(W_{g,1},W_{g,1})) \times \pi_0(\mr{map}_{\half \partial}(W_{g,1},\mr{GL}_{2n}(\bR))).\]
The homotopy equivalence of pairs $(W_{g,1},\half \partial W_{g,1}) \simeq (\vee_{2g} S^{n},\ast)$ gives a weak equivalence of topological monoids 
\[\mr{map}_{\half \partial}(W_{g,1},W_{g,1}) \simeq \mr{map}_\ast(\vee_{2g}S^n,\vee_{2g}S^n).\]
Thus the first term may be identified with the monoid $\mr{End}_\bZ(H_n)$ of $\bZ$-module endomorphisms of $H_n$ under composition, by sending $[f]$ to the endomorphism $f_* \colon H_n \to H_n$.
	
For the second term, we recall from Section \ref{sec:emb-path-components} that the manifold $W_{g,1}$ has $2g$ standard cores $C_i$, which give elements $a_i$ of $H_n$ forming a basis. We may then identify $\pi_0(\mr{map}_{\half \partial}(W_{g,1},\mr{GL}_{2n}(\bR)))$ with the set $\mr{Hom}(H_n,\pi_n(SO(2n)))$, by sending $[\lambda]$ to the homomorphism uniquely determined by
\[a_i \longmapsto \left[C_i \ni x \mapsto  \lambda_x \in \mr{GL}_{2n}(\bR)\right].\]
By Lemma \ref{lem:SDaction}, under these identifications the operation $\circledast$ on the set $\mr{End}_\bZ(H_n) \times \mr{Hom}(H_n,\pi_n(SO(2n)))$ is given by
\begin{equation}\label{eqn:ast-formula} (A,\alpha) \circledast (B,\beta) = (A \circ B,\alpha \circ B+\beta),\end{equation}
where $\circ$ denotes the composition of endomorphisms of $H_n$ or of a homomorphism $H_n \to \pi_n(SO(2n))$ with an endomorphism of $H_n$. An element $(A,\alpha)$ is invertible with respect to $\circledast$ if and only if $A$ is (in which case its inverse is $(A^{-1},-\alpha \circ A^{-1})$). We also read off from \eqref{eqn:ast-formula} that the group of invertible elements is a semi-direct product.
\end{proof}

The expression of $\Upsilon_g$ as a semi-direct product as in Lemma \ref{lem:semi-direct-1} depends on the choice of trivialisation $\tau$, but the ensuing description as an extension
\[1 \lra \mr{Hom}(H_n,\pi_n(SO(2n))) \lra \Upsilon_g \lra \mr{GL}(H_n) \lra 1\]
is independent of this choice. To see this, note that another trivialisation $\tau'$ differs from $\tau$ by an element $\phi \in \mr{map}_{\half \partial}(W_{g,1}, SO(2n))$, giving an element $[\phi] \in \mr{Hom}(H_n,\pi_n(SO(2n)))$, and the isomorphism $(\kappa_{\tau'})_* \circ (\kappa_\tau)_*^{-1}$ is then given by conjugation by $[\phi]$. Such a conjugation is a non-trivial automorphism of the group $\mr{GL}(H_n) \ltimes \mr{Hom}(H_n,\pi_n(SO(2n)))$, but it is trivial on the normal subgroup $\mr{Hom}(H_n,\pi_n(SO(2n)))$ (because this is abelian) and on the quotient $\mr{GL}(H_n)$ (where it becomes conjugation by the identity).

In these terms, let us describe the homomorphism 
\[\Lambda_g= \pi_0(\mr{Emb}^{\cong}_{\half \partial}(W_{g,1})) \lra \Upsilon_g = \pi_0(\mr{Bun}_{\half \partial}(TW_{g,1}))^\times\] induced by taking the derivative of a self-embedding. 

\begin{lemma}\label{lem:semi-direct-2} 
There is a commutative diagram with exact rows
	\[\begin{tikzcd} 1 \rar & J_g = \mr{Hom}(H_n,S\pi_n(SO(n))) \rar \dar & \Lambda_g \rar \dar & G'_g \rar \dar[hook] & 1 \\
	1 \rar & \mr{Hom}(H_n,\pi_n(SO(2n))) \rar & \Upsilon_g \rar & \mr{GL}(H_n) \rar & 1,\end{tikzcd}\]
where the left vertical map is induced by the stabilisation $S\pi_n(SO(n)) \to \pi_n(SO(2n))$.
\end{lemma}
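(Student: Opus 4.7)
My plan is as follows. The commutativity of the right square is immediate: both composite maps $\Lambda_g \to \mr{GL}(H_n)$ send a class $[\phi]$ to the induced endomorphism $\phi_\ast$ of $H_n$, using the description of $\kappa_\tau$ and the identification $\pi_0(\mr{map}_{\half \partial}(W_{g,1},W_{g,1})) \cong \mr{End}_\bZ(H_n)$ from Section \ref{sec:lbun}. The injectivity of $G'_g \hookrightarrow \mr{GL}(H_n)$ holds by construction of $G'_g$. Exactness of the two rows then forces the restriction of the derivative map $\Lambda_g \to \Upsilon_g$ to $J_g$ to land in $\mr{Hom}(H_n, \pi_n(SO(2n)))$; what remains is to identify this restriction with postcomposition by the stabilisation $S\pi_n(SO(n)) \to \pi_n(SO(2n))$.

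Given $[f] \in J_g$, lift to $[\tilde f] \in I_g$ and, using Haefliger's theorem as recalled before Theorem \ref{thm:kreck}, represent it by a diffeomorphism fixing each standard core $C_i$ pointwise. By Lemma \ref{lem:semi-direct-1}, the image of $[\tilde f]$ in $\mr{Hom}(H_n, \pi_n(SO(2n)))$ evaluated at the basis element $a_i \in H_n$ is the class $[C_i \ni x \mapsto \tau_x \circ Df_x \circ \tau_x^{-1}] \in \pi_n(SO(2n))$. Since $f|_{C_i} = \mr{id}$, the derivative preserves the splitting $TW_{g,1}|_{C_i} = TC_i \oplus \nu_i$ and restricts to $\mr{id}_{TC_i} \oplus Df|_{\nu_i}$. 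I claim this class coincides with the image of Kreck's $\chi(\tilde f)(a_i) = [Df|_{\nu_i} \oplus \mr{id}_\bR] \in S\pi_n(SO(n)) \subset \pi_n(SO(n+1))$ under the stabilisation to $\pi_n(SO(2n))$.

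To prove the claim, I stabilise by $\bR$ in order to access a split trivialisation. The bundle $TW_{g,1}|_{C_i} \oplus \bR \cong (TC_i \oplus \bR) \oplus \nu_i$ is a direct sum of trivialisable bundles, using stable parallelisability of $S^n$ and triviality of the normal bundles of the cores. In any split trivialisation of this stabilised bundle, the stabilised derivative $Df \oplus \mr{id}_\bR = \mr{id}_{TC_i \oplus \bR} \oplus Df|_{\nu_i}$ reads as $\mr{id}_{\bR^{n+1}} \oplus A$ for some $A \colon C_i \to SO(n)$ representing $[Df|_{\nu_i}]$. By abelianness of $\pi_n(SO(2n+1))$ (which holds since $2n \geq 6$, so $n \geq 2$), this agrees with the stabilisation of $[Df|_{\nu_i} \oplus \mr{id}_\bR] \in \pi_n(SO(n+1))$ via the standard inclusion as the upper-left block, which is exactly the further stabilisation of $\chi(\tilde f)(a_i)$ to $\pi_n(SO(2n+1))$. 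On the other hand, the split trivialisation differs from the stabilised global framing $\tau|_{C_i} \oplus \mr{id}_\bR$ by a map $C_i \to SO(2n+1)$, and conjugation by such a map acts trivially on $\pi_n(SO(2n+1))$, so the same element also equals the stabilisation of $[\tau_x \circ Df_x \circ \tau_x^{-1}]$ from $\pi_n(SO(2n))$ to $\pi_n(SO(2n+1))$. The long exact sequence of the fibration $SO(2n) \to SO(2n+1) \to S^{2n}$ together with $\pi_{n+1}(S^{2n}) = 0 = \pi_n(S^{2n})$ for $n \geq 2$ shows that this stabilisation map is an isomorphism; hence the equality descends to $\pi_n(SO(2n))$, proving the claim.

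The main technical subtlety is precisely this trivialisation comparison: the global framing $\tau$ does not in general split compatibly with the decomposition $TC_i \oplus \nu_i$ (as $TS^n$ is rarely trivial), so a one-dimensional stabilisation is required to access a split trivialisation; one then uses abelianness of $\pi_n$ to reconcile the two trivialisations and invokes stability of $\pi_n(SO(k))$ to return to $\pi_n(SO(2n))$.
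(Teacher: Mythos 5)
Your proof is essentially the same as the paper's: both establish commutativity of the right square directly from the definitions and then reduce the left square to a comparison of the trivialisations appearing in Kreck's $\chi$ and the global framing $\tau$ along each core, carried out by stabilising, reading off the block form of $Df$, and invoking the Eckmann--Hilton/conjugation argument together with stability of $\pi_n(SO(k))$. The only differences are cosmetic: you stabilise once (by $\bR$, using that $\nu_i$ is already trivial) where the paper stabilises by $\bR^2$ so as to use a strict short exact sequence of trivialised bundles whose right term is Kreck's $\nu_i \oplus \bR$, and you implicitly take the derivative block-diagonal where the paper allows the block upper-triangular form \eqref{eqn:pres-subspace} — neither of which affects the homotopy class.
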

	
\begin{proof}Since both homomorphisms $\Lambda_g \to G'_g$ and $\Upsilon_g \to \mr{GL}(H_n)$ are given by the action on the middle-dimensional homology, we obtain a commutative square
	\[\begin{tikzcd} \Lambda_g \rar \dar & G'_g \dar[hook] \\
	\Upsilon_g \rar & \mr{GL}(H_n).\end{tikzcd}\]
Hence the kernel of the top horizontal map gets sent to the kernel of the bottom horizontal map, and we obtain a commutative diagram as in the statement.

To understand the map between these kernels, we need to recall the surjection $\chi \colon I_g \to \mr{Hom}(H_n,S \pi_n(SO(n)))$ following Theorem \ref{thm:kreck}. Every element of $I_g$ can be represented by a diffeomorphism $f$ which fixes the cores $C_i$ pointwise, and after picking orientation-preserving trivialisations $\tau_i \colon \nu_i \times \bR \smash{\overset{\sim}\to} \bR^{n+1}$ of the once-stabilised normal bundles of the cores, $\chi(f)$ is determined by sending the basis element $a_i \in H_n$ to 
\[[\tau_i \circ (Df|_{\nu_i} \times \bR) \circ \tau_i^{-1}] \in \pi_n(SO(n+1)).\]

The element in $\mr{Hom}(H_n,\pi_n(SO(2n)))$ to which diffeomorphism $f$ is sent is determined by sending the basis elements $a_i \in H_n$ to 
\[[\tau \circ Df|_{C_i} \circ \tau^{-1}] \in \pi_n(SO(2n)).\]
Adding a two-dimensional trivial bundle, we prove the following claim:

\vspace{1.5ex}
\noindent \textbf{Claim.} $[(\tau \circ Df|_{C_i} \circ \tau^{-1}) \times \bR^2] \in \pi_n(SO(2n+2))$ is the $(n+1)$-fold stabilisation of $[\tau_i \circ (Df|_{\nu_i} \times \bR) \circ \tau_i^{-1}] \in \pi_n(SO(n+1))$. 
\vspace{-1ex}

\begin{proof} We will use the following two facts. Firstly, if two maps $S^n \to \mr{GL}_{2n+2}(\bR)$ differ by pointwise conjugation by a map $S^n \to \mr{GL}_{2n+2}(\bR)$ they represent the same element of $\pi_n(SO(2n+2))$ by a Hilton--Eckmann argument. Secondly, if a map $G \colon S^n \to \mr{GL}_{2n+2}(\bR))$ is given by
\begin{equation}\label{eqn:pres-subspace} S^n \ni x \longmapsto \begin{bmatrix} \mr{id}_{n+1} & \gamma(x) \\
0 & g(x) \end{bmatrix} \in \mr{GL}_{2n+2}(\bR),\end{equation}
with $g \colon S^n \to \mr{GL}_{n+1}(\bR)$ and $\gamma \colon S^n \to \mr{Lin}(\bR^{n+1},\bR^{n+1})$, then the homotopy class $[G] \in \pi_n(SO(2n+2))$ is equal to the $(n+1)$-fold stabilisation of the homotopy class $[g] \in \pi_n(SO(n+1))$.

We now fix a trivialisation $\tilde{\tau}_i$ which fits in a commutative diagram of short exact sequences of vector bundles over $C_i$
\[\begin{tikzcd} 1 \rar &  TC_i \times \bR \rar \dar[swap]{\cong} & TW_{g,1} \times \bR^2 \dar{\tilde{\tau}_i}[swap]{\cong} \rar & \nu_i \times \bR \dar{\tau_i}[swap]{\cong} \rar & 1 \\
1 \rar & C_i \times \bR^{n+1} \rar & C_i \times \bR^{2n+2} \rar & C_i \times \bR^{n+1} \rar & 1. \end{tikzcd}\]
 
Using the fact that $f$ fixes $C_i$ pointwise and hence is the identity on $TC_i$, we see that $[(\tilde{\tau}_i \circ Df|_{C_i} \circ \tilde{\tau}_i^{-1}) \times \bR^2] \in \pi_n SO(2n+2)$ is of the form \eqref{eqn:pres-subspace}, with $g = \tau_i \circ (Df|_{\nu_i} \times \bR) \circ \tau_i^{-1}$. This differs by conjugation with $(\tau \times \bR^2) \circ \tilde{\tau}_i^{-1}$ from $[(\tau \circ Df|_{C_i} \circ \tau^{-1}) \times \bR^2]$. The claim then follows from the above two facts.
\end{proof}

Since the stabilisation homomorphism $\pi_n(SO(2n)) \to \pi_n(SO(2n+2))$ is an isomorphism, this implies that $J_g = \mr{Hom}(H_n,S\pi(SO(n))) \to \mr{Hom}(H_n,\pi_n(SO(2n)))$ on generators is induced by the map $S\pi_n(SO(n)) \lra \pi_n(SO(2n))$.\end{proof}

To understand better the homomorphism $\Lambda_g \to \Upsilon_g$, we combine Theorem 1.4 of \cite{levineexotic} with Table \ref{tab.spino} to get:

\begin{lemma}\label{lem:levine} For $n \geq 3$, the stabilisation $S\pi_n(SO(n)) \to \pi_n(SO(2n))$ is: 
	\begin{enumerate}[\indent (i)]
		\item surjective with kernel $\bZ/2$ when $n$ is even, 
		\item an isomorphism when $n$ is odd, $\neq 3,7$, 
		\item injective with cokernel $\bZ/2$ if $n = 3,7$.
	\end{enumerate}
\end{lemma}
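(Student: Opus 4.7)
The plan is to compare the known group $S\pi_n(SO(n))$ from Table \ref{tab.spino} with the stable group $\pi_n(SO(2n))$, and then to invoke Levine's Theorem~1.4 to pin down the map in the two cases where this comparison is not forced by the abstract isomorphism type.

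First I observe that for $n \geq 2$ one has $2n \geq n+2$, so $\pi_n(SO(2n))$ already lies in the stable range: the natural map $\pi_n(SO(2n)) \to \pi_n(SO)$ is an isomorphism by the usual connectivity estimate for the fibration $SO(k) \to SO(k+1) \to S^k$. By Bott periodicity, $\pi_n(SO)$ is $\bZ/2, \bZ/2, 0, \bZ, 0, 0, 0, \bZ$ for $n \equiv 0,1,2,3,4,5,6,7 \pmod 8$ respectively. Comparing with Table~\ref{tab.spino} yields:

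\begin{itemize}
  \item For $n$ even, the source is always $\bZ/2$ or $(\bZ/2)^2$ (or $0$ when $n=6$) and the target is $\bZ/2$ or $0$. In each case the target is either $0$ or a quotient of order $2$ of the source, so the stabilisation map must be surjective with kernel of order at most $\bZ/2$. This gives case~(i) up to checking that the kernel is genuinely $\bZ/2$ and not smaller when $n \equiv 0 \pmod 8$ (where both sides are nonzero).
  \item For $n$ odd with $n \equiv 1, 5 \pmod 8$, source and target agree abstractly ($\bZ/2$ and $\bZ/2$, resp.\ $0$ and $0$), so it suffices to check the map is nonzero in the first case, giving case~(ii).
  \item For $n \equiv 3, 7 \pmod 8$, source and target are both $\bZ$, and I must decide whether the map is an isomorphism or multiplication by $\pm 2$. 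This is the one genuine input from Levine's theorem.
\end{itemize}

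The main obstacle is the last bullet, and this is exactly what Levine's Theorem~1.4 handles. His tables give the order of the cokernel of $\pi_n(SO(n)) \to \pi_n(SO)$, from which one reads off that for $n \equiv 3,7 \pmod 8$ this cokernel is $\bZ/2$ precisely when $n = 3$ or $n=7$ (these being the Hopf invariant one dimensions), and is trivial otherwise. Since $S\pi_n(SO(n))$ is by definition the image in $\pi_n(SO(n+1))$, and the further stabilisations $\pi_n(SO(n+1)) \to \pi_n(SO(n+2)) = \pi_n(SO)$ are isomorphisms for $n$ in this range by the same stability estimate, the cokernel of $S\pi_n(SO(n)) \to \pi_n(SO(2n))$ coincides with the cokernel computed by Levine, yielding case~(iii) and completing case~(ii) for $n \equiv 3,7 \pmod 8$, $n \neq 3,7$. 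Finally for case~(i), when $n \equiv 0 \pmod 8$ the source $(\bZ/2)^2$ has a distinguished $\bZ/2$ summand generated by the image of $\pi_n(SO) \to \pi_n(SO(n))$ which splits off; Levine's result shows this summand maps isomorphically to the target, forcing the kernel to be the complementary $\bZ/2$.
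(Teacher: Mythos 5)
Your proposal follows essentially the same route as the paper: the paper's proof is literally just the one-line statement ``combine Theorem 1.4 of Levine with Table \ref{tab.spino}'', and you have unpacked what that comparison looks like. Your observation that $\pi_n(SO(2n))\cong\pi_n(SO)$ since $2n\geq n+2$, and that most cases are then forced by comparing orders so that Levine is only genuinely needed for $n\equiv 3,7\pmod 8$ (and to determine the precise kernel when $n\equiv 0\pmod 8$ and $n \equiv 1 \pmod 8$), is a reasonable and correct elaboration.

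One small imprecision: for $n\equiv 0\pmod 8$ you speak of ``the image of $\pi_n(SO)\to\pi_n(SO(n))$'', but no such natural map exists (stabilisation goes the other way). What you want is simply that Levine's Theorem~1.4 directly identifies the kernel of $S\pi_n(SO(n))\to\pi_n(SO)$ as $\bZ/2$ in this case, without any claim about a preferred splitting. Since you are invoking Levine for these cases anyway, the conclusion is unaffected, but the stated mechanism should be removed. (You might also note in passing that the ``$n$ even'' case as stated nominally fails for the exceptional $n=6$, where $S\pi_6(SO(6))=0$; this anomaly is already present in the paper's statement and is harmless in the applications.)
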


\subsection{The higher rational homotopy groups}
We next study the action of the group $\Lambda_g$ on $\pi_i(\mr{Bun}_{\half \partial}(TW_{g,1}), \mr{id}) \otimes \bQ$ via the derivative map $\Lambda_g \to \Upsilon_g$ and conjugation, which is the action which arises from the embedding calculus tower. It will suffice to study the action of $\Upsilon_g$, which by Lemma \ref{lem:semi-direct-2}  fits into a short exact sequence
\[1 \lra \mr{Hom}(H_n,\pi_n(SO(2n))) \lra \Upsilon_g \lra \mr{GL}(H_n) \lra 1.\]

\begin{proposition}\label{prop:filteredlayer1}
For all $i>0$ the $\Upsilon_g$-representation $\pi_i(\mr{Bun}_{\half \partial}(TW_{g,1}),\mr{id}) \otimes \bQ$ is $gr$-algebraic. 
\end{proposition}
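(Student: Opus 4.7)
The plan is to trivialise $TW_{g,1}$ using $\tau$ to split $\mr{Bun}_{\half\partial}(TW_{g,1})$ as a product, compute $\pi_i\otimes\bQ$ as a direct sum of two finite-dimensional pieces, and exhibit a two-step filtration of this sum whose graded pieces are algebraic $G'_g$-representations.

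First, I would invoke the homeomorphism \eqref{eqn:bun-ident} to identify $\mr{Bun}_{\half\partial}(TW_{g,1})$ with the product $\mr{map}_{\half\partial}(W_{g,1}, W_{g,1}) \times \mr{map}_{\half\partial}(W_{g,1}, \mr{GL}_{2n}(\bR))$. Since $(W_{g,1}, \half\partial W_{g,1}) \simeq (\vee_{2g}S^n, \ast)$, each factor is equivalent to $\prod^{2g}\Omega^n X$ for $X = W_{g,1}$ or $\mr{GL}_{2n}(\bR)$ respectively. At the basepoint $\mr{id}$, which under $\kappa_\tau$ corresponds to $(\mr{id}_{W_{g,1}}, \mr{const}_e)$, one obtains a splitting
\[ \pi_i(\mr{Bun}_{\half\partial}(TW_{g,1}), \mr{id}) \otimes \bQ \cong A_i \oplus B_i \]
with $A_i \cong H_n^\vee \otimes_\bQ \pi_{n+i}(W_{g,1}) \otimes \bQ$ and $B_i \cong H_n^\vee \otimes_\bQ \pi_{n+i}(\mr{GL}_{2n}(\bR)) \otimes \bQ$. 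Both are finite-dimensional: since $W_{g,1} \simeq \vee_{2g} S^n$, Hilton's theorem expresses $\pi_*(W_{g,1}) \otimes \bQ$ as a free graded Lie algebra on $H_n \otimes \bQ$ (finite-dimensional in each degree), and $\pi_*(\mr{GL}_{2n}(\bR)) \otimes \bQ$ is finite-dimensional by Bott's theorem.

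Next, I would analyse the conjugation action of $\Upsilon_g$ on $A_i \oplus B_i$ using the twisted monoid formula of Lemma \ref{lem:SDaction}. Conjugating a pure-second-factor element $(\mr{id}, \lambda_s)$ by $(g, \rho)$ yields $(\mr{id}, (\rho \circ g^{-1}) \cdot (\lambda_s \circ g^{-1}) \cdot (\rho \circ g^{-1})^{-1})$, so $B_i$ is a $\Upsilon_g$-subrepresentation. Conjugating $(f_s, \mr{const}_e)$ by $(g, \rho)$ has first component $g \circ f_s \circ g^{-1}$ (independent of $\rho$), so the induced action on the quotient $A_i$ factors through the surjection $\Upsilon_g \twoheadrightarrow \mr{GL}(H_n)$. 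I would then show the action on $B_i$ also factors through $\mr{GL}(H_n)$: the reparametrisation $\lambda_s \mapsto \lambda_s \circ g^{-1}$ gives the usual dual action on $H_n^\vee$, while the residual pointwise conjugation by $\rho \circ g^{-1}$ in $\mr{GL}_{2n}(\bR)$ corresponds, under the equivalence $\mr{map}_{\half\partial}(W_{g,1}, \mr{GL}_{2n}(\bR)) \simeq (\Omega^n \mr{GL}_{2n}(\bR))^{2g}$, to the $\pi_0$-on-$\pi_i$ conjugation action on each factor. Because $\Omega^n \mr{GL}_{2n}(\bR)$ carries commuting H-space structures from both the loop coordinates and the ambient group multiplication on $\mr{GL}_{2n}(\bR)$, it is a homotopy commutative H-space by Eckmann--Hilton; hence this conjugation action is trivial for $i \geq 1$.

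Finally, I would verify that the associated graded pieces are algebraic $G'_g$-representations. Hilton's theorem identifies $\pi_{n+i}(\vee_{2g} S^n) \otimes \bQ$ as a direct sum of Schur functors of $H_n \otimes \bQ$ (explicitly, pieces of the form $\mr{Lie}_k \otimes H_n^{\otimes k}$), so $A_i$ is an algebraic $\mr{GL}(H_n)$-representation; and $B_i$ is a finite sum of copies of $H_n^\vee$, also algebraic. Restricting along $G'_g \hookrightarrow \mr{GL}(H_n)$ preserves algebraicity, so $\pi_i \otimes \bQ$ is $gr$-algebraic. The main technical obstacle I anticipate is the triviality of the pointwise conjugation action on $B_i$: this needs careful use of Eckmann--Hilton since $\rho$ may represent a nontrivial class in $N = \pi_0\,\mr{map}_{\half\partial}(W_{g,1}, \mr{GL}_{2n}(\bR))$ and hence cannot be deformed to the constant map in the mapping space itself.
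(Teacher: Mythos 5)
Your proof is correct and follows essentially the same route as the paper: both use the framing $\tau$ to split off $\mr{map}_{\half\partial}(W_{g,1}, \mr{GL}_{2n}(\bR))$ as a subpiece with quotient $\mr{map}_{\half\partial}(W_{g,1}, W_{g,1})$, read off the twisted action from Lemma \ref{lem:SDaction}, and conclude via Hilton--Milnor plus precomposition for the quotient and an Eckmann--Hilton/iterated-loop-space argument for triviality of the pointwise-conjugation part on the sub. The paper packages this as a split fibration sequence of topological monoids and notes that $\mr{map}_{\half\partial}(W_{g,1}, \mr{GL}_{2n}(\bR))$ is an $(n+1)$-fold loop space, but this is the same underlying idea as your two commuting $H$-structures.
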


\begin{proof}
Let $\cL_*(-)$ denote the functor assigning the free graded Lie algebra to a graded $\bQ$-vector space. For a $\bQ$-vector space $A$, we write $A[n]$ for the graded vector space with $A$ put in degree $n$.  We will prove the more precise statement that this representation is an extension of $\mr{Hom}(H_n,\cL_*(H_n[n-1] \otimes \bQ))[1]$ by $\mr{Hom}(H_n,\pi_{*+n}(SO(2n))\otimes \bQ)$, and on each term the $\Upsilon_g$-action is given by the evident $\mr{GL}(H_n) \cong \mr{GL}_{2g}(\bZ)$-action, which is algebraic.
	
Using the homeomorphism $\kappa_\tau$ of \eqref{eqn:bun-ident}, there is a fibration sequence
\[\mr{map}_{\half \partial}(W_{g,1}, \mr{GL}_{2n}(\bR)) \lra \mr{Bun}_{\half \partial}(TW_{g,1})^\times \lra \mr{map}_{\half \partial}(W_{g,1},W_{g,1})^\times\]
of topological monoids, where the multiplication is given by composition on the base and by pointwise multiplication of maps on the fibre. This is split by a map of monoids, by sending a map $f \colon W_{g,1} \to W_{g,1}$ to the bundle map
\[TW_{g,1} \overset{\tau}\lra W_{g,1} \times \bR^{2n} \xrightarrow{f \times \bR^{2n}} W_{g,1} \times \bR^{2n} \overset{\tau^{-1}}\lra TW_{g,1}.\]
The long exact sequence of (rational) homotopy groups therefore splits into short exact sequences.

The multiplication on $\mr{map}_{\half \partial}(W_{g,1}, \mr{GL}_{2n}(\bR))$ given by pointwise multiplication of maps extends to an $(n+1)$-fold loop space structure, as $W_{g,1} \simeq \vee^{2g} S^n$. Thus the action of $\pi_0$ of this group on its higher homotopy groups is trivial, and so the action of $\Upsilon_g$ on $\pi_i(\mr{map}_{\half \partial}(W_{g,1}, \mr{GL}_{2n}(\bR)),\mr{const}_\mr{id}) = \mr{Hom}(H_n, \pi_{i+n}(\mr{GL}_{2n}(\bR)))$ descends to an action of $\mr{GL}(H_n)$. It follows from Lemma \ref{lem:SDaction} that this action is given by precomposition.

The group $\Upsilon_g$ acts on the rational homotopy groups $\pi_*(\mr{map}_{\half \partial}(W_{g,1},W_{g,1})^\times,\mr{id})$ via the projection map $\Upsilon_g \to \pi_0(\mr{map}_{\half \partial}(W_{g,1},W_{g,1})^\times) = \mr{GL}(H_n)$. Using the homotopy equivalence of pairs $(W_{g,1},\half \partial W_{g,1}) \simeq (\vee_{2g} S^n,\ast)$ we get
\[\pi_i(\mr{map}_{\half \partial}(W_{g,1},W_{g,1}),\mr{id}) \otimes \bQ \cong \mr{Hom}(H_n,\pi_{i+n}(W_{g,1}) \otimes \bQ).\]
The Hilton--Milnor theorem gives an identification of $\Upsilon_g$-representations 
$$\pi_{*+n}(W_{g,1}) \otimes \bQ \cong \cL_*(H_n[n-1] \otimes \bQ)[1],$$
a shift of free graded Lie algebra on $H_n \otimes \bQ$ concentrated in degree $n-1$, and the resulting identification
\[\pi_*(\mr{map}_{\half \partial}(W_{g,1},W_{g,1}),\mr{id}) \otimes \bQ \cong \mr{Hom}(H_n,\cL_*(H_n[n-1] \otimes \bQ))[1]\]
in positive degrees is one of graded $\mr{GL}(H_n)$-representations.
\end{proof}

\section{The higher layers: section spaces}

In this section, our goal is to prove Proposition \ref{prop:filteredhigherlayers}, concerning the rational homotopy groups of the higher layers $L_k(\mr{Emb}_{\half \partial}(W_{g,1})_\mr{id})$ of the embedding calculus tower.

\subsection{Bousfield--Kan homotopy spectral sequences} For every tower of fibrations of based spaces
\[\cdots \lra X_2 \lra X_1 \lra X_0\]
there is an ``extended'' spectral sequence of homotopy groups, as in \cite[IX.\S 4]{bousfieldkan}. Letting $F_n$ denote the homotopy fibre of $X_n \to X_{n-1}$ (with $X_{-1} = \ast$ by convention), we get sequences
\[\cdots \to \pi_2(X_{n-1}) \to \pi_1(F_n) \to \pi_1(X_n) \to \pi_1(X_{n-1}) \overset{\circlearrowright}\to \pi_0(F_n) \to \pi_0(X_n) \to \pi_0(X_{n-1}),\] 
with rightmost three terms pointed sets, next three terms groups, and the remainder abelian groups. The maps into $\pi_0$-terms are maps of pointed sets and the maps into $\pi_i$-terms for $i \geq 2$ are group homomorphisms, with $\pi_2(X_{n-1})$ mapping into the centre of $\pi_1(F_n)$. The sequence is exact in the sense that the kernel of a map is the image of the previous one, with ``kernel" taken to mean the inverse images of the basepoint/identity element. Finally, the decoration on the map $\pi_1(X_{n-1}) \smash{\overset{\circlearrowright}\to} \pi_0(F_n)$ is to indicate that it extends to an action of $\pi_1(X_{n-1})$ on $\pi_0(F_n)$; exactness here is the property that two elements of $\pi_0(F_n)$ are in the same orbit if and only if they map to the same element of $\pi_0(X_n)$. Let us call such a sequence an \emph{extended long exact sequence}.

These extended long exact sequences assemble to an \emph{extended exact couple} (in the sense of \cite[\S.IX.4.1]{bousfieldkan})
\[\begin{tikzcd} D^1 \arrow{rr}{i} & & D^1 \ar{ld}{j} \\
& E^1, \arrow{lu}{k} & \end{tikzcd} \qquad \parbox{5cm}{$D^1_{p,q} = \pi_{q-p}(X_p),$ \\
	$E^1_{p,q} = \pi_{q-p}(F_p)$.}\]
with $i$ of bidegree $(-1,1)$, $j$ of bidegree $(0,-1)$ and $k$ of bidegree $(0,0)$. We can iteratively form a derived couple by taking
\begin{align*}D^{r}_{p,q} &= \mr{im}(\pi_{q-p}(X_{p+r}) \to \pi_{q-p}(X_p)),\\
E^{r}_{p,q} &= \frac{\ker(\pi_{q-p}(F_p) \to \pi_{q-p}(X_p)/D^r_{p,q})}{\text{action of $\ker(\pi_{q-p+1}(X_{p-1}) \to \pi_{q-p+1}(X_{p-r-1}))$}},\end{align*}
the latter reducing to the cokernel of the boundary homomorphism as long as $q-p \geq 1$. For this to make sense, one needs Bousfield and Kan's crucial observation \cite[p.\ 259]{bousfieldkan} that the derived couple of an extended exact couple is again an extended exact couple.

The result is an extended spectral sequence, the \emph{Bousfield--Kan homotopy spectral sequence} 
\[E^1_{p,q} = \pi_{q-p}(F_p) \Longrightarrow \pi_{q-p}(\mr{holim}_p\, X_p).\]
Its properties are explained on \cite[p.\ 260]{bousfieldkan}. The differentials $d^r \colon E^r_{p,q} \to E^r_{p+r,q+r-1}$ are homomorphisms when $q-p \geq 2$, whose images are central when $q-p=2$. In these cases
\[E^{r+1}_{p,q} = \frac{E^r_{p,q} \cap \ker(d_r)}{E^r_{p,q} \cap \mr{im}(d_r)}.\]
When $q-p=1$, the differential $d^r$ extends to an action and
\[E^{r+1}_{p,q} = \frac{E^r_{p,p}}{\text{action of $E^r_{p-r,q-r+1}$}}.\]

Convergence conditions for this spectral sequence are described in \cite[IX.\S 5]{bousfieldkan} and \cite[Section 4]{bousfieldhomotopy}. \emph{Complete convergence} for $q-p \geq 1$ as in \cite[IX.\S 5.3]{bousfieldkan} means that $\pi_{q-p}(\mr{holim}_p\, X_p)$ is the limit of a tower of epimorphisms with kernels given by entries on the $E^\infty$-page. By \cite[Lemma IX.\S 5.4]{bousfieldkan} this holds for $q-p \geq 1$ if $\lim^1_r E^r_{p,q}$ vanishes (see \cite[IX.\S 2]{bousfieldkan} for a discussion of $\smash{\lim^1}$ in the non-abelian setting), a condition similar to that for strong convergence of a half-plane spectral sequence with entering differentials \cite[Theorem 7.1]{Boardman}. Complete convergence holds for example if there are only finitely many non-zero differentials into or out of each entry by \cite[Proposition IX.\S 5.7]{bousfieldkan}, as will be the case in the examples we consider.

One way for a tower of fibrations of based spaces to arise is by filtering the totalisation of a based cosimplicial space $Y_\bullet$: the $p$th space in the tower $\cdots \to \mr{Tot}(Y_\bullet)_1 \to \mr{Tot}(Y_\bullet)_0$ is given by
\[\mr{Tot}(Y_\bullet)_p = \{f_i \colon \Delta^i \to Y_i,\, \text{satisfying simplicial relations}\} \subset \prod_{i=0}^p \mr{map}(\Delta^i,Y_i).\]
As explained in \cite[X.\S 6,7]{bousfieldkan}, in this case
\[E^1_{p,q} = \pi_q(Y_p) \cap \ker(s^0) \cap \cdots \cap \ker(s^{p-1})\]
as long as $q \geq p \geq 0$, and is $0$ otherwise, where the $s^i$ denote the codegeneracy maps. For $q \geq 2$, the differential is induced by the alternating sum of the coface maps $d^i \colon \pi_q(Y_p) \to \pi_q(Y_{p+1})$, and thus the $E^2$-page is given by the cohomology of the cosimplicial abelian groups $E^1_{\ast,q}$. A similar description exists for $q=0,1$. Above we discussed when it completely converges to $\pi_{q-p}(\mr{Tot}(Y_\bullet))$ for $q-p \geq 1$; in particular this happens if there are only finitely many non-zero differentials into or out of each entry.

\subsection{The Federer spectral sequence} 

In \cite{federer}, Federer constructed an extended spectral sequence for the homotopy groups of the space $\mr{map}(X,Y)$ of maps $X \to Y$ based at $f$, in the case that $X$ is a finite CW-complex and $Y$ is a simple path-connected space:
\[E^2_{p,q} = H^p(X;\pi_{q}(Y)) \Longrightarrow \pi_{q-p}(\mr{map}(X,Y),f).\]

We shall need a variation of this spectral sequence for relative section spaces (the Federer spectral sequence is recovered by taking $E = X \times Y$ and $A = \varnothing$).
Such a spectral sequence has appeared before in \cite{schultz}. For a fibration $\pi \colon E \to B$, a subspace $A \subset B$, and a section $\sigma\vert_A \colon A \to E$, let us write $\mr{Sect}(\pi ; \sigma\vert_A)$ for the space of sections of $\pi$ extending $\sigma\vert_A$ in compact-open topology.

We will occasionally want to change the base of the fibration. Suppose $g \colon (B,A) \to (B',A')$ is a map of pairs, that is, a continuous map $g \colon B \to B'$ such that $g(A) \subset A'$. Then we can pull back a fibration $\pi' \colon E' \to B'$ with section $\sigma'\vert_{A'}$ along $g$ to obtain another fibration $g^* \pi' \colon g^* E' \coloneqq B \times_{B'} E' \to B$, with a section $g^* \sigma'|_A \colon A \to E$ induced by universal property of pullbacks:
\[\begin{tikzcd} A \arrow{rd}[description]{g^* \sigma'\vert_{A}} \arrow[bend right=15]{rdd}[swap]{\mr{inc}} \arrow[bend left=15]{rrd}{\sigma' \vert_{A'}\circ g\vert_A } &[10pt] & \\
& g^* E' \rar{\tilde{g}} \dar{g^* \pi'} & E' \dar{\pi'} \\
& B \rar{g} & B'.\end{tikzcd}\]
The following is proven by an elementary argument:

\begin{lemma}If both $A \hookrightarrow B$ and $A' \hookrightarrow B'$ are Hurewicz cofibrations and $g \colon (B,A) \to (B',A')$ is a homotopy equivalence of pairs, then the induced map
\[\mr{Sect}(\pi' ; \sigma'\vert_{A'}) \lra \mr{Sect}(g^* \pi' ; g^* \sigma'\vert_{A})\]
is a weak equivalence.
\end{lemma}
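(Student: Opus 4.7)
The plan is to realise both section spaces as fibres of fibrations between mapping spaces, and then exploit the assumption that $g$ is a homotopy equivalence of pairs to produce a weak equivalence on these fibres.

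By the universal property of the pullback $g^*E' = B \times_{B'} E'$, a section $s \colon B \to g^*E'$ of $g^*\pi'$ corresponds bijectively to a map $\tilde{s} \colon B \to E'$ satisfying $\pi' \circ \tilde{s} = g$, and under this correspondence the boundary condition $s|_A = g^*\sigma'|_A$ translates to $\tilde{s}|_A = \sigma'|_{A'} \circ g|_A$. Moreover, the map in the statement is identified with precomposition with $g$, sending $s \colon B' \to E'$ to $s \circ g \colon B \to E'$.

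Now consider the commutative square of topological spaces
\[\begin{tikzcd}
\mr{map}(B', E') \arrow{r}{g^*} \arrow{d}{p'} & \mr{map}(B, E') \arrow{d}{p} \\
\mr{map}(B', B') \times \mr{map}(A', E') \arrow{r}{g^* \times (g|_A)^*} & \mr{map}(B, B') \times \mr{map}(A, E'),
\end{tikzcd}\]
where the vertical maps send $f \mapsto (\pi' \circ f, f|_{A'})$ and $f \mapsto (\pi' \circ f, f|_A)$ respectively. The fibre of $p'$ over $(\mr{id}_{B'}, \sigma'|_{A'})$ is $\mr{Sect}(\pi'; \sigma'|_{A'})$, and this point is mapped by the bottom map to $(g, \sigma'|_{A'} \circ g|_A)$, whose fibre under $p$ is identified with $\mr{Sect}(g^*\pi'; g^*\sigma'|_A)$ by the previous paragraph. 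Hence the induced map on fibres is precisely the map in the statement.

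It remains to show both vertical maps are Hurewicz fibrations, and both horizontal maps are weak equivalences, after which a standard five-lemma argument on the long exact sequences yields the conclusion. For the horizontal maps, the top is a weak equivalence because $g \colon B \to B'$ is a homotopy equivalence; for the bottom, it is a product of weak equivalences, using that the homotopy equivalence of pairs $g$ restricts to a homotopy equivalence $g|_A \colon A \to A'$. For the vertical maps, exponential adjunction of a standard homotopy lifting problem reduces to finding a lift in a diagram where the left side is the pushout-product inclusion $(X \times \{0\} \times B) \cup (X \times I \times A) \hookrightarrow X \times I \times B$ against the fibration $\pi'$; this inclusion is a trivial Hurewicz cofibration (the pushout-product of the Hurewicz cofibration $A \hookrightarrow B$ with the trivial Hurewicz cofibration $X \times \{0\} \hookrightarrow X \times I$), so the lift exists by the Str\o m model structure on topological spaces.

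The main technical obstacle is checking the fibration property of the vertical maps $p$ and $p'$, which is precisely where the Hurewicz cofibration hypothesis on $A \hookrightarrow B$ and $A' \hookrightarrow B'$ is needed; everything else is a formal consequence of the homotopy invariance of mapping spaces.
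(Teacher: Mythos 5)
Your overall strategy — realising the two section spaces as fibres of a map of fibrations between mapping spaces and then invoking the long exact sequence and the five lemma — is sound, and close to the quickest way to prove this. However, there is a concrete gap in the claim that the vertical maps $p$ and $p'$ are Hurewicz fibrations onto the full products. The map $p' \colon \mr{map}(B',E') \to \mr{map}(B',B') \times \mr{map}(A',E')$ cannot be a fibration: its image lies inside the closed subspace of pairs $(\phi,\psi)$ satisfying $\pi'\circ\psi = \phi\vert_{A'}$, and a homotopy in the target can start in the image and immediately leave that subspace, whereupon no lift exists. This is exactly where your adjunction argument breaks down. Unwinding the homotopy lifting problem, one is led to a square whose left side is the pushout-product inclusion $(X\times\{0\}\times B')\cup(X\times I\times A')\hookrightarrow X\times I\times B'$, with top map glued from $\hat k$ (the adjoint of the initial lift) and $\hat h_2$ (the adjoint of the $\mr{map}(A',E')$-component), bottom map $\hat h_1$ (the adjoint of the $\mr{map}(B',B')$-component), and right side $\pi'$; but this square is \emph{not} required to commute, because nothing forces $\pi'\circ\hat h_2 = \hat h_1\vert_{X\times I\times A'}$. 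Without commutativity, the lifting property of $\pi'$ gives nothing.

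The fix is to replace the products by the fibred products
\[
\mr{map}(B',B')\times_{\mr{map}(A',B')}\mr{map}(A',E'), \qquad \mr{map}(B,B')\times_{\mr{map}(A,B')}\mr{map}(A,E'),
\]
where the maps are restriction $\phi\mapsto\phi\vert_{A'}$ and post-composition $\psi\mapsto\pi'\circ\psi$. Your square still commutes, the basepoints still lie in these subspaces, and the fibres of $p'$ and $p$ over them are unchanged. Now the offending square \emph{does} commute on $X\times I\times A'$ by the fibred-product constraint, so the pushout-product argument goes through and $p'$, $p$ are Hurewicz fibrations. For the bottom horizontal map, note that each fibred product is in fact a homotopy pullback, since $\mr{map}(B',B')\to\mr{map}(A',B')$ (resp.\ $\mr{map}(B,B')\to\mr{map}(A,B')$) is a Hurewicz fibration by the cofibration hypothesis on $A'\hookrightarrow B'$ (resp.\ $A\hookrightarrow B$); as $g^*$ and $(g\vert_A)^*$ are weak equivalences on all three legs of each cospan (precisely because $g$ is a homotopy equivalence of pairs, so $g$ and $g\vert_A$ are both homotopy equivalences), the induced map of homotopy pullbacks is a weak equivalence. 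With both horizontals weak equivalences and both verticals fibrations, the map of fibres over the chosen basepoints is a weak equivalence, which is the statement.
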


\begin{theorem}\label{thm:Federer}
Suppose $i \colon A \to B$ is a Hurewicz cofibration such that $(B,A)$ is homotopy equivalent to a relative CW pair, and let $\pi \colon E \to B$ be a fibration with $1$-connected fibres and $\sigma \colon B \to E$ be a section. Then there is an extended spectral sequence
\[E^2_{p,q} = \begin{cases}
H^{p}\left(B,A;\underline{\pi_{q}}(\pi)\right) & \text{if $p \geq 0$ and $q-p \geq 0$,}\\
0 & \text{otherwise,}
\end{cases} \quad \Longrightarrow \pi_{q-p}(\mr{Sect}(\pi; \sigma\vert_A),\sigma),\]
with differentials $d^r \colon E^r_{p,q} \to E^r_{p+r,q+r-1}$. Here $\underline{\pi_{q}}(\pi)$ denotes the local system 
\begin{align*} \underline{\pi_{q}}(\pi) \colon \Pi(B) &\lra \text{$\cat{Set}_\ast$, $\cat{Gr}$, or $\cat{Ab}$} \\
b &\longmapsto \pi_{q}(\pi^{-1}(b), \sigma(b)). \end{align*}
If $(B,A)$ is homotopy equivalent to a finite-dimensional relative CW pair, then this spectral sequence converges completely for $q-p \geq 1$.\end{theorem}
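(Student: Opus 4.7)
The plan is to reduce to the case in which $(B,A)$ is a genuine relative CW complex and then build a tower of fibrations by restricting sections to skeleta. Let $B_n$ denote the union of $A$ with all cells of $B$ of dimension $\leq n$ and set $X_n := \mr{Sect}(\pi|_{B_n};\sigma|_A)$. Since $B_{n-1}\hookrightarrow B_n$ is a Hurewicz cofibration, each $X_n \to X_{n-1}$ is a Serre fibration and $X_{-1} = \{\sigma|_A\}$; the resulting tower of based spaces satisfies $\mr{holim}_n X_n \simeq \mr{Sect}(\pi;\sigma|_A)$ (an equality when the tower is finite). I would then apply the Bousfield--Kan extended homotopy spectral sequence of this tower.

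The main computational step is to identify the layer $F_n := \mr{hofib}_{\sigma|_{B_{n-1}}}(X_n \to X_{n-1})$. For each $n$-cell $e^n_\alpha$ with characteristic map $\Phi_\alpha\colon (D^n, S^{n-1}) \to (B_n, B_{n-1})$, choose an interior basepoint $b_\alpha$ and set $F_\alpha := \pi^{-1}(b_\alpha)$. Contractibility of $D^n$ supplies a fibre-homotopy trivialisation $\Phi_\alpha^* \pi \simeq D^n \times F_\alpha$, under which extending $\sigma|_{B_{n-1}} \circ \Phi_\alpha|_{S^{n-1}}$ over the cell becomes the problem of extending a null-homotopic map $S^{n-1} \to F_\alpha$ to a map $D^n \to F_\alpha$. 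Because $F_\alpha$ is $1$-connected, the space of such extensions, based at the one coming from $\sigma$, is homotopy equivalent to $\Omega^n F_\alpha$, and hence $F_n \simeq \prod_\alpha \Omega^n F_\alpha$. This yields
\[ E^1_{p,q} \;=\; \pi_{q-p}(F_p) \;=\; \prod_\alpha \pi_q(F_\alpha) \;=\; C^p_{\mr{cell}}(B, A;\,\ul{\pi_q}(\pi)), \]
and identifying the $d^1$-differential with the cellular coboundary then gives $E^2_{p,q} = H^p(B, A;\,\ul{\pi_q}(\pi))$.

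When $(B,A)$ is homotopy equivalent to a finite-dimensional relative CW pair, the tower stabilises after finitely many steps, so only finitely many differentials enter or leave each bidegree, and the Bousfield--Kan complete-convergence criterion applies for $q-p \geq 1$. The main obstacle I expect is the identification of $d^1$ with the cellular coboundary: this requires chasing the connecting homomorphism of the fibre sequence $F_{p+1} \to X_{p+1} \to X_p$ through the identifications $F_n \simeq \prod_\alpha \Omega^n F_\alpha$ and matching it with the effect of the attaching maps on the $p$-skeleton, which is folklore but fiddly to execute cleanly with local coefficients, especially when $B \setminus A$ is disconnected and when low-degree entries involve pointed sets and groups rather than abelian groups. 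The reduction from a pair homotopy equivalent to a relative CW pair to an honest one is immediate from the preceding lemma on invariance of relative section spaces under homotopy equivalences of pairs into cofibrations, so that step is essentially free.
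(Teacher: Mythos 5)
Your approach is genuinely different from the paper's, though both are standard methods of producing such a spectral sequence. The paper forms a cosimplicial space directly out of $\mr{Sing}(A)\subset \mr{Sing}(B)$ (with $p$-cosimplices the space of lifts $\{f_\tau\colon\Delta^p\to E\}_{\tau\in\mr{Sing}(B)_p}$ of singular simplices that agree with $\sigma\circ\tau$ on $\mr{Sing}(A)$), totalises, and applies the Bousfield--Kan spectral sequence for the $\mr{Tot}$-tower. You instead choose a CW structure on $(B,A)$ and filter by skeleta, which is Federer's original method. Two remarks on what each buys. First, the technical difficulty you flag at the end --- that $d^1$ is the cellular coboundary with local coefficients --- is precisely what the cosimplicial approach dissolves: there the $d^1$ differential is by construction the alternating sum of coface maps, i.e.\ the singular coboundary, so one reads off $E^2_{p,q}=H^p(B,A;\underline{\pi_q}(\pi))$ with no further work. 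In your set-up this identification is a real obstruction-theoretic lemma; it is true and classical, but it is where the work sits. Second, and not visible from the statement alone, the paper relies heavily on the naturality of this spectral sequence under maps of fibrations covering maps of pairs $(B,A)\to(B',A')$ (that naturality is how the spectral sequence is transported along the maps \eqref{eq:Var1stVariable} and \eqref{eq:Var2ndVariable} between embedding-calculus layers). The $\mr{Sing}$-based construction is manifestly functorial; the skeletal filtration is not natural in continuous maps without cellular-approximation arguments, so your approach would need additional care to deliver the naturality used downstream.

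One small reassurance: the low-degree pointed-set/group subtleties you anticipate are largely absent here. Because the fibres of $\pi$ are assumed $1$-connected, each $E^1_{p,q}\cong\prod_\alpha\pi_q(F_\alpha)$ vanishes for $q\leq 1$ and is abelian for $q\geq 2$; you still need the Bousfield--Kan extended-exact-couple formalism to make sense of the fringe $q-p\in\{0,1\}$ (as does the paper), but there is no genuine non-abelian coefficient issue. Your identification of the layers (via fibre-homotopy trivialising over each cell, using the section $\sigma$ itself to witness nullhomotopy of the boundary map and to base $\Omega^p F_\alpha$) is correct, and the reduction to an honest CW pair via the preceding lemma is fine. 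So this is a valid alternative route, at the cost of the $d^1$-identification and at the cost of naturality.
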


The observant reader may have noticed we did not define $H^{p}(B,A;\underline{\pi_{q}}(\pi))$ for $q=0,1$. Since the functors $\underline{\pi_q}(\pi)$ for $q=0,1$ take trivial values by the assumption that the fibres are $1$-connected, we will take these groups to be $0$.

\begin{proof}
Out of the singular simplicial sets $\mr{Sing}(A) \subset \mr{Sing}(B)$, we can form a cosimplicial space with $p$-cosimplices given by the subspace of 
\[\mr{map}(\Delta^p \times \mr{Sing}(B)_p,E) \cong \prod_{\tau \in \mr{Sing}(B)_p} \mr{map}(\Delta^p,E)\]
consisting of collections of maps $\{f_\tau \colon \Delta^p \to E\}_{\tau \in \mr{Sing}(B)_p}$ that satisfy $\pi \circ f_\tau = \tau$, and $f_\tau = \sigma \circ \tau$ when $\tau \in \mr{Sing}(A)$. Its totalisation is the relative space of sections $\mr{Sect}(\epsilon^* \pi;\epsilon^* \sigma|_{|\mr{Sing}(A)|})$ of the pullback of $\pi$ along $\epsilon \colon |\mr{Sing}(B)| \to B$. The assumption that $(B,A)$ is homotopy equivalent to a relative CW pair implies that $(|\mr{Sing}(B)|,|\mr{Sing}(A)|) \to (B,A)$ is a homotopy equivalence of pairs. By the previous lemma, the totalisation of the cosimplicial space is weakly equivalent to $\mr{Sect}(\pi; \sigma\vert_A)$.
	
Form the Bousfield--Kan spectral sequence for its totalisation, whose $E^1_{p,q}$-entry is given for $0 \leq p \leq q$ by the subset of $\prod_{\tau \in \mr{Sing}(B)_p} \pi_q(\mr{Sect}(\tau^*\pi),\tau^*\sigma)$ of those elements that are trivial when $\tau$ is an element of $\mr{Sing}(A)_p$. With the differential given by the alternating sum of the coface maps, we see that the $E^2_{p,q}$-entry will be $H^p(B,A;\smash{\underline{\pi_{q}}}(\pi))$ (as we assumed that the fibre of $\pi$ is $1$-connected, this is just cohomology with a local coefficient system of abelian groups).

If $(B,A)$ is homotopy equivalent to a finite-dimensional relative CW complex of dimension $d$ then $E^2_{p,q}=0$ for $p > d$, so there are finitely-many non-zero differentials out of each entry and hence the spectral sequence converges completely by \cite[Lemma IX.5.7]{bousfieldkan}.
\end{proof}

This spectral sequence is natural in maps of fibrations with section
\[\begin{tikzcd} 
E \arrow{r}{F} \arrow{d}{\pi}& E' \arrow{d}{\pi'} \\
B \arrow[bend left=25]{u}{\sigma} \rar{f} & \arrow[bend left=25]{u}{\sigma'} B', 
\end{tikzcd}\]
in the following sense. As explained above, given such a commutative diagram we can produce a pullback fibration $f^* \pi'$ with section $f^* \sigma'$. Then there is a zigzag of maps of spectral sequences as above, given on the $E^2$-page by 
\[\begin{tikzcd} 
H^p\left(B',A';\underline{\pi_{p+q}}(\pi')\right) \rar{f^*} & H^p\left(B,A;f^*\underline{\pi_{p+q}}(\pi')\right) \ar[equal]{d}\\[-10pt]
H^p\left(B,A;\underline{\pi_{p+q}}(\pi)\right) \rar{F_*}& H^p\left(B,A;\underline{\pi_{p+q}}(f^*\pi')\right) 
\end{tikzcd}\]
and converging to 
\[\pi_{p+q}(\mr{Sect}(\pi'; \sigma'\vert_{A'}),{\sigma'}) \to \pi_{p+q}(\mr{Sect}(f^* \pi'; f^*\sigma'\vert_A), {f^* \sigma'}) \leftarrow \pi_{p+q}(\mr{Sect}(\pi; \sigma\vert_A),{\sigma}).\]

\subsection{Application to layers}\label{sec:SSLayers} 

We shall now apply this to study ${L}_k(\mr{Emb}_{K}(M,N)_{\iota})$ as in Section \ref{sec:layers-bundles}. In terms of the fibration
\[t^\iota_k \colon \iota^* {Z}_k(N) \lra C_k(M),\]
with section $s^\iota_k$, it is given by space of sections which are equal to $s^\iota_k$ near the fat diagonal or when at least one particle is near $K$. The fibre of the map $t_k^\iota$ over a configuration $x_{\ul{k}} = (x_1,\ldots,x_k)$ is $\mr{tohofib}_{I \subset x_{\ul{k}}} \mr{Emb}(-,N)$, which by Theorem B of \cite{goodwillieklein} is $(-(d-3)+k(d-2))$-connected, where $d = \dim(N)$, so at least 1-connected for all $k \geq 2$ as long as $d \geq 2$. 

We shall rephrase the condition on the support of sections in Proposition \ref{prop:emb-calc-fibers} as being relative to a certain subspace $\nabla_\partial$. This subspace will not be unique, but any two choices will admit a common homotopy equivalent refinement. To see the resulting definition coincides with the definition in Proposition \ref{prop:emb-calc-fibers}, we will observe that subspaces of the form $\nabla_\partial$ are cofinal in the poset of open neighbourhoods considered in that proposition.

To define the pair $(C_k(M),\nabla_\partial)$ and understand its homotopy type, we use the Fulton--MacPherson compactifications of configuration spaces, discussed in detail for smooth manifolds without boundary in \cite{sinha} and adapted without much difficulty to smooth manifolds with boundary. Fixing a proper neat embedding $M \hookrightarrow [0,\infty) \times \bR^{N-1}$, recording the location, relative angles between pairs of particles and relative distances between triples of particles gives an inclusion
\[\mr{Emb}(\ul{k},M) \lra ([0,\infty) \times \bR^{N-1})^k \times (S^{N-1})^{{k \choose 2}} \times [0,1]^{{k \choose 3}}.\]
The Fulton--MacPherson compactification $\mr{Emb}[\ul{k},M]$ of $\mr{Emb}(\ul{k},M)$ is the closure of its image. This is a smooth manifold with corners and free $\fS_k$-action. The quotient $C_k[M]$ is the Fulton--MacPherson compactification of $C_k(M)$ and likewise a smooth manifold with corners. 

In particular, $C_k[M]$ is a PL-manifold with boundary and thus there exists a closed collar $C \subset C_k[M]$ of the boundary $\partial C_k[M]$, unique up to isotopy. This is PL-homeomorphic to $\partial C_k[M] \times [0,1]$, with $\partial C_k[M] \times \{0\}$ corresponding to $\partial C_k[M] \subset C_k[M]$, and we take $C' \subset C$ to be the inverse image of $\partial C_k[M] \times [0,\frac{1}{2}]$ under this homeomorphism. We then define
\[\nabla_\partial \coloneqq C' \cap C_k(M),\]
which is PL-homeomorphic to $\partial C_k[M] \times (0,\frac{1}{2}]$. We use both $C$ and $C'$ in Lemma \ref{lem:emb-to-prod}; in the lemma below one may use $C$ instead of $C'$.

\begin{lemma}There is an equivalence of pairs $(C_k[M],\partial C_k[M]) \simeq (C_k(M),\nabla_\partial)$, and $(C_k[M],\partial C_k[M])$ admits the structure of a finite-dimensional CW pair.\end{lemma}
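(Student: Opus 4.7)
The plan is to prove the two assertions separately. For the equivalence of pairs, I would construct explicit maps in both directions using the closed collar $C \cong \partial C_k[M] \times [0,1]$ (with the boundary corresponding to $t=0$). Define
\[f \colon (C_k[M], \partial C_k[M]) \lra (C_k(M), \nabla_\partial)\]
to be the identity outside $C$ and $(x, t) \mapsto (x, \max(\tfrac{1}{4}, t))$ on $C$. This pushes the boundary slightly inward, lands in $C_k(M)$ since the collar coordinate never vanishes, and sends $\partial C_k[M] = \partial C_k[M] \times \{0\}$ into $\nabla_\partial \cong \partial C_k[M] \times (0, \tfrac{1}{2}]$. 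In the reverse direction, define
\[g \colon (C_k(M), \nabla_\partial) \lra (C_k[M], \partial C_k[M])\]
to be the identity outside $C$ and $(x, t) \mapsto (x, \max(0, 2t - 1))$ on $C \cap C_k(M) = \partial C_k[M] \times (0, 1]$; this collapses the inner half-collar, namely $\nabla_\partial$, onto $\partial C_k[M]$, and it agrees with the identity at $t=1$ so that it is globally continuous. The compositions $f \circ g$ and $g \circ f$ differ from the respective identities only in the collar coordinate, and I would exhibit the required pair-homotopies by straight-line interpolation there. The pair conditions reduce to two easy observations: convex combinations of values in $(0, \tfrac{1}{2}]$ stay in $(0, \tfrac{1}{2}]$, and convex combinations with a positive summand remain positive, so none of the intermediate maps leaves $C_k(M)$ or $\nabla_\partial$ when required to stay there.

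For the second assertion, $C_k[M]$ is a smooth manifold with corners of dimension $k \cdot \dim(M)$; after smoothing corners it is a smooth manifold with boundary, and by Whitehead's triangulation theorem admits a smooth triangulation in which $\partial C_k[M]$ appears as a subcomplex. This yields the required finite-dimensional CW structure on the pair $(C_k[M], \partial C_k[M])$.

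The only real obstacle is the careful bookkeeping in the first step: one has to check that $f$, $g$, and the straight-line homotopies between their compositions and the identities all preserve the two subspace constraints $(\cdot, \partial C_k[M])$ and $(\cdot, \nabla_\partial)$ at every moment. Once the explicit formulas above are written down these verifications are routine, and the CW part of the statement is essentially automatic from classical smoothing/triangulation results.
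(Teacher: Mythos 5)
Your proposal is correct and takes essentially the same approach as the paper: the paper simply invokes the standard fact that for a PL-manifold $N$ with boundary and collar $C'$, one has a homotopy equivalence of pairs $(N\setminus\partial N, C'\setminus\partial N)\simeq(N,\partial N)$, and obtains the CW structure from a PL-triangulation of the manifold with corners. You are unpacking that observation into explicit collar-coordinate formulas, and reaching the triangulation via corner-smoothing and Whitehead's theorem rather than a direct PL-triangulation, but the underlying ideas are the same.
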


\begin{proof}Picking a PL-triangulation proves the second claim. The first claim follows from the observation that if $N$ is a PL-manifold with boundary $\partial N$ and $C'$ is a collar on $\partial N$, there is a homotopy equivalence of pairs $(N \setminus \partial N,C' \setminus \partial N) \simeq (N,\partial N)$.
\end{proof}

The inclusion $\nabla_\partial \hookrightarrow C_k(M)$ is a Hurewicz cofibration, so Theorem \ref{thm:Federer} applies to the fibration $t^\iota_k \colon \iota^* Z_k(N) \to C_k(M)$, giving a completely convergent extended spectral sequence
\begin{equation}\label{eqn:federer-applied} 
E^2_{p,q} = \begin{cases}
H^{p}\left(C_k(M),\nabla_\partial;\underline{\pi_{q}}(t_k^\iota)\right) & \text{$p \geq 0$, $q{-}p \geq 0$}\\
0 & \text{otherwise,}
\end{cases}\Rightarrow \pi_{q-p}({L}_k(\mr{Emb}_K(M,N)_{\iota})).
\end{equation}

If $M$ is 1-connected of dimension $\geq 3$, then $\mr{Emb}(\underline{k},M)$ is also $1$-connected and the local system $\underline{\pi_{q}}(t_k^\iota)$ may be trivialised when pulled back along the principal $\fS_k$-bundle
\[\pi \colon \mr{Emb}(\underline{k},M) \lra C_k(M).\]
Let $\tilde{\nabla}_\partial$ be the inverse image of $\nabla_\partial$ under the map $\pi$. By transfer we obtain an isomorphism 
\begin{equation}\label{eqn:e2-explicit} E^2_{p,q} \otimes \bQ \cong \left[H^p(\mr{Emb}(\underline{k},M), \tilde{\nabla}_\partial;\bQ) \otimes_\bQ \pi_q(\mr{tohofib}_{I \subset x_{\ul{k}}} \mr{Emb}(I,N))\otimes \bQ\right]^{\fS_k}.\end{equation}
This is an identification only of the rationalised $E^2$-page. We shall not attempt to ``rationalise'' the entire spectral sequence or any subsequent pages, which might not make sense when $p-q=0,1$.

Let us consider the functoriality of the above under embeddings, using the notation of Section \ref{sec:SSLayers}. An embedding $f \colon M' \to M$ induces a map of fibrations \eqref{eq:Var1stVariable}, giving an morphism of spectral sequences which on $E^2$ is given by $H^*(f_*; \mr{id})$ and converges to the map induced by ${L}_k(f)^*$. On the other hand, an embedding $g \colon N \to N'$ induces a map of fibrations \eqref{eq:Var2ndVariable}, giving an morphism of spectral sequences which on $E^2$ is given by $H^*(\mr{id}; (g_*)_*)$ and converges to the map induced by ${L}_k(g)_*$.

\subsection{Homotopy and cohomology groups of configuration spaces} \label{sec:homotopy-cohomology-conf} In this section we obtain qualitative results on configuration spaces, with the goal of eventually applying these to the description \eqref{eqn:e2-explicit} of the $E^2$-page of \eqref{eqn:federer-applied}. In particular, we shall identify the groups
\[H^*(\mr{Emb}(\ul{k},M),\tilde{\nabla}_\partial;\bQ) \quad \text{ and } \quad \pi_*(\mr{tohofib}_{I \subset x_{\ul{k}}}\, \mr{Emb}(I,N)) \otimes \bQ,\]
in a sufficiently natural form that we can understand the $\Gamma_g$-actions for $M = N = W_{g,1}$. For the sake of Section \ref{sec:nilpotent} we will use coefficients in an arbitrary commutative ring $\bk$.

\subsubsection{Relative cohomology} Let $\bar{\Delta}_\partial$ be the closure in $M^k$ of $\tilde{\nabla}_\partial \subset \mr{Emb}(\ul{k},M)$. This contains the closed subset
\[\Delta_\partial = \left\{(x_1,\ldots,x_k) \in M^k \middle| \parbox{3.7cm}{\centering $x_i = x_j$ for some $i \neq j$ \\
	or $x_i \in K$ for some $i$}\right\}.\]

\begin{lemma}\label{lem:emb-to-prod} 
There is an isomorphism
	\[H^*(M^k,\Delta_\partial;\bk) \cong H^*(\mr{Emb}(\ul{k},M),\tilde{\nabla}_\partial;\bk),\]
of $\bk$-modules with commuting $\fS_k$- and $\pi_0(\mr{Diff}_\partial(M))$-actions.\end{lemma}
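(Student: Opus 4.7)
I will construct the isomorphism as a zigzag of maps of pairs,
\[(\mr{Emb}(\ul{k},M), \tilde{\nabla}_\partial) \hookrightarrow (M^k, \bar{\Delta}_\partial) \hookleftarrow (M^k, \Delta_\partial),\]
induced by the obvious inclusions, and show that each induces an isomorphism on $H^*(-;\bk)$. The key geometric input is the Fulton--MacPherson compactification recalled just above the lemma: the blow-down map $\beta \colon \mr{Emb}[\ul{k},M] \to M^k$ is a proper surjection which restricts to a homeomorphism over $\mr{Emb}(\ul{k},M)$, and which sends $\partial\mr{Emb}[\ul{k},M]$ onto $\Delta_\partial$. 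Writing $\tilde{C}'\subset \mr{Emb}[\ul{k},M]$ for the (closed) lift of the collar neighbourhood $C'\subset C_k[M]$ of the boundary, I will first observe that $\bar{\Delta}_\partial = \beta(\tilde{C}')$: the right-hand side is closed (as $\beta$ is proper), contains $\tilde{\nabla}_\partial$, and conversely $\tilde{\nabla}_\partial$ is dense in $\tilde{C}'$ since $\tilde{C}' \cong \partial\mr{Emb}[\ul{k},M] \times [0,\nicefrac{1}{2}]$ and $\tilde{\nabla}_\partial$ corresponds to the complement of $\partial\mr{Emb}[\ul{k},M]\times\{0\}$.

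For the right-hand arrow, I will show that $\Delta_\partial$ is a strong deformation retract of $\bar{\Delta}_\partial$. The collar identification gives a canonical retraction $\rho_s \colon \tilde{C}' \to \tilde{C}'$, $(x,t) \mapsto (x,(1-s)t)$, onto $\partial\mr{Emb}[\ul{k},M]$. Because $\beta$ identifies points only over $\Delta_\partial$, i.e.\ only on the slice $t=0$ where $\rho_s$ acts as the identity, this retraction descends through $\beta$ to a well-defined strong deformation retract $\bar{\Delta}_\partial \to \Delta_\partial$. The five lemma applied to the long exact sequences of the pairs then gives $H^*(M^k,\Delta_\partial) \cong H^*(M^k,\bar{\Delta}_\partial)$.

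For the left-hand arrow, I will apply excision, excising the closed set $\Delta_\partial$ from the pair $(M^k, \bar{\Delta}_\partial)$. To do so I must check that $\Delta_\partial \subset \mr{int}(\bar{\Delta}_\partial)$: given $p \in \Delta_\partial$, choose an open neighbourhood $U$ of the compact set $\beta^{-1}(p) \subset \partial\mr{Emb}[\ul{k},M]$ contained in the interior of $\tilde{C}'$; then $\beta(\mr{Emb}[\ul{k},M]\setminus U)$ is closed in $M^k$ and its complement is an open neighbourhood of $p$ all of whose points have $\beta$-fibre inside $\tilde{C}'$, hence lie in $\bar{\Delta}_\partial$. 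Excision then yields
\[H^*(M^k,\bar{\Delta}_\partial) \cong H^*(M^k \setminus \Delta_\partial,\bar{\Delta}_\partial \setminus \Delta_\partial) = H^*(\mr{Emb}(\ul{k},M), \tilde{\nabla}_\partial),\]
since $\beta$ is a homeomorphism over $\mr{Emb}(\ul{k},M) = M^k\setminus\Delta_\partial$ and $\bar{\Delta}_\partial \cap \mr{Emb}(\ul{k},M)=\tilde{\nabla}_\partial$.

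For the equivariance statement, every construction in the above argument is natural: the Fulton--MacPherson compactification and its blow-down are functorial for embeddings (in particular for self-diffeomorphisms fixing the boundary) and carry the evident $\fS_k$-action by permutation of factors, so in particular $\Delta_\partial$, $\bar{\Delta}_\partial$, and $\tilde{\nabla}_\partial$ are all $\fS_k$- and $\mr{Diff}_\partial(M)$-invariant. The only mildly non-canonical choice is the collar $C'$, but any two choices are isotopic, and since we only need to record the $\pi_0(\mr{Diff}_\partial(M))$-action on cohomology, the induced map is independent of these choices. The main potential obstacle is verifying that the collar-based deformation retract really descends through the blow-down, which I expect to be a short direct check once one observes that $\beta$ is injective off $\{t=0\}$; everything else is a routine application of excision and the five lemma.
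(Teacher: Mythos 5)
Your argument is correct and in essence the same as the paper's: both navigate the zigzag of pairs $(\mr{Emb}(\ul{k},M),\tilde{\nabla}_\partial) \hookrightarrow (M^k,\bar{\Delta}_\partial) \hookleftarrow (M^k,\Delta_\partial)$ and prove that each inclusion is a cohomology isomorphism using the Fulton--MacPherson blow-down $\mu$, a collar retraction, and excision. The only differences are in the bookkeeping: you excise $\Delta_\partial$ itself (checking $\Delta_\partial \subset \mr{int}(\bar{\Delta}_\partial)$ via properness of $\mu$) and deformation-retract $\bar{\Delta}_\partial$ onto $\Delta_\partial$ before invoking the five lemma, whereas the paper excises the complementary compact region $\mr{Emb}(\ul{k},M)\setminus \mr{int}(\tilde{C}')$ and constructs a descended collar homotopy $\overline{H_1}$ as an explicit homotopy inverse of pairs.
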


\begin{proof}
Recall the collar $C \subset C_k[M]$ is homeomorphic to $\partial C_k[M] \times [0,1]$ and under this identification $C'$ is given by $\partial C_k[M] \times [0,\frac{1}{2}]$. We shall use these identifications. The collars $C,C'$ lift to $\fS_k$-equivariant collars $\tilde{C},\tilde{C}'$ of the boundary in $\mr{Emb}[\ul{k},M]$. Consider now the homotopy
	\begin{align*} H_t \colon \mr{Emb}[\ul{k},M] &\lra \mr{Emb}[\ul{k},M] \\
	\vec{x} &\longmapsto \begin{cases} \left(\vec{y},\frac{\min(0,s-t/2)}{(1-t/2)}\right) & \text{if $\vec{x} = (\vec{y},s) \in \tilde{C} \cong \partial \mr{Emb}[\ul{k},M] \times [0,1]$} \\
	\vec{x} & \text{otherwise},\end{cases}\end{align*}
which pushes points near the boundary of $\mr{Emb}[\ul{k},M]$ into the boundary using the collar. By definition of $\mr{Emb}[\ul{k},M]$ there is a ``macroscopic position'' map $\mu \colon \mr{Emb}[\ul{k},M] \to M^k$, which records the underlying location of the configuration points. Observe that for $\vec{x} \in M^k$, the element $\mu \circ H_t \colon \mr{Emb}[\ul{k},M] \to M^k$ takes the same values on all elements of $\mu^{-1}(\vec{x})$. Thus $H_1$ extends to a map $\overline{H_1} \colon M^k \to M^k$, and $H_t$ extends to a homotopy $\overline{H_t}$ of such maps. These exhibit $\overline{H_1}$ as a homotopy inverse to the inclusion of pairs $i : (M^k,\Delta_\partial) \to (M^k,\bar{\Delta}_\partial)$.

Now $(\mr{Emb}(\ul{k},M) \setminus \mr{int}(\tilde{C}'),(\mr{Emb}(\ul{k},M) \setminus \mr{int}(\tilde{C}')) \cap \tilde{C}) \to (\mr{Emb}(\ul{k},M),\tilde{\nabla}_\partial)$ is a homotopy equivalence of pairs, and $\mu$ induces a map of pairs
\[\left(\mr{Emb}(\ul{k},M) \setminus \mr{int}(\tilde{C}'),(\mr{Emb}(\ul{k},M) \setminus \mr{int}(\tilde{C}')) \cap \tilde{C}\right)  \lra \left(M^k,\bar{\Delta}_\partial\right).\]
The induced maps on cohomology are isomorphisms:
\begin{align*}H^*(\mr{Emb}(\ul{k},M),\tilde{\nabla}_\partial;\bk) &\overset{\cong}{\longleftarrow} H^*(\mr{Emb}(\ul{k},M) \setminus \mr{int}(\tilde{C}'),(\mr{Emb}(\ul{k},M) \setminus \mr{int}(\tilde{C}')) \cap \tilde{C};\bk) \\
&\overset{\cong}{\lra} H^*(M^k,\bar{\Delta}_\partial;\bk) \\
&\overset{\cong}{\longleftarrow} H^*(M^k,\Delta_\partial;\bk),\end{align*}
with second an isomorphism by excision. Since all maps are equivariant for the actions of $\fS_k$ and diffeomorphisms supported away from the boundary, these are isomorphisms of $\bk$-modules with commuting $\fS_k$- and $\pi_0(\mr{Diff}_\partial(M))$-actions\end{proof}

Let us from now on suppose that the cohomology of $M$ consists of free $\bk$-modules, as it is for $W_{g,1} = \# S^n \times S^n \setminus \mr{int}(D^{2n})$. Under this assumption, the K\"unneth theorem gives an isomorphism $H^*(M^k;\bk) \cong H^*(M;\bk)^{\otimes k}$. Since the action of the mapping class group $\pi_0(\mr{Diff}_\partial(M))$ on $H^*(M^k;\bk) \cong H^*(M;\bk)^{\otimes k}$ is evident, it will suffice to understand the groups $H^*(\Delta_\partial;\bk)$ with their action of the mapping class group. To do so, we will express $\Delta_\partial$ as a homotopy colimit. 

Let $\Pi^\ast(\ul{k})$ be the poset of non-discrete partitions $\omega$ of $\{1,\ldots,k,\ast\}$, ordered by refinement, and consider the functor $\Delta_\delta : \Pi^\ast(\ul{k}) \to \mathsf{Top}$ given by
\[\Delta_\delta(\omega) = \Delta_\delta^\omega \coloneqq \left\{(x_1,\ldots,x_k) \in M^k \middle| \parbox{6.3cm}{\centering $x_i = x_j$ if $i,j$ in the same element of $\omega$ \\
	$x_i \in K$ if $i$ in the same element of $\omega$ as $\ast$}\right\}.\]

\begin{example}$\Pi^\ast(\ul{2})$ contains four partitions: $\{1,2\}\{\ast\}$, $\{1,\ast\}\{2\}$, $\{2,\ast\}\{1\}$, and $\{1,2,\ast\}$, the first three partitions being incomparable and all larger than the last partition. If $K$ is contractible, then the value of $\Delta_\partial$ on the first three partitions is homotopy equivalent to $M$ and its value on the last partition is contractible.
\end{example}

\begin{lemma}
The inclusions $\Delta_\delta^\omega \hookrightarrow \Delta_\partial$ assemble to a homeomorphism
\[\mr{colim}_{\Pi^\ast(\ul{k})}\,\Delta_\delta \overset{\cong}{\lra} \Delta_\partial\]
and the canonical map $\mr{hocolim}_{\Pi^\ast(\ul{k})}\,\Delta_\delta \to \mr{colim}_{\Pi^\ast(\ul{k})}\,\Delta_\delta$ is a weak equivalence.
\end{lemma}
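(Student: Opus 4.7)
My plan is to argue in three steps. First, I would attach to each $x = (x_1, \ldots, x_k) \in \Delta_\partial$ its canonical coincidence partition $\omega(x) \in \Pi^\ast(\ul{k})$, defined by declaring $i \sim j$ iff $x_i = x_j$ and $i \sim \ast$ iff $x_i \in K$; by the hypothesis $x \in \Delta_\partial$ this partition is non-discrete. By inspection, $x \in \Delta_\delta^\omega$ iff every block of $\omega$ is contained in a block of $\omega(x)$, i.e.\ iff $\omega(x)$ refines $\omega$; in particular $x \in \Delta_\delta^{\omega(x)}$. This gives $\Delta_\partial = \bigcup_\omega \Delta_\delta^\omega$ set-theoretically and hence a continuous bijection $\colim_{\Pi^\ast(\ul{k})}\Delta_\delta \to \Delta_\partial$.

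Next, I would promote this bijection to a homeomorphism. Each $\Delta_\delta^\omega$ is cut out of $M^k$ by the finitely many closed conditions $x_i = x_j$ (for $i, j$ in the same block of $\omega$) and $x_i \in K$ (for $i$ in the block with $\ast$), and is therefore closed in $M^k$. Since $\Pi^\ast(\ul{k})$ is finite, the colimit of this diagram of closed inclusions carries the final topology with respect to a finite closed cover of its underlying set, and this coincides with the subspace topology from $M^k$: a subset $F \subset \Delta_\partial$ is colimit-closed iff each $F \cap \Delta_\delta^\omega$ is closed in $\Delta_\delta^\omega$, in which case $F = \bigcup_\omega (F \cap \Delta_\delta^\omega)$ is a finite union of closed subsets of $M^k$ and therefore closed in $\Delta_\partial$; the converse is trivial.

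Finally, I would establish the hocolim-to-colim weak equivalence by a nerve-lemma argument. Every structure map of $\Delta_\delta$ is a closed embedding of smooth submanifolds of $M^k$, hence a Hurewicz cofibration; under this cofibrancy hypothesis the standard comparison for hocolim of a diagram of cofibrations indexed by a finite poset (the projection/Mayer--Vietoris lemma, or Dugger's recognition principle for homotopy colimits) reduces the claim to showing that for each $x \in \Delta_\partial$ the subposet $\Pi^\ast(\ul{k})_x \coloneqq \{\omega : x \in \Delta_\delta^\omega\}$ has contractible nerve. But by the first step $\Pi^\ast(\ul{k})_x$ is exactly the set of partitions refined by $\omega(x)$, a poset having $\omega(x)$ as an extremal element and hence contractible. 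The main obstacle is packaging the cofibrancy input of the standard comparison --- one clean way is to filter $\Pi^\ast(\ul{k})$ by the number of non-singleton blocks and realise $\colim \Delta_\delta$ as an iterated pushout, after which the gluing lemma for cofibrations gives the required weak equivalence stage by stage.
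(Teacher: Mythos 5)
Your argument is, at bottom, the same as the paper's: both proofs hinge on the observation that the diagram $\Delta_\delta$ is a cofibrant diagram of closed subspaces (the paper phrases this as Reedy cofibrancy with degree given by the number of parts and uses Lillig's union theorem for closed Hurewicz cofibrations to check that the latching maps are cofibrations; your closing sentence about filtering and iterating pushouts with the gluing lemma is doing the same work). Your treatment of the first claim is more careful than the paper's, which simply calls it obvious, and is correct. Two small points about the second claim, though. First, the ``fiber contractibility'' intermediate step is not really doing any work here: once one has a Reedy--cofibrant diagram of subspaces whose ordinary colimit is $\Delta_\partial$, the comparison $\hocolim \to \colim$ is a weak equivalence for free (colim is left Quillen), and contractibility of the posets $\Pi^\ast(\ul{k})_x$ is a consequence rather than an input; the genuine work is precisely the cofibrancy of the latching maps, which requires the intersections $\Delta_\delta^\omega\cap\Delta_\delta^{\omega'}=\Delta_\delta^{\omega\wedge\omega'}$ to again belong to the diagram so that Lillig (or the gluing lemma) applies, and you relegate this to the final sentence. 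Second, two minor slips: ``every block of $\omega$ is contained in a block of $\omega(x)$'' means $\omega$ refines $\omega(x)$, not the reverse (so $\Pi^\ast(\ul{k})_x$ is the set of non-discrete refinements of $\omega(x)$ --- still a cone, so the contractibility conclusion is unaffected); and filtering by the number of \emph{non-singleton} blocks does not give a linear extension of $\Pi^\ast(\ul{k})$ (refining a partition can decrease it), so one should filter by the total number of parts as the paper does.
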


\begin{proof}
The first identification is obvious. 

Before discussing the weak equivalence, let us discuss the Reedy model structure, which may be used to efficiently present the homotopy colimit. A \emph{Reedy category} is a category $\cat{C}$ with two subcategories $\cat{C}_+$ and $\cat{C}_-$ containing all objects, and a function $\deg \colon \mr{ob}(\cat{C}) \to \bN$ such that (i) non-identity morphisms in $\cat{C}_+$ strictly increase $\deg$, (i') non-identity morphisms in $\cat{C}_-$ strictly decrease $\deg$, and (ii) every morphism in $\cat{C}$ factors uniquely as a composition as a morphism in $\cat{C}_-$ followed by a morphism in $\cat{C}_+$. As every non-identity morphism in $\Pi^\ast(\ul{k})$ increases the number of parts of a partition, we can make $\Pi^\ast(\ul{k})$ into a Reedy category by taking $\deg$ to be the number of parts, $\cat{C}^+$ all morphisms and $\cat{C}_-$ to be the identity morphisms, cf.~\cite[Example 2.3]{RiehlVerity}.

Let us use the Str\o m model structure on the category $\cat{Top}$ of topological spaces, then there is a Reedy model structure on the category $\cat{Top}^\cat{C}$ of functors $\cat{C} \to \cat{Top}$ \cite[Theorem 4.18]{RiehlVerity}. We will need two facts about this model structure: (i) $\mr{colim} \colon \cat{Top}^\cat{C} \to \cat{Top}$ is a left Quillen functor \cite[\S 8]{RiehlVerity}, (ii) a diagram $F \in \cat{Top}^\cat{C}$ is cofibrant if for each object $c \in \cat{C}$ the \emph{latching maps} \cite[\S 3]{RiehlVerity}
\[\underset{c \overset{+}{\to} c'}{\mr{colim}}\, F(c') \lra F(c)\]
are closed Hurewicz cofibrations. Here the colimit is taken over the full subcategory of the comma-category in $\cat{C}_+/c$ not containing the identity.

Now to prove the weak equivalence it suffices to prove that $\omega \mapsto \Delta_\delta^\omega$ is cofibrant in the Reedy model structure, as then we may use its colimit to compute the homotopy colimit \cite[Definition 8.1]{RiehlVerity}. This amounts to proving that the latching maps are cofibrations. These latching maps identify the inclusion into $\Delta_\delta^\omega \subset M^k$ of all $\Delta_\delta^{\omega'} \subset M^{k'}$ such that $\omega' \prec \omega$. The result follows from the union theorem for closed Hurewicz cofibrations \cite{Lillig}.
\end{proof}

Thus there is a Bousfield--Kan spectral sequence (a special case of \cite[XII.\S 5]{bousfieldkan})
\begin{equation}\label{eqn:bk-cohomology} E^1_{p,q} = \bigoplus_{\omega_0 \prec \ldots \prec \omega_p \in N_p(\Pi^\ast(\ul{k}))} H^q(\Delta^{\omega_0}_\delta;\bk) \Longrightarrow H^{p+q}(\Delta_\partial;\bk).\end{equation}
As the above constructions are equivariant for diffeomorphisms of $M$ fixing $K$ pointwise, we can read those features of the action on the abutment which are relevant for this paper from the action on the $E^1$-page. 

Let us assume that $K$ is contractible. Then the action of the mapping class group on $H^q(\Delta_\delta^\omega;\bk)$ again factors over $\mr{Aut}(H^*(M;\bk))$ as $\Delta^\omega_\delta$ is equivariantly homotopy equivalent to $M^{k'}$ for some $k' < k$. Let us now specialise to $M = W_{g,1}$, $K = \half \partial W_{g,1} = D^{2n-1}$, and take $\bk = \bQ$. Recall that the mapping class group $\pi_0(\mr{Diff}_\partial(W_{g,1}))$ was denoted $\Gamma_g$.

\begin{proposition}\label{prop:conf-rational-cohomology-alg}For each $k \geq 1$ the $\Gamma_g$-representation $H^*(W_{g,1}^k,\Delta_\partial;\bQ)$ is $gr$-algebraic.\end{proposition}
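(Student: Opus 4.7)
The plan is to apply the long exact sequence of the pair $(W_{g,1}^k,\Delta_\partial)$ in cohomology, and show separately that both $H^*(W_{g,1}^k;\bQ)$ and $H^*(\Delta_\partial;\bQ)$ are $gr$-algebraic $\Gamma_g$-representations. Closure under extensions (Lemma~\ref{lem:filteredalgebraic}(c)), which is built into the Serre class of $gr$-algebraic representations via Lemma~\ref{lem:gr-algebraic-serre}, then yields the result.

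For $H^*(W_{g,1}^k;\bQ)$: the K\"unneth theorem gives a $\Gamma_g$-equivariant isomorphism with $H^*(W_{g,1};\bQ)^{\otimes k}$. Now $\Gamma_g$ acts on $H^*(W_{g,1};\bQ)$ through its quotient $G'_g \leq \mr{GL}(H_n)$, and the only non-trivial degree is $n$, where the action is the standard representation of $G'_g$ on $H_n \otimes \bQ$ (trivial in degree $0$); this is algebraic. By Theorem~\ref{thm:representations}(e) its tensor powers are algebraic as well, so $H^*(W_{g,1}^k;\bQ)$ is algebraic, and in particular $gr$-algebraic.

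For $H^*(\Delta_\partial;\bQ)$ we use the Bousfield--Kan spectral sequence \eqref{eqn:bk-cohomology}. Since $K = D^{2n-1}$ is contractible, each $\Delta^\omega_\delta$ is $\Gamma_g$-equivariantly homotopy equivalent to a Cartesian power $W_{g,1}^{k'}$ for some $k' < k$, so by the preceding paragraph each $E_1^{p,q}$ is a direct sum of algebraic $\Gamma_g$-representations, hence algebraic (using Theorem~\ref{thm:representations}). The poset $\Pi^\ast(\ul{k})$ is finite, so its nerve has finite dimension and the spectral sequence is a bounded first-quadrant spectral sequence of $\Gamma_g$-representations; in particular hypotheses (a)--(c) of Lemma~\ref{lem:serre-class-ss} are satisfied with $\cC$ the class of $gr$-algebraic representations. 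That lemma then shows $H^*(\Delta_\partial;\bQ)$ is $gr$-algebraic.

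There is no serious obstacle: the main points to verify carefully are that the Bousfield--Kan spectral sequence \eqref{eqn:bk-cohomology} is genuinely a spectral sequence of $\Gamma_g$-representations (it is, since the diagram $\omega \mapsto \Delta^\omega_\delta$ is $\Gamma_g$-equivariant, diffeomorphisms fixing $D^{2n-1}$ preserving each partition stratum), and that the equivariant homotopy equivalence $\Delta^\omega_\delta \simeq W_{g,1}^{k'}$ really identifies the $\Gamma_g$-representation on cohomology with a tensor power of the standard representation. Both are routine.
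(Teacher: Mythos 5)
Your proof is correct and follows the same strategy as the paper: use the long exact sequence of the pair to reduce to $H^*(W_{g,1}^k;\bQ)$ and $H^*(\Delta_\partial;\bQ)$, handle the former via K\"unneth and the $G'_g$-action, and handle the latter via the Bousfield--Kan spectral sequence \eqref{eqn:bk-cohomology} together with Lemma~\ref{lem:serre-class-ss}. The only difference is that you spell out a bit more detail (e.g.\ identifying the underlying algebraic representation), but the argument is the same.
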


\begin{proof}Using Lemma \ref{lem:filteredalgebraic} (a), (b), (c), and the long exact sequences of $\Gamma_g$-representations
	\[\cdots \lra H^q(W_{g,1}^k,\Delta_\partial;\bQ) \lra H^q(W_{g,1}^k;\bQ) \lra H^q(\Delta_\partial;\bQ) \lra \cdots,\]
we see that $H^*(W_{g,1}^k,\Delta_\partial;\bQ)$ is $gr$-algebraic if both $H^*(W_{g,1}^k;\bQ)$ and  $H^*(\Delta_\partial;\bQ)$ are. As pointed out above, the K\"unneth isomorphism provides an isomorphism (of $\Gamma_g$-representations) $H^*(W_{g,1}^k;\bQ) \cong H^*(W_{g,1};\bQ)^{\otimes k}$. This shows that the action of $\Gamma_g$ factors over $G'_g$ and by Lemma \ref{lem:filteredalgebraic} (e) it is $gr$-algebraic as such.

The Bousfield--Kan spectral sequence \eqref{eqn:bk-cohomology} computing $H^*(\Delta_\partial;\bQ)$ as a $\Gamma_g$-representation, has $E^1$-page given by a direct sum of tensor products of $H^*(W_{g,1};\bQ)$ as $\Gamma_g$-representations. By the same reasoning as above its $E^1$-page is $gr$-algebraic and hence so is the abutment by Lemma \ref{lem:serre-class-ss}.
\end{proof}

\subsubsection{Rational homotopy} Next we study the rational homotopy groups of the ordered configuration spaces $\mr{Emb}(\ul{k},N)$, which we base at a configuration near the boundary $K$ so that we may assume that the $\mr{Diff}_\partial(N)$-action fixes the basepoint.

We shall use the Totaro spectral sequence \cite{totaro}. This is derived from the Leray spectral sequence for the inclusion $\mr{Emb}(\ul{k},N) \hookrightarrow N^k$, and when $\bk$ is a field it has $E_2$-page given by the bigraded $\bk$-algebra $H^*(N^k;\bk)[G_{ab}]$ with $H^q(N^k;\bk)$ in bidegree $(q,0)$ and generators $G_{ab}$ of bidegree $(0,d-1)$ for $1 \leq a, b\leq k$, $a \neq b$, subject to the following relations: 
\begin{enumerate}[\indent (i)]
\item $G_{ab} = (-1)^d G_{ba}$,
\item $G_{ab}^2 = 0$, 
\item $G_{ab}G_{bc}+G_{bc}G_{ca}+G_{ca}G_{ab} = 0$ for $a,b,c$ distinct,
\item $p^*_a(x)G_{ab} = p^*_b(x)G_{ab}$ where $p_a \colon N^k \to N$ denotes the $a$th projection map and $x \in H^*(N;\bk)$.
\end{enumerate}
Following Totaro's construction one finds that this description of the $E_2$-page also holds with $\bk$-coefficients if $\bk$ is a localisation of the integers and $H^*(N;\bk)$ consists of free $\bk$-modules. It converges to $H^{p+q}(\mr{Emb}(\ul{k},N);\bk)$ and is natural in embeddings. Let us now specialise to $N = W_{g,1}$ and $\bk = \bQ$.

\begin{lemma}
Each $\Gamma_g$-representation $H^i(\mr{Emb}(\ul{k},W_{g,1});\bQ)$ is $gr$-algebraic.
\end{lemma}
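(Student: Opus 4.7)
The plan is to apply the Totaro spectral sequence just described, with $N = W_{g,1}$ and $\bk = \bQ$, and to show that its $E_2$-page consists of $gr$-algebraic $\Gamma_g$-representations; the statement then follows from Lemma \ref{lem:serre-class-ss} applied to the equivariant Serre class of $gr$-algebraic representations of Lemma \ref{lem:gr-algebraic-serre}. Since the Totaro (Leray) spectral sequence is natural in embeddings $N \hookrightarrow N'$, the action of $\mr{Diff}_\partial(W_{g,1})$ on $\mr{Emb}(\ul{k},W_{g,1})$ makes it into a spectral sequence of $\Gamma_g$-representations, with $\Gamma_g$-equivariant differentials.

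The $E_2$-page, as a $\Gamma_g$-module, is a subquotient of $H^*(W_{g,1}^k;\bQ) \otimes_\bQ \bQ\langle G_{ab} \mid 1 \leq a \neq b \leq k\rangle$ where the second factor is the finite-dimensional graded $\bQ$-vector space spanned by monomials in the generators $G_{ab}$ subject to relations (i)--(iii). On the factor $H^*(W_{g,1}^k;\bQ) \cong H^*(W_{g,1};\bQ)^{\otimes k}$ (K\"unneth, using that $H^*(W_{g,1};\bQ)$ is finite-dimensional), the $\Gamma_g$-action factors through $G'_g \hookrightarrow \mr{GL}(H_n)$ and is algebraic, so tensor powers of it are algebraic by Theorem \ref{thm:representations}(e). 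The generators $G_{ab}$ are the Thom/Gysin classes of the codimension-$2n$ normal bundles of the diagonals $\{x_a = x_b\} \subset W_{g,1}^k$; since diffeomorphisms in $\mr{Diff}_\partial(W_{g,1})$ are orientation-preserving and preserve each diagonal (as they do not permute the factors), they fix each $G_{ab}$. Hence $\Gamma_g$ acts trivially on $\bQ\langle G_{ab}\rangle$ and thus algebraically on the tensor product. Subquotients of algebraic (hence $gr$-algebraic) representations are $gr$-algebraic by Lemma \ref{lem:filteredalgebraic}(a) and (b), so each bigraded piece $E_2^{p,q}$ is $gr$-algebraic.

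Finally, since for fixed $k$ there are only finitely many $G_{ab}$'s and $H^*(W_{g,1};\bQ)$ is concentrated in finitely many degrees, the spectral sequence is bounded: each $E_2^{p,q}$ is finite-dimensional, each entry supports only finitely many non-zero differentials, and each total degree has only finitely many non-zero entries. The hypotheses of Lemma \ref{lem:serre-class-ss} (for $\bk = \bQ$ and $\cC$ the class of $gr$-algebraic $\Gamma_g$-representations) are therefore satisfied, and we conclude that each $H^i(\mr{Emb}(\ul{k},W_{g,1});\bQ)$ is a $gr$-algebraic $\Gamma_g$-representation.

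The only non-routine point is the claim that $\Gamma_g$ fixes the Gysin classes $G_{ab}$, i.e.\ that the action on this one-dimensional piece is trivial rather than merely factoring through a finite quotient; this is where the assumption that diffeomorphisms preserve a boundary disc (and hence orientation) is used.
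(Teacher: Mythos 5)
Your proof is correct and takes essentially the same approach as the paper: Totaro's spectral sequence with $N = W_{g,1}$ and $\bk = \bQ$, observing that the $\Gamma_g$-action on its $E_2$-page factors over $G'_g$ and is algebraic there, then invoking Lemma \ref{lem:serre-class-ss}. The paper gives this as a two-sentence sketch; your version simply fills in the supporting detail (the K\"unneth description of the $H^*(W_{g,1}^k;\bQ)$ part, the fact that orientation-preserving diffeomorphisms fix the Gysin classes $G_{ab}$, and the boundedness hypotheses of Lemma \ref{lem:serre-class-ss}), all of which is correct.
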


\begin{proof}In the Totaro spectral sequence the $\Gamma_g$-action on the $E_2$-page factors over $G'_g$ and as such is an algebraic representation, so in particular $gr$-algebraic. By Lemma \ref{lem:serre-class-ss} the abutment is also $gr$-algebraic.
\end{proof}

As $\mr{Emb}(\ul{k},W_{g,1})$ is $1$-connected for $n \geq 2$, we can convert this to a statement about rational homotopy groups using Lemmas \ref{lem:filtered-algebraic-homology-to-homotopy}, \ref{lem:homology-to-cohomology}, and \ref{lem:gr-algebraic-serre}.

\begin{corollary}\label{cor:HtyEmbIsAlg}
Each $\Gamma_g$-representation $\pi_i(\mr{Emb}(\ul{k},W_{g,1})) \otimes \bQ$ is $gr$-algebraic.
\end{corollary}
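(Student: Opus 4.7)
The plan is simply to chain together the three machinery lemmas indicated by the hint. By the preceding lemma, each $\Gamma_g$-representation $H^i(\mr{Emb}(\ul{k},W_{g,1});\bQ)$ is $gr$-algebraic. By Lemma~\ref{lem:gr-algebraic-serre} the class of $gr$-algebraic $\bQ[\Gamma_g]$-modules is an equivariant Serre class satisfying the additional property \eqref{enum:equiv-serre-4}, so Lemma~\ref{lem:homology-to-cohomology} applies and the homology groups $H_i(\mr{Emb}(\ul{k},W_{g,1});\bQ)$ are $gr$-algebraic as well.

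To promote from homology to homotopy, I would invoke Lemma~\ref{lem:filtered-algebraic-homology-to-homotopy}, which requires the space in question to be based and $1$-connected with a based action of $\Gamma_g$ up to homotopy. Simple connectedness of $\mr{Emb}(\ul{k},W_{g,1})$ holds because $W_{g,1}$ is $(n-1)$-connected with $n\geq 2$ and removing finitely many points from a manifold of dimension $2n\geq 4$ preserves simple connectedness; in fact $\mr{Emb}(\ul{k},W_{g,1})$ is $(n-1)$-connected. For the basepoint, one chooses a configuration near $\partial W_{g,1}$ (as was already done in Section~\ref{sec:homotopy-cohomology-conf}) so that the $\Gamma_g$-action, which factors through diffeomorphisms fixing a neighbourhood of $\partial W_{g,1}$ pointwise, is strictly basepoint-preserving.

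With these hypotheses verified, Lemma~\ref{lem:filtered-algebraic-homology-to-homotopy} gives that each $\pi_i(\mr{Emb}(\ul{k},W_{g,1}))\otimes\bQ$ is $gr$-algebraic, which is the desired conclusion. There is no real obstacle here: the substantive work was already carried out in the preceding lemma, where the Totaro spectral sequence was used to show that the cohomology is $gr$-algebraic, and the content of the present corollary is purely formal consequence of the closure properties of $gr$-algebraic representations established in Section~\ref{sec:equiv-serre}.
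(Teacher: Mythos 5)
Your argument is correct and matches the paper's: the paper likewise notes that $\mr{Emb}(\ul{k},W_{g,1})$ is $1$-connected for $n\geq 2$, bases at a configuration near $\partial W_{g,1}$ to ensure a based $\Gamma_g$-action, and then cites exactly the same three lemmas (Lemma~\ref{lem:gr-algebraic-serre}, Lemma~\ref{lem:homology-to-cohomology}, and Lemma~\ref{lem:filtered-algebraic-homology-to-homotopy}) to pass from the cohomology statement of the preceding lemma to the homotopy statement.
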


The total homotopy fibre $\mr{tohofib}_{I \subset x_{\ul{k}}} \mr{Emb}(I,W_{g,1})$ taken at a configuration $x_{\ul{k}}$ near the boundary admits a basepoint-preserving $\mr{Diff}_\partial(W_{g,1})$-action, because the entire cubical diagram $I \mapsto \mr{Emb}(I,W_{g,1})$ of based spaces does.

\begin{proposition}\label{prop:conf-rational-homotopy-alg} 
Each $\Gamma_g$-representation $\pi_i(\mr{tohofib}_{I \subset x_{\ul{k}}} \mr{Emb}(I,W_{g,1})) \otimes \bQ$ is $gr$-algebraic.
\end{proposition}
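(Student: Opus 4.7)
The plan is to reduce $gr$-algebraicity for the total homotopy fibre to the already-established $gr$-algebraicity for ordered configuration spaces, via the defining fibration sequence
\[
\mr{tohofib}_{I \subset x_{\ul{k}}} \mr{Emb}(I,W_{g,1}) \lra \mr{Emb}(\ul{k},W_{g,1}) \overset{\eqref{eqn:tohofib}}{\lra} \mr{holim}_{I \subsetneq \ul{k}} \mr{Emb}(I,W_{g,1}).
\]
Choosing the basepoint configuration $x_{\ul{k}}$ to lie in a fixed collar of $\tfrac{1}{2}\partial W_{g,1}$ makes this $\Gamma_g$-equivariantly based, so the associated long exact sequence of rational homotopy groups is one of $\Gamma_g$-representations. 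Since $\pi_*(\mr{Emb}(\ul{k},W_{g,1})) \otimes \bQ$ is $gr$-algebraic by Corollary \ref{cor:HtyEmbIsAlg}, and the class of $gr$-algebraic representations is closed under subrepresentations, quotients and extensions by Lemma \ref{lem:filteredalgebraic}, it then suffices to prove that $\pi_*(\mr{holim}_{I \subsetneq \ul{k}} \mr{Emb}(I,W_{g,1})) \otimes \bQ$ is $gr$-algebraic.

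For the homotopy limit I would invoke the Bousfield--Kan homotopy spectral sequence for the homotopy limit over the finite punctured poset $\mathcal{P}_0(\ul{k}) = \{I : I \subsetneq \ul{k}\}$,
\[
E_2^{s,t} = \lim\nolimits^s_{\mathcal{P}_0(\ul{k})} \pi_t(\mr{Emb}(I,W_{g,1})) \Longrightarrow \pi_{t-s}\!\left(\mr{holim}_{I \subsetneq \ul{k}} \mr{Emb}(I,W_{g,1})\right),
\]
which is a spectral sequence of $\Gamma_g$-representations by naturality of cosimplicial replacement. Each $\mr{Emb}(I,W_{g,1}) \cong \mr{Emb}(\ul{|I|},W_{g,1})$ is $1$-connected with $gr$-algebraic and degreewise finite-dimensional rational homotopy groups by Corollary \ref{cor:HtyEmbIsAlg}, so the spectral sequence is entirely abelian in the range of interest. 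Because $\mathcal{P}_0(\ul{k})$ is finite, the higher inverse limits vanish for $s > k-1$ and are computed by a bounded cochain complex whose terms are finite products of the values of the diagram; in particular rationalisation commutes with $\lim^s$ here. Lemma \ref{lem:filteredalgebraic} then identifies the rational $E_2$-page as a $gr$-algebraic $\Gamma_g$-representation, and Lemmas \ref{lem:gr-algebraic-serre} and \ref{lem:serre-class-ss} propagate $gr$-algebraicity through the spectral sequence to the abutment.

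The main obstacle will be establishing $\Gamma_g$-equivariance and complete convergence of this Bousfield--Kan spectral sequence, together with the commutation of rationalisation with the higher inverse limits. All three are essentially formal: equivariance follows from naturality of cosimplicial replacements once $x_{\ul{k}}$ is chosen in a $\mr{Diff}_\partial(W_{g,1})$-invariant collar, complete convergence follows from the finiteness of $\mathcal{P}_0(\ul{k})$ via \cite[Proposition IX.\S 5.7]{bousfieldkan} (which ensures only finitely many non-zero differentials into or out of each bidegree and a bounded filtration of the abutment), and the commutation of rationalisation with $\lim^s$ is immediate from boundedness of the complex computing it together with the finite-dimensionality of the rationalised homotopy groups.
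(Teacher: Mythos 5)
Your proof is correct, but it takes a genuinely different, and more elaborate, route than the paper. The paper's argument is very short: all the forgetful maps $\mr{Emb}(J,W_{g,1}) \to \mr{Emb}(I,W_{g,1})$ admit compatible sections up to homotopy (by adding particles near $\half\partial W_{g,1}$), and the iterated-homotopy-fibre description of $\mr{tohofib}$ then makes $\pi_*(\mr{tohofib}_{I \subset x_{\ul{k}}}\mr{Emb}(I,W_{g,1}))$ a direct summand of $\pi_*(\mr{Emb}(\ul{k},W_{g,1}))$, after which Lemma~\ref{lem:filteredalgebraic}(a) finishes the proof in one step. You instead bypass this structural observation and compute $\pi_*(\mr{holim}_{I \subsetneq \ul{k}}\mr{Emb}(I,W_{g,1}))\otimes\bQ$ directly by the Bousfield--Kan spectral sequence over the punctured cube, feeding that into the long exact sequence of the fibration; this works, because the punctured cube is finite (so $\lim^s$ vanishes for $s > k-1$, the cosimplicial replacement is a bounded complex of finite products of $\Gamma_g$-modules, and tensoring with $\bQ$ commutes with its cohomology by exactness of rationalisation), all constituent spaces are $1$-connected so the spectral sequence is entirely abelian, and Lemmas~\ref{lem:filteredalgebraic} and \ref{lem:serre-class-ss} carry $gr$-algebraicity of $E_2$ through to the abutment. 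What the paper's approach buys is brevity and the avoidance of any convergence bookkeeping; what yours buys is robustness (it never uses the existence of sections, only finiteness of the indexing category and the already-established Corollary~\ref{cor:HtyEmbIsAlg}) plus the extra information that $\pi_*(\mr{holim})\otimes\bQ$ itself is $gr$-algebraic, which the paper never asserts.
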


\begin{proof}As the total homotopy fibre $\mr{tohofib}_{I \subset x_{\ul{k}}} \mr{Emb}(I,W_{g,1})$ is the homotopy fibre of the map \eqref{eqn:tohofib} over a certain basepoint, there is a natural based map
	\[\mr{tohofib}_{I \subset x_{\ul{k}}} \mr{Emb}(I,W_{g,1}) \lra \mr{Emb}(\ul{k},W_{g,1}),\]
which is equivariant for the basepoint-preserving action of $\mr{Diff}_\partial(W_{g,1})$.
	
Because all maps in the diagram admit sections up to homotopy by adding particles near the boundary, the induced map on homotopy groups
	\[\pi_*(\mr{tohofib}_{I \subset x_{\ul{k}}} \mr{Emb}(I,W_{g,1})) \lra \pi_*(\mr{Emb}(\ul{k},W_{g,1}))\]
	is the inclusion of a summand. Since this inclusion is $\Gamma_g$-equivariant and the right hand side after tensoring with $\bQ$ is $gr$-algebraic by Corollary \ref{cor:HtyEmbIsAlg}, so is the left hand side after tensoring with $\bQ$ by Lemma \ref{lem:filteredalgebraic} (a).
\end{proof}

\subsection{Algebraicity of rational homotopy groups} 
We now put together the results of this section to prove the remaining proposition concerning the layers of the embedding calculus tower. It concerns the $\Gamma_g$-action on $L_k(\mr{Emb}_{\half \partial}(W_{g,1})_{\mr{id}})$, which is induced by conjugation with a diffeomorphism of $W_{g,1}$. We know that this action factors through $\Lambda_g$, but it is convenient to remember the geometric origin of this action. 

As mentioned in Section \ref{sec:EmbTower} the layer ${L}_k(\mr{Emb}_{\half \partial}(W_{g,1})_\mr{id})$ is an $H$-space, so its positive-degree homotopy groups are abelian and so can be rationalised.

\begin{proposition}\label{prop:filteredhigherlayers} 
For $i \geq 1$ and $k \geq 2$ the $\Gamma_g$-representations 
$$\pi_i(L_k(\mr{Emb}_{\half \partial}(W_{g,1})_{\mr{id}})) \otimes \bQ$$
are $gr$-algebraic.
\end{proposition}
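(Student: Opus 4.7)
The strategy is to apply the Federer-type spectral sequence of Theorem \ref{thm:Federer} to the fibration $t_k^{\mr{id}} \colon Z_k(W_{g,1}) \to C_k(W_{g,1})$ with section $s_k^{\mr{id}}$ of Section \ref{sec:layers-bundles}. By Proposition \ref{prop:emb-calc-fibers} (and the discussion in Section \ref{sec:SSLayers} reformulating the support condition in terms of $\nabla_\partial$), the space of sections extending $s_k^{\mr{id}}|_{\nabla_\partial}$ is weakly equivalent to $L_k(\mr{Emb}_{\half \partial}(W_{g,1})_{\mr{id}})$, so \eqref{eqn:federer-applied} is a completely convergent extended spectral sequence with abutment the homotopy groups of that layer. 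Because all constructions are natural in diffeomorphisms of $W_{g,1}$ rel boundary and preserve the basepoint $\mr{id}$, it is a spectral sequence of $\bZ[\Gamma_g]$-modules. Note that the fibres of $t_k^{\mr{id}}$ are the total homotopy fibres $\mr{tohofib}_{I \subset x_{\ul{k}}} \mr{Emb}(I,W_{g,1})$, which by \cite{goodwillieklein} are $(2n-1)$-connected for $k \geq 2$ and $d = 2n \geq 6$; this ensures the groups in the range $q - p \geq 1$ are abelian, so the usual Bousfield--Kan formalism applies and we may tensor with $\bQ$ entrywise.

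The key step is to show that every entry of the rationalised $E^2$-page is a $gr$-algebraic $\Gamma_g$-representation. By the identification \eqref{eqn:e2-explicit},
\[ E^2_{p,q} \otimes \bQ \cong \bigl[H^p(\mr{Emb}(\ul{k},W_{g,1}), \tilde{\nabla}_\partial; \bQ) \otimes_\bQ \pi_q(\mr{tohofib}_{I \subset x_{\ul{k}}} \mr{Emb}(I,W_{g,1})) \otimes \bQ\bigr]^{\fS_k}, \]
where the $\Gamma_g$-action is the diagonal action, commuting with the $\fS_k$-action. By Lemma \ref{lem:emb-to-prod} the first tensor factor is $\Gamma_g$-equivariantly isomorphic to $H^p(W_{g,1}^k, \Delta_\partial; \bQ)$, which is $gr$-algebraic by Proposition \ref{prop:conf-rational-cohomology-alg}. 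The second factor is $gr$-algebraic by Proposition \ref{prop:conf-rational-homotopy-alg}. Closure of $gr$-algebraic representations under tensor products (Lemma \ref{lem:filteredalgebraic}(e)) and subrepresentations (Lemma \ref{lem:filteredalgebraic}(a)) then shows that $E^2_{p,q} \otimes \bQ$ is $gr$-algebraic.

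To conclude, apply Lemma \ref{lem:serre-class-ss} to the rationalised spectral sequence, taking $\cC$ to be the class of $gr$-algebraic $\Gamma_g$-representations (which is an equivariant Serre class by Lemma \ref{lem:gr-algebraic-serre}). Condition (a) of that lemma holds because for any fixed $(p,q)$ the connectivity of the fibre forces $E^2_{p',q'} = 0$ outside a finite range, so only finitely many differentials touch each entry; condition (b) is immediate from complete convergence for $q - p \geq 1$; condition (c) follows from the previous paragraph. Thus the abutment $\pi_i(L_k(\mr{Emb}_{\half \partial}(W_{g,1})_{\mr{id}})) \otimes \bQ$ is $gr$-algebraic for every $i \geq 1$.

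The main obstacle is bookkeeping rather than ideas: verifying that the $\Gamma_g$-action on the $E^2$-page really is the diagonal action one expects (so that one may separately apply Propositions \ref{prop:conf-rational-cohomology-alg} and \ref{prop:conf-rational-homotopy-alg}), and checking that the extended portion of the Bousfield--Kan spectral sequence causes no trouble in the relevant range. The connectivity estimate for the total homotopy fibre is what makes the latter harmless: in the range $q - p \geq 1$ that contributes to $\pi_i$ with $i \geq 1$, the surviving entries come from $q \geq 2n$, so everything in sight is an honest abelian group and the abutment filtration in total degree $i \geq 1$ is a finite filtration by sub-$\bQ[\Gamma_g]$-modules with $gr$-algebraic quotients.
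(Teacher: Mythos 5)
Your proposal is correct and follows essentially the same route as the paper's proof: apply the Federer spectral sequence \eqref{eqn:federer-applied} to $t_k^{\mr{id}}$, identify the rationalised $E^2$-page using \eqref{eqn:e2-explicit}, feed in Propositions \ref{prop:conf-rational-cohomology-alg} and \ref{prop:conf-rational-homotopy-alg}, and close using the Serre-class closure properties of $gr$-algebraic representations. The only cosmetic difference is that the paper deduces the conclusion directly from complete convergence (finite filtration of the abutment with filtration quotients that are subquotients of $E^2_{p,i+p}$, then Lemma \ref{lem:filteredalgebraic}) rather than invoking Lemma \ref{lem:serre-class-ss} by name.
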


\begin{proof}
We apply the spectral sequence \eqref{eqn:federer-applied} with $M=N=W_{g,1}$ and $K = \half \partial W_{g,1}$. This spectral sequence converges completely as it has $E^2_{p,q}=0$ for all large enough $p$, so for $i \geq 1$ the abelian group $\pi_{i}({L}_k(\mr{Emb}_{\half \partial}(W_{g,1})_\mr{id}))$ has a finite filtration with $p$th filtration quotient a subquotient of $E^2_{p,i+p}$. This is natural for the $\Gamma_g$-action, so $\pi_{i}({L}_k(\mr{Emb}_{\half \partial}(W_{g,1})_\mr{id})) \otimes \bQ$ has a finite filtration by $\Gamma_g$-subrepresentations with $p$th filtration quotient a subquotient of $E^2_{p,i+p} \otimes \bQ$. By Lemma \ref{lem:filteredalgebraic} it suffices to show that each $E^2_{p,i+p} \otimes \bQ$ is $gr$-algebraic

As described in Section \ref{sec:SSLayers} we have
\[E^2_{p,i+p} \otimes \bQ = \left[H^{p}(\mr{Emb}(\ul{k},W_{g,1}),\tilde{\nabla}_\partial;\bQ) \otimes_\bQ (\pi_{i+p}(\mr{tohofib}_{I \subset x_{\ul{k}}} \mr{Emb}(I,W_{g,1})) \otimes \bQ)\right]^{\fS_k}.\]
A diffeomorphism $\phi$ of $W_{g,1}$ acts on this by $[(\phi^{-1}_*)^* \otimes \phi_*]^{\fS_k}$. By parts (a) and (e) of Lemma \ref{lem:filteredalgebraic} it suffices to show that the $\Gamma_g$-representations $H^{p}(\mr{Emb}(\ul{k},W_{g,1}),\nabla_\partial;\bQ)$ and $\pi_{i+p}(\mr{tohofib}_{I \subset x_{\ul{k}}} \mr{Emb}(I,W_{g,1})) \otimes \bQ$ are both $gr$-algebraic, which they are by Propositions \ref{prop:conf-rational-cohomology-alg} and \ref{prop:conf-rational-homotopy-alg} respectively.\end{proof}

\section{Proof of Theorem \ref{thm:main} and Corollary \ref{cor:main}} \label{sec:proofs}

To obtain a structural understanding of the cohomology of the Torelli space of $W_{g,1}$, we will consider the Torelli analogue of the space of self-embeddings. We proved in Section \ref{sec:emb-path-components} that the action on homology gives a surjective homomorphism 
\[\beta_g \colon \Lambda_g  = \pi_0(\mr{Emb}^{\cong}_{\half \partial}(W_{g,1})) \lra G'_g = \begin{cases} \mr{Sp}_{2g}(\bZ) & \text{if $n$ is 1, 3, or 7,} \\
\mr{Sp}_{2g}^q(\bZ) & \text{if $n$ is odd but not 1, 3, or 7,} \\
\mr{O}_{g,g}(\bZ) & \text{if $n$ is even.}\end{cases}\]
As before, we shall assume that $2n \geq 6$.

\begin{definition}
The \emph{embedding Torelli group} $\mr{TorEmb}^{\cong}_{\half \partial}(W_{g,1})$ is the group-like submonoid of $\mr{Emb}^{\cong}_{\half \partial}(W_{g,1})$ consisting of those path-components in the kernel of $\beta_g$.
\end{definition}

The classifying space of this monoid fits into a fibration sequence
\[B\mr{TorEmb}^{\cong}_{\half \partial}(W_{g,1}) \lra B\mr{Emb}^{\cong}_{\half \partial}(W_{g,1}) \lra BG'_g,\]
and so $B\mr{TorEmb}^{\cong}_{\half \partial}(W_{g,1})$ has an action of $G'_g$ up to homotopy. To prove that the cohomology of this space consists of algebraic $G'_g$-representations, we will use that we already understand the group $\Lambda_g = \pi_0(\mr{Emb}_{\half \partial}^{\cong}(W_{g,1}))$ and focus our attention on the identity component $\smash{\mr{Emb}^\mr{id}_{\half \partial}}(W_{g,1})$. By definition there is a fibration sequence
\[B\mr{Emb}^\mr{id}_{\half \partial}(W_{g,1}) \lra B\mr{Emb}_{\half \partial}^{\cong}(W_{g,1}) \lra B\Lambda_g,\]
and hence the simply-connected space $B\mr{Emb}^\mr{id}_{\half \partial}(W_{g,1})$ has a basepoint-preserving $\Lambda_g$-action up to homotopy, as in Section \ref{sec:algebr-cohomology}.

\begin{theorem}\label{thm:emb-alg} 
Suppose that $2n \geq 6$. Then for each $i \geq 1$, the $\Lambda_g$-representation $\pi_{i+1}(B\mr{Emb}^\mr{id}_{\half \partial}(W_{g,1})) \otimes \bQ$ is $gr$-algebraic.
\end{theorem}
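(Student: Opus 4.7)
The plan is to exploit the embedding calculus Taylor tower applied to $\mr{Emb}_{\half \partial}(W_{g,1})$ (using the isotopy equivalent manifold $W_{g,1}^\circ$ as in Section \ref{sec:trick}). Following the discussion at the end of Section \ref{sec:EmbTower}, we may realise $T_k \coloneqq T_k(\mr{Emb}_\partial(W_{g,1}^\circ))$ as a tower of topological monoids, and let $T_k^{\mr{id}}$ denote the path component of the identity, a group-like submonoid. These assemble into a tower of group-like monoids with a compatible map from $\mr{Emb}^\mr{id}_{\half \partial}(W_{g,1})$, and by naturality of embedding calculus the tower is $\Lambda_g$-equivariant for the conjugation action. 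The fibre of $T_k^{\mr{id}} \to T_{k-1}^{\mr{id}}$ over the identity is the layer $L_k(\mr{Emb}_{\half \partial}(W_{g,1})_\mr{id})$ for $k \geq 2$.

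The convergence estimate of Goodwillie--Weiss--Klein recalled in Section \ref{sec:EmbTower} yields that $\mr{Emb}^{\mr{id}}_{\half \partial}(W_{g,1}) \to T_k^{\mr{id}}$ is $(-(2n-1)+k(n-2))$-connected, which tends to infinity since $n \geq 3$. Thus for each fixed $i \geq 1$ the map $\pi_i(\mr{Emb}^{\mr{id}}_{\half \partial}(W_{g,1})) \to \pi_i(T_k^{\mr{id}})$ is a $\Lambda_g$-equivariant isomorphism for $k$ large enough. It therefore suffices to prove, by induction on $k$, that $\pi_i(T_k^{\mr{id}}) \otimes \bQ$ is a $gr$-algebraic $\Lambda_g$-representation for all $i \geq 1$. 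The base case $k=1$ is immediate from Proposition \ref{prop:filteredlayer1}, since $T_1^\mr{id}$ is the identity component of $\mr{Bun}_{\half \partial}(TW_{g,1})$ and the $\Upsilon_g$-representation structure on its rational homotopy groups pulls back to $\Lambda_g$ along the map of Lemma \ref{lem:semi-direct-2}.

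For the inductive step, the fibration $L_k \to T_k^{\mr{id}} \to T_{k-1}^{\mr{id}}$ gives a long exact sequence of $\Lambda_g$-modules
\[\cdots \to \pi_{i+1}(T_{k-1}^{\mr{id}}) \to \pi_i(L_k) \to \pi_i(T_k^{\mr{id}}) \to \pi_i(T_{k-1}^{\mr{id}}) \to \pi_{i-1}(L_k) \to \cdots\]
which, after tensoring with $\bQ$, presents $\pi_i(T_k^{\mr{id}}) \otimes \bQ$ as an extension of the kernel of a map out of $\pi_i(T_{k-1}^{\mr{id}}) \otimes \bQ$ (which is $gr$-algebraic by inductive hypothesis and Lemma \ref{lem:filteredalgebraic}(a)) by the cokernel of a map into $\pi_i(L_k) \otimes \bQ$ (which is $gr$-algebraic by Proposition \ref{prop:filteredhigherlayers} and Lemma \ref{lem:filteredalgebraic}(b)). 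Since $g \geq 2$, Lemma \ref{lem:filteredalgebraic}(c) closes the induction. Finally, as $\mr{Emb}^{\mr{id}}_{\half \partial}(W_{g,1})$ is group-like, $\pi_{i+1}(B\mr{Emb}^\mr{id}_{\half \partial}(W_{g,1})) \cong \pi_i(\mr{Emb}^\mr{id}_{\half \partial}(W_{g,1}))$, delivering the theorem.

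The main obstacle I anticipate is bookkeeping around equivariance: ensuring the $\Gamma_g$-action on $\pi_*(L_k) \otimes \bQ$ from Proposition \ref{prop:filteredhigherlayers} genuinely factors through $\Lambda_g$ and is compatible with the conjugation action on the tower, so that the long exact sequence is honestly $\Lambda_g$-equivariant. This should follow from the naturality properties of the layers with respect to embeddings spelled out in Section \ref{sec:layers-bundles}, together with the fact that isotopic self-embeddings induce homotopic maps of layers, so the conjugation action descends from $\mr{Diff}_\partial(W_{g,1})$ to $\Lambda_g$.
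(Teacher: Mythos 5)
Your proof is correct and essentially the same as the paper's: both hinge on the embedding calculus tower together with Propositions \ref{prop:filteredlayer1} and \ref{prop:filteredhigherlayers} and the closure properties in Lemma \ref{lem:filteredalgebraic}; the only difference is that you unwind the tower stage by stage via long exact sequences while the paper packages the same argument into the Bousfield--Kan homotopy spectral sequence. One small correction: your inductive step invokes ``Since $g \geq 2$, Lemma \ref{lem:filteredalgebraic}(c) closes the induction,'' but that hypothesis is not needed here --- as the remark following Lemma \ref{lem:filteredalgebraic} points out, closure of $gr$-algebraic representations under extensions holds without assuming $g \geq 2$ (in contrast with Theorem \ref{thm:representations}(c)), which is precisely what lets Theorem \ref{thm:emb-alg} be stated without that condition.
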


\begin{proof} 
Let us first recall how the $\Lambda_g$-action may be constructed geometrically. The identity component of the self-embeddings of $W_{g,1}$ admits an action of $\mr{Diff}_\partial(W_{g,1})$ by letting a diffeomorphism $\phi$ act on an embedding $e$ by $\phi \cdot e = \phi^{-1} \circ e \circ \phi$. This action is via morphisms of monoids, so induces an action on $B\mr{Emb}^\mr{id}_{\half \partial}(W_{g,1})$, too. The induced $\Gamma_g$-action on $\pi_{i+1}(B\mr{Emb}^\mr{id}_{\half \partial}(W_{g,1}))$ is then identified with the $\Gamma_g$-action on $\pi_{i}(\mr{Emb}_{\half \partial}(W_{g,1}), \mr{id})$ for $i \geq 1$. This action factors through the surjection $\Gamma_g \to \Lambda_g$, so to prove the theorem it is therefore enough to prove that the $\Gamma_g$-representations $\pi_{i}(\mr{Emb}_{\half \partial}(W_{g,1}), \mr{id}) \otimes \bQ$ are $gr$-algebraic for all $i \geq 1$. 

To do so we consider the embedding calculus tower for $\mr{Emb}_{\half \partial}(W_{g,1})$, which is a tower of $H$-spaces, and apply the Bousfield--Kan spectral sequence to it. As $\mr{id} \in \mr{Emb}_{\half \partial}(W_{g,1})$ is a $\mr{Diff}_\partial(W_{g,1})$-invariant base point, the Taylor approximations and layers inherit an action of $\mr{Diff}_\partial(W_{g,1})$, so their homotopy groups inherit a $\Gamma_g$-action. This spectral sequence is given by
	\[E^1_{p,q} = \pi_{q-p}(L_p(\mr{Emb}_{\half \partial}(W_{g,1})_\mr{id})) \Longrightarrow \pi_{q-p}(\mr{Emb}_{\half \partial}(W_{g,1}), \mr{id}),\]
	and by the $H$-space structure this is a spectral sequence of abelian groups for $q-p \geq 1$. By naturality of the Taylor tower and the Bousfield--Kan homotopy spectral sequence, this spectral sequence has an action of $\Gamma_g$. By Theorem B of \cite{goodwillieklein} the $p$th layer of this tower is $(-(2n-1)+p(n-2))$-connected, so the spectral sequence converges completely by \cite[Proposition IX.\S 5.7]{bousfieldkan}. Thus for each $i \geq 1$ the abelian group $\pi_{i}(\mr{Emb}_{\half \partial}(W_{g,1}), \mr{id})$ has a finite filtration with $p$th filtration quotient a subquotient of $E^1_{p, i+p}$. Hence $\pi_{i}(\mr{Emb}_{\half \partial}(W_{g,1}), \mr{id}) \otimes \bQ$ has a finite filtration with $p$th filtration quotient a subquotient of $E^1_{p, i+p} \otimes \bQ$.

By Lemma \ref{lem:filteredalgebraic}, the theorem follows as soon as we establish that each $\Gamma_g$-representation $E^1_{p,i+p} \otimes \bQ$ is $gr$-algebraic for $i \geq 1$. This was the content of previous two sections: Proposition \ref{prop:filteredlayer1} for $p=1$, Proposition \ref{prop:filteredhigherlayers} for $p \geq 2$.
\end{proof}

\begin{corollary}\label{cor:id-comp-alg} Suppose that $2n \geq 6$. Then the $\Lambda_g$-representations 
$$H^i(B\mr{Emb}^\mr{id}_{\half \partial}(W_{g,1});\bQ)$$
are $gr$-algebraic.\end{corollary}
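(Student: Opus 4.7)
The plan is to combine Theorem \ref{thm:emb-alg} with the abstract machinery developed in Section \ref{sec:equiv-serre}. By Lemma \ref{lem:gr-algebraic-serre}, the class $\cC$ of $gr$-algebraic $\bQ[\Lambda_g]$-representations forms an equivariant Serre class satisfying the additional property (iv) (closure under $\bQ$-linear duals in the derived sense). The strategy is therefore to transport the knowledge of the rational homotopy groups of $B\mr{Emb}^\mr{id}_{\half \partial}(W_{g,1})$, supplied by Theorem \ref{thm:emb-alg}, through the formal results of Section \ref{sec:equiv-serre} to its rational cohomology.

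First I would observe that $B\mr{Emb}^\mr{id}_{\half \partial}(W_{g,1})$ is $1$-connected (its $\pi_1$ is $\pi_0$ of the identity component of the self-embedding monoid, which is trivial by construction), and carries a based action of $\Lambda_g$ up to homotopy coming from the fibration sequence
\[B\mr{Emb}^\mr{id}_{\half \partial}(W_{g,1}) \lra B\mr{Emb}_{\half \partial}^{\cong}(W_{g,1}) \lra B\Lambda_g.\]
In particular it is simple, so Lemma \ref{lem:filtered-algebraic-homotopy-to-homology} applies. Theorem \ref{thm:emb-alg} tells us that for every $i \geq 1$ the $\Lambda_g$-representation $\pi_{i+1}(B\mr{Emb}^\mr{id}_{\half \partial}(W_{g,1})) \otimes \bQ$ lies in $\cC$ (and $\pi_1$ vanishes). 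Applying Lemma \ref{lem:filtered-algebraic-homotopy-to-homology} with $\bk = \bQ$ and this $\cC$ gives that every $\Lambda_g$-representation $H_i(B\mr{Emb}^\mr{id}_{\half \partial}(W_{g,1});\bQ)$ is $gr$-algebraic.

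Finally I would invoke Lemma \ref{lem:homology-to-cohomology}, whose hypothesis (property (iv)) is satisfied by $\cC$ thanks to Lemma \ref{lem:gr-algebraic-serre}, to conclude that the $\Lambda_g$-representations $H^i(B\mr{Emb}^\mr{id}_{\half \partial}(W_{g,1});\bQ)$ are also $gr$-algebraic, as required.

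There is no real obstacle here: all the technical work has already been done, either in Section \ref{sec:equiv-serre} (the closure properties of $gr$-algebraic representations under spectral sequences and Postnikov-tower arguments) or in Theorem \ref{thm:emb-alg} (the control of rational homotopy groups via embedding calculus). The corollary is essentially a two-line consequence, provided one is careful to check that $B\mr{Emb}^\mr{id}_{\half \partial}(W_{g,1})$ is simply-connected so that the passage from homotopy to homology via Lemma \ref{lem:filtered-algebraic-homotopy-to-homology} is available in the stronger based form.
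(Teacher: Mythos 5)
Your proposal is correct and matches the paper's argument almost exactly: the paper's proof of this corollary is the one-line "Combine Theorem \ref{thm:emb-alg} with Lemma \ref{lem:filtered-algebraic-homotopy-to-homology}," leaving the homology-to-cohomology dualization implicit, whereas you spell it out via Lemma \ref{lem:homology-to-cohomology}. Both routes are the same in substance.
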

\begin{proof}
Combine Theorem \ref{thm:emb-alg} with Lemma \ref{lem:filtered-algebraic-homotopy-to-homology}.
\end{proof}

\begin{corollary}\label{cor:algebraic-torelli-emb} 
Suppose that $2n \geq 6$ and that $g \geq 2$. Then the $G'_g$-representations $H^i(B\mr{TorEmb}^{\cong}_{\half \partial}(W_{g,1});\bQ)$ are algebraic.\end{corollary}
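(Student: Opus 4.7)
The plan is to apply Lemma \ref{lem:i-connective} with $Y \coloneqq B\mr{Emb}^{\cong}_{\half\partial}(W_{g,1})$, $\Gamma \coloneqq \Lambda_g$, $G \coloneqq G'_g$, and $J \coloneqq J_g$. Since $\mr{Emb}^{\mr{id}}_{\half\partial}(W_{g,1})$ is path-connected, the fibration
\[B\mr{Emb}^{\mr{id}}_{\half\partial}(W_{g,1}) \lra Y \lra B\Lambda_g\]
identifies $B\mr{Emb}^{\mr{id}}_{\half\partial}(W_{g,1})$ with the universal cover $Y\langle e\rangle$. Similarly, the extension \eqref{eqn:lambda-setup} combined with the defining fibration
\[B\mr{TorEmb}^{\cong}_{\half\partial}(W_{g,1}) \lra Y \lra BG'_g\]
identifies $B\mr{TorEmb}^{\cong}_{\half\partial}(W_{g,1})$ with the cover $Y\langle J_g\rangle$.

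Lemma \ref{lem:i-connective} has three hypotheses. The condition $g \geq 2$ is given; that each $\Lambda_g$-representation $H^i(Y\langle e\rangle;\bQ)$ is $gr$-algebraic is exactly Corollary \ref{cor:id-comp-alg}. The only item left to verify is that each $G'_g$-representation $H^i(J_g;\bQ)$ is algebraic. To this end, I split into cases using Table \ref{tab.spino}. The group $S\pi_n(SO(n))$ is either finite or isomorphic to $\bZ$; the latter occurs precisely when $n \equiv 3, 7 \pmod 8$. In the finite case $J_g = \mr{Hom}(H_n, S\pi_n(SO(n)))$ is itself finite, so $H^i(J_g;\bQ) = 0$ for $i > 0$ and the hypothesis holds trivially. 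In the remaining case, Lemma \ref{lem:semi-direct-2} shows that $J_g \cong H_n^\vee$ as a $G'_g$-module, with $G'_g$ acting by precomposition with the standard action on $H_n$. Consequently $J_g \otimes \bQ \cong (H_n \otimes \bQ)^\vee$, and there is an isomorphism of graded $G'_g$-representations
\[H^*(J_g;\bQ) \cong \Lambda^*\!\left((J_g \otimes \bQ)^\vee\right) \cong \Lambda^*(H_n \otimes \bQ);\]
each exterior power of the standard representation is algebraic by Theorem \ref{thm:representations}.

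I do not expect a serious obstacle: the deep inputs (finiteness analysis of the Taylor tower, handle on the layers, and algebraicity of $H^i(B\mr{Emb}^{\mr{id}}_{\half\partial}(W_{g,1});\bQ)$) are already contained in Corollary \ref{cor:id-comp-alg}. The only mildly delicate step is the identification of $Y\langle J_g\rangle$ with $B\mr{TorEmb}^{\cong}_{\half\partial}(W_{g,1})$, which follows from the definition of the embedding Torelli group as the union of the path components in $\ker(\beta_g)$ together with the fibration sequence induced by \eqref{eqn:lambda-setup}.
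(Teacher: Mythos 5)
Your proof is correct and follows the same route as the paper: both identify $B\mr{TorEmb}^{\cong}_{\half\partial}(W_{g,1})$ with the cover of $B\mr{Emb}^{\cong}_{\half\partial}(W_{g,1})$ corresponding to $J_g$, feed Corollary~\ref{cor:id-comp-alg} into Lemma~\ref{lem:i-connective}, and observe that $H^*(J_g;\bQ) \cong \Lambda^*[H_n \otimes (S\pi_n(SO(n))\otimes\bQ)^\vee]$ is algebraic. Your case split according to whether $S\pi_n(SO(n))$ is finite or infinite is subsumed by the paper's uniform formula, and the identification of the $G'_g$-action on $J_g$ is immediate from Theorem~\ref{thm:kreck} rather than Lemma~\ref{lem:semi-direct-2}, but these are cosmetic differences.
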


\begin{proof}
There is a fibration sequence
\[B\mr{Emb}^\mr{id}_{\half \partial}(W_{g,1}) \lra B\mr{TorEmb}^{\cong}_{\half \partial}(W_{g,1})\lra BJ_g,\]
with $J_g = \pi_0(\mr{TorEmb}^{\cong}_\partial(W_{g,1})) \overset{\cong}{\to} \mr{Hom}(H_n,S\pi_n(SO(n)))$ as in \eqref{eqn:lambda-setup}, to which we will apply Lemma \ref{lem:i-connective}. Given Corollary \ref{cor:id-comp-alg}, to apply that lemma we need to prove that the $G'_g$-representations $H^i(J_g;\bQ)$ are algebraic, but we have
\[H^i(J_g;\bQ) = \Lambda^i [H_n \otimes (S\pi_n(SO(2n)) \otimes \bQ)^\vee]\]
which is indeed algebraic.\end{proof}

We may now deduce Theorem \ref{thm:main}, which said that for $2n \geq 6$ and $g \geq 2$, the rational cohomology groups of $B\mr{Tor}_{\partial}(W_{g,1})$ are algebraic $G'_g$-representations.

\begin{proof}[Proof of Theorem \ref{thm:main}]
The Weiss fibration sequence \eqref{eqn:weiss} provides a commutative diagram with rows and columns fibration sequences
\[\begin{tikzcd} 
B\mr{Diff}_\partial(D^{2n}) \rar[equals] \dar  & B\mr{Diff}_\partial(D^{2n}) \rar \dar & \ast \dar \\
	B\mr{Tor}_\partial(W_{g,1}) \rar \dar & B\mr{Diff}_\partial(W_{g,1}) \rar \dar & BG'_g \dar[equals] \\ B\mr{TorEmb}^{\cong}_{\half \partial}(W_{g,1}) \rar &  B\mr{Emb}^{\cong}_{\half \partial}(W_{g,1}) \rar & BG'_g,
\end{tikzcd}\]
where the left and middle columns deloop compatibly by \cite[Theorem 4.17]{kupersdisk} and hence the action of the fundamental group of the base on the cohomology of the fibre is trivial, and on the left column the $G'_g$- and $B\mr{Diff}_\partial(D^{2n})$-actions commute. Thus we get a Serre spectral sequence of $G'_g$-representations 
	\[E^2_{p,q} = H^p(B\mr{TorEmb}^{\cong}_{\half \partial}(W_{g,1}); \bQ) \otimes H^q(B\mr{Diff}_\partial(D^{2n});\bQ) \Longrightarrow H^{p+q}(B\mr{Tor}_\partial(W_{g,1});\bQ).\]
 Using Corollary \ref{cor:algebraic-torelli-emb} and the fact that $H^*(B\mr{Diff}_\partial(D^{2n});\bQ)$ is degree-wise finite-dimensional by \cite[Theorem A]{kupersdisk}, the $E^2$-page consists of algebraic $G'_g$-representations, so by Theorem \ref{thm:representations} so does the abutment.\end{proof}

\begin{proof}[Proof of Corollary \ref{cor:main}]
Consider the map of fibrations
\[\begin{tikzcd} 
B\mr{Tor}_\partial(W_{g,1}) \rar \dar[equals] & B\mr{Diff}^{G_g''}_\partial(W_{g,1}) \rar \dar & BG''_g \dar \\ 
B\mr{Tor}_\partial(W_{g,1}) \rar &  B\mr{Diff}_\partial(W_{g,1}) \rar & BG'_g,
\end{tikzcd}\]
which on $E_2$-pages of the associated Serre spectral sequences induces
\[H^p(G'_g ; H^q(B\mr{Tor}_\partial(W_{g,1});\bQ) \otimes V) \lra H^p(G''_g ; H^q(B\mr{Tor}_\partial(W_{g,1});\bQ) \otimes V).\]
To compare the left and right hand side, we shall use work of Borel. By Corollary 5.5 of \cite{kupersdisk} each $H^q(B\mr{Tor}_\partial(W_{g,1});\bQ) \otimes V$ is a finite-dimensional $G'_g$-representation, so by theorems of Borel \cite{borelstable,borelstable2} (see Theorem 2.3 of \cite{KR-WTorelli} for a description of Borel's results adapted to this situation, using the bounds from \cite{tshishikuBorel}) the map
\[H^p(G_\infty;\bQ) \otimes [H^q(B\mr{Tor}_\partial(W_{g,1});\bQ) \otimes V]^{G''_g} \to H^p(G''_g ; H^q(B\mr{Tor}_\partial(W_{g,1});\bQ) \otimes V)\]
is an isomorphism for $p < g-e$, with $e=0$ if $n$ is odd and $e=1$ if $n$ is even. This only uses that $G''_g$ is an arithmetic subgroup of $\mathbf{G}(\bQ)$, so also holds with $G''_g$ replaced with $G'_g$.

Now by Theorem \ref{thm:main} the representation $H^q(B\mr{Tor}_\partial(W_{g,1});\bQ) \otimes V$ is algebraic, and by our assumptions both $G'_g$ and $G''_g$ are Zariski-dense in $\mathbf{G}(\bQ)$ (see Section 2.1.1 of \cite{KR-WTorelli}). Thus the $G'_g$- and $G''_g$-invariants coincide, so the map of total spaces induces an isomorphism on homology in total degrees $* < g-e$.
\end{proof}

\section{Proof of Theorem \ref{thm:nilp}}\label{sec:nilpotent}

In this section we prove that for $2n \geq 6$ the spaces $B\mr{Tor}_\partial(W_{g,1})$ are nilpotent. A suitable reference for the theory of nilpotent spaces is \cite[Chapter 3]{MayPonto}, but we recall the definition here. 

\begin{definition}
A path-connected based space $(X,x_0)$ is \emph{nilpotent} if $\pi_1(X,x_0)$ is a nilpotent group and for each $i>1$ the $\pi_1(X,x_0)$-module $\pi_i(X,x_0)$ is nilpotent. More generally, a space is \emph{nilpotent} if each of its path-components is nilpotent for each basepoint.\end{definition}

Examples of nilpotent spaces include simply-connected spaces and $n$-fold loop spaces. Nilpotent spaces are preserved by various constructions: e.g.~products and homotopy fibres, a special case of \cite[Proposition 4.4.1]{MayPonto}:

\begin{lemma}\label{lem:nilp-fib} Suppose $p \colon E \to B$ is a surjective fibration with $B$ path-connected. For $e \in E$, let $E_e$ denote the path-component containing $e$, $F$ denote the fibre over $p(e)$, and $F_e$ denote the path-component of $F$ containing $e$. If $E_e$ is nilpotent, then $F_e$ is nilpotent.
\end{lemma}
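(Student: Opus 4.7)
The plan is to verify directly that $\pi_1(F_e, e)$ is a nilpotent group and that it acts nilpotently on each $\pi_n(F_e, e)$ for $n \geq 2$. The tool I would use is the long exact sequence of homotopy groups
$$\cdots \to \pi_{n+1}(B, b) \xrightarrow{\partial} \pi_n(F_e, e) \xrightarrow{i_*} \pi_n(E_e, e) \xrightarrow{p_*} \pi_n(B, b) \to \cdots$$
together with the classical fact (see e.g.\ \cite[Chapter 3]{MayPonto}) that there is an action of $\pi_1(E_e, e)$ on each $\pi_n(F_e, e)$ extending the standard self-action of $\pi_1(F_e, e)$ along $i_*$, and with respect to which the long exact sequence is $\pi_1(E_e, e)$-equivariant (with $\pi_1(E_e, e)$ acting on the homotopy of $B$ through $p_*$).

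Given this setup, the key observation is that $p_* \circ i_*$ is trivial, and hence by equivariance the action of $\pi_1(F_e, e)$ on $\operatorname{im}(\partial) \subseteq \pi_n(F_e, e)$ inherited from the extended structure is trivial. For $n \geq 2$ the long exact sequence then furnishes a short exact sequence of $\pi_1(F_e, e)$-modules
$$0 \to \operatorname{im}(\partial) \to \pi_n(F_e, e) \to \operatorname{im}(i_*) \to 0,$$
whose submodule has trivial action and whose quotient is identified via $i_*$ with a $\pi_1(F_e, e)$-submodule of $\pi_n(E_e, e)$, inheriting a nilpotent action from the hypothesis that $E_e$ is nilpotent. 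An extension of a nilpotent module by a trivial one is nilpotent, so $\pi_n(F_e, e)$ is nilpotent.

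For $n = 1$ the analogous argument at the low end of the sequence produces a short exact sequence of groups
$$1 \to \operatorname{im}(\partial) \to \pi_1(F_e, e) \to \operatorname{im}(i_*) \to 1.$$
The same equivariance argument now shows that $\operatorname{im}(\partial)$ is fixed by conjugation by $\pi_1(F_e, e)$, that is, central. The quotient $\operatorname{im}(i_*) \leq \pi_1(E_e, e)$ is nilpotent as a subgroup of a nilpotent group, and a central extension of a nilpotent group by an abelian group is nilpotent, finishing the argument.

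The only real obstacle is producing the extended $\pi_1(E_e, e)$-action on each $\pi_n(F_e, e)$ and checking the equivariance of the long exact sequence; this is standard but technical, which is presumably why the authors prefer to cite \cite{MayPonto} rather than reprove it here.
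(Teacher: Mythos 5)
Your argument is correct, and it follows the standard approach: use the extended $\pi_1(E_e)$-action on the homotopy of the fibre and the equivariance of the long exact sequence to split $\pi_n(F_e,e)$ into a trivial piece $\operatorname{im}(\partial)$ and a piece $\operatorname{im}(i_*)$ that inherits a nilpotent action from $\pi_n(E_e,e)$, handling $n=1$ by the centrality of $\operatorname{im}(\partial)$. The paper does not give its own proof of this lemma --- it simply cites \cite[Proposition 4.4.1]{MayPonto} --- and the argument you spell out is precisely the standard one underlying that reference, so there is nothing to compare against beyond noting that you have correctly reconstructed it.

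One small remark: you should make explicit that since $p \colon E \to B$ is surjective and $B$ is path-connected, the restricted map $p|_{E_e} \colon E_e \to B$ is still a surjective fibration (path-lifting shows every fibre meets $E_e$), and that the fibre over $b=p(e)$ based at $e$ has $\pi_n$ computed by $F_e$ since homotopy groups only see the path-component of the basepoint. This is what justifies writing the long exact sequence with $\pi_n(F_e)$ and $\pi_n(E_e)$ in it rather than $\pi_n(F)$ and $\pi_n(E)$. Your proof implicitly assumes this but it is worth a sentence.
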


We showed in Lemma \ref{lem:NilpIsSerre} that the class $\cC$ of nilpotent $\Gamma$-modules which are finitely-generated as abelian groups is an equivariant Serre class, and showed in Lemma \ref{lem:NilpIsDualClosed} that this class is closed under duals or $\mr{Ext}^1_\bZ(-,\bZ)$. This gives us Lemmas \ref{lem:serre-class-ss}, \ref{lem:filtered-algebraic-homotopy-to-homology}, \ref{lem:filtered-algebraic-homology-to-homotopy}, and \ref{lem:homology-to-cohomology} as tools. We shall apply some of these with $\Gamma = I_g$.

We will now commence the proof of Theorem \ref{thm:nilp}, which says that $B\mr{Tor}_\partial(W_{g,1})$ is nilpotent as long as $2n \geq 6$. We will first prove the corresponding statement for $B\mr{TorEmb}^{\cong}_{\half \partial}(W_{g,1})$. This requires two pieces of input, analogous to Propositions \ref{prop:conf-rational-cohomology-alg} and \ref{prop:conf-rational-homotopy-alg}.

\begin{lemma}\label{lem:conf-cohomology-nilp} The $I_g$-modules $H^i(W_{g,1}^k,\Delta_\partial;\bZ)$ are nilpotent.\end{lemma}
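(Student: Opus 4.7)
The plan is to mimic the structure of Proposition \ref{prop:conf-rational-cohomology-alg}, substituting the equivariant Serre class of $gr$-algebraic $\bQ[\Gamma]$-modules with the equivariant Serre class $\cC$ of nilpotent $\bZ[I_g]$-modules which are finitely-generated as abelian groups, made available by Lemma \ref{lem:NilpIsSerre}. Since membership in $\cC$ implies nilpotence, it suffices to show each $H^i(W_{g,1}^k,\Delta_\partial;\bZ)$ lies in $\cC$.

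First I would use the long exact sequence of $I_g$-modules associated to the pair $(W_{g,1}^k,\Delta_\partial)$,
\[\cdots \lra H^q(W_{g,1}^k,\Delta_\partial;\bZ) \lra H^q(W_{g,1}^k;\bZ) \lra H^q(\Delta_\partial;\bZ) \lra \cdots,\]
together with property \eqref{enum:equiv-serre-1} of $\cC$, to reduce the problem to showing that $H^*(W_{g,1}^k;\bZ)$ and $H^*(\Delta_\partial;\bZ)$ both lie in $\cC$.

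The first piece is nearly immediate: $W_{g,1} \simeq \vee_{2g} S^n$ has torsion-free cohomology concentrated in degrees $0$ and $n$, and $I_g$ is by definition the kernel of the action on $H_n(W_{g,1};\bZ)$, so $I_g$ acts trivially on $H^*(W_{g,1};\bZ)$. The K\"unneth formula then gives an identification of $I_g$-modules $H^*(W_{g,1}^k;\bZ) \cong H^*(W_{g,1};\bZ)^{\otimes k}$, which is a finitely-generated trivial $I_g$-module and therefore lies in $\cC$ with the tautological one-step filtration. For the second piece, I would apply the Bousfield--Kan spectral sequence \eqref{eqn:bk-cohomology}. Each $\Delta_\delta^\omega$ is $\mr{Diff}_\partial(W_{g,1})$-equivariantly homotopy equivalent to a product of finitely many copies of $W_{g,1}$ (with an extra contractible factor coming from $K$ when the block of $\omega$ containing $\ast$ is nontrivial), so by the previous paragraph each $H^q(\Delta_\delta^\omega;\bZ)$ is a finitely-generated trivial $I_g$-module, hence in $\cC$. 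Because $\Pi^\ast(\ul{k})$ is a finite poset, the $E_1$-page is a finite direct sum of such terms in each bidegree and vanishes for $p$ large; the hypotheses of Lemma \ref{lem:serre-class-ss} are therefore satisfied, and the abutment $H^*(\Delta_\partial;\bZ)$ lies in $\cC$.

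No serious obstacle arises, and in particular the stumbling block of Theorem \ref{thm:representations}(c) — the requirement $g \geq 2$ for extensions of algebraic representations — has no counterpart here, because $\cC$ is closed under arbitrary extensions. In fact the argument shows the stronger statement that $H^*(W_{g,1}^k,\Delta_\partial;\bZ)$ admits a finite filtration by $I_g$-submodules whose associated graded has trivial $I_g$-action, which is the nilpotence assertion of the lemma.
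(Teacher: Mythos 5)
Your proof is correct and follows essentially the same route as the paper: reduce via the long exact sequence of the pair to $H^*(W_{g,1}^k;\bZ)$ (trivial $I_g$-module) and $H^*(\Delta_\partial;\bZ)$, then handle the latter by applying Lemma \ref{lem:serre-class-ss} to the Bousfield--Kan spectral sequence \eqref{eqn:bk-cohomology} with $\bk=\bZ$, observing that the $E^1$-page consists of trivial $I_g$-modules. You merely spell out a few intermediate steps the paper leaves implicit.
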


\begin{proof}There is a long exact sequence of $I_g$-modules
	\[\cdots \lra H^i(W_{g,1}^k,\Delta_\partial;\bZ) \lra H^i(W_{g,1}^k;\bZ) \lra H^i(\Delta_\partial;\bZ) \lra \cdots,\]
	and since the $I_g$-action on $H^i(W_{g,1}^k;\bZ)$ is trivial, it suffices to prove that each $I_g$-module $H^i(\Delta_\partial;\bZ)$ is nilpotent. This follows by applying Lemma \ref{lem:serre-class-ss} to the spectral sequence \eqref{eqn:bk-cohomology} with $\bk = \bZ$, using the observation that the $I_g$-action on the $E^1$-page is trivial.
\end{proof}

\begin{lemma}\label{lem:conf-homotopy-nilp} The $I_g$-modules $\pi_i(\mr{tohofib}_{I \subset x_{\ul{k}}}\mr{Emb}(I,W_{g,1}))$ are nilpotent.\end{lemma}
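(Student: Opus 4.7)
The plan is to mimic the argument for the rational/$gr$-algebraic version in Proposition \ref{prop:conf-rational-homotopy-alg}, substituting $\bQ$ with $\bZ$ and the Serre class of $gr$-algebraic representations with the Serre class $\cC$ of finitely-generated abelian $I_g$-modules which are nilpotent, established in Lemmas \ref{lem:NilpIsSerre} and \ref{lem:NilpIsDualClosed}. The key point is that since $I_g$ acts trivially on $H_\ast(W_{g,1};\bZ)$ by definition, any construction whose $\bZ$-coefficient input is built from $H^\ast(W_{g,1};\bZ)$ will have trivial $I_g$-action on its $E^1$- or $E^2$-page, and hence land in $\cC$ before taking the abutment.

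First I would show that each $I_g$-module $H^i(\mr{Emb}(\ul{k},W_{g,1});\bZ)$ lies in $\cC$. Since $H^\ast(W_{g,1};\bZ)$ is a free finitely-generated graded $\bZ$-module, the Totaro spectral sequence applies with $\bZ$-coefficients: its $E_2$-page is $H^\ast(W_{g,1}^k;\bZ)[G_{ab}]$ with generators $G_{ab}$ in bidegree $(0,2n-1)$ and the four relations listed in the text. This is finitely-generated abelian in each bidegree (and vanishes outside a first quadrant wedge, so there are only finitely many contributions to each total degree), and the $I_g$-action on it is trivial since $I_g$ acts trivially on $H^\ast(W_{g,1};\bZ)$. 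Each $E_2^{p,q}$ therefore lies in $\cC$, and so by Lemma \ref{lem:serre-class-ss} the abutment does too.

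Second, since $\cC$ satisfies property (iv) by Lemma \ref{lem:NilpIsDualClosed}, the equivalence of Lemma \ref{lem:homology-to-cohomology} applies and gives that the $I_g$-modules $H_i(\mr{Emb}(\ul{k},W_{g,1});\bZ)$ also lie in $\cC$. Because $2n \geq 6$ and $W_{g,1}$ is $(n-1)$-connected with $n \geq 3$, the ordered configuration space $\mr{Emb}(\ul{k},W_{g,1})$ is $1$-connected, and by choosing a configuration near the boundary we can assume the $I_g$-action fixes the basepoint. Lemma \ref{lem:filtered-algebraic-homology-to-homotopy} (applied with $\bk = \bZ$ and $\cC$ as above) then shows that each $I_g$-module $\pi_i(\mr{Emb}(\ul{k},W_{g,1}))$ lies in $\cC$.

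Third, exactly as in the proof of Proposition \ref{prop:conf-rational-homotopy-alg}, the natural based $I_g$-equivariant map
\[\mr{tohofib}_{I \subset x_{\ul{k}}}\mr{Emb}(I,W_{g,1}) \lra \mr{Emb}(\ul{k},W_{g,1})\]
induces a split injection on homotopy groups, with splitting given by adding particles near the boundary. Hence $\pi_i(\mr{tohofib}_{I \subset x_{\ul{k}}}\mr{Emb}(I,W_{g,1}))$ is an $I_g$-equivariant direct summand of an object of $\cC$, and so lies in $\cC$ by property \eqref{enum:equiv-serre-1}, i.e.\ is a finitely-generated nilpotent $I_g$-module, which is what we wanted. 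The only step that requires any care is verifying the hypotheses of Lemma \ref{lem:filtered-algebraic-homology-to-homotopy} (simple connectivity and basepoint-preserving action), and checking that the $\bZ$-coefficient Totaro spectral sequence indeed has finitely-generated $E_2$-page in each bidegree; both are straightforward here.
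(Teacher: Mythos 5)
Your proof is correct and is essentially the paper's own argument: you run the integral Totaro spectral sequence to put $H^*(\mr{Emb}(\ul{k},W_{g,1});\bZ)$ in the nilpotent Serre class $\cC$, pass to homology via Lemma~\ref{lem:homology-to-cohomology}, to homotopy via Lemma~\ref{lem:filtered-algebraic-homology-to-homotopy}, and then use the same split-injection onto the total homotopy fibre as in Proposition~\ref{prop:conf-rational-homotopy-alg}. You merely spell out a few intermediate checks (free $\bZ$-cohomology of $W_{g,1}$ for the integral Totaro $E_2$-description, $1$-connectivity of the configuration space, basepoint near the boundary) that the paper leaves implicit.
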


\begin{proof}By the Totaro spectral sequence the $I_g$-modules $H^i(\mr{Emb}(\ul{k};W_{g,1});\bZ)$ are finitely generated as abelian groups and have nilpotent $I_g$-action, so by Lemmas \ref{lem:filtered-algebraic-homology-to-homotopy} and \ref{lem:homology-to-cohomology}  the homotopy groups of $\mr{Emb}(\ul{k};W_{g,1})$ are as well. By the argument of Proposition \ref{prop:conf-rational-homotopy-alg}, $\pi_i(\mr{tohofib}_{I \subset x_{\ul{k}}}\mr{Emb}(I,W_{g,1}))$ is a summand of $\pi_i(\mr{Emb}(\ul{k},W_{g,1}))$.
\end{proof}

\begin{proposition}\label{prop:TorEmbNilpotent}
For $2n \geq 6$, $B\mr{TorEmb}^{\cong}_{\half \partial}(W_{g,1})$ is nilpotent.
\end{proposition}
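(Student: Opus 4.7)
My plan is to follow the scheme of Theorem~\ref{thm:emb-alg}, but with the equivariant Serre class of $gr$-algebraic $\bQ[\Gamma_g]$-representations replaced throughout by the class of finitely generated nilpotent $\bZ[I_g]$-modules guaranteed by Lemma~\ref{lem:NilpIsSerre}. The target sits in a fibration
\[
B\mr{Emb}^{\mr{id}}_{\half\partial}(W_{g,1}) \lra B\mr{TorEmb}^{\cong}_{\half\partial}(W_{g,1}) \lra BJ_g,
\]
whose base has abelian fundamental group $J_g$. The monodromy action of $J_g$ on $\pi_\ast$ of the fibre is obtained by restricting the $\Gamma_g$-conjugation action on $\pi_{\ast}(\mr{Emb}_{\half\partial}(W_{g,1}),\mr{id})$ along the surjection $I_g \twoheadrightarrow J_g$. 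Hence nilpotence of $B\mr{TorEmb}^{\cong}_{\half\partial}(W_{g,1})$ will reduce (via Lemma~\ref{lem:nilp-fib} applied in reverse together with the fact that $J_g$ is abelian) to showing that $\pi_j(\mr{Emb}_{\half\partial}(W_{g,1}),\mr{id})$ is a finitely generated nilpotent $I_g$-module for every $j \geq 1$.

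To access these homotopy groups I will run the Bousfield--Kan homotopy spectral sequence of the embedding calculus Taylor tower, exactly as in the proof of Theorem~\ref{thm:emb-alg}; this is a spectral sequence of $I_g$-modules converging completely because of the connectivity of the layers. An application of Lemma~\ref{lem:serre-class-ss} then reduces the problem to showing that every layer $\pi_j(L_p(\mr{Emb}_{\half\partial}(W_{g,1})_{\mr{id}}))$ is a finitely generated nilpotent $I_g$-module.

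For $p \geq 2$ this is the integral analogue of Proposition~\ref{prop:filteredhigherlayers}: I run the Federer spectral sequence for the section space model of $L_p$, whose $E^2$-page is assembled out of $H^{\ast}(W_{g,1}^k,\Delta_\partial;\bZ)$ and $\pi_{\ast}(\mr{tohofib}_{I \subset x_{\ul{k}}}\mr{Emb}(I,W_{g,1}))$. Both inputs are already shown to be finitely generated nilpotent $I_g$-modules by Lemmas~\ref{lem:conf-cohomology-nilp} and \ref{lem:conf-homotopy-nilp}, so two applications of Lemma~\ref{lem:serre-class-ss} deliver the layer.

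The first layer $L_1 = \mr{Bun}_{\half\partial}(TW_{g,1})$ is where I expect the main care to be needed. Here I will exploit the split fibration of Section~\ref{sec:TrivTang} to obtain a split short exact sequence of $\Upsilon_g$-modules
\[
0 \to \mr{Hom}(H_n,\pi_{j+n}(\mr{GL}_{2n}(\bR))) \to \pi_j(\mr{Bun}_{\half\partial}(TW_{g,1}),\mr{id}) \to \mr{Hom}(H_n,\pi_{j+n}(W_{g,1})) \to 0,
\]
whose outer terms are finitely generated, being built from homotopy groups of Lie groups and of wedges of spheres. As observed in the proof of Proposition~\ref{prop:filteredlayer1}, the $\Upsilon_g$-actions on both ends factor through $\mr{GL}(H_n)$: the fibre is an $(n{+}1)$-fold loop space and so has trivial $\pi_0$-action on its higher $\pi_\ast$, while on the base the action is by precomposition. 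By Lemma~\ref{lem:semi-direct-2} the image of $I_g$ in $\Upsilon_g$ lies in the kernel $\mr{Hom}(H_n,\pi_n(SO(2n)))$ of $\Upsilon_g \to \mr{GL}(H_n)$, so $I_g$ acts trivially on both ends and the middle term is a $2$-step nilpotent finitely generated $I_g$-module. The principal obstacle I anticipate is precisely this first-layer case: Proposition~\ref{prop:filteredlayer1} is phrased rationally, so rather than quoting it directly I will need to extract the underlying integral naturality statements from its proof to see that the action on the two ends factors through $\mr{GL}(H_n)$ already before rationalisation.
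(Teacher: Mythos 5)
Your proposal follows the same overall strategy as the paper: reduce to showing that $I_g$ acts nilpotently on $\pi_j(\mr{Emb}_{\half\partial}(W_{g,1}),\mr{id})$, feed this through the Bousfield--Kan homotopy spectral sequence of the Taylor tower via Lemma~\ref{lem:serre-class-ss}, and then settle the layers: the first layer via the semi-direct product structure from Section~\ref{sec:TrivTang} and Lemma~\ref{lem:semi-direct-2}, the higher layers via the Federer spectral sequence together with Lemmas~\ref{lem:conf-cohomology-nilp} and~\ref{lem:conf-homotopy-nilp}.

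Two points of comparison are worth flagging. First, your worry that the first layer will be the sticking point is misplaced: as you in fact work out, the argument in the proof of Proposition~\ref{prop:filteredlayer1} that the $\Upsilon_g$-actions on the fibre and base of the split fibration factor through $\mr{GL}(H_n)$ is integral, and combined with Lemma~\ref{lem:semi-direct-2} (which places the image of $I_g$ in $\mr{Hom}(H_n,\pi_n(SO(2n)))=\ker(\Upsilon_g\to\mr{GL}(H_n))$) one gets at most a $2$-step nilpotent action on the middle term; this is enough and is what the paper uses. Second, the place that actually needs more care is the higher layers: to say the $E^2$-page of the Federer spectral sequence is ``assembled out of'' the two lemma inputs glosses over the $\fS_k$-action. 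Over $\bQ$ one invokes the transfer identification \eqref{eqn:e2-explicit}, but integrally this is unavailable; the paper instead inserts a trigraded (Cartan--Leray) spectral sequence for the $\fS_k$-covering $\mr{Emb}(\ul{k},W_{g,1}) \to C_k(W_{g,1})$, reduces via the bar complex and property \eqref{enum:equiv-serre-1}, and then applies the universal coefficient theorem and property \eqref{enum:equiv-serre-2} to split the coefficients from the homotopy of the total homotopy fibre. You should make this step explicit rather than appealing twice to Lemma~\ref{lem:serre-class-ss} without naming the intermediate spectral sequence. Finally, your invocation of ``Lemma~\ref{lem:nilp-fib} applied in reverse'' is a misnomer: that lemma passes nilpotence from total space to fibre, whereas what is wanted here is the direct observation that the fibration over $BJ_g$ exhibits the fibre as a universal cover, so nilpotence of the target is equivalent to $J_g$ being a nilpotent group (it is abelian) acting nilpotently on $\pi_{\ast\geq 2}$.
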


\begin{proof}
	
	It is a path-connected space with fundamental group $J_g \cong \mr{Hom}(H_n,S\pi_n(SO(n)))$, which is abelian so in particular nilpotent. It remains to show that $J_g$ acts nilpotently on the higher homotopy groups. That is, we need to show that the $J_g$-module $\pi_{i+1}(B\mr{TorEmb}^{\cong}_{\half \partial}(W_{g,1}))$ for $i>0$ admits a finite filtration by sub-$J_g$-modules such that the action on the associated graded is trivial. That is, it should lie in the class $\cC$ of $\bZ[J_g]$-modules which are finitely-generated as abelian groups and have nilpotent $J_g$-actions.
	
There is an $I_g$-module structure on the $J_g$-module $\pi_{i+1}(B\mr{TorEmb}_\partial(W_{g,1}))$ given by the surjection $I_g \to J_g$, which is induced by the geometric action of $I_g \subset \Gamma_g$ by conjugation. Hence it suffices to prove that $I_g$ acts nilpotently. Furthermore, by Lemma \ref{lem:serre-class-ss} the property of being a nilpotent $I_g$-module passes through the Bousfield--Kan homotopy spectral sequence used in the proof of Theorem \ref{thm:emb-alg}, so it suffices to prove that $I_g$ acts nilpotently on the higher homotopy groups of the layers $L_p(\mr{Emb}_{\half \partial}(W_{g,1})_\mr{id})$.
	
	There are two cases to consider. The first is $p=1$, in which case we have \[L_1(\mr{Emb}_{\half \partial}(W_{g,1})_\mr{id}) \simeq \mr{Bun}_{\half \partial}(TW_{g,1}).\] The first part of the proof of Proposition \ref{prop:filteredlayer1} did not require rational coefficients, and that argument shows that $I_g$ acts trivially on the higher homotopy groups of $\mr{Bun}_{\half \partial}(TW_{g,1})$.
	
	The second case is $p>1$, in which case we have 
	\[L_p(\mr{Emb}_{\half \partial}(W_{g,1})_\mr{id}) \simeq \mr{Sect}(t^\mr{id}_k,s^\mr{id}_k|_{\nabla_\partial}),\]
with the right hand side as described in Proposition \ref{prop:emb-calc-fibers}.	The Federer spectral sequence \eqref{eqn:federer-applied} is a spectral sequence of $I_g$-modules of the form
	\[E^2_{p,q} = \begin{cases}
	H^{p}\left(C_k(W_{g,1}),\nabla_\partial;\underline{\pi_{q}}(t_k^\mr{id})\right) & \text{$p \geq 0$, $q{-}p \geq 0$},\\
	0 & \text{else,}
	\end{cases}\Longrightarrow \pi_{q-p}(\mr{Sect}(t^\mr{id}_k,s^\mr{id}_k|_{\nabla_\partial})).\]
	Since $C_k(W_{g,1})$ is finite-dimensional, this is concentrated in finitely-many columns and by \cite[Proposition IX.\S 5.7]{bousfieldkan} converges completely.
	
	Working integrally, the simplification of the $E^2$-page which we gave in in \eqref{eqn:e2-explicit} does not apply. Instead, there is a trigraded spectral sequence of $I_g$-modules converging to the $E^2$-page of the Federer spectral sequence
	\[H^{p'}\left(B\fS_k,H^{q'}\big(\mr{Emb}(\ul{k},W_{g,1}),\tilde{\nabla}_\partial;\pi_q(\mr{tohofib}_{I \subset x_{\ul{k}}}\,\mr{Emb}(I,W_{g,1}))\big)\right) \Longrightarrow E^2_{p'+q',q}.\]
	In light of Lemma \ref{lem:emb-to-prod} for $\bk = \bZ$ and the fact that the local systems of coefficients $\pi_q(\mr{tohofib}_{I \subset x_{\ul{k}}}\,\mr{Emb}(I,W_{g,1}))$ are trivial by simply-connectivity, we may replace this $\smash{E^2}$-page by
	\[H^{p'}\left(B\fS_k,H^{q'}\big(W_{g,1}^k,\Delta_\partial;\pi_q(\mr{tohofib}_{I \subset x_{\ul{k}}}\,\mr{Emb}(I,W_{g,1}))\big)\right).\]
	By property \eqref{enum:equiv-serre-1} of equivariant Serre classes, it suffices to prove that these entries lie in $\cC$.
	
	The entry $E^2_{p',q'}$ may be computed using the bar complex
	\[C^{p'} = \mr{Hom}_{\bZ[\fS_k]}\left(\bZ[\fS_k]^{p'+1},H^{q'}\big(W_{g,1}^k,\Delta_\partial;\pi_q(\mr{tohofib}_{I \subset x_{\ul{k}}}\,\mr{Emb}(I,W_{g,1}))\big)\right),\]
	so by property \eqref{enum:equiv-serre-1} again, it suffices to prove that each of groups \[H^{q'}\left(W_{g,1}^k,\Delta_\partial;\pi_q(\mr{tohofib}_{I \subset x_{\ul{k}}}\,\mr{Emb}(I,W_{g,1}))\right)\] lies in $\cC$. By property \eqref{enum:equiv-serre-2} and the universal coefficients theorem, it suffices to prove that $H^{q'}(W_{g,1}^k,\Delta_\partial;\bZ)$ and $\pi_q(\mr{tohofib}_{I \subset x_{\ul{k}}}\mr{Emb}(I,W_{g,1}))$ are nilpotent $I_g$-modules. We did so in Lemmas \ref{lem:conf-cohomology-nilp} and \ref{lem:conf-homotopy-nilp}.
\end{proof}

We now prove Theorem \ref{thm:nilp}:

\begin{proof}[Proof of Theorem \ref{thm:nilp}] In the proof of Theorem \ref{thm:main} we constructed a fibre sequence
	\[B\mr{Diff}_\partial(D^{2n}) \lra B\mr{Tor}_\partial(W_{g,1}) \lra B\mr{TorEmb}^{\cong}_{\half \partial}(W_{g,1})\]
	which deloops. In particular, $B\mr{Tor}_\partial(W_{g,1})$ is the homotopy fibre of a map 
	\[B\mr{TorEmb}^{\cong}_{\half \partial}(W_{g,1}) \lra B^2\mr{Diff}_\partial(D^{2n}).\]
	The domain is a nilpotent space by the previous proposition, so by Lemma \ref{lem:nilp-fib} the space $B\mr{Tor}_\partial(W_{g,1})$ is also nilpotent.	
\end{proof}

\section{Generalisation to tangential structures} \label{sec:tangential} 

In this section we extend Theorem \ref{thm:main} and Corollary \ref{cor:main} to moduli spaces of manifolds equipped with a $\theta$-structure, encoded by a fibration $\theta \colon B \to BO(2n)$ which classifies a $2n$-dimensional vector bundle $\theta^*\gamma$ over $B$, where $\gamma$ denotes the universal $2n$-dimensional vector bundle over $BO(2n)$.

A \emph{$\theta$-structure} on $W_{g,1}$ is a map of vector bundles $\ell \colon TW_{g,1} \to \theta^* \gamma$. We shall fix a boundary condition $\ell_\partial \colon TW_{g,1}|_{\partial W_{g,1}} \to \theta^* \gamma$ and only consider $\theta$-structures extending this boundary condition: let $\mr{Bun}_\partial(TW_{g,1},\theta^* \gamma; \ell_\partial)$ denote the space of bundle maps extending $\ell_\partial$. A bundle map is a continuous map which is a fibrewise linear isomorphism, and this space is given the compact-open topology. The group $\mr{Diff}_\partial(W_{g,1})$ acts through the derivative map $\mr{Diff}_\partial(W_{g,1}) \to \mr{Bun}_\partial(TW_{g,1})$ on $\mr{Bun}_\partial(TW_{g,1},\theta^* \gamma; \ell_\partial)$ by precomposition. The object of interest in this section is the homotopy quotient
	\[B\mr{Diff}^\theta_\partial(W_{g,1};\ell_\partial) \coloneqq \mr{Bun}_\partial(TW_{g,1},\theta^* \gamma; \ell_\partial) \sslash \mr{Diff}_\partial(W_{g,1}).\]
	
Though the notation may suggest otherwise, this is \emph{not} the classifying space of a topological monoid and in general has many path components, which are in bijection with the orbits of the action of the mapping class group $\Gamma_g = \pi_0(\mr{Diff}_\partial(W_{g,1}))$ on the set of path components $\pi_0(\mr{Bun}_\partial(TW_{g,1},\theta^* \gamma; \ell_\partial))$. 

We shall denote by $B\mr{Diff}^\theta_\partial(W_{g,1};\ell_\partial)_\ell$ the path component containing a particular $\theta$-structure $\ell$, and by $G^{\theta,[\ell]}_g$ the image of the composition
\[\check{\Gamma}^{\theta,\ell}_g \coloneqq \pi_1(B\mr{Diff}^\theta_\partial(W_{g,1};\ell_\partial), \ell) \lra \Gamma_g = \pi_1(B\mr{Diff}_\partial(W_{g,1}), *) \lra G'_g.\]
We shall first show that this is an arithmetic group:

\begin{proposition}\label{prop:theta-fin-index} Let $2n \geq 6$ and $B$ be $n$-connected. Then $G^{\theta,[\ell]}_g \leq G'_g$ has finite index.
\end{proposition}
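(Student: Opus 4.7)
The plan is to reformulate the claim and then dispose of it in two obstruction-theoretic steps matching the handle structure of $(W_{g,1}, \partial W_{g,1})$. Set $X = \pi_0(\mr{Bun}_\partial(TW_{g,1}, \theta^*\gamma; \ell_\partial))$, a $\Gamma_g$-set via precomposition with derivatives. Unwinding the fibration $\mr{Bun}_\partial(\ldots) \to B\mr{Diff}^\theta_\partial(W_{g,1}; \ell_\partial) \to B\mr{Diff}_\partial(W_{g,1})$ identifies $G^{\theta,[\ell]}_g$ with the stabiliser in $G'_g = \Gamma_g/I_g$ of the image of $[\ell]$ in $X/I_g$, so it suffices to show that this $G'_g$-orbit is finite.

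First, analyse the obstruction on the $n$-skeleton. The pair $(W_{g,1}, \partial W_{g,1})$ has $2g$ relative $n$-cells and one $2n$-cell; writing $F = \mr{hofib}(\theta)$, the $n$-connectivity of $B$ together with the long exact sequence gives a surjection $\pi_n(SO(2n)) = \pi_{n+1}(BO(2n)) \twoheadrightarrow \pi_n(F)$. Obstruction theory produces a natural affine $\Gamma_g$-equivariant map from $X$ to $\mr{Hom}(H_n(W_{g,1}), \pi_n(F))$ recording the difference from $[\ell]$ on the $n$-skeleton. Analogously to Lemma \ref{lem:semi-direct-2}, $I_g$ acts by translations through $I_g \xrightarrow{\chi} \mr{Hom}(H_n, S\pi_n(SO(n))) \to \mr{Hom}(H_n, \pi_n(F))$, with the second arrow induced by $S\pi_n(SO(n)) \hookrightarrow \pi_n(SO(2n)) \twoheadrightarrow \pi_n(F)$; its image $T$ has finite index because the inclusion has cokernel of order at most $2$ by Lemma \ref{lem:levine}. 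Since $T$ is $G'_g$-invariant, $\mr{Hom}(H_n, \pi_n(F))/T$ is a finite $G'_g$-set, and the stabiliser $H' \leq G'_g$ of the image of $[\ell]$ is of finite index; let $\tilde H' = \alpha_g^{-1}(H') \subseteq \Gamma_g$.

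Next, analyse the obstruction on the top cell. The preimage $Q' \subseteq X$ of the $T$-coset of $[\ell^{(n)}]$ fibres over this coset with fibres that are $\pi_{2n}(F)$-torsors, and $I_g$ acts transitively on the base (through $T$), with the residual action of $I_0 := \ker(I_g \to T)$ on the fibre over $[\ell^{(n)}]$ given by translations via a group homomorphism $\psi \colon I_0 \to \pi_{2n}(F)$. This yields an identification $Q'/I_g \cong \pi_{2n}(F)/\psi(I_0)$, based at $[\ell]$. The induced $H'$-action on $Q'/I_g$ has trivial linear part, since $\Gamma_g$ acts trivially on $\pi_{2n}(F) \cong H^{2n}(W_{g,1}, \partial W_{g,1}; \pi_{2n}(F))$ (its action factors through the trivial action on $H^{2n}(W_{g,1}, \partial W_{g,1}; \bZ) \cong \bZ$); hence it is by translations via a group homomorphism $c \colon H' \to \pi_{2n}(F)/\psi(I_0)$.

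Assuming $g \geq 2$ (the case of interest in view of Convention \ref{conv:eveng2}), the arithmetic group $G'_g$ is a lattice in a real semisimple Lie group of rank $\geq 2$ and so enjoys Kazhdan's property (T), which passes to the finite-index subgroup $H'$; the abelianisation of $H'$ is therefore finite. Consequently $c$ has finite image, the $H'$-orbit of $[\ell]$ in $Q'/I_g$ is finite, and the full $G'_g$-orbit is a union of $[G'_g : H']$ many such orbits, hence also finite. The main obstacle is the naturality bookkeeping behind this two-step obstruction analysis: verifying that the translation action of $I_g$ on the first obstruction really is given by Kreck's $\chi$ composed with the stabilisation $S\pi_n(SO(n)) \hookrightarrow \pi_n(SO(2n))$ and the projection to $\pi_n(F)$, as in the proof of Lemma \ref{lem:semi-direct-2}, and that $\psi$ is well-defined as a group homomorphism; both rely on choosing handle trivialisations compatibly with the $\Gamma_g$-action.
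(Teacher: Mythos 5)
Your proposal follows the paper's reformulation (identifying $G^{\theta,[\ell]}_g$ with the stabiliser of the image of $[\ell]$ in $\mr{Str}^\theta_\partial(W_{g,1})/I_g$, and showing the $G'_g$-orbit there is finite), and the two-step obstruction analysis through the $n$-cells and the top cell is a legitimate and rather natural alternative to the paper's route. The first step is sound and closely parallels the paper's Section~\ref{sec:Relaxing}: the $I_g$-action on the first obstruction translates through $\chi$ and the stabilisation $S\pi_n(SO(n)) \to \pi_n(SO(2n)) \twoheadrightarrow \pi_n(F^\mr{or})$, whose image has finite index by Lemma~\ref{lem:levine} and $n$-connectivity of $B$, so the first stabiliser $H'$ has finite index.

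The gap is in how you dispose of the top-cell obstruction. You argue that the translation homomorphism $c \colon H' \to \pi_{2n}(F^\mr{or})/\psi(I_0)$ has finite image because $H'$ has finite abelianisation, deduced from Kazhdan's property (T) for $G'_g$. This is where things go wrong. First, the proposition as stated has no hypothesis $g \geq 2$, so for $n$ odd and $g = 1$ (which Convention~\ref{conv:eveng2} allows) the argument does not apply at all: $\Sp_2(\bZ) = SL_2(\bZ)$ contains finite-index free subgroups with infinite abelianisation. Second, and more seriously, even within $g \geq 2$ the claim ``a lattice in a real semisimple group of rank $\geq 2$ has property (T)'' is false as stated: property (T) requires every simple factor to have rank $\geq 2$. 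For $n$ even and $g = 2$, the relevant group $\SO_{2,2}(\bR)$ is isogenous to $SL_2(\bR) \times SL_2(\bR)$, which does \emph{not} have property (T), and $\SO_{2,2}(\bZ)$ is a reducible lattice commensurable with $SL_2(\bZ) \times SL_2(\bZ)$; its finite-index subgroups (e.g.\ $\Gamma(2) \times \Gamma(2)$) have infinite abelianisation, so neither property (T) nor the finite-abelianisation conclusion you need hold for $H'$. Thus the second step fails exactly in a case that the proposition must cover.

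The paper sidesteps the top-cell analysis entirely: since $\pi_n(SO(2n)) \to \pi_n(F^\mr{or})$ is surjective (by $n$-connectivity of $B$), every $\theta$-structure rel boundary differs from one that factors through a framing only by the action of an element of $\pi_{2n}(F^\mr{or})$ concentrated in a small disc near the boundary, which gives a homotopy equivalence of the corresponding components over $B\mr{Diff}_\partial(W_{g,1})$ and hence \emph{conjugacy} of the groups $G^{\theta,[\ell]}_g$ and $G^{\theta,[\ell_0]}_g$. For $\ell_0$ coming from a framing one has $G^{\mr{fr},[\ell_0]}_g \subseteq G^{\theta,[\ell_0]}_g$, and Lemma~\ref{lem:ig-action-framing} reduces the finite-index claim for $G^{\mr{fr},[\ell_0]}_g$ to the finiteness of $\pi_{2n}(SO(2n))$ (Kervaire), which costs nothing about $g$. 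So the finiteness that your argument tries to extract from rigidity of $G'_g$ is, in the paper's approach, a consequence of a classical fact in the homotopy theory of $SO(2n)$. If you want to rescue your approach you would need to replace the property (T) argument by the observation that $c$ factors through the finite group $\pi_{2n}(SO(2n))$ (via the splitting of the short exact sequence for framings mapping to the one for $\theta$), which is essentially the paper's reduction by another name.
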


The proof will be given in Section \ref{sec:proof:theta-fin-index}, after some preparation.

\begin{remark}
	The assumption that $B$ be $n$-connected is minor. Since $W_{g,1}$ is $(n-1)$-connected, for any tangential structure $\theta' \colon B' \to BO(2n)$ the relevant spaces of $\theta'$-structures are weakly equivalent to the corresponding spaces of $\theta''$-structures for $\theta'' \colon B'\langle n \rangle \to B' \to BO(2n)$, with $B' \langle n \rangle \to B'$ the $n$-connected cover.
\end{remark}

We may define a version of the Torelli space with $\theta$-structures by
\[B\mr{Tor}^\theta_\partial(W_{g,1};\ell_\partial)_\ell \coloneqq \mr{hofib} \left[B\mr{Diff}^\theta_\partial(W_{g,1};\ell_\partial)_\ell \to BG^{\theta,[\ell]}_g\right];\]
this is a connected space. It has a $G^{\theta,[\ell]}_g$-action up to homotopy, so its rational cohomology groups are $G^{\theta,[\ell]}_g$-representations, and as $G^{\theta,[\ell]}_g$ is an arithmetic group by Proposition \ref{prop:theta-fin-index} one may ask whether they are algebraic $G^{\theta,[\ell]}_g$-representations. Analogously to Theorem \ref{thm:main}, we will show that they are.

\begin{theorem}\label{thm:maintangent} Let $2n \geq 6$, $B$ be $n$-connected, and $H^*(B;\bQ)$ be finite-dimensional in each degree. For $g \geq 2$ the $G^{\theta,[\ell]}_g$-representations $H^i(B\mr{Tor}_\partial^\theta(W_{g,1};\ell_\partial)_\ell;\bQ)$ are algebraic.\end{theorem}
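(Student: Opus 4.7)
The plan is to adapt the strategy of Theorem \ref{thm:main}, with the space of $\theta$-structures $\mr{Bun}_\ell \coloneqq \mr{Bun}_\partial(TW_{g,1},\theta^*\gamma;\ell_\partial)_\ell$ appearing as an additional fibre. Let $\mr{Diff}^{\theta,[\ell]}_\partial(W_{g,1}) \subset \mr{Diff}_\partial(W_{g,1})$ denote the union of components whose mapping class stabilises the path component $[\ell] \in \pi_0(\mr{Bun}_\partial(TW_{g,1},\theta^*\gamma;\ell_\partial))$. Then $B\mr{Diff}^\theta_\partial(W_{g,1};\ell_\partial)_\ell \simeq \mr{Bun}_\ell \sslash \mr{Diff}^{\theta,[\ell]}_\partial(W_{g,1})$, giving a fibration
\[\mr{Bun}_\ell \lra B\mr{Tor}^\theta_\partial(W_{g,1};\ell_\partial)_\ell \lra \widetilde F\]
where $\widetilde F \coloneqq \mr{hofib}[B\mr{Diff}^{\theta,[\ell]}_\partial(W_{g,1}) \to BG^{\theta,[\ell]}_g]$. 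By Proposition \ref{prop:theta-fin-index} the space $\widetilde F$ is a connected finite cover of $B\mr{Tor}_\partial(W_{g,1})$ corresponding to a finite-index subgroup $\widetilde{I_g} \subset I_g$.

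First I would establish that $H^*(\widetilde F;\bQ)$ is algebraic as a $G^{\theta,[\ell]}_g$-representation by applying Lemma \ref{lem:i-connective} to the extension $1 \to \widetilde{I_g} \to \pi_0(\mr{Diff}^{\theta,[\ell]}_\partial(W_{g,1})) \to G^{\theta,[\ell]}_g \to 1$; the required algebraicity of $H^*(\widetilde{I_g};\bQ)$ can be bootstrapped from that of $H^*(I_g;\bQ)$ (an input to the proof of Theorem \ref{thm:main}) via a Shapiro-type argument for the finite-index inclusion $\widetilde{I_g} \subset I_g$. Next I would analyse the rational homotopy of $\mr{Bun}_\ell$: since $B$ is $n$-connected and $(W_{g,1},\half\partial W_{g,1}) \simeq (\vee_{2g} S^n, *)$, the Federer spectral sequence for sections of the pullback fibration $\tau^*B \to W_{g,1}$ collapses to give
\[\pi_i(\mr{Bun}_\ell) \otimes \bQ \cong \mr{Hom}(H_n(W_{g,1};\bZ), \pi_{i+n}(F_\theta)) \otimes \bQ,\]
where $F_\theta \coloneqq \mr{hofib}(\theta)$ has rationally finite-dimensional homotopy groups by the hypothesis on $H^*(B;\bQ)$. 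The action of $\check{\Gamma}^{\theta,\ell}_g$ factors through $G'_g$ acting via the defining representation on $H_n(W_{g,1};\bZ)$ and trivially on $\pi_{i+n}(F_\theta) \otimes \bQ$, so this is an algebraic $G'_g$-representation; by Lemmas \ref{lem:filtered-algebraic-homotopy-to-homology} and \ref{lem:homology-to-cohomology} the groups $H^*(\mr{Bun}_\ell;\bQ)$ are then gr-algebraic.

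Finally, the Serre spectral sequence for $\mr{Bun}_\ell \to B\mr{Tor}^\theta_\partial(W_{g,1};\ell_\partial)_\ell \to \widetilde F$ is one of $G^{\theta,[\ell]}_g$-representations; since $\pi_1(\widetilde F) = \widetilde{I_g}$ lies in $\ker(\Gamma_g \to G'_g)$ and the action on $H^q(\mr{Bun}_\ell;\bQ)$ factors through $G'_g$ on the graded pieces of the algebraic filtration, the monodromy is trivial on those pieces. Combined with the algebraicity of $H^*(\widetilde F;\bQ)$ from the first step, Theorem \ref{thm:representations} gives that the $E_2$-page consists of algebraic $G^{\theta,[\ell]}_g$-representations, and hence so does the abutment $H^*(B\mr{Tor}^\theta_\partial(W_{g,1};\ell_\partial)_\ell;\bQ)$. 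The main obstacle will be the second step: carefully verifying that the action of $\check{\Gamma}^{\theta,\ell}_g$ on the rational homotopy of $\mr{Bun}_\ell$ takes the claimed algebraic form, which requires handling the possibly non-trivial twisting of $\tau^*B \to W_{g,1}$ in the Federer argument.
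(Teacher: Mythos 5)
Your first step is where the argument breaks down. You claim that $\widetilde F$ is a \emph{finite} cover of $B\mr{Tor}_\partial(W_{g,1})$, corresponding to a finite-index subgroup $\widetilde{I_g} = I^{\theta,[\ell]}_g \subset I_g$. This is false in general. The index $[I_g : I^{\theta,[\ell]}_g]$ equals $[J_g : \mr{Stab}_{J_g}([\ell])]$, and by Lemma~\ref{lem:jgtheta-finite} the latter group has finite index in $J_g^{\theta,[[\ell]]} = \mr{Hom}(H_n, K_n)$ where $K_n = \ker[S\pi_n(SO(n)) \to \pi_n(F^\mr{or})]$. But $\mr{Hom}(H_n,K_n)$ can have infinite index in $J_g = \mr{Hom}(H_n, S\pi_n(SO(n)))$: for framings with $n \equiv 3 \pmod 4$, $n \neq 3, 7$, Lemma~\ref{lem:levine} says $S\pi_n(SO(n)) \cong \bZ \to \pi_n(SO(2n))$ is an isomorphism, so $K_n = 0$ and $[J_g : \mr{Hom}(H_n,K_n)] = \infty$. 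Proposition~\ref{prop:theta-fin-index} only concerns finiteness of $[G'_g : G^{\theta,[\ell]}_g]$, not of $[I_g : I^{\theta,[\ell]}_g]$; these are independent. So there is no Shapiro-type argument relating $H^*(\widetilde{I_g};\bQ)$ to $H^*(I_g;\bQ)$.

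Even set aside the index issue, the application of Lemma~\ref{lem:i-connective} requires a second hypothesis you do not address: the $\Gamma^{\theta,[\ell]}_g$-representation $H^i(Y\langle e\rangle;\bQ)$ must be $gr$-algebraic, where here $Y\langle e\rangle = B\mr{Diff}^{\mr{id}}_\partial(W_{g,1})$, the universal cover of $B\mr{Diff}^{\theta,[\ell]}_\partial(W_{g,1})$. Nothing in the paper establishes (or uses) $gr$-algebraicity of $H^*(B\mr{Diff}^{\mr{id}}_\partial(W_{g,1});\bQ)$; doing so would require information about $H^*(B\mr{Diff}_\partial(D^{2n});\bQ)$ far beyond the degree-wise finite-dimensionality of \cite[Theorem A]{kupersdisk}. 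This is precisely why the proof of Theorem~\ref{thm:main} (and its $\theta$-analogue) routes through the Weiss fibration sequence with tangential structures (Proposition~\ref{prop:weiss-tang}), which isolates $B\mr{Diff}^\theta_\partial(D^{2n};\ell_{\partial_0})$ as the \emph{fibre} of a delooping fibration so that only its finite-dimensionality is needed, and shifts all representation-theoretic work to the self-embedding side $\overline{B\mr{TorEmb}^{\cong,\theta}_{\half\partial}(W_{g,1};\ell_{\half\partial})_\ell}$, accessible via embedding calculus. Your decomposition $\mr{Bun}_\ell \to B\mr{Tor}^\theta_\partial(W_{g,1};\ell_\partial)_\ell \to \widetilde F$ puts the delicate space on the base rather than the fibre, which requires information not available.

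Smaller points: in analysing $\mr{Bun}_\ell$ with the \emph{full} boundary condition $\ell_\partial$, the rational homotopy groups acquire an extra summand $\pi_{2n+i}(F^\mr{or})\otimes\bQ$ from sections over a collar of $\partial W_{g,1}$, not just $\mr{Hom}(H_n, \pi_{n+i}(F^\mr{or}))\otimes\bQ$. And verifying that the $I_g$- (or $J_g$-) part of the monodromy acts trivially on these groups is not automatic; it requires the Eckmann--Hilton argument of Lemma~\ref{lem:jtheta-triv}, so you are right to flag this as nontrivial, though it is in fact the smaller of the difficulties.
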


The proof will be given in Section \ref{sec:proof:maintangent}. The analogue of Corollary \ref{cor:main} for $\theta$-structures follows by a nearly identical argument. After that we prove the analogue of Theorem \ref{thm:nilp}:

\begin{theorem}\label{thm:nilp-tangential} Let $2n \geq 6$ and $B$ be $n$-connected. Then $B\mr{Tor}^\theta_\partial(W_{g,1};\ell_\partial)_\ell$ is nilpotent.\end{theorem}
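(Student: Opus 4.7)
The plan is to reduce Theorem \ref{thm:nilp-tangential} to Theorem \ref{thm:nilp} via the fibration sequence
\[\mr{Bun}_\partial(TW_{g,1},\theta^*\gamma;\ell_\partial)_\ell \lra B\mr{Tor}^\theta_\partial(W_{g,1};\ell_\partial)_\ell \lra B\mr{Tor}^{[\ell]}_\partial(W_{g,1}),\]
where $\mr{Tor}^{[\ell]}_\partial(W_{g,1}) \subseteq \mr{Tor}_\partial(W_{g,1})$ denotes the union of path components whose mapping classes stabilise $[\ell] \in \pi_0(\mr{Bun}_\partial(TW_{g,1},\theta^*\gamma;\ell_\partial))$. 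This fibration arises by pulling back and restricting the tautological fibration $\mr{Bun}_\partial(TW_{g,1},\theta^*\gamma;\ell_\partial) \to B\mr{Diff}^\theta_\partial(W_{g,1};\ell_\partial) \to B\mr{Diff}_\partial(W_{g,1})$, and a direct comparison of fundamental groups identifies the displayed total space with $B\mr{Tor}^\theta_\partial(W_{g,1};\ell_\partial)_\ell$. One may then invoke the standard nilpotency criterion for fibrations (e.g.\ \cite[Proposition 4.4.1]{MayPonto}): if both fibre and base are nilpotent and $\pi_1$ of the base acts nilpotently on the homotopy groups of the fibre, then the total space is nilpotent.

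Two of the three hypotheses of this criterion are relatively direct. The base $B\mr{Tor}^{[\ell]}_\partial(W_{g,1})$ is a connected cover of $B\mr{Tor}_\partial(W_{g,1})$, which is nilpotent by Theorem \ref{thm:nilp}; nilpotency is inherited by connected covers because subgroups of nilpotent groups are nilpotent and restrictions of nilpotent module actions remain nilpotent. The fibre $\mr{Bun}_\partial(TW_{g,1},\theta^*\gamma;\ell_\partial)_\ell$ is a component of the space of relative sections of the pullback fibration $W_{g,1}\times_{BO(2n)}B\to W_{g,1}$; using a Moore--Postnikov tower for $\theta$ one exhibits this section space as an inverse limit of principal fibrations whose fibres are products of Eilenberg--Mac~Lane spaces, and so each of its components is nilpotent (the hypothesis that $B$ is $n$-connected controls the low-dimensional behaviour).

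The main obstacle is the third hypothesis: that $\mr{Tor}^{[\ell]}_g$ acts nilpotently on $\pi_q(\mr{Bun}_\partial(TW_{g,1},\theta^*\gamma;\ell_\partial)_\ell, \ell)$ for each $q \geq 1$. For this one applies the Federer-type spectral sequence of Theorem \ref{thm:Federer} (modified to handle the possibly non-simply-connected fibres of $\theta$, for instance by passing to universal covers componentwise), yielding a spectral sequence of $\mr{Tor}^{[\ell]}_g$-modules
\[E^{2}_{p,q} = H^{p}(W_{g,1},\half\partial;\pi_q(F)) \Longrightarrow \pi_{q-p}(\mr{Bun}_\partial(TW_{g,1},\theta^*\gamma;\ell_\partial)_\ell,\ell),\]
where $F$ is the homotopy fibre of $\theta$. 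Choosing a basepoint of $W_{g,1}$ in $\half\partial$, where every element of $\mr{Diff}_\partial(W_{g,1})$ restricts to the identity, trivialises the induced action on the coefficients $\pi_q(F)$; combined with the fact that $\mr{Tor}^{[\ell]}_g \leq I_g$ acts trivially on $H^*(W_{g,1};\bZ)$, the entire $E^{2}$-page carries a trivial $\mr{Tor}^{[\ell]}_g$-action. Invoking the equivariant Serre class of nilpotent (not necessarily finitely generated) $\mr{Tor}^{[\ell]}_g$-modules, whose closure properties follow from the proofs of Lemmas \ref{lem:NilpIsSerre} and \ref{lem:NilpIsDualClosed} (neither of which essentially uses finite generation), Lemma \ref{lem:serre-class-ss} gives a nilpotent action on the abutment, completing the verification.
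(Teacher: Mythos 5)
Your proposal takes a genuinely different route from the paper. The paper deloops the tangential Weiss fibration sequence (Proposition \ref{prop:weiss-tang}) to exhibit $B\mr{Tor}^\theta_\partial(W_{g,1};\ell_\partial)_\ell$ as a homotopy fibre over the self-embedding Torelli space $\overline{B\mr{TorEmb}^{\cong,\theta}_{\half\partial}(W_{g,1};\ell_{\half\partial})_\ell}$, proves that latter space is nilpotent by combining Proposition \ref{prop:TorEmbNilpotent} with the triviality of the $J^{\theta,[[\ell]]}_g$-action from Lemma \ref{lem:jtheta-triv}, and concludes via Lemma \ref{lem:nilp-fib}. You instead work directly with the fibration $\mr{Bun}_\partial(TW_{g,1},\theta^*\gamma;\ell_\partial)_\ell \to B\mr{Tor}^\theta_\partial(W_{g,1};\ell_\partial)_\ell \to B\mr{Tor}^{[\ell]}_\partial(W_{g,1})$. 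The crucial advantage of the paper's route is that Lemma \ref{lem:jtheta-triv} uses the half-boundary space $\mr{Bun}_{\half\partial}(TW_{g,1},\theta^*\gamma;\ell_{\half\partial})$, which is identified with $\mr{map}_{\half\partial}(W_{g,1}, F^\mr{or}) \simeq (\Omega^n F^\mr{or})^{2g}$, an $n$-fold loop space, so the Eckmann--Hilton argument makes the relevant action \emph{trivial}. Your full-boundary space $\mr{map}_\partial(W_{g,1}, F^\mr{or}) \cong \mr{map}_*(W_g, F^\mr{or})$ has no such loop-space structure ($W_g$ is not a co-$H$-space), which is why you need the heavier Federer machinery.

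There are, however, genuine gaps that need to be closed before your argument stands.

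First, the nilpotency criterion for fibrations is stated in terms of the $\pi_1(E)$-action, not the $\pi_1(B)$-action: with $B$ nilpotent, $E$ is nilpotent if and only if $\pi_1(E)$ acts nilpotently on $\pi_*(F)$ for all $* \geq 1$. When the fibre is not simple there is no canonically defined $\pi_1(B)$-action on $\pi_*(F)$, only a $\pi_1(E)$-action. Your fibre $\mr{map}_*(W_g, F^\mr{or})_\ell$ is nilpotent but is not obviously simple, so you cannot silently conflate the two; and your Federer spectral sequence argument only produces an $I_g^{\theta,[\ell]}$-equivariant filtration, which is a priori about the geometric mapping-class-group action rather than the monodromy action of $\pi_1(B\mr{Tor}^\theta_\partial(W_{g,1};\ell_\partial)_\ell)$. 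The subgroup generated by the image of $\pi_1(\mr{Bun}_\ell)$ needs to be handled explicitly (e.g.\ by showing the filtration is also preserved by the internal $\pi_1(F)$-action and combining with $I_g^{\theta,[\ell]}$). The paper's route avoids this delicacy: its fibre for the analogous step is simple, and its residual homotopy-fibre step (Lemma \ref{lem:nilp-fib}) only requires nilpotence of the total space.

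Second, $F^\mr{or}$ is generally not simply connected under the hypothesis that $B$ is $n$-connected: the fibration $SO(2n) \to F^\mr{or} \to B$ gives $\pi_1(F^\mr{or}) \cong \bZ/2$ whenever $\pi_2(B) \to \pi_2(BSO(2n)) = \bZ/2$ is not surjective. Theorem \ref{thm:Federer} as stated requires $1$-connected fibres. You gesture at ``passing to universal covers componentwise,'' but this replaces the section space whose homotopy groups you want to understand, and you would need to carry out a covering-space analysis to recover the original statement; this is a non-trivial extra step and should be spelled out (or the spectral sequence should be adapted to the $1$-truncated case). There is also a small slip: the relative cohomology in your spectral sequence should be computed modulo the full boundary $\partial W_{g,1}$, matching the boundary condition in $\mr{Bun}_\partial(\cdot;\ell_\partial)$, not modulo $\half\partial W_{g,1}$.
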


\subsection{Notation}\label{sec:Notation}
 We collect here the notation for various groups used in this section. Further details and results regarding these objects will be given in the following sections. 

First, we recall that there are maps of group extensions
\[\begin{tikzcd}1 \rar &[-10pt] I_g \dar[two heads] \rar & \Gamma_g = \pi_0(\mr{Diff}_\partial(W_{g,1})) \dar[two heads] \rar & G'_g \rar \dar[equals] &[-10pt] 1 \\
1 \rar & J_g \dar \rar & \Lambda_g = \pi_0(\mr{Emb}^{\cong}_{\half \partial}(W_{g,1})) \dar \rar & G'_g \rar \dar & 1 \\
1 \rar & \mr{Hom}(H_n,\pi_n(SO(2n))) \rar & \Upsilon_g = \pi_0(\mr{Bun}_\ast(TW_{g,1})) \rar & \mr{GL}(H_n) \rar & 1, \end{tikzcd}\]
with $J_g = I_g/\Theta_{2n+1} = \mr{Hom}(H_n,S\pi_n(SO(n)))$. 

We fix a boundary condition $\ell_\partial$ of $\theta$-structures near $\partial W_{g,1}$, and let $\mr{Str}^{\theta}_\partial(W_{g,1})$ denote the set of homotopy classes of $\theta$-structure extending $\ell_\partial$. An element in this set is denoted $[\ell]$, and we introduce notation for its stabiliser:
\[\Gamma^{\theta,[\ell]}_g \coloneqq \mr{Stab}_{\Gamma_g}([\ell]).\]
This receives a surjection from
\[\check{\Gamma}^{\theta,\ell}_g \coloneqq \pi_1(B\mr{Diff}^{\theta}_\partial(W_{g,1};\ell_\partial), \ell),\]
and we set 
\begin{equation*}
G^{\theta,[\ell]}_g \coloneqq  \mr{im}\left[\Gamma^{\theta,[\ell]}_g \to \Gamma_g \to G'_g\right] \quad\quad\quad I^{\theta,[\ell]}_g \coloneqq  \mr{Stab}_{I_g}([\ell]) = \Gamma^{\theta,[\ell]}_g \cap I_g,
\end{equation*}
giving an extension
\[1 \lra I^{\theta,[\ell]}_g \lra \Gamma^{\theta,[\ell]}_g \lra G^{\theta,[\ell]}_g \lra 1.\]

The boundary condition $\ell_\partial$ of $\theta$-structures near $\partial W_{g,1}$ by restriction determines one, $\ell_{\half \partial}$ near a fixed point $\ast \in \partial W_{g,1}$, and we let $\mr{Str}^{\theta}_\ast(W_{g,1})$ denote the set of homotopy classes of $\theta$-structure extending $\ell_{\half \partial}$. An element in this set is denoted $[[\ell]]$ and, though we shall not need it, its stabiliser is denoted $\Gamma_g^{\theta, [[\ell]]}$. The $\Gamma_g$-action on $\mr{Str}^{\theta}_\ast(W_{g,1})$ descends to an action of $\Lambda_g$, and we write
\[\Lambda^{\theta,[[\ell]]}_g \coloneqq \mr{Stab}_{\Lambda_g}([[\ell]]).\]
This receives a surjection from
\[\check{\Lambda}^{\theta,\ell}_g \coloneqq \pi_1(B\mr{Emb}^{\theta}_{\half \partial}(W_{g,1};\ell_{\half \partial}), \ell),\]
and we set 
\begin{equation*}
G^{\theta,[[\ell]]}_g \coloneqq \mr{im}\left[\Lambda^{\theta,[[\ell]]}_g \to \Lambda_g \to G'_g\right] \quad\quad\quad J^{\theta,[[\ell]]}_g \coloneqq \mr{Stab}_{J_g}([[\ell]]) = \Lambda^{\theta,[[\ell]]}_g \cap J_g,
\end{equation*} 
giving an extension
\[1 \lra J^{\theta,[[\ell]]}_g \lra \Lambda^{\theta,[[\ell]]}_g \lra G^{\theta,[[\ell]]}_g \lra 1.\]
Finally, we define $L^{\theta,\ell}_g$ as the kernel of the composition $\check{\Lambda}^{\theta,\ell}_g \to {\Lambda}^{\theta,[[\ell]]}_g \to  G^{\theta,[[\ell]]}_g$, giving an extension
\[1 \lra L^{\theta,\ell}_g \lra \check{\Lambda}^{\theta,\ell}_g \lra G^{\theta,[[\ell]]}_g \lra 1.\] 

\subsection{Spaces of $\theta$-structures on $W_{g,1}$} 

The proof of Proposition \ref{prop:theta-fin-index} and Theorem \ref{thm:maintangent} requires a careful study of the action of $\Gamma_g$ on the set of homotopy classes of $\theta$-structures on $W_{g,1}$. We will write
\[F \coloneqq \mr{Fr}(\theta^*\gamma)\]
for the frame bundle of the vector bundle $\theta^*\gamma \to B$; this is a principal $\mr{GL}_{2n}(\bR)$-bundle, and $F$ is homotopy equivalent to the homotopy fibre of $\theta: B \to BO(2n)$. A bundle map $TW_{g,1} \to \theta^* \gamma$ is precisely the same as a $\mr{GL}_{2n}(\bR)$-equivariant map $\mr{Fr}(TW_{g,1}) \to F$.

Under the assumption that $B$ is $n$-connected, so in particular simply-connected, we may choose once and for all an orientation of the bundle $\theta^*\gamma$, and let
\[F^\mr{or} \coloneqq \mr{Fr}^\mr{or}(\theta^*\gamma)\]
denote the oriented frame bundle, which is path-connected (it is a path component of $F$). An orientation-preserving bundle map $TW_{g,1} \to \theta^* \gamma$ is precisely the same as a $\mr{GL}_{2n}^+(\bR)$-equivariant map $\mr{Fr}^\mr{or}(TW_{g,1}) \to F^\mr{or}$.

If $\tau \colon TW_{g,1} \to W_{g,1} \times \bR^{2n}$ is a choice of (orientation-preserving) framing then choosing a basepoint $f_0 \in F^\mr{or}$ defines a $\mr{GL}_{2n}^+(\bR)$-equivariant map
\[\ell^\tau \colon \mr{Fr}^\mr{or}(TW_{g,1}) \overset{\tau}\lra W_{g,1} \times \mr{GL}_{2n}^+(\bR) \xrightarrow{(x, g) \mapsto g \cdot f_0} F^\mr{or}\]
which up to homotopy does not depend on the choice of $f_0$, as $F^\mr{or}$ is path-connected. It does however depend on $\tau$.

\begin{lemma}\label{lem:SpaceThetaStr}
Let $B$ be $n$-connected. 
\begin{enumerate}[\indent (i)]
\item\label{it:UniqueBdy} Up to homotopy there is a unique orientation preserving boundary condition $\ell_\partial$ which extends to a $\theta$-structure $\ell$ on all of $W_{g,1}$, and $\ell^\tau_\partial \coloneqq \ell^\tau\vert_{\partial W_{g,1}}$ represents this homotopy class of boundary condition.

\item\label{it:ThetaSpace} For such a boundary condition there is a homotopy equivalence
\[\mr{Bun}_\partial(TW_{g,1},\theta^* \gamma; \ell_\partial) \simeq \mr{map}_\partial(W_{g,1}, F^\mr{or}),\]
depending on a framing $\tau$ and a homotopy from $\ell_\partial$ to $\ell^\tau_\partial$.
\end{enumerate}
\end{lemma}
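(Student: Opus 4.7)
The strategy is to use the framing $\tau$ to translate bundle maps into ordinary maps into $F^\mr{or}$, and then to analyse the boundary restriction using the CW structure of $W_{g,1}$.

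First I would set up the basic identification. A choice of basepoint $f_0 \in F^\mr{or}$ together with the framing $\tau$ trivialises $\mr{Fr}^\mr{or}(TW_{g,1})$ as a principal $\mr{GL}^+_{2n}(\bR)$-bundle, so orientation-preserving bundle maps $TW_{g,1} \to \theta^*\gamma$ (i.e.\ $\mr{GL}^+_{2n}(\bR)$-equivariant maps out of the oriented frame bundle into $F^\mr{or}$) are in natural bijection with ordinary maps $W_{g,1} \to F^\mr{or}$ via evaluation at the identity frame. This gives a homeomorphism
\[
\kappa_\tau \colon \mr{Bun}(TW_{g,1}, \theta^*\gamma) \xrightarrow{\cong} \mr{map}(W_{g,1}, F^\mr{or}),
\]
sending $\ell^\tau$ to the constant map at $f_0$ and compatible with restriction to $\partial W_{g,1}$.

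Given (i), part (ii) is essentially formal. A chosen homotopy $\ell_\partial \simeq \ell^\tau_\partial$ in the space of orientation-preserving bundle maps on $TW_{g,1}\vert_{\partial W_{g,1}}$, combined with homotopy lifting for the restriction-to-boundary fibration $\mr{Bun}(TW_{g,1}, \theta^*\gamma) \to \mr{Bun}(TW_{g,1}\vert_{\partial W_{g,1}}, \theta^*\gamma)$, induces a weak equivalence $\mr{Bun}_\partial(TW_{g,1}, \theta^*\gamma; \ell_\partial) \simeq \mr{Bun}_\partial(TW_{g,1}, \theta^*\gamma; \ell^\tau_\partial)$, and $\kappa_\tau$ identifies the latter with $\mr{map}_\partial(W_{g,1}, F^\mr{or})$.

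For (i), existence is witnessed by $\ell^\tau_\partial$ itself, which extends to $\ell^\tau$. For uniqueness, applying $\kappa_\tau$ reduces the problem to showing that if $\phi \colon W_{g,1} \to F^\mr{or}$ is any map, then $\phi\vert_{\partial W_{g,1}}$ is null-homotopic. Since $W_{g,1}$ is $(n-1)$-connected and $W_{g,1} \simeq \bigvee^{2g} S^n$, the homotopy class of $\phi$ is determined by $2g$ elements $\phi_1,\ldots,\phi_{2g} \in \pi_n(F^\mr{or})$. By Hilton's theorem the attaching map $\partial W_{g,1} = S^{2n-1} \hookrightarrow W_{g,1}$ represents $\sum_{i=1}^g [e_i,f_i]$ in $\pi_{2n-1}(\bigvee^{2g} S^n)$, so $\phi\vert_{\partial W_{g,1}}$ represents $\sum_{i=1}^g [\phi_{2i-1},\phi_{2i}]$ in $\pi_{2n-1}(F^\mr{or})$.

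The main obstacle is showing that these Whitehead sums vanish, and this is the step where $n$-connectedness of $B$ is essential. From the long exact sequence of $F \to B \to BO(2n)$, the hypothesis $\pi_n(B) = 0$ forces every class in $\pi_n(F^\mr{or})$ to be the transgression of some class in $\pi_{n+1}(BO(2n))$. Naturality of Whitehead products with respect to transgression, combined with the vanishing of the relevant Samelson products in the loop space $\Omega BO(2n) \simeq O(2n)$ (which is a topological group and hence has trivial Samelson products), forces each $[\phi_{2i-1},\phi_{2i}]$ to vanish in $\pi_{2n-1}(F^\mr{or})$. This is the delicate ingredient; once it is in place, the rest of the argument is formal.
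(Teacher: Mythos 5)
Your strategy matches the paper's: identify $\theta$-structures with maps to $F^{\mr{or}}$ via $\tau$, reduce the uniqueness claim to showing the boundary class $\sum_i[\phi_{2i-1},\phi_{2i}] \in \pi_{2n-1}(F^{\mr{or}})$ vanishes, and use $n$-connectivity of $B$ to lift the $\phi_j$ along the fibre inclusion so the brackets vanish for $H$-space reasons. Part (ii) and the existence half of (i) are fine.

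The justification you give for the key vanishing step is, however, incorrect as stated. You claim the Whitehead brackets die because ``$\Omega BO(2n) \simeq O(2n)$ is a topological group and hence has trivial Samelson products.'' Samelson products do \emph{not} vanish in topological groups; they are typically nonzero in compact Lie groups (already in $SU(2) = S^3$, the Samelson square of the generator of $\pi_3$ is nonzero in $\pi_6(S^3) \cong \bZ/12$). What you need is the different, and correct, classical fact that \emph{Whitehead} products vanish in any $H$-space. Concretely: since $\pi_n(B) = 0$, the map $\iota_*\colon \pi_n(\Omega BO(2n)) \to \pi_n(F^{\mr{or}})$ coming from the fibre inclusion is surjective; writing $\phi_j = \iota_*(\psi_j)$ one has $[\phi_i,\phi_j] = \iota_*([\psi_i,\psi_j])$ by naturality of the Whitehead bracket under maps of spaces, and $[\psi_i,\psi_j]=0$ because $\Omega BO(2n)$ is an $H$-space. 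Your phrase ``naturality of Whitehead products with respect to transgression'' is also not quite the right formulation --- transgression is a connecting map, not a map induced by a map of spaces, and it does not literally commute with brackets. The clean way to say it is to factor the transgression as $\pi_{n+1}(BO(2n)) \cong \pi_n(\Omega BO(2n)) \xrightarrow{\iota_*} \pi_n(F^{\mr{or}})$ and apply naturality to $\iota$. The paper does exactly this, using the equivalent fibration $SO(2n) \to F^{\mr{or}} \to B$ so that $\iota$ is the inclusion $SO(2n) \to F^{\mr{or}}$. With this correction your argument is right and coincides with the paper's.
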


Recall that we write
\[\mr{Str}^\theta_\partial(W_{g,1}) = \pi_0(\mr{Bun}_\partial(TW_{g,1},\theta^* \gamma; \ell_\partial)),\]
for the set of homotopy classes of $\theta$-structures on $W_{g,1}$ rel boundary, the omission of the boundary condition $\ell_\partial$ from the notation justified by Lemma \ref{lem:SpaceThetaStr} (\ref{it:UniqueBdy}). It is important to be aware that under the bijection
\[\pi_0(\mr{Bun}_\partial(TW_{g,1},\theta^* \gamma; \ell_\partial)) \cong \pi_0(\mr{map}_\partial(W_{g,1}, F^\mr{or}))\]
given by Lemma \ref{lem:SpaceThetaStr} (\ref{it:ThetaSpace}) the action of the mapping class group $\Gamma_g = \pi_0(\mr{Diff}_\partial(W_{g,1}))$ on the set of homotopy classes of $\theta$-structures does \emph{not} in general correspond to the action by precomposition on the mapping space. Instead an analysis analogous to that of Section \ref{sec:TrivTang} must be made, which we will do below.

\begin{proof}[Proof of Lemma \ref{lem:SpaceThetaStr}]
If $X \times \bR^{2n}$ is a trivial bundle then there is a homeomorphism
\[\mr{map}(X, F^\mr{or}) \overset{\sim}\lra \mr{Bun}^{\mr{or}}(X \times \bR^{2n},\theta^*\gamma)\]
to the space of $\mr{GL}_{2n}^+(\bR)$-equivariant maps $X \times \mr{GL}_{2n}^+(\bR) \to F^\mr{or}$, given by sending the map $f \colon X \to F^\mr{or}$ to the $\mr{GL}_{2n}^+(\bR)$-equivariant map
\[\ell^f \colon X \times \mr{GL}_{2n}^+(\bR) \xrightarrow{(x, g) \mapsto g \cdot f(x)} F^\mr{or}.\]

Fixing a choice of (orientation-preserving) framing $\tau \colon TW_{g,1} \to W_{g,1} \times \bR^{2n}$, any $\theta$-structure $\ell \in \mr{Bun}^{\mr{or}}(TW_{g,1},\theta^* \gamma)$ therefore corresponds to a map $f_\ell \colon W_{g,1} \to F^\mr{or}$, and the associated boundary condition $\ell_\partial \in \mr{Bun}^{ \mr{or}}(TW_{g,1}\vert_{\partial W_{g,1}},\theta^* \gamma)$ corresponds to the composition $f_\ell \circ \mr{inc} \colon S^{2n-1} \to W_{g,1} \to F^\mr{or}$. We hence need to show that the map $[W_{g,1},F^\mr{or}] \to [S^{2n-1},F^\mr{or}]$ of homotopy classes, induced by restriction to the boundary, is constant. It suffices to prove this for based homotopy classes, as $[W_{g,1},F^\mr{or}]_* \to [W_{g,1},F^\mr{or}]$ is surjective because $F^\mr{or}$ is path-connected.

From the fibration $SO(2n) \simeq \mr{GL}_{2n}^+(\bR) \to \mr{Fr}^\mr{or}(\theta^*\gamma) \to B$ we obtain an exact sequence
\[\pi_{n+1}(B, x_1) \overset{\partial}\lra \pi_{n}(SO(2n),\mr{id}) \overset{a_*}\lra \pi_n(F^\mr{or}, x_1) \lra \pi_n(B, x_1)=0,\]
using our assumption that $B$ is $n$-connected. In particular the map $a_*$ is surjective, and since all Whitehead products vanish in an $H$-space such as $SO(2n)$, the Whitehead bracket
\[[-,-] \colon \pi_n(F^\mr{or}, x_1) \times \pi_n(F^\mr{or}, x_1) \lra \pi_{2n-1}(F^\mr{or}, x_1)\]
must also be zero. The inclusion of the boundary $\mr{inc} \colon S^{2n-1} \to W_{g,1}$ is represented by the homotopy class
\[\sum_{i=1}^g [a_i, b_i] \in \pi_{2n-1}(W_{g,1}, x_0),\]
where $a_i, b_i \colon S^n \to W_{g,1}$ form a hyperbolic basis of the intersection form on $\pi_n(W_{g,1}, x_0) \cong H_n(W_{g,1};\bZ)$. In particular it is a sum of Whitehead products, and hence the map $f_\ell \circ \mr{inc} \colon S^{2n-1} \to F^\mr{or}$ must be nullhomotopic. This proves 
part (\ref{it:UniqueBdy}).

For part (\ref{it:ThetaSpace}) we observe that the above identification using $\tau$ gives a map of homotopy cartesian squares from
\begin{equation*}
\begin{tikzcd} 
\mr{map}_\partial(W_{g,1}, F^\mr{or}) \rar \dar &\mr{map}(W_{g,1}, F^\mr{or}) \dar\\
\{\mr{const}\} \rar  & \mr{map}(\partial W_{g,1}, F^\mr{or})
\end{tikzcd}
\end{equation*}
to 
\begin{equation*}
\begin{tikzcd} 
\mr{Bun}_\partial(TW_{g,1},\theta^* \gamma; \ell_\partial^\tau) \rar \dar &\mr{Bun}^{\mr{or}}(TW_{g,1},\theta^* \gamma) \dar\\
\{\ell^\tau_\partial\} \rar  & \mr{Bun}^{\mr{or}}(TW_{g,1}\vert_{\partial W_{g,1}},\theta^* \gamma)
\end{tikzcd}
\end{equation*}
which is an equivalence at each corner apart from the top left corner, so is also an equivalence at the top left corner. Finally, a choice of homotopy from $\ell_\partial^\tau$ to $\ell_\partial$ gives an equivalence $\mr{Bun}_\partial(TW_{g,1},\theta^* \gamma; \ell_\partial^\tau) \simeq \mr{Bun}_\partial(TW_{g,1},\theta^* \gamma; \ell_\partial)$, using the homotopy lifting property for the right-hand map of the second square.
\end{proof}

\subsubsection{Mapping class group action}\label{sec:MCGact}

Given a choice of framing $\tau$ we have produced a bijection
\[\mr{Str}^\theta_\partial(W_{g,1}) = \pi_0(\mr{Bun}_\partial(TW_{g,1},\theta^* \gamma; \ell_\partial)) \cong \pi_0(\mr{map}_\partial(W_{g,1},F^\mr{or})),\]
and we wish to understand the orbits and stabilisers of the natural action of $\Gamma_g$ on the left-hand side. To do so we must describe the corresponding $\Gamma_g$-action on the right-hand side. In Section \ref{sec:first-layer} we have described how a choice of framing identifies the topological monoid $\mr{Bun}_{\half \partial}(TW_{g,1})$ with $\mr{map}_{\half \partial}(W_{g,1},W_{g,1} \times \mr{GL}_{2n}(\bR))$, and we determined the induced composition law on this space. The same discussion goes through when we impose a boundary condition on the entire boundary instead: there is a homeomorphism
\[\mr{Bun}_{\partial}(TW_{g,1}) \overset{\cong}\lra \mr{map}_{\partial}(W_{g,1},W_{g,1} \times \mr{GL}_{2n}^+(\bR))\]
under which composition of bundle maps corresponds to the operation 
\[(f,\lambda) \circledast (g,\rho) = (f \circ g,(\lambda \circ g) \cdot \rho),\]
with $\circ$ denoting composition of maps and $\cdot$ denoting pointwise multiplication. 

In the proof of Lemma \ref{lem:SpaceThetaStr} we have similarly used $\tau$ to identify the space of $\theta$-structures $\mr{Bun}_{\partial}(TW_{g,1},\theta^* \gamma; \ell_\partial)$ with $\mr{map}_{\partial}(W_{g,1}, F^\mr{or})$. Similarly to Lemma \ref{lem:SDaction} one sees that under this identification the right action of $\mr{Bun}_{\partial}(TW_{g,1})$ by precomposition corresponds to
\begin{align*}
\mr{map}_\partial(W_{g,1}, F^\mr{or}) \times \mr{map}_{\partial}(W_{g,1},W_{g,1} \times \mr{GL}^+_{2n}(\bR))  &\lra \mr{map}_\partial(W_{g,1}, F^\mr{or})\\
(h, (f,\lambda)) &\longmapsto  (h \circ f) \cdot \lambda,
\end{align*}
where here $\cdot$ denotes the left $\mr{GL}^+_{2n}(\bR)$-action on $F^\mr{or}$. We write $h \circledast (f,\lambda) $ for this operation. Note that it is \emph{not} equal to precomposition on the mapping space.

\subsubsection{Relaxing the boundary condition}\label{sec:Relaxing}

In the long exact sequence of homotopy groups for the fibration sequence 
\[\mr{map}_\partial(W_{g,1}, F^\mr{or}) \lra \mr{map}_*(W_{g,1}, F^\mr{or}) \lra \mr{map}_*(\partial W_{g,1}, F^\mr{or}),\]
based at the constant maps to the basepoint $f_0 \in F^\mr{or}$, the maps $\pi_i(\mr{map}_*(W_{g,1}, F^\mr{or})) \to \pi_i(\mr{map}_*(\partial W_{g,1}, F^\mr{or}))$ are given by a sum of Whitehead products of elements in $\pi_n(\Omega^i F^\mr{or})$. For $i>0$ such Whitehead products vanish as $\Omega^i F^\mr{or}$ is a loop space; for $i=0$ they also vanish as discussed in the proof of Lemma \ref{lem:SpaceThetaStr}. Thus this yields a short exact sequence in the sense of groups and sets
\begin{equation}\label{eqn:theta-struc-pi0} 
0 \lra \pi_{2n}(F^\mr{or}) \overset{\circlearrowright}\lra \pi_0(\mr{map}_\partial(W_{g,1}, F^\mr{or})) \lra \mr{Hom}(H_n,\pi_n(F^\mr{or})) \lra 0;
\end{equation}
(By $G \overset{\circlearrowright}\to X$ we indicate an action of a group $G$ on a set $X$.) Recalling that $\mr{Str}^\theta_\ast(W_{g,1})$ denotes the homotopy classes of $\theta$-structures on $W_{g,1}$ equal to $\ell_\partial$ near the point $\ast \in \partial W_{g,1}$, this may be rewritten as
\[\begin{tikzcd}0 \rar & \mr{Str}^\theta_\partial(D^{2n}) \rar{\circlearrowright} \dar{\cong}& \mr{Str}^\theta_\partial(W_{g,1}) \rar \dar{\cong} & \mr{Str}^\theta_\ast(W_{g,1}) \rar \dar{\cong} & 0 \\
0 \rar & \pi_{2n}(F^\mr{or}) \rar{\circlearrowright} & \pi_0(\mr{map}_\partial(W_{g,1}, F^\mr{or})) \rar & \mr{Hom}(H_n,\pi_n(F^\mr{or})) \rar & 0, \end{tikzcd}
\]
where the vertical bijections depend on the choice of framing $\tau$.

This short exact sequence is equivariant for the right action of the mapping class group $\Gamma_g$ in  the following sense. Let $\Gamma_g$ act on the middle term via the derivative $\Gamma_g \to \pi_0(\mr{Bun}_{\partial}(TW_{g,1}))$ and $\circledast$ above. Similarly, $\Gamma_g$ acts on the right-hand term via the derivative
\[\Gamma_g \lra \Upsilon_g = \pi_0(\mr{Bun}_{\half \partial}(TW_{g,1})),\]
and factors over $\Lambda_g = \pi_0(\mr{Emb}^{\cong}_{\half \partial}(W_{g,1}))$. The right hand side was identified with $\mr{GL}(H_n) \ltimes \mr{Hom}(H_n,\pi_n(SO(2n)))$ in Section \ref{sec:lbun}, and in terms of this identification the action $(B,\beta) \in \Upsilon_g$ is given by 
\[\alpha \circledast (B, \beta) = \alpha \circ B + \iota_*\beta,\]
where $\iota \colon SO(2n) \subset \mr{GL}^+_{2n}(\bR) \to F^\mr{or}$ is given by acting on the basepoint of $F^\mr{or}$. With these actions the map
\[\pi_0(\mr{map}_\partial(W_{g,1}, F^\mr{or})) \lra \mr{Hom}(H_n,\pi_n(F^\mr{or}))\]
is $\Gamma_g$-equivariant, and the $\Gamma_g$- and $\pi_{2n}(F^\mr{or})$-actions on $\pi_0(\mr{map}_\partial(W_{g,1}, F^\mr{or}))$ commute (this is because the $\pi_{2n}(F^\mr{or})$-action is by changing the $\theta$-structure in a small disc near the boundary, and diffeomorphisms in $\Gamma_g$ can be assumed to fix such a disc).

\subsection{Framings} 

An important example of a tangential structure satisfying the conditions given in the beginning of this section is a framing: we take the tangential structure to be $\mr{fr} \colon EO(2n) \to BO(2n)$.

In this case $F^\mr{or} \simeq SO(2n)$, so when we specialise \eqref{eqn:theta-struc-pi0} to framings, we see that the set $\mr{Str}^\mr{fr}_\partial(W_{g,1})$ of homotopy classes of framings of $W_{g,1}$ extending $\ell_\partial$ is in bijection with the middle term of the short exact sequence in the sense of groups and sets
\[\begin{tikzcd}0 \rar &[-10pt] \mr{Str}^\mr{fr}_\partial(D^{2n}) \rar{\circlearrowright} \dar{\cong}&[-5pt] \mr{Str}^\mr{fr}_\partial(W_{g,1}) \rar \dar{\cong} &[-5pt] \mr{Str}^\mr{fr}_\ast(W_{g,1}) \rar \dar{\cong} &[-10pt] 0 \\
0 \rar & \pi_{2n}(SO(2n)) \rar{\circlearrowright} & \pi_0(\mr{map}_\partial(W_{g,1}, SO(2n))) \rar & \mr{Hom}(H_n,\pi_n(SO(2n))) \rar & 0. \end{tikzcd}
\]
For $n \geq 3$, the groups $\pi_{n}(SO(2n))$ were determined by Bott and the groups $\pi_{2n}(SO(2n))$ by Kervaire \cite{KervaireNonstable}. We will only use that $\pi_{2n}(SO(2n))$ is always finite.

Recall from Section \ref{sec:emb-path-components} that $I_g$ denotes the Torelli subgroup of the mapping class group $\Gamma_g$, that is the kernel of $\alpha_g \colon \Gamma_g \to G'_g$.

\begin{lemma}\label{lem:ig-action-framing} 
For $n \geq 3$, the action of the subgroup $I_g \leq \Gamma_g$ on $\mr{Str}^\mr{fr}_\partial(W_{g,1,})$ via $\circledast$ has finitely many orbits.
\end{lemma}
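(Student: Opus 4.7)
The plan is to reduce, via the short exact sequence
\[0 \lra \pi_{2n}(SO(2n)) \overset{\circlearrowright}\lra \mr{Str}^\mr{fr}_\partial(W_{g,1}) \lra \mr{Str}^\mr{fr}_\ast(W_{g,1}) \lra 0\]
displayed just above the lemma, to showing that $I_g$ has finitely many orbits on the quotient $\mr{Str}^\mr{fr}_\ast(W_{g,1})$. This reduction is legitimate: $\pi_{2n}(SO(2n))$ is finite (as noted before the statement), and its action commutes with that of $\Gamma_g$ (as explained in Section \ref{sec:Relaxing}), so each fibre of the surjection is a $\pi_{2n}(SO(2n))$-torsor and is therefore finite. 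Consequently the number of $I_g$-orbits upstairs is at most $|\pi_{2n}(SO(2n))|$ times the number downstairs.

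For the action on $\mr{Str}^\mr{fr}_\ast(W_{g,1}) \cong \mr{Hom}(H_n,\pi_n(SO(2n)))$, I would unpack the description of the $\Gamma_g$-action from Section \ref{sec:MCGact}: it factors through the derivative map $\Gamma_g \to \Upsilon_g$, and by Lemma \ref{lem:semi-direct-2} the image of $I_g$ is contained in the normal subgroup $\mr{Hom}(H_n,\pi_n(SO(2n))) \lhd \Upsilon_g$, where it coincides with $\sigma_*\mr{Hom}(H_n,S\pi_n(SO(n)))$ for $\sigma \colon S\pi_n(SO(n)) \to \pi_n(SO(2n))$ the stabilisation. Combining the formula $\alpha \circledast (\mr{id},\beta) = \alpha + \iota_*\beta$ from Section \ref{sec:MCGact} with the observation that in the framed case $F^\mr{or} \simeq SO(2n)$ and $\iota_*$ is an isomorphism on $\pi_n$, such elements act on $\mr{Hom}(H_n,\pi_n(SO(2n)))$ by translation.

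The orbit set of this translation action is then the cokernel of the map $\mr{Hom}(H_n,\sigma)$, which (since $H_n \cong \bZ^{2g}$ is free) is naturally isomorphic to $\mr{Hom}(H_n,\mr{coker}(\sigma))$. By Lemma \ref{lem:levine} the cokernel of $\sigma$ is either trivial or $\bZ/2$, so $\mr{Hom}(H_n,\mr{coker}(\sigma))$ is finite and the action has only finitely many orbits, completing the argument. There is no serious obstacle: the conceptual work is already contained in the identifications of $\Upsilon_g$ and $\Lambda_g$ in Sections \ref{sec:emb-path-components}--\ref{sec:lbun} and in the sequence \eqref{eqn:theta-struc-pi0}; the two non-trivial external inputs are the finiteness of $\pi_{2n}(SO(2n))$ and Levine's stabilisation computation recorded as Lemma \ref{lem:levine}.
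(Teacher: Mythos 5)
Your proposal is correct and follows essentially the same route as the paper: reduce to $\mr{Str}^\mr{fr}_\ast(W_{g,1})$ using finiteness of $\pi_{2n}(SO(2n))$, identify the image of $I_g$ in $\Upsilon_g$ as $\sigma_*\mr{Hom}(H_n,S\pi_n(SO(n)))$ acting by translation, and invoke Lemma~\ref{lem:levine}. The only (cosmetic) difference is that you package the orbit set once and for all as $\mr{Hom}(H_n,\mr{coker}(\sigma))$, whereas the paper runs through the three cases of Lemma~\ref{lem:levine} separately.
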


\begin{proof} The framing $\tau$ gives a bijection $\mr{Str}^\mr{fr}_\partial(W_{g,1,}) \cong \pi_0(\mr{map}_\partial(W_{g,1},SO(2n)))$. Since $\pi_{2n}(SO(2n))$ is finite, it suffices to prove that there are finitely many orbits for the $I_g$-action on the set $\mr{Str}^\mr{fr}_\ast(W_{g,1})$ of homotopy classes of framings relative to a point. As before, we identify this set through the framing with $\pi_{2n}(SO(2n)) \backslash \pi_0(\mr{map}_\partial(W_{g,1},SO(2n))) = \pi_0(\mr{map}_*(W_{g,1},SO(2n)))$.

To study the $I_g$-action, recall from Theorem \ref{thm:kreck} that $I_g$ is an extension
	\[1 \lra \Theta_{2n+1} \lra I_g \lra \mr{Hom}(H_n,S\pi_n(SO(n))) \lra 1.\]
The action of $\Gamma_g$ on the set $\mr{Hom}(H_n,\pi_n(SO(2n)))$ is through the derivative map $\Gamma_g \to \Upsilon_g = \pi_0(\mr{Bun}_{\half \partial}(TW_{g,1}))$, whose structure was determined in Lemma \ref{lem:semi-direct-2}. Thus we need to understand the image of $I_g$ in $\Upsilon_g$.

The derivatives of elements of $\Theta_{2n+1}$ are bundle maps supported in a small disc which can be taken to be near the boundary: when only half of the boundary is required to be fixed these may be homotoped to the identity (or one may use Lemma \ref{lem:DiscActTriv}). Thus the homomorphism $I_g \to \Upsilon_g$ factors over the quotient group 
\[J_g = I_{g}/\Theta_{2n+1} = \mr{Hom}(H_n,S\pi_n(SO(n))).\]
By Lemma \ref{lem:semi-direct-2}, the map from $J_g = \mr{Hom}(H_n,S\pi_n(SO(n)))$ lands in the subgroup $\mr{Hom}(H_n,\pi_n(SO(2n))) \subset \Upsilon_g$, and this homomorphism is induced by applying the homomorphism $S\pi_n(SO(n)) \to \pi_n(SO(2n))$ to the target. Since this is surjective when $n \neq 1,3,7$ by Lemma \ref{lem:levine}, in this case the image of $I_g$ is exactly $\mr{Hom}(H_n,\pi_n(SO(2n)))$. The action of this group on $\mr{Hom}(H_n,\pi_n(SO(2n)))$ is through addition, which is transitive. This proves that there is a unique $I_g$-orbit for $n \neq 3,7$. On the other hand if $n=3,7$ then $S\pi_n(SO(n)) \to \pi_n(SO(2n))$ has cokernel $\bZ/2$, again by Lemma \ref{lem:levine}, so the set of orbits is in bijection with $\mr{Hom}(H_n, \bZ/2)$ and is still finite.
\end{proof}

Recall from Section \ref{sec:Notation} that we write $\Gamma_g^{\mr{fr}, [\ell]}$ for the stabiliser of $[\ell] \in \mr{Str}^\mr{fr}_\partial(W_{g,1,})$ under the action of $\Gamma$, and write $G^{\mr{fr}, [\ell]}_g = \mr{im}\left[\Gamma_g^{\mr{fr}, [\ell]} \to \Gamma_g \to G'_g \right]$.

\begin{corollary}\label{cor:framing-fin-ind} 
The group $G^{\mr{fr}, [\ell]}_g$ has finite index in $G'_g$.
\end{corollary}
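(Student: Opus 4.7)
The plan is to deduce the corollary immediately from Lemma \ref{lem:ig-action-framing}, by a purely group-theoretic manipulation of the $\Gamma_g$-action on $\mr{Str}^\mr{fr}_\partial(W_{g,1})$. The only input needed is that $I_g$ acts with finitely many orbits; no further geometric content about framings is required.

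First I would unpack what finite index of $G^{\mr{fr},[\ell]}_g \leq G'_g$ amounts to in terms of subgroups of $\Gamma_g$. Since $G'_g = \Gamma_g/I_g$ and $G^{\mr{fr},[\ell]}_g$ is the image of $\Gamma^{\mr{fr},[\ell]}_g$ under this quotient, its full preimage in $\Gamma_g$ is $\Gamma^{\mr{fr},[\ell]}_g \cdot I_g$ (this is a subgroup because $I_g$ is normal), and hence
\[[G'_g : G^{\mr{fr},[\ell]}_g] = [\Gamma_g : \Gamma^{\mr{fr},[\ell]}_g \cdot I_g].\]

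Next I would identify the right-hand side with a count of $I_g$-orbits. The $\Gamma_g$-orbit of $[\ell]$ is in $\Gamma_g$-equivariant bijection with $\Gamma_g/\Gamma^{\mr{fr},[\ell]}_g$, so the $I_g$-orbits within $\Gamma_g \cdot [\ell]$ are in bijection with the double cosets $I_g \backslash \Gamma_g / \Gamma^{\mr{fr},[\ell]}_g$. Because $I_g \lhd \Gamma_g$, each double coset $I_g \gamma \Gamma^{\mr{fr},[\ell]}_g$ equals $\gamma \cdot (I_g \Gamma^{\mr{fr},[\ell]}_g)$, so the set of double cosets is naturally identified with the set of left cosets $\Gamma_g / (I_g \Gamma^{\mr{fr},[\ell]}_g)$. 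Combining with the previous display,
\[[G'_g : G^{\mr{fr},[\ell]}_g] = \#\bigl(I_g\text{-orbits on } \Gamma_g\cdot[\ell]\bigr).\]

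Finally, the number of $I_g$-orbits on $\Gamma_g \cdot [\ell]$ is at most the total number of $I_g$-orbits on $\mr{Str}^\mr{fr}_\partial(W_{g,1})$, which is finite by Lemma \ref{lem:ig-action-framing}. Hence $[G'_g : G^{\mr{fr},[\ell]}_g]$ is finite, as required. There is no real obstacle here: the only substantive input is Lemma \ref{lem:ig-action-framing} itself, which was established through the surjectivity (up to finite cokernel) of the stabilisation $S\pi_n(SO(n)) \to \pi_n(SO(2n))$ from Lemma \ref{lem:levine}.
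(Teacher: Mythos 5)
Your argument is correct and is essentially the same as the paper's, just unwound: the paper identifies $G^{\mr{fr},[\ell]}_g$ as the $G'_g$-stabiliser of $[\ell]$ in the finite set $\mr{Str}^\mr{fr}_\partial(W_{g,1})/I_g$ and invokes orbit–stabiliser, while you make the same index computation explicit via the double-coset identification $I_g \backslash \Gamma_g / \Gamma^{\mr{fr},[\ell]}_g \cong \Gamma_g/(I_g\Gamma^{\mr{fr},[\ell]}_g)$. Both reduce to Lemma \ref{lem:ig-action-framing}, and your normality-of-$I_g$ step correctly replaces the usual care needed for double cosets.
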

\begin{proof}
This image is the stabiliser of $[\ell] \in \mr{Str}^\mr{fr}_\partial(W_{g,1,})/I_g$ with respect to the residual $G'_g = \Gamma_g / I_g$-action. As this is a finite set by Lemma \ref{lem:ig-action-framing}, this stabiliser has finite index.
\end{proof}

\subsection{Proof of Proposition \ref{prop:theta-fin-index}}\label{sec:proof:theta-fin-index}

Since $EO(2n)$ is contractible, there is a unique map $EO(2n) \to B$ over $BO(2n)$ up to homotopy, using which any framing determines a $\theta$-structure: we say such $\theta$-structures come from framings. This induces a map $SO(2n) \to F^\mr{or}$ as well as a map of short exact sequences \eqref{eqn:theta-struc-pi0}:
\[\begin{tikzcd}0 \rar &[-12pt] \pi_{2n}(SO(2n))) \dar \rar &[-5pt] \pi_0(\mr{map}_\partial(W_{g,1}, SO(2n))) \rar \dar &[-5pt] \mr{Hom}(H_n,\pi_n(SO(2n))) \rar \dar[two heads] &[-12pt] 0 \\
0 \rar & \pi_{2n}(F^\mr{or}) \rar & \pi_0(\mr{map}_\partial(W_{g,1}, F^\mr{or})) \rar & \mr{Hom}(H_n,\pi_n(F^\mr{or})) \rar & 0 \end{tikzcd}\]
with right-hand map surjective because $\pi_n(SO(2n)) \to \pi_n(F^\mr{or})$ is surjective, by our assumption that $B$ is $n$-connected. This is identified with the map $\mr{Str}^\mr{fr}_\ast(W_{g,1}) \to \mr{Str}^\theta_\ast(W_{g,1})$, which is therefore also surjective.

Thus, given a $[\ell] \in \mr{Str}^\theta_\partial(W_{g,1}) \cong \pi_0(\mr{map}_\partial(W_{g,1}, F^\mr{or}))$, there is another  $[\ell_0]$ coming from a framing which has the same image in $ \mr{Str}^\theta_\ast(W_{g,1})$. Since the bottom sequence is exact, these differ by the action of an element $\pi_{2n}(F^\mr{or})$. Changing the $\theta$-structure by this element in a small disc near the boundary, we obtain a homotopy equivalence $B\mr{Diff}^\theta_\partial(W_{g,1};\ell_\partial)_\ell \simeq B\mr{Diff}^\theta_\partial(W_{g,1};\ell_\partial)_{\ell_0}$. In conclusion, each path component of $B\mr{Diff}^\theta_\partial(W_{g,1};\ell_\partial)$ is homotopy equivalent (over $B\mr{Diff}_\partial(W_{g,1})$) to one which comes from a framing.

In particular, the group $G^{\theta,[\ell]}_g$ is conjugate to $G^{\theta,[\ell_0]}_g$ where $\ell_0$ is a $\theta$-structure that comes from a framing. By Corollary \ref{cor:framing-fin-ind} the inclusion 
\[G^{\mr{fr},[\ell_0]}_g \subset G^{\theta,[\ell_0]}_g \subset G'_g\]
has finite index, and hence so does $G^{\theta,[\ell_0]}_g \subset G'_g$.

\subsection{Proof of Theorem \ref{thm:maintangent}} \label{sec:proof:maintangent} We shall repeat the proof the argument for Theorem \ref{thm:main} while carrying along the tangential structure $\theta \colon B \to BO(2n)$. Recall that we assume that $B$ is $n$-connected, and $H^*(B;\bQ)$ is finite-dimensional in each degree.

\subsubsection{The Weiss fibration sequence with tangential structures} The first step of Theorem \ref{thm:main} was to reduce from diffeomorphisms to self-embeddings. We shall do the same for tangential structures.

From the boundary condition of $\theta$-structures $\ell_\partial$  near $\partial W_{g,1}$ we can extract by restriction a new boundary condition $\ell_{\half \partial}$ near $\half \partial W_{g,1}$. The topological monoid $\mr{Emb}^{\cong}_{\half \partial}(W_{g,1})$ acts on the space $\mr{Bun}_{\half \partial}(TW_{g,1},\theta^*\gamma;\ell_{\half \partial})$ through the derivative map \[\mr{Emb}^{\cong}_{\half \partial}(W_{g,1}) \lra \mr{Bun}_{\half \partial}(TW_{g,1}).\]
In analogy with $B\mr{Diff}^\theta_\partial(W_{g,1};\ell_\partial)$, we take the homotopy quotient
\[B\mr{Emb}^{\cong,\theta}_{\half \partial}(W_{g,1};\ell_{\half \partial}) \coloneqq \mr{Bun}_{\half \partial}(TW_{g,1},\theta^* \gamma;\ell_{\half \partial}) \sslash \mr{Emb}^{\cong}_{\half \partial}(W_{g,1}).\]
In the same way as the set of path components of $\smash{B\mr{Diff}^\theta_\partial(W_{g,1};\ell_\partial)}$ is given by the set of orbits $\mr{Str}^\theta_\partial(W_{g,1})/\Gamma_g$, the set of path components of $\smash{B\mr{Emb}^{\cong,\theta}_{\half \partial}(W_{g,1};\ell_{\half \partial})}$ is given by $\mr{Str}^\theta_\ast(W_{g,1})/\Lambda_g$.

The difference between diffeomorphisms and self-embeddings with tangential structures is described by an analogue of the Weiss fibre sequence \eqref{eqn:weiss}. Let $\ell_{\partial_0}$ be obtained by restricting the standard framing on $\bR^{2n}$ to a neighbourhood of $\partial D^{2n}$ and considering it as a boundary condition for $\theta$-structures on $D^{2n}$; this is a representative of the unique homotopy class of boundary conditions which extends, as in Lemma \ref{lem:SpaceThetaStr}. Then 
\[B \mr{Diff}^\theta_{\partial}(D^{2n};\ell_{\partial_0}) \coloneqq \mr{Bun}_\partial(TD^{2n},\theta^* \gamma;\ell_{\partial_0}) \sslash \mr{Diff}_\partial(D^{2n}).\]

Boundary connected sum makes the fibration sequence
\begin{equation}\label{eqn:tang-disc}\mr{Bun}_\partial(TD^{2n},\theta^* \gamma;\ell_{\partial_0}) \lra B \mr{Diff}^\theta_{\partial}(D^{2n};\ell_{\partial_0}) \lra B\mr{Diff}_\partial(D^{2n})\end{equation}
one of group-like $E_{2n}$-algebras, using models as in Remark \ref{rem:e2n-algebra}. By choosing suitable models reminiscent of Moore loops, we can extract from this a fibration sequence of group-like topological monoids, cf.~\cite[Section 4.2]{kupersdisk}.

\begin{proposition}\label{prop:weiss-tang} There is a fibration sequence
	\[B \mr{Diff}^\theta_{\partial}(D^{2n};\ell_{\partial_0}) \lra B\mr{Diff}_\partial^\theta(W_{g,1};\ell_\partial) \lra B\mr{Emb}^{\cong,\theta}_{\half \partial}(W_{g,1};\ell_{\half \partial}),\]
	which deloops once.
\end{proposition}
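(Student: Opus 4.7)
The plan is to adapt the proof of \cite[Theorem 4.17]{kupersdisk} to the tangential setting. That theorem realises the non-tangential Weiss sequence \eqref{eqn:weiss} as a principal $B\mr{Diff}_\partial(D^{2n})$-bundle, by exploiting the $E_{2n}$-algebra structure on $B\mr{Diff}_\partial(D^{2n})$ given by boundary connect sum (Remark \ref{rem:e2n-algebra}), extracting a strict topological monoid structure via a Moore-loop-style model, and then identifying the orbit space as $B\mr{Emb}^{\cong}_{\half \partial}(W_{g,1})$. We will carry out a $\theta$-structured version of each of these steps.

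First, enhance the submanifold moduli space model of Remark \ref{rem:e2n-algebra} by parametrising in addition a $\theta$-structure on the submanifold extending $\ell_{\partial_0}$; the boundary connect-sum operation glues $\theta$-structures along the canonical $\theta$-structure on the collar determined by $\ell_{\partial_0}$. This equips $B\mr{Diff}^\theta_\partial(D^{2n};\ell_{\partial_0})$ with an $E_{2n}$-algebra structure, and makes \eqref{eqn:tang-disc} into a fibration sequence of $E_{2n}$-algebras. The Moore-loop construction of \cite[Section 4.2]{kupersdisk} then converts this into a strict group-like topological monoid.

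Next, boundary connect sum at a disc in the free half $\partial W_{g,1} \setminus \mr{int}(\half\partial W_{g,1})$ gives a right action of this monoid on $B\mr{Diff}^\theta_\partial(W_{g,1};\ell_\partial)$, glueing both diffeomorphisms and $\theta$-structures along the common collar. The two-sided bar construction then produces a principal bundle
\[B\mr{Diff}^\theta_\partial(D^{2n};\ell_{\partial_0}) \lra B\mr{Diff}^\theta_\partial(W_{g,1};\ell_\partial) \lra B\mr{Diff}^\theta_\partial(W_{g,1};\ell_\partial) \sslash B\mr{Diff}^\theta_\partial(D^{2n};\ell_{\partial_0}),\]
which automatically deloops once to the classifying space of the acting monoid.

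The remaining step, which will be the main obstacle, is to identify the quotient with $B\mr{Emb}^{\cong,\theta}_{\half \partial}(W_{g,1};\ell_{\half \partial})$. There is a forgetful map from the bar construction above to the non-tangential Weiss principal bundle of \cite[Theorem 4.17]{kupersdisk}, whose quotient is $B\mr{Emb}^{\cong}_{\half \partial}(W_{g,1})$. Fibrewise this reduces to computing the homotopy quotient of $\mr{Bun}_\partial(TW_{g,1},\theta^*\gamma;\ell_\partial)$ by $\mr{Bun}_\partial(TD^{2n},\theta^*\gamma;\ell_{\partial_0})$ acting via boundary connect sum at the free half. Such an action only modifies a $\theta$-structure in a neighbourhood of that free half, so after homotopy taking orbits canonically identifies the quotient with $\mr{Bun}_{\half \partial}(TW_{g,1},\theta^*\gamma;\ell_{\half \partial})$, giving the desired identification of the total quotient. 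The difficulty lies in making this fibrewise identification precise at each simplicial level of the bar construction and verifying that the gluing of $\theta$-structures is strictly compatible with the topological monoid structure extracted in the first step.
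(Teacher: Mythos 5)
Your proposal follows essentially the same route as the paper: use the submanifold model of Remark~\ref{rem:e2n-algebra} to equip $B\mr{Diff}^\theta_\partial(D^{2n};\ell_{\partial_0})$ with an $E_{2n}$-algebra (indeed monoid) structure by boundary connected sum, let it act on $B\mr{Diff}^\theta_\partial(W_{g,1};\ell_\partial)$ by glueing on the free half of the boundary, form the homotopy quotient, and identify the base. The one point worth flagging is that you end by describing the identification of the quotient as an obstacle to be handled ``at each simplicial level'' and leave it open, whereas the paper closes it cleanly: it maps the whole three-column diagram of homotopy quotients to the corresponding self-embedding spaces by restriction to a small interior copy of $W_{g,1}$, invokes \cite[Theorem 4.17]{kupersdisk} for the non-tangential column, and reduces the remaining claim to showing that \[\cat{TW}^\theta_{g,1}\sslash \cat{TD}^\theta \lra \mr{Bun}_{\half \partial}(TW_{g,1},\theta^*\gamma;\ell_{\half\partial})\] is a weak equivalence. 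This last step is then done not simplicial level by simplicial level but globally: upon choosing a reference framing $\tau$, the map becomes $\mr{map}_\partial(W_{g,1},F^\mr{or}) \sslash \Omega^{2n}(F^\mr{or}) \to \mr{map}_{\half \partial}(W_{g,1},F^\mr{or})$, which is directly seen to be a weak equivalence. Your heuristic (``the action only modifies the $\theta$-structure near the free half'') is exactly the intuition behind that identification, but the reference-framing trick is what turns it into a two-line argument rather than a bar-resolution chase.
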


\begin{proof}If suffices to establish the delooped version. For brevity, we write (consistent with \cite{kupersdisk})
	\begin{align*}\cat{BD} \coloneqq B\mr{Diff}_\partial(D^{2n}), \qquad &\cat{BD}^\theta \coloneqq B \mr{Diff}^\theta_{\partial}(D^{2n};\ell_{\partial_0}),\\
	\cat{BW}_{g,1} \coloneqq B\mr{Diff}_\partial(W_{g,1}), \qquad &\cat{BW}_{g,1}^\theta \coloneqq B \mr{Diff}^\theta_{\partial}(W_{g,1};\ell_{\partial}),\\
	\cat{TD}^\theta \coloneqq \mr{Bun}_\partial(TD^{2n},\theta^* \gamma;\ell_{\partial_0}), \qquad &\cat{TW}^\theta_{g,1} \coloneqq \mr{Bun}_\partial(TW_{g,1},\theta^* \gamma;\ell_{\partial}).\end{align*}
	 Without loss of generality $\ell_\partial$ agrees with $\ell_{\partial_0}$ on the complement of $\half \partial W_{g,1}$. Then boundary connected sum gives compatible actions of the topological monoids \eqref{eqn:tang-disc}, now written $\cat{TD}^\theta \to \cat{BD}^\theta \to \cat{BD}$, on each of the terms in $\cat{TW}^\theta_{g,1} \to \cat{BW}_{g,1}^\theta \to \cat{BW}_{g,1}$.	Taking homotopy quotients, we get a commutative diagram
	\[\begin{tikzcd} \cat{TW}^\theta_{g,1} \rar \dar & \cat{TW}^\theta_{g,1}\sslash \cat{TD}^\theta \dar \rar & \ast \sslash \cat{TD}^\theta \dar \\
	\cat{BW}_{g,1}^\theta \rar \dar &\cat{BW}_{g,1}^\theta \sslash \cat{BD}^\theta \dar \rar & \ast \sslash \cat{BD}^\theta \dar \\
	\cat{BW}_{g,1} \rar & \cat{BW}_{g,1} \sslash \cat{BD} \rar & \ast \sslash \cat{BD} \end{tikzcd}\]
with columns and rows fibration sequences. The middle row is the desired fibration sequence, so it remains to identify the center term with $B\mr{Emb}^{\cong,\theta}_{\half \partial}(W_{g,1};\ell_{\half \partial})$. As in \cite[Section 4]{kupersdisk}, in a suitable model, restriction to a small copy of $W_{g,1}$ inside $W_{g,1}$ gives a map of fibration sequences
\[\begin{tikzcd}\cat{TW}^\theta_{g,1}\sslash \cat{TD}^\theta \rar \dar & \mr{Bun}_{\half \partial}(TW_{g,1},\theta^* \gamma;\ell_{\half \partial}) \dar \\
\cat{BW}_{g,1}^\theta \sslash \cat{BD}^\theta \rar \dar & B\mr{Emb}^{\cong,\theta}_{\half \partial}(W_{g,1};\ell_{\half \partial}) \dar \\
\cat{BW}_{g,1} \sslash \cat{BD} \rar & B\mr{Emb}^{\cong}_{\half \partial}(W_{g,1}).\end{tikzcd}\]
The bottom map is a weak equivalence by \cite[Theorem 4.17]{kupersdisk}, so it suffices to prove that the top map is a weak equivalence. Upon picking a reference framing $\tau$, we can identify it with the map
\[\mr{map}_\partial(W_{g,1},F^\mr{or}) \sslash \Omega^{2n}(F^\mr{or}) \lra \mr{map}_{\half \partial}(W_{g,1},F^\mr{or}),\]
induced by restriction to a small copy of $W_{g,1}$ inside $W_{g,1}$, which is indeed a weak equivalence.
\end{proof}

\subsubsection{Embeddings with tangential structures} Now that Proposition \ref{prop:weiss-tang} has established the relationship between diffeomorphisms and self-embeddings with $\theta$-structures, we study the latter.

Let us recall some of the notation introduced in Section \ref{sec:Notation}. A $\theta$-structure $\ell$ on $W_{g,1}$ which extends $\ell_{\partial}$, and hence also extends $\ell_{\half \partial}$, gives a basepoint in $\smash{B\mr{Emb}^{\cong,\theta}_{\half \partial}(W_{g,1};\ell_{\half \partial})}$ and we write 
\[\check{\Lambda}^{\theta,\ell}_g \coloneqq \pi_1(B\mr{Emb}^{\cong,\theta}_{\half \partial}(W_{g,1};\ell_{\half \partial}),\ell)\]
for the fundamental group at this basepoint. This surjects onto the subgroup $\Lambda^{\theta,[[\ell]]}_g = \mr{Stab}_{\Lambda_g}([[\ell]])$ of $\Lambda_g = \pi_0(\mr{Emb}^{\cong}_{\half \partial}(W_{g,1}))$, and $G^{\theta,[[\ell]]}_g \subset G'_g$ denotes the image of $\Lambda^{\theta,[[\ell]]}_g$. We defined $L^{\theta,\ell}_g$ by the extension
\[1 \lra L^{\theta,\ell}_g \lra \check{\Lambda}^{\theta,\ell}_g \lra G^{\theta,[[\ell]]}_g \lra 1.\] 

\begin{lemma}\label{lem:g-emb-tang-arithmetic} 
If $2n \geq 6$ and $B$ is $n$-connected, then $G^{\theta,[[\ell]]}_g \leq G'_g$ has finite index.
\end{lemma}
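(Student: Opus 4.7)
The plan is to reduce to the case of framings, where Lemma~\ref{lem:ig-action-framing} gives the required finiteness input, and then transfer the conclusion via an equivariance argument.

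First I would lift $[[\ell]]$ to a framing. Since $B$ is $n$-connected, the long exact sequence of the fibration $SO(2n) \simeq \mr{GL}^+_{2n}(\bR) \to F^\mr{or} \to B$ shows that $\pi_n(SO(2n)) \to \pi_n(F^\mr{or})$ is surjective, so the induced map
\[\mr{Str}^\mr{fr}_\ast(W_{g,1}) \cong \mr{Hom}(H_n,\pi_n(SO(2n))) \lra \mr{Hom}(H_n,\pi_n(F^\mr{or})) \cong \mr{Str}^\theta_\ast(W_{g,1})\]
is surjective (using freeness of $H_n$). Pick a framing $[[\ell_0]] \in \mr{Str}^\mr{fr}_\ast(W_{g,1})$ whose image is $[[\ell]]$. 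This map is $\Lambda_g$-equivariant, since on both sides the $\Lambda_g$-action factors through the derivative map $\Lambda_g \to \Upsilon_g$; hence any $\lambda \in \Lambda_g$ that stabilizes $[[\ell_0]]$ also stabilizes $[[\ell]]$. This gives inclusions $\Lambda^{\mr{fr},[[\ell_0]]}_g \subseteq \Lambda^{\theta,[[\ell]]}_g$ and $G^{\mr{fr},[[\ell_0]]}_g \subseteq G^{\theta,[[\ell]]}_g$, so it suffices to prove the statement for the framing $[[\ell_0]]$.

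Next I would count orbits for framings. By Lemma~\ref{lem:ig-action-framing} the $I_g$-action on $\mr{Str}^\mr{fr}_\partial(W_{g,1})$ has only finitely many orbits. The surjection $\mr{Str}^\mr{fr}_\partial(W_{g,1}) \twoheadrightarrow \mr{Str}^\mr{fr}_\ast(W_{g,1})$ coming from \eqref{eqn:theta-struc-pi0} is $\Gamma_g$-equivariant, so $I_g$ acts with only finitely many orbits on $\mr{Str}^\mr{fr}_\ast(W_{g,1})$ as well. Since the $\Gamma_g$-action on $\mr{Str}^\mr{fr}_\ast(W_{g,1})$ factors through $\Lambda_g$ (it only depends on the derivative, which factors $\Gamma_g \twoheadrightarrow \Lambda_g \to \Upsilon_g$), the $I_g$-action factors through its image $J_g \leq \Lambda_g$, and these $I_g$-orbits are exactly the $J_g$-orbits. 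Thus $\mr{Str}^\mr{fr}_\ast(W_{g,1})/J_g$ is finite.

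Finally I would identify $G^{\mr{fr},[[\ell_0]]}_g$ as the stabiliser of $[J_g \cdot [[\ell_0]]] \in \mr{Str}^\mr{fr}_\ast(W_{g,1})/J_g$ under the residual $G'_g = \Lambda_g/J_g$-action. Concretely: $g \in G'_g$ fixes this class iff some lift $\lambda \in \Lambda_g$ satisfies $\lambda \cdot [[\ell_0]] = j \cdot [[\ell_0]]$ for some $j \in J_g$, i.e.\ iff $j^{-1}\lambda \in \Lambda^{\mr{fr},[[\ell_0]]}_g$, which says exactly $g \in G^{\mr{fr},[[\ell_0]]}_g$ (using that $J_g$ is normal in $\Lambda_g$). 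By orbit–stabiliser, the index $[G'_g : G^{\mr{fr},[[\ell_0]]}_g]$ equals the size of the $G'_g$-orbit in $\mr{Str}^\mr{fr}_\ast(W_{g,1})/J_g$, which is bounded by the total number of $J_g$-orbits and hence finite. Therefore $G^{\theta,[[\ell]]}_g \supseteq G^{\mr{fr},[[\ell_0]]}_g$ also has finite index in $G'_g$.

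The only real obstacle is the bookkeeping in the reduction step to framings: one must verify carefully that the map $\mr{Str}^\mr{fr}_\ast \to \mr{Str}^\theta_\ast$ is $\Lambda_g$-equivariant (rather than just $\Lambda_g$-equivariant in the "precomposition" sense that is \emph{not} the action here, cf.\ Section~\ref{sec:MCGact}). This is built into the fact that both identifications with $\mr{Hom}(H_n,-)$ are functorial in the coefficients and that on both sides the $\Lambda_g$-action is mediated through the same derivative map to $\Upsilon_g$ via the formula $\alpha \circledast (B,\beta) = \alpha \circ B + \iota_\ast\beta$ of Section~\ref{sec:Relaxing}.
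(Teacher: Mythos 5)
Your proof is correct, but it takes a genuinely different route from the paper's. The paper's own proof is a two-liner: it observes that the map $\mr{Str}^\theta_\partial(W_{g,1}) \to \mr{Str}^\theta_\ast(W_{g,1})$ being $\Gamma_g$-equivariant gives an inclusion $G^{\theta,[\ell]}_g \leq G^{\theta,[[\ell]]}_g$, and then simply cites Proposition~\ref{prop:theta-fin-index}, which has already established that $G^{\theta,[\ell]}_g$ has finite index in $G'_g$. Your argument instead re-derives the finiteness from scratch at the $\ast$-level (half-boundary level): you lift $[[\ell]]$ to a framing via the surjection $\mr{Str}^\mr{fr}_\ast \twoheadrightarrow \mr{Str}^\theta_\ast$, check $\Lambda_g$-equivariance of that map through the formula $\alpha \circledast (B,\beta) = \alpha \circ B + \iota_*\beta$, deduce finitely many $J_g$-orbits from Lemma~\ref{lem:ig-action-framing} via equivariant surjections, and close with orbit--stabiliser. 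This is essentially a re-proof of (the $\ast$-analogue of) Proposition~\ref{prop:theta-fin-index}/Corollary~\ref{cor:framing-fin-ind}, and it has the minor virtue of avoiding the path-component-changing step of Section~\ref{sec:proof:theta-fin-index} (where $[\ell]$ is replaced by an $[\ell_0]$ coming from a framing using the $\pi_{2n}(F^\mr{or})$-action on a small disc near $\partial$), since passing to $\mr{Str}^\theta_\ast$ immediately identifies $[[\ell]]$ with $[[\ell_0]]$. That said, since the paper has already done the work for Proposition~\ref{prop:theta-fin-index}, the two-line deduction via the containment $G^{\theta,[\ell]}_g \leq G^{\theta,[[\ell]]}_g$ is more economical; you should look for such containments before re-running an orbit count.
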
 
\begin{proof}
By definition we have inclusions $G^{\theta,[\ell]}_g \leq G^{\theta,[[\ell]]}_g \leq G'_g$. The result then follows as $G^{\theta,[\ell]}_g$ is a finite index subgroup of $G'_g$ by Proposition \ref{prop:theta-fin-index}.
\end{proof}

We can therefore apply the setup of \eqref{eqn:setup} and ask whether a $\check{\Lambda}^{\theta,\ell}_g$-representation is $gr$-algebraic.

The group $\Lambda^{\theta,[[\ell]]}_g$ acts (in the homotopy category, i.e.\ by homotopy classes of homotopy equivalences) on the path-component $\mr{Bun}_{\half \partial}(TW_{g,1},\theta^*\gamma;\ell_{\half \partial})_\ell$ of $\ell$, and as preparation we study the action of the subgroup $J^{\theta,[[\ell]]}_g$ on the rational homotopy groups of $\mr{Bun}_{\half \partial}(TW_{g,1},\theta^*\gamma;\ell_{\half \partial})_\ell$. This action does not preserve the basepoint $\ell$, but the space in question is simple (as it is homotopy equivalent to a path component of $\prod_{2g}\Omega^n F^\mr{or}$) so there is still a well-defined action on homotopy groups.

\begin{lemma}\label{lem:jtheta-triv} 
	$J^{\theta,[[\ell]]}_g$ acts trivially on $\pi_i(\mr{Bun}_{\half \partial}(TW_{g,1},\theta^*\gamma;\ell_{\half \partial})_\ell)$.\end{lemma}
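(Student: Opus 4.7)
The plan is to use the framing $\tau$ of Section \ref{sec:TrivTang} together with Lemma \ref{lem:SpaceThetaStr}(\ref{it:ThetaSpace}) and the homotopy equivalence of pairs $(W_{g,1},\half\partial W_{g,1})\simeq(\vee_{2g}S^n,\ast)$ to identify the space in question with
\[
M \;\coloneqq\; \mr{map}_{\half\partial}(W_{g,1},F^\mr{or}) \;\simeq\; \prod_{2g}\Omega^n F^\mr{or}.
\]
Since $n\ge 1$, this is a simple space and an $H$-space under loop concatenation, which I will denote $+$. Via Lemma \ref{lem:semi-direct-2} together with the $\circledast$-formula of Section \ref{sec:MCGact}, an element $[\phi]\in J_g$ acts on $M$ by pointwise multiplication $\rho_\lambda\colon h\mapsto h\cdot\lambda$, where $\lambda\in L\coloneqq\mr{map}_{\half\partial}(W_{g,1},SO(2n))$ is any representative of $\chi(\phi)\in\mr{Hom}(H_n,\pi_n(SO(2n)))$ and $\cdot$ denotes the principal $SO(2n)$-action on $F^\mr{or}$ applied pointwise.

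The key step is to identify $\rho_\lambda$, up to homotopy of unbased self-maps of $M$, with the loop-sum translation $T_{\iota\circ\lambda}\colon h\mapsto h+(\iota\circ\lambda)$, where $\iota\colon SO(2n)\to F^\mr{or}$ is $g\mapsto g\cdot f_0$. The principal action $\mu\colon F^\mr{or}\times SO(2n)\to F^\mr{or}$ satisfies $\mu(-,e)=\mr{id}$ and $\mu(f_0,-)=\iota$, so after taking $n$-fold loops both $\rho_\lambda=\Omega^n\mu(-,\lambda)$ and $T_{\iota\circ\lambda}=(-)+\Omega^n\iota(\lambda)$ are self-maps of $M$ that agree on $\pi_\ast$ via the standard formula $\mu_\ast(\alpha,\beta)=\alpha+\iota_\ast\beta$. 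Their difference (formed using the $H$-structure on the target) vanishes on both axes and so factors through the smash $\Omega^n F^\mr{or}\wedge\Omega^n SO(2n)\to\Omega^n F^\mr{or}$; this factor is null because the relevant Whitehead-type products into the $H$-space target vanish, so an Eckmann--Hilton argument promotes the $\pi_\ast$-agreement to a homotopy of maps.

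Given the key step, the conclusion is quick. The hypothesis $[\phi]\in J^{\theta,[[\ell]]}_g$ says $\iota_\ast\chi(\phi)=0$ in $\pi_0(M)=\mr{Hom}(H_n,\pi_n(F^\mr{or}))$, so $\iota\circ\lambda\in M$ lies in the identity path component; any path in $M$ joining $\iota\circ\lambda$ to the basepoint shows $T_{\iota\circ\lambda}\simeq T_0=\mr{id}_M$ through self-maps, and combining with the key step gives $\rho_\lambda\simeq\mr{id}_M$. Restricting to the component $M_\ell$ yields a self-homotopy equivalence homotopic to the identity, which therefore acts trivially on every $\pi_i(M_\ell)$.

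The main obstacle is the Eckmann--Hilton promotion in the middle paragraph: the $\pi_\ast$-equality of $\rho_\lambda$ and $T_{\iota\circ\lambda}$ is immediate from the principal-bundle formula, but upgrading this to a genuine homotopy of unbased maps $M\to M$ uses crucially that $M$ is an $n$-fold loop space for $n\ge 1$, in order to kill the smash-product defect. This is the step requiring the most care, and is where the assumption $2n\ge 6$ (and in particular $n\ge 1$) enters the argument.
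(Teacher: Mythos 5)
Your proposal follows the same route as the paper: use a framing to identify $\mr{Bun}_{\half\partial}(TW_{g,1},\theta^*\gamma;\ell_{\half\partial})\simeq\mr{map}_{\half\partial}(W_{g,1},F^\mr{or})\simeq\prod_{2g}\Omega^n F^\mr{or}$, observe that the $J_g$-action is by pointwise multiplication against a map $\lambda\colon W_{g,1}\to SO(2n)$, appeal to Eckmann--Hilton to convert this to translation $h\mapsto h+\iota_\ast\lambda$ in the $n$-fold loop structure, and then note that for $[\phi]\in J^{\theta,[[\ell]]}_g$ the element $\iota\circ\lambda$ lies in the basepoint component so the translation is homotopic to the identity. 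The setup and the final step are both correct and match the paper.

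The middle step is where I would push back on your justification. After observing that $\rho_\lambda$ and $T_{\iota\circ\lambda}$ agree on axes and on $\pi_\ast$, you form a ``difference'' using the $H$-structure and argue that it factors through the smash $\Omega^n F^\mr{or}\wedge\Omega^n SO(2n)$, and then conclude that this factor is null because ``Whitehead-type products into the $H$-space target vanish.'' That inference is not valid as stated: a map out of a smash product into an $H$-space need not be null (the commutator square $G\times G\to G$ for a nonabelian topological group $G$ factors through $G\wedge G$, vanishes on axes, and is generally essential), and agreement on $\pi_\ast$ together with an $H$-space target is not enough to upgrade to a homotopy of maps. The correct form of the Eckmann--Hilton argument here is more direct: the pointwise action map $\rho\colon M\times L\to M$ is itself an $H$-map for the loop-concatenation $H$-structures on $M=\prod\Omega^nF^\mr{or}$, $L=\prod\Omega^nSO(2n)$, and $M\times L$, because pointwise application of the principal $SO(2n)$-action commutes strictly with concatenation of $n$-loops. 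Writing $(h,\lambda)=(h,e)+(\mr{const},\lambda)$ in $M\times L$ and using the $H$-map property immediately gives $\rho(h,\lambda)\simeq\rho(h,e)+\rho(\mr{const},\lambda)=h+\iota_\ast\lambda$, which is the claimed homotopy $\rho_\lambda\simeq T_{\iota\circ\lambda}$ with no smash product or Whitehead product detour needed. One further small point: here only $n\ge 1$ (indeed $n\ge 2$ would also do, and is needed elsewhere) is used, not $2n\ge 6$; the role of $2n\ge 6$ is elsewhere in the paper.
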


\begin{proof}
	The action of $\smash{\mr{Emb}^{\cong}_{\half \partial}(W_{g,1})}$ on $\mr{Bun}_{\half \partial}(TW_{g,1},\theta^*\gamma;\ell_{\half \partial})$ is through the derivative map to $\mr{Bun}_{\half \partial}(TW_{g,1})^\times$, so the action of the group $\Lambda^{\theta,[[\ell]]}_g$ on the path-component $\mr{Bun}_{\half \partial}(TW_{g,1},\theta^*\gamma;\ell_{\half \partial})_\ell$ factors through an action of the stabiliser of $[[\ell]]$ in $\Upsilon_g = \pi_0(\mr{Bun}_{\half \partial}(TW_{g,1})^\times)$. 
	
	As in Proposition \ref{prop:filteredlayer1} and Lemma \ref{lem:SpaceThetaStr} (ii), a reference framing $\tau$ gives rise to a split fibration sequence
	\begin{equation}\label{eq:FibSeq}
	\mr{map}_{\half \partial}(W_{g,1}, \mr{GL}_{2n}(\bR)) \lra \mr{Bun}_{\half \partial}(TW_{g,1})^\times \lra \mr{map}_{\half \partial}(W_{g,1},W_{g,1})^\times,
	\end{equation}
	as well as an identification
	\begin{equation}\label{eq:Untwist}
	\mr{Bun}_{\half \partial}(TW_{g,1},\theta^*\gamma;\ell_{\half \partial}) \simeq \mr{map}_{\half \partial}(W_{g,1},F^\mr{or}).
	\end{equation}
	In this description the group $J^{\theta,[[\ell]]}_g$ acts through $\pi_0(\mr{map}_{\half \partial}(W_{g,1}, \mr{GL}_{2n}(\bR)))$. The topological monoid $\mr{map}_{\half \partial}(W_{g,1}, \mr{GL}_{2n}(\bR))$ acts on $\mr{map}_{\half \partial}(W_{g,1},F^\mr{or})$ by the pointwise action of $\mr{GL}_{2n}(\bR)$ on $F^\mr{or}$. By the Eckmann--Hilton argument, the action of an element of $\mr{map}_{\half \partial}(W_{g,1},\mr{GL}_{2n}(\bR))$ is homotopic to sending it to $\mr{map}_{\half \partial}(W_{g,1},F^\mr{or})$ and acting through the $n$-fold loop structure. As elements of $J^{\theta,[[\ell]]}_g$ stabilise $[[\ell]]$ they lie in the kernel of $\pi_0(\mr{map}_\ast(W_{g,1},\mr{GL}_{2n}(\bR))) \to \pi_0(\mr{map}_\ast(W_{g,1},F^\mr{or}))$, and hence they act on $\mr{map}_{\half \partial}(W_{g,1},F^\mr{or})$ by maps homotopic to the identity.
\end{proof}

\begin{proposition}\label{prop:emb-tang-gr} 
	Supposing that $2n \geq 6$, then for each $i \geq 1$ the $\check{\Lambda}^{\theta,\ell}_g$-representation $\pi_{i+1}(B\mr{Emb}^{\cong,\theta}_{\half \partial}(W_{g,1};\ell_{\half \partial}),\ell) \otimes \bQ$ is $gr$-algebraic.
\end{proposition}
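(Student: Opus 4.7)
The plan is to exhibit $B\mr{Emb}^{\cong,\theta}_{\half\partial}(W_{g,1};\ell_{\half\partial})_\ell$ as the total space of a fibration whose base and fibre are under control. Restricting the homotopy quotient fibration $\mr{Bun}_{\half\partial}(TW_{g,1},\theta^*\gamma;\ell_{\half\partial}) \to B\mr{Emb}^{\cong,\theta}_{\half\partial}(W_{g,1};\ell_{\half\partial}) \to B\mr{Emb}^{\cong}_{\half\partial}(W_{g,1})$ to the path components of $\ell$ gives a fibration
\[\mr{Bun}_{\half\partial}(TW_{g,1},\theta^*\gamma;\ell_{\half\partial})_\ell \lra B\mr{Emb}^{\cong,\theta}_{\half\partial}(W_{g,1};\ell_{\half\partial})_\ell \lra \cB,\]
with $\cB$ the cover of $B\mr{Emb}^{\cong}_{\half\partial}(W_{g,1})$ corresponding to $\Lambda^{\theta,[[\ell]]}_g \leq \Lambda_g$. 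A choice of framing $\tau$ and Lemma \ref{lem:SpaceThetaStr}(ii) identifies the fibre with a component of $\mr{map}_{\half\partial}(W_{g,1},F^\mr{or}) \simeq \prod_{2g}\Omega^n F^\mr{or}$, so in particular the fibre is simple. Tensoring the long exact sequence of homotopy groups with $\bQ$ then exhibits $\pi_{i+1}(B\mr{Emb}^{\cong,\theta}_{\half\partial}(W_{g,1};\ell_{\half\partial}),\ell) \otimes \bQ$ as an extension of a $\check{\Lambda}^{\theta,\ell}_g$-subrepresentation of $\pi_{i+1}(\cB) \otimes \bQ$ by a quotient of the fibre's rational homotopy. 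By Lemma \ref{lem:filteredalgebraic} parts (a)--(c) it suffices to prove both of these are $gr$-algebraic $\check{\Lambda}^{\theta,\ell}_g$-representations.

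For the base, the covering $\cB \to B\mr{Emb}^{\cong}_{\half\partial}(W_{g,1})$ is an isomorphism on $\pi_j$ for $j \geq 2$; combined with $\pi_{i+1}(B\mr{Emb}^{\cong}_{\half\partial}(W_{g,1})) = \pi_{i+1}(B\mr{Emb}^\mr{id}_{\half\partial}(W_{g,1}))$ for $i \geq 1$, Theorem \ref{thm:emb-alg} gives gr-algebraicity as a $\Lambda_g$-representation, and hence as a $\check{\Lambda}^{\theta,\ell}_g$-representation by restriction (via $\check{\Lambda}^{\theta,\ell}_g \to \Lambda^{\theta,[[\ell]]}_g \hookrightarrow \Lambda_g$); here we also implicitly use Lemma \ref{lem:g-emb-tang-arithmetic} to ensure the relevant quotient $G^{\theta,[[\ell]]}_g$ is arithmetic so that the notion of gr-algebraicity applies. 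For the fibre, the identification above yields
\[\pi_{i+1}(\mr{Bun}_{\half\partial}(TW_{g,1},\theta^*\gamma;\ell_{\half\partial})_\ell) \otimes \bQ \cong H_n \otimes_\bZ (\pi_{i+1+n}(F^\mr{or}) \otimes \bQ),\]
finite-dimensional in each degree by the Serre spectral sequence for $F^\mr{or} \to B \to BO(2n)$ and the hypothesis on $H^*(B;\bQ)$. The $\check{\Lambda}^{\theta,\ell}_g$-action factors through $\Lambda^{\theta,[[\ell]]}_g$; by Lemma \ref{lem:jtheta-triv} it further factors through $G^{\theta,[[\ell]]}_g \leq G'_g \leq \mr{GL}(H_n)$, and arguing as in Section \ref{sec:MCGact} (using an Eckmann--Hilton argument to see that the $\pi_n(SO(2n))$-part of the derivative $\Lambda^{\theta,[[\ell]]}_g \to \Upsilon_g$ acts trivially on positive-degree homotopy of maps into an $n$-fold loop space) this is the standard precomposition action on the $H_n$-factor, which is algebraic.

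The main obstacle I foresee is the careful identification of the $G^{\theta,[[\ell]]}_g$-action on the fibre's rational homotopy as algebraic: Lemma \ref{lem:jtheta-triv} removes the $J_g^{\theta,[[\ell]]}$-contribution, but one must also verify that, under the $\tau$-dependent description of the fibre, the residual $G^{\theta,[[\ell]]}_g$-action is the standard precomposition action of $\mr{GL}(H_n)$ on $H_n$ and trivial on the $\pi_{*+n}(F^\mr{or}) \otimes \bQ$-factor. This uses the iterated loop space structure of the fibre in an essential way. Once established, the proof assembles through the long exact sequence and Lemma \ref{lem:filteredalgebraic}.
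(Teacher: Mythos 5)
Your proposal is correct and takes essentially the same approach as the paper. The paper uses the fibration sequence \eqref{eq:ForgetThetaEmb} with fibre given by the orbit $\mr{Bun}_{\half\partial}(TW_{g,1},\theta^*\gamma;\ell_{\half\partial})_{\Lambda_g\cdot\ell}$ over the full base $B\mr{Emb}^{\cong}_{\half\partial}(W_{g,1})$, whereas you pass to the cover corresponding to $\Lambda^{\theta,[[\ell]]}_g$ so as to have a connected fibre; these are two equivalent ways of arranging the same long exact sequence, and both dispose of the base via Theorem \ref{thm:emb-alg} and the fibre via Lemma \ref{lem:jtheta-triv} together with the split fibration \eqref{eq:FibSeq} giving a section $\mr{GL}(H_n)\to\Upsilon_g$. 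The one small discrepancy is that $\pi_{i+1}$ of the fibre is $\mr{Hom}(H_n,\pi_{n+i+1}(F^\mr{or})) \cong H_n^\vee \otimes \pi_{n+i+1}(F^\mr{or})$, not $H_n \otimes \pi_{n+i+1}(F^\mr{or})$, though this is inconsequential here since the relevant action factors through the form-preserving group $G'_g$, where $H_n$ and $H_n^\vee$ are isomorphic representations.
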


\begin{proof}
	Let $\mr{Bun}_{\half \partial}(TW_{g,1},\theta^*\gamma;\ell_{\half \partial})_{\Lambda_g \cdot \ell}$ denote the set of path components in the $\Lambda_g$-orbit of $[[\ell]] \in \mr{Str}^\theta_*(W_{g,1}) = \pi_0(\mr{Bun}_{\half \partial}(TW_{g,1},\theta^*\gamma;\ell_{\half \partial}))$, so that there is a fibration sequence
	\begin{equation}\label{eq:ForgetThetaEmb}
	\mr{Bun}_{\half \partial}(TW_{g,1},\theta^*\gamma;\ell_{\half \partial})_{\Lambda_g \cdot \ell} \overset{i}\lra B\mr{Emb}^{\cong,\theta}_{\half \partial}(W_{g,1};\ell_{\half \partial})_\ell \lra B\mr{Emb}^{\cong}_{\partial}(W_{g,1}).
	\end{equation}
	This gives a long exact sequence of abelian groups (or groups, resp.~sets) with action of $\check{\Lambda}^{\theta,\ell}_g$, the fundamental group of the total space \cite[p.~385]{spanier}. By property \eqref{enum:equiv-serre-1} of equivariant Serre classes, it suffices to prove that the higher rational homotopy groups of fibre and base are $gr$-algebraic.
	
	For the base, the action is through the homomorphism $\check{\Lambda}^{\ell,\theta}_g \to \Lambda_g$. That the higher rational homotopy groups are $gr$-algebraic $\smash{\check{\Lambda}^{\ell,\theta}_g}$-representations thus follows from the Theorem \ref{thm:emb-alg}, which says that they are $gr$-algebraic $\Lambda_g$-representations.
	
	For the fibre, we recall that $\mr{Bun}_{\half \partial}(TW_{g,1},\theta^*\gamma;\ell_{\half \partial})_\ell$ is simple and thus its fundamental group acts trivially on its higher homotopy groups. Hence the action of $\smash{\check{\Lambda}^{\theta,\ell}_g}$ factors through the surjection $\smash{\check{\Lambda}^{\theta,\ell}_g} \to \smash{\Lambda^{\theta,[[\ell]]}_g}$, and the $\Lambda^{\theta,[[\ell]]}_g$-action is given by self-embeddings acting on $\theta$-structures through the derivative map.
	
	By Lemma \ref{lem:jtheta-triv}, the action of the subgroup $J^{\theta,[[\ell]]}_g \subset \Lambda^{\theta,[[\ell]]}_g$ on the higher rational homotopy groups of $\mr{Bun}_{\half \partial}(TW_{g,1},\theta^*\gamma;\ell_{\half \partial})_\ell$ is trivial, and hence the $\smash{\Lambda^{\theta,\ell}_g}$-action factors over $\smash{G^{\theta,[[\ell]]}_g}$. As in the proof of Lemma \ref{lem:jtheta-triv}, a reference framing $\tau$ gives an identification \eqref{eq:Untwist}, from which we read off that 
	\[\pi_{i+1}(\mr{Bun}_{\half \partial}(TW_{g,1},\theta^*\gamma;\ell_{\half \partial})_\ell ) \cong H_n^\vee \otimes \pi_{n+i+1}(F^\mr{or}).\] The split fibration sequence \eqref{eq:FibSeq} provides a section $\mr{GL}(H_n) \to \Upsilon_g$, and $G_g^{\theta, [[\ell]]} \leq GL(H_n)$ acts via this in the evident way. Since the rational homotopy groups of $F^\mr{or}$ are finite-dimensional because those of $B$ and $BO(2n)$ are, this action is algebraic.
\end{proof}

In fact, it is easy to describe the group $J^{\theta,[[\ell]]}_g$. 

\begin{lemma}\label{lem:j-theta-ell} 
The subgroup 
	\[J^{\theta,[[\ell]]}_g = \mr{Stab}_{J_g}([[\ell]]) \leq J_g = \mr{Hom}(H_n,S\pi_n(SO(n)))\]
	is given by $\mr{Hom}(H_n, K_n) \leq \mr{Hom}(H_n, S\pi_n(SO(n)))$, where $K_n \coloneqq \ker[S\pi_n(SO(n)) \to \pi_n(SO(2n)) \to \pi_n(F^\mr{or})]$.\end{lemma}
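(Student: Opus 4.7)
The plan is to trace through the description of the $\Lambda_g$-action on $\mr{Str}^\theta_\ast(W_{g,1})$ recalled in Section \ref{sec:Relaxing} and then specialise to the subgroup $J_g \leq \Lambda_g$. Fix a reference framing $\tau$ of $W_{g,1}$. By Section \ref{sec:Relaxing}, $\tau$ determines a bijection $\mr{Str}^\theta_\ast(W_{g,1}) \cong \mr{Hom}(H_n,\pi_n(F^\mr{or}))$, and the $\Lambda_g$-action on this set factors through the derivative homomorphism $\Lambda_g \to \Upsilon_g \cong \mr{GL}(H_n) \ltimes \mr{Hom}(H_n,\pi_n(SO(2n)))$ via the formula
\[\alpha \circledast (B,\beta) = \alpha \circ B + \iota_\ast \beta,\]
where $\iota_\ast \colon \pi_n(SO(2n)) \to \pi_n(F^\mr{or})$ is induced by acting on the basepoint of $F^\mr{or}$.

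Next, by Lemma \ref{lem:semi-direct-2} the composition $J_g \to \Lambda_g \to \Upsilon_g$ lands inside the normal subgroup $\mr{Hom}(H_n,\pi_n(SO(2n)))$ (that is, it has trivial $\mr{GL}(H_n)$-component), and the map
\[J_g = \mr{Hom}(H_n,S\pi_n(SO(n))) \lra \mr{Hom}(H_n,\pi_n(SO(2n)))\]
is obtained by post-composition with the stabilisation $S\pi_n(SO(n)) \to \pi_n(SO(2n))$. Combining this with the previous paragraph, an element $\varphi \in J_g$ acts on $\mr{Hom}(H_n,\pi_n(F^\mr{or}))$ by translation $\alpha \mapsto \alpha + s \circ \varphi$, where
\[s \colon S\pi_n(SO(n)) \lra \pi_n(SO(2n)) \lra \pi_n(F^\mr{or})\]
is the composition whose kernel is $K_n$ by definition.

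Since this action is affine and by translation, the stabiliser of any $[[\ell]]$ is independent of $[[\ell]]$ and is given by those $\varphi$ with $s \circ \varphi = 0$, i.e.\ by those homomorphisms $H_n \to S\pi_n(SO(n))$ which factor through $K_n \subset S\pi_n(SO(n))$. This yields the asserted identification $J^{\theta,[[\ell]]}_g = \mr{Hom}(H_n,K_n)$. There is no real obstacle here; the only care needed is in verifying that the dependence on the choice of framing $\tau$ is absorbed into the identifications, which follows because changing $\tau$ conjugates by an element of $\mr{Hom}(H_n,\pi_n(SO(2n)))$ and this conjugation acts trivially on the abelian subgroup $\mr{Hom}(H_n,\pi_n(SO(2n)))$.
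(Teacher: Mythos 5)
Your proof is correct and follows essentially the same approach as the paper's own one-sentence argument, just unpacked in more detail: you explicitly trace the $J_g$-action through Lemma \ref{lem:semi-direct-2} and the formula $\alpha \circledast (B,\beta) = \alpha \circ B + \iota_\ast\beta$, rather than stating the conclusion directly. The additional remark on independence of the framing $\tau$ is a reasonable care, though not needed for the paper's formulation since the translation action on the abelian group is manifestly basepoint-independent.
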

\begin{proof}
The action of $J_g = \mr{Hom}(H_n, S\pi_n(SO(n)))$ on $\mr{Str}^{\theta}_\ast(W_{g,1}) \cong \mr{Hom}(H_n,\pi_n(F^\mr{or}))$ is via the map $S\pi_n(SO(n)) \to \pi_n(SO(2n)) \to \pi_n(F^\mr{or})$ and the action by addition on $\mr{Hom}(H_n, \pi_n(F^\mr{or}))$, so its stabiliser is precisely as described.
\end{proof}

Recall that $L^{\theta,\ell}_g$ denotes the kernel of the homomorphism $\check{\Lambda}^{\theta,\ell}_g \to G^{\theta,[[\ell]]}_g$. Using the fibration sequence \eqref{eq:ForgetThetaEmb} we introduce the notation
	\[\mr{im}(i) \coloneqq \mr{im}\left[i \colon \pi_1(\mr{Bun}_{\half \partial}(TW_{g,1},\theta^*\gamma;\ell_{\half \partial}),\ell) \to L^{\theta,\ell}_g\right],\]
	and it follows from that  fibration sequence that there is a commutative diagram
		\begin{equation}\label{eq:imidiagram}
		\begin{tikzcd} 
\mr{im}(i) \rar \dar[equals]& L^{\theta,\ell}_g \rar \dar& J^{\theta,[[\ell]]}_g \dar\\
\mr{im}(i) \rar  \dar & \check{\Lambda}^{\theta,\ell}_g \rar \dar& {\Lambda}^{\theta,[[\ell]]}_g \dar\\
\{e\} \rar & G^{\theta,[[\ell]]}_g \rar[equals] & G^{\theta,[[\ell]]}_g
	\end{tikzcd}
	\end{equation}
	where each row and column is a group extension.

\begin{lemma}\label{lem:l-alg} 
	If $2n \geq 6$, then $L^{\theta,\ell}_g$ is abelian (and hence nilpotent). If in addition $g \geq 2$, then the $G^{\theta,[[\ell]]}_g$-representations $H^i(L^{\theta,\ell}_g;\bQ)$ are algebraic.
\end{lemma}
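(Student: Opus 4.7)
The analysis rests on the short exact sequence at the top of diagram \eqref{eq:imidiagram},
\[1 \lra \mr{im}(i) \lra L^{\theta,\ell}_g \lra J^{\theta,[[\ell]]}_g \lra 1.\]
I would first record that both ends are abelian: $J^{\theta,[[\ell]]}_g = \mr{Hom}(H_n, K_n)$ by Lemma \ref{lem:j-theta-ell}, and $\mr{im}(i)$ is a quotient of $\pi_1(\mr{Bun}_{\half \partial}(TW_{g,1},\theta^*\gamma;\ell_{\half \partial})_\ell)$; picking a framing and applying Lemma \ref{lem:SpaceThetaStr} together with the equivalence of pairs $(W_{g,1}, \half\partial W_{g,1}) \simeq (\vee_{2g} S^n, *)$ identifies the latter with a component of $\prod_{2g} \Omega^n F^\mr{or}$, whose $\pi_1$ is the abelian group $\mr{Hom}(H_n, \pi_{n+1}(F^\mr{or}))$. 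The extension is moreover central, because the conjugation action of $J^{\theta,[[\ell]]}_g$ on $\mr{im}(i)$ is precisely the monodromy action on $\pi_1$ of the fibre $\mr{Bun}_\ell$, which is trivial by Lemma \ref{lem:jtheta-triv}. In particular $L^{\theta,\ell}_g$ is nilpotent of class at most $2$.

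\textbf{The main obstacle} is to upgrade ``central extension of abelian groups'' to ``abelian''. This amounts to showing that the classifying class in $H^2(J^{\theta,[[\ell]]}_g; \mr{im}(i))$, equivalently the commutator pairing $J^{\theta,[[\ell]]}_g \times J^{\theta,[[\ell]]}_g \to \mr{im}(i)$, vanishes. I would attempt to construct a group-theoretic section $J^{\theta,[[\ell]]}_g \to L^{\theta,\ell}_g$ directly: an element $j \in J^{\theta,[[\ell]]}_g$ is represented by a Torelli diffeomorphism supported near the standard cores of $W_{g,1}$ (as in Theorem \ref{thm:kreck}), and the defining property of $K_n$---that its image in $\pi_n(F^\mr{or})$ vanishes---provides canonical null-homotopies $\phi_j^*\ell \simeq \ell$ over each core; the subtle point is verifying that these choices compose coherently, which I expect is the main technical work.

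\textbf{For the algebraicity statement,} centrality of the extension is already enough. The Lyndon--Hochschild--Serre spectral sequence takes the form
\[E_2^{p,q} = H^p(J^{\theta,[[\ell]]}_g;\bQ) \otimes H^q(\mr{im}(i);\bQ) \Longrightarrow H^{p+q}(L^{\theta,\ell}_g;\bQ),\]
as a spectral sequence of $G^{\theta,[[\ell]]}_g$-representations with trivial coefficients. Both factors are exterior algebras on the rational duals of finitely-generated abelian groups; the rationalisations $\mr{im}(i)\otimes\bQ$ and $J^{\theta,[[\ell]]}_g\otimes\bQ$ sit inside $H_n^\vee \otimes \pi_{n+1}(F^\mr{or})\otimes\bQ$ and $H_n^\vee \otimes K_n\otimes\bQ$ respectively, each of which is finite-dimensional since the standing hypothesis that $H^*(B;\bQ)$ is degreewise finite-dimensional forces the same for $\pi_*(F^\mr{or})\otimes\bQ$. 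Thus both are algebraic $G^{\theta,[[\ell]]}_g$-representations via the natural $\mr{GL}(H_n)$-action on $H_n^\vee$. Applying the closure properties of Theorem \ref{thm:representations}---tensor products, subquotients, and, thanks to $g\geq 2$, extensions---as in the proof of Lemma \ref{lem:filt-j-cohom}, gives that the abutment $H^*(L^{\theta,\ell}_g;\bQ)$ is algebraic as a $G^{\theta,[[\ell]]}_g$-representation.
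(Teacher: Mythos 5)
Your treatment of the algebraicity claim matches the paper's proof exactly: centrality of the top row of \eqref{eq:imidiagram} untwists the Lyndon--Hochschild--Serre spectral sequence to give $E_2^{p,q} = H^p(J^{\theta,[[\ell]]}_g;\bQ) \otimes H^q(\mr{im}(i);\bQ)$ as $\Lambda^{\theta,[[\ell]]}_g$-representations, both factors are identified as algebraic via $J^{\theta,[[\ell]]}_g=\mr{Hom}(H_n,K_n)$ (Lemma \ref{lem:j-theta-ell}) and $\mr{im}(i)$ a quotient of $H_n^\vee\otimes\pi_{n+1}(F^\mr{or})$, and one concludes via Theorem \ref{thm:representations}. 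This part is fine.

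For the abelianity of $L^{\theta,\ell}_g$, you have correctly identified a genuine gap, but you have not filled it. You establish, as the paper does, that $1 \to \mr{im}(i) \to L^{\theta,\ell}_g \to J^{\theta,[[\ell]]}_g \to 1$ is a central extension with both ends abelian; but a central extension of an abelian group by an abelian group is only $2$-step nilpotent in general, not abelian (the integral Heisenberg group $1\to\bZ\to H\to\bZ^2\to1$ is the standard counterexample). You correctly say the missing content is the vanishing of the commutator pairing $J^{\theta,[[\ell]]}_g\times J^{\theta,[[\ell]]}_g\to\mr{im}(i)$, and you sketch a section-construction that would suffice, but you leave the coherence verification open. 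Two further remarks. First, a small slip in your writeup: the vanishing of the classifying class in $H^2(J^{\theta,[[\ell]]}_g;\mr{im}(i))$ is \emph{not} equivalent to the vanishing of the commutator pairing. The cohomology class vanishing means the extension splits; the commutator pairing is only its antisymmetrization, so it can vanish while the class does not (e.g.\ $0\to\bZ\to\bZ\to\bZ/2\to0$). Only the pairing needs to vanish for abelianity, so a section proves strictly more than is required. Second, you should be aware that the paper's own proof reads ``as $J_g^{\theta,[[\ell]]}$ is abelian (because $J_g$ is) it follows that $L^{\theta,\ell}_g$ is abelian,'' which as written is exactly the invalid implication you flagged; so either the authors are implicitly invoking additional structure not spelled out there, or this step deserves the scrutiny you are giving it. Your instinct to treat this as the real work of the lemma is sound.
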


\begin{proof}
By Lemma \ref{lem:jtheta-triv} the top extension in \eqref{eq:imidiagram} is central, and as $J_g^{\theta,[[\ell]]}$ is abelian (because $J_g$ is) it follows that $L^{\theta,\ell}_g$ is abelian. 
	
	The group $\check{\Lambda}^{\theta,\ell}_g$ acts on its normal subgroup $L^{\theta,\ell}_g$ by conjugation, and by \eqref{eq:imidiagram} there are compatible actions on $\mr{im}(i)$ and $J^{\theta,[[\ell]]}_g$ which factor over $\Lambda^{\theta,[[\ell]]}_g$. Since the top extension in \eqref{eq:imidiagram} is central there is a Serre spectral sequence of $\Lambda^{\theta,[[\ell]]}_g$-representations
	\[E^2_{p,q} = H^p(J^{\theta,[[\ell]]}_g;\bQ) \otimes H^q(\mr{im}(i);\bQ) \Longrightarrow H^{p+q}(L^{\theta,\ell}_g;\bQ),\]
	so by Theorem \ref{thm:representations} it suffices to show that the $\Lambda^{\theta,[[\ell]]}_g$-representations $H^p(J^{\theta,[[\ell]]}_g;\bQ)$ and $H^q(\mr{im}(i);\bQ)$ are $gr$-algebraic.
	
	Lemma \ref{lem:j-theta-ell} shows that $J^{\theta,[[\ell]]}_g = \mr{Hom}(H_n,K_n)$ where $K_n \subset S\pi_n(SO(n))$ is a certain subgroup, necessarily finitely-generated, with the $\Lambda^{\theta,[[\ell]]}_g$-action induced by the $G^{\theta,[[\ell]]}_g$-action by precomposition. Thus $H^p(J^{\theta,[[\ell]]}_g;\bQ) \cong \Lambda^p[H_n \otimes (K_n \otimes \bQ)^\vee]$, which is algebraic.
	
	For the group $\mr{im}(i)$, we use that it is abelian and so the $\Lambda^{\theta,[[\ell]]}_g$-representations $H^q(\mr{im}(i);\bQ)$ are all $gr$-algebraic if and only if $\mr{im}(i) \otimes \bQ$ is. But $\mr{im}(i)$ is a quotient of $\pi_1(\mr{Bun}_{\half \partial}(TW_{g,1},\theta^*\gamma;\ell_{\half \partial}),\ell)$, which in the proof of Proposition \ref{prop:emb-tang-gr} we showed is identified with $H_n^\vee \otimes \pi_{n+1}(F^\mr{or})$; the rationalisation of this is algebraic.
\end{proof}

\subsubsection{Starting the proof of Theorem \ref{thm:maintangent}} 
In analogy with $B\mr{Tor}^{\theta}_\partial(W_{g,1};\ell_\partial)_\ell$, let us define
\[B\mr{TorEmb}^{\cong,\theta}_{\half \partial}(W_{g,1};\ell_{\half \partial})_\ell \coloneqq \mr{hofib}\left[B\mr{Emb}^{\cong,\theta}_{\half \partial}(W_{g,1};\ell_{\half \partial})_\ell \lra BG^{\theta,[[\ell]]}_g\right].\]
Its fundamental group is $L^{\theta,\ell}_g$, the kernel of $\check{\Gamma}^{\theta,\ell}_g \to G^{\theta,[[\ell]]}_g$. 

The proof of Theorem \ref{thm:maintangent} proceeds along the lines of Theorem \ref{thm:main}, with some additional work keeping track of fundamental groups and path components. We restrict the fibre sequence of Proposition \ref{prop:weiss-tang} to the path-component of total space given by $[\ell] \in \mr{Str}^\theta_\partial(W_{g,1})$, and produce the analogue of the commutative diagram in the proof of Theorem \ref{thm:main}, which now takes the form
\begin{equation}\label{eq:BigThetaDiagram}
\begin{tikzcd} 
B\mr{Diff}^\theta_\partial(D^{2n};\ell_{\partial_0})_{A} \rar \dar  & B\mr{Diff}^\theta_\partial(D^{2n};\ell_{\partial_0})_{B} \rar \dar & G^{\theta,[[\ell]]}_g/G^{\theta,[\ell]}_g \dar \\
B\mr{Tor}^\theta_\partial(W_{g,1} ;\ell_\partial)_\ell \rar \dar & B\mr{Diff}^\theta_\partial(W_{g,1};\ell_\partial)_{\ell} \rar \dar & BG^{\theta,[\ell]}_g \dar \\ 
B\mr{TorEmb}^{\cong,\theta}_{\half \partial}(W_{g,1};\ell_{\half \partial})_\ell \rar &  B\mr{Emb}^{\cong, \theta}_{\half \partial}(W_{g,1};\ell_{\half \partial})_\ell \rar & BG^{\theta,[[\ell]]}_g,
\end{tikzcd}
\end{equation}
in which the rows and columns are fibration sequences and the subscripts $A, B \subset \pi_0(B\mr{Diff}^\theta_\partial(D^{2n};\ell_{\partial_0}))$ denote certain collections of path components. As the leftmost two columns deloop, we see that $A$ and $B$ are in fact subgroups of $\pi_0(B\mr{Diff}^\theta_\partial(D^{2n};\ell_{\partial_0}))$ and may be described in terms of the exact sequences of groups $\pi_1(B\mr{Tor}^\theta_\partial(W_{g,1} ;\ell_\partial), \ell) \to L^{\theta,\ell}_g \to A \to 0$ and $\check{\Gamma}^{\theta, \ell}_g \to \check{\Lambda}^{\theta, \ell}_g \to B \to 0$.

\begin{remark}
 It follows from the delooping of the middle column that $G^{\theta,[\ell]}_g \leq G^{\theta,[[\ell]]}_g$ is in fact a normal subgroup.
\end{remark}

\subsubsection{Interlude: The groups $A$ and $B$ are finite} 
First observe that the map
\[\pi_1(\mr{Bun}_{\partial}(TW_{g,1},\theta^*\gamma;\ell_{\partial}), \ell) \lra \pi_1(\mr{Bun}_{\half \partial}(TW_{g,1},\theta^*\gamma;\ell_{\half \partial}), \ell)\]
is surjective, by the discussion in Section \ref{sec:Relaxing} and the long exact sequence on homotopy groups. From the exact sequence
\[\pi_1(\mr{Bun}_{\half \partial}(TW_{g,1},\theta^*\gamma;\ell_{\half \partial}), \ell) \lra L_g^{\theta, \ell} \lra J_g^{\theta, [[\ell]]} \lra 0\]
coming from $B\mr{Tor}^\theta_\partial(W_{g,1} ;\ell_\partial) \simeq \mr{Bun}_{\half \partial}(TW_{g,1},\theta^* \gamma;\ell_{\half \partial}) \sslash \mr{TorEmb}^{\cong}_{\half \partial}(W_{g,1})$ it then follows that there is a surjection $\smash{J_g^{\theta, [[\ell]]}} \to A$ with kernel given by the image of the natural homomorphism $I_g^{\theta, [\ell]} \to J_g^{\theta, [[\ell]]}$.

To finish our analysis of the group $A$, we need to understand how much the group $J^{\theta,[[\ell]]}_g = \mr{Stab}_{J_g}([[\ell]])$ differs from $I^{\theta,[\ell]}_g = \mr{Stab}_{I_g}([\ell]$. 

\begin{lemma}\label{lem:DiscActTriv}
	The action up to homotopy of the subgroup $\Theta_{2n+1} \leq I_g \leq \Gamma_g$ on the space $\mr{Bun}_{\partial}^{\theta}(TW_{g,1}; \ell_\partial)$ is trivial, so in particular $\Theta_{2n+1} \leq I_g^{\theta, [\ell]}$.
\end{lemma}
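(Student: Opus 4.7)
The plan is to reduce the claim to a computation on the disc and then argue that the induced action on the disc is trivial up to homotopy. By Cerf--Morlet, $\Theta_{2n+1} = \pi_0(\mr{Diff}_\partial(D^{2n}))$, so any $\phi \in \Theta_{2n+1}$ may be represented by a diffeomorphism of $W_{g,1}$ supported in a small interior disc $D^{2n} \subset \mr{int}(W_{g,1})$. Its derivative $D\phi$ is then the identity on $TW_{g,1}$ away from $D^{2n}$, so precomposition by $D\phi$ modifies a $\theta$-structure $\ell \in \mr{Bun}^\theta_\partial(TW_{g,1}; \ell_\partial)$ only on $D^{2n}$. Restriction to $D^{2n}$ therefore identifies the action of $\phi$ with its induced action on the space of bundle maps $TD^{2n} \to \theta^*\gamma$ extending $\ell|_{\partial D^{2n}}$, which using the restriction of the framing $\tau$ to $D^{2n}$ is homotopy equivalent to $\mr{map}_\partial(D^{2n}, F^\mr{or}) \simeq \Omega^{2n} F^\mr{or}$.

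Under this identification, writing $\ell|_{D^{2n}}$ as a map $f \colon D^{2n} \to F^\mr{or}$, a direct computation analogous to that of Section \ref{sec:MCGact} shows the action of $\phi$ is $f \mapsto D\phi \cdot (f \circ \phi)$, where $D\phi \colon D^{2n} \to SO(2n)$ is the matrix-valued derivative of $\phi$ in the framing. Since $\phi$ is an orientation-preserving self-diffeomorphism of $D^{2n}$ fixing $\partial D^{2n}$ pointwise, it has degree one as a self-map of the pair $(D^{2n}, \partial D^{2n})$, and is therefore homotopic to the identity rel $\partial D^{2n}$. Hence $f \circ \phi \simeq f$ rel boundary, and up to homotopy the action simplifies to $f \mapsto D\phi \cdot f$. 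By an Eckmann--Hilton argument, this self-equivalence of $\Omega^{2n} F^\mr{or}$ acts on $\pi_0 = \pi_{2n}(F^\mr{or})$ as translation by the image of $[D\phi] \in \pi_{2n}(SO(2n))$ under the map induced by $SO(2n) \to F^\mr{or}$, $g \mapsto g \cdot f_0$.

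The main obstacle is to verify that this translation is trivial, equivalently that the composite $\Theta_{2n+1} \xrightarrow{D} \pi_{2n}(SO(2n)) \to \pi_{2n}(F^\mr{or})$ vanishes. For this I would isotope the disc containing the support of $\phi$ towards $\half \partial W_{g,1}$ so that, as in the proof of Lemma \ref{lem:ig-action-framing}, the derivative $D\phi$ becomes nullhomotopic in $\mr{Bun}_{\half \partial}(TW_{g,1})$. Combining this with the short exact sequence \eqref{eqn:theta-struc-pi0} comparing the full and half boundary versions, and using that the $\pi_{2n}(F^\mr{or})$-action on $\pi_0(\mr{Bun}^\theta_\partial)$ commutes with the $\Theta_{2n+1}$-action, one reduces the vanishing to a single check on the basepoint $[\ell]$, which then follows from the nullhomotopy. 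Having established that the translation on $\pi_0$ is trivial, the composition $f \mapsto D\phi \cdot f$ is then homotopic to the identity self-map of $\Omega^{2n} F^\mr{or}$, which completes the proof.
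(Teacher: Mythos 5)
Your reduction to a translation on $\mr{map}_\partial(D^{2n},F^\mr{or})\simeq\Omega^{2n}F^\mr{or}$ is correct, and you have correctly identified that the content of the lemma is the vanishing of the composite
\[\Theta_{2n+1} \lra \pi_{2n}(SO(2n)) \lra \pi_{2n}(F^\mr{or}),\]
where the first map is the derivative. The gap is in your final step. Isotoping the supporting disc towards $\half\partial W_{g,1}$ and observing that $D\phi$ becomes nullhomotopic in $\mr{Bun}_{\half\partial}(TW_{g,1})$ only tells you that the image of $[D\phi]$ in $\pi_0(\mr{map}_{\half\partial}(W_{g,1},SO(2n)))\cong\mr{Hom}(H_n,\pi_n(SO(2n)))$ vanishes --- which is true for trivial reasons (a bundle map supported in an interior disc links none of the $n$-handles) and carries no information about the class $[D\phi]\in\pi_{2n}(SO(2n))$ itself. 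The short exact sequence \eqref{eqn:theta-struc-pi0} makes this explicit: the half-boundary picture is exactly the quotient of the full-boundary picture by the $\pi_{2n}(F^\mr{or})$-translation you are trying to understand, so passing to it throws away precisely the thing you need. The observation that the translation commutes with the $\pi_{2n}(F^\mr{or})$-action and hence is determined by its effect on a single $[\ell]$ is true, but it does not help you compute that effect; nullhomotopy in the half-boundary space does not descend or restrict to a nullhomotopy in $\mr{map}_\partial(D^{2n},F^\mr{or})$.

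The vanishing of $\Theta_{2n+1}\to\pi_{2n}(SO(2n))$ is a genuinely nontrivial fact and is where the paper's proof spends its effort. It invokes smoothing theory to identify $\mr{Diff}_\partial(D^{2n})\simeq\Omega^{2n+1}\mr{Top}(2n)/O(2n)$ and interprets the derivative map as the connecting homomorphism $\pi_{2n+1}(\mr{Top}(2n)/O(2n))\to\pi_{2n}(O(2n))$ in the fibration $O(2n)\to\mr{Top}(2n)\to\mr{Top}(2n)/O(2n)$; the vanishing then follows from the Burghelea--Lashof result that $\pi_i(O(k))\to\pi_i(\mr{Top}(k))$ is injective for $k\geq 5$ and $i\leq k$. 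No soft homotopy-theoretic argument internal to $W_{g,1}$ will replace this input: in the case $\theta=\mr{fr}$, $F^\mr{or}\simeq SO(2n)$ and the claim is literally that the derivative map is zero, which you cannot see from the mapping-space picture alone.
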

\begin{proof}
	This subgroup acts on $\mr{Bun}_{\partial}^{\theta}(TW_{g,1}; \ell_\partial)$ via the derivative map (with respect to the standard framing of $D^{2n} \subset \bR^{2n})$
	\[\Theta_{2n+1} = \pi_0(\mr{Diff}_\partial(D^{2n})) \lra \pi_{2n}(SO(2n)),\]
	which we claim is trivial. Smoothing theory identifies the space of smooth structures on a $2n$-dimensional topological manifold with the space of lifts to $BO(2n)$ of the map to $B\mr{Top}(2n)$ classifying its tangent microbundle \cite[Essays IV, V]{kirbysiebenmann}. In particular, for a disc this provides an equivalence \cite[Theorem 4.4(b)]{burglashof} \cite[Theorem V.3.4]{kirbysiebenmann}
	\[\mr{Diff}_\partial(D^{2n}) \simeq \Omega^{2n+1} \mr{Top}(2n)/O(2n),\] 
	under which the derivative map is the connecting map $\pi_{2n+1}(\mr{Top}(2n)/O(2n)) \to \pi_{2n}(O(2n))$ in the fibration sequence. Then the result follows directly from \cite[Proposition 5.4 (iv)]{burglashof}, which says that $\pi_i(O(k)) \to \pi_i(\mr{Top}(k))$ is injective for $k \geq 5$ and $i \leq k$.
\end{proof}

As a consequence of the previous lemma, the action of $I_g$ on $\mr{Str}^\theta_\partial(W_{g,1})$ factors over $J_g$, and $I_g^{\theta, [\ell]}/ \Theta_{2n+1}$ is identified with the stabiliser in $J_g$ of $[\ell] \in \mr{Str}^\theta_\partial(W_{g,1})$.

\begin{lemma}\label{lem:jgtheta-finite}
	The subgroup
	\[\mr{Stab}_{J_g}([\ell]) \leq J_g = \mr{Hom}(H_n, S\pi_n(SO(n)))\]
	has finite index in the subgroup $J_g^{\theta,[[\ell]]} = \mr{Hom}(H_n, K_n) \leq \mr{Hom}(H_n, S\pi_n(SO(n)))$, where $K_n \coloneqq \ker[S\pi_n(SO(n)) \to \pi_n(SO(2n)) \to \pi_n(F^\mr{or})]$.
\end{lemma}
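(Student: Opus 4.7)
I would aim to show that the map
\[\delta\colon J_g^{\theta,[[\ell]]}\lra \pi_{2n}(F^\mr{or})\]
controlling how elements $j$ with $j\cdot[[\ell]]=[[\ell]]$ fail to fix $[\ell]$ has finite image; this image is exactly the orbit $J_g^{\theta,[[\ell]]}\cdot[\ell]$ in the fibre of $\mr{Str}^\theta_\partial(W_{g,1})\to\mr{Str}^\theta_\ast(W_{g,1})$ over $[[\ell]]$, so its finiteness is equivalent to the desired finite-index statement.

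\emph{First step: define $\delta$ and show the $\pi_{2n}(F^\mr{or})$-action is free.} The short exact sequence \eqref{eqn:theta-struc-pi0} exhibits $\pi_{2n}(F^\mr{or})$ acting on $\pi_0(\mr{map}_\partial(W_{g,1},F^\mr{or}))$ with orbits equal to the fibres of the projection to $\mr{Str}^\theta_\ast(W_{g,1})$. Freeness follows from the fibration sequence $\mr{map}_\partial(W_{g,1},F^\mr{or})\to\mr{map}_\ast(W_{g,1},F^\mr{or})\to\mr{map}_\ast(\partial W_{g,1},F^\mr{or})$ and the observation (used already in Section \ref{sec:Relaxing}) that because $\partial W_{g,1}\hookrightarrow W_{g,1}$ represents $\sum_i[a_i,b_i]$ and Whitehead products vanish on suspension, the restriction map $\pi_1(\mr{map}_\ast(W_{g,1},F^\mr{or}))\to\pi_1(\mr{map}_\ast(\partial W_{g,1},F^\mr{or}))=\pi_{2n}(F^\mr{or})$ is zero; hence the connecting map $\pi_{2n}(F^\mr{or})\hookrightarrow\pi_0(\mr{map}_\partial(W_{g,1},F^\mr{or}))$ is injective, and by comparing with the inclusion of a small disc the stabiliser of any $[\ell]$ is trivial. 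Because the $J_g$- and $\pi_{2n}(F^\mr{or})$-actions commute (the former factors through derivatives supported away from the small boundary disc along which grafting takes place), the formula $\delta(j)\cdot[\ell]=j\cdot[\ell]$ defines $\delta$ unambiguously and makes it a homomorphism.

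\emph{Second step: reduce to the case that $\ell$ comes from a framing.} By the proof of Proposition \ref{prop:theta-fin-index}, the surjectivity of $\pi_n(SO(2n))\twoheadrightarrow\pi_n(F^\mr{or})$ (using $n$-connectedness of $B$) shows that $[[\ell]]\in\mr{Str}^\theta_\ast(W_{g,1})$ lies in the image of $\mr{Str}^\mr{fr}_\ast(W_{g,1})$, so there is $\ell_0=\iota\circ\ell^\mr{fr}$ with $[[\ell_0]]=[[\ell]]$, and then $\ell=\xi\cdot\ell_0$ for some $\xi\in\pi_{2n}(F^\mr{or})$. As the two actions commute, $\mr{Stab}_{J_g}([\ell])=\mr{Stab}_{J_g}([\ell_0])$, so I may assume $\ell=\iota\circ\ell^\mr{fr}$.

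\emph{Third step (the main obstacle): identify $\delta$ with an obstruction that factors through a finite group.} For $j\in J_g^{\theta,[[\ell]]}$, choosing a lift to $\phi\in I_g$ and using Lemma \ref{lem:DiscActTriv} to see that its derivative $\hat j\in\pi_0(\mr{map}_\partial(W_{g,1},SO(2n)))$ is well-defined, one has $j\cdot\ell=\iota\circ(\ell^\mr{fr}\cdot\hat j)$ by $SO(2n)$-equivariance of $\iota$. Let $P\subset\pi_0(\mr{map}_\partial(W_{g,1},SO(2n)))$ be the subgroup of elements mapping to zero in $\mr{Hom}(H_n,\pi_n(F^\mr{or}))$, and let $\psi\colon P\to\pi_{2n}(F^\mr{or})$ be the homomorphism determined by $\iota_*(\sigma)=\psi(\sigma)\cdot\ell_0$ (for $\ell_0$ the basepoint-constant $\theta$-structure). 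The hypothesis $j\in J_g^{\theta,[[\ell]]}$ means $\hat j\in P$. One then verifies, using that the $\pi_0(\mr{map}_\partial(W_{g,1},SO(2n)))$- and $\pi_{2n}(F^\mr{or})$-actions commute and the torsor structure of the fibre, that
\[\delta(j)\equiv\psi(\hat j)\pmod{\iota_*\pi_{2n}(SO(2n))}.\]
Since $\hat j$ factors (up to $\pi_{2n}(SO(2n))$-ambiguity) through the standard inclusion $SO(n)\hookrightarrow SO(2n)$—because $j\in J_g$ has derivative stabilised from $S\pi_n(SO(n))$—the obstruction $\psi(\hat j)$ can only differ from an element of $\iota_*(\pi_{2n}(SO(2n)))$ by the obstruction to nullhomotoping the composite $\iota\circ\hat j\colon W_{g,1}\to F^\mr{or}$ rel boundary, which for $\hat j_*$ landing in $K_n$ lifts (via $\pi_{n+1}(B)\twoheadrightarrow\mr{im}[\pi_{n+1}(B)\to\pi_n(SO(2n))]\supset\iota_\mr{SO}(K_n)$) to structural data controlled by $\ell^\mr{fr}$. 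Making this precise, one concludes that $\mr{im}(\delta)$ lies in $\iota_*(\pi_{2n}(SO(2n)))$, a finite group by Kervaire's computation \cite{KervaireNonstable}, so $\mr{im}(\delta)$ is finite, proving the lemma.

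The hardest step is the third: compatibly identifying the secondary obstruction $\delta(j)$ with something factoring through the finite group $\iota_*(\pi_{2n}(SO(2n)))$. The key geometric input is that derivatives of elements of $J_g$ are of a very restricted form—they factor stably through $SO(n)\hookrightarrow SO(2n)$—and this extra structure, combined with $K_n$-valuedness and the framing factorisation $\ell=\iota\circ\ell^\mr{fr}$, must be used to rule out rational contributions from $\pi_{2n}(B)\otimes\bQ\subset\pi_{2n}(F^\mr{or})\otimes\bQ$.
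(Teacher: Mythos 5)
Your first two steps are fine and essentially reproduce the setup the paper uses: the exact sequence
\[1 \lra \mr{Stab}_{J_g}([\ell]) \lra J_g^{\theta,[[\ell]]} \xrightarrow{\ f_\ell\ } \pi_{2n}(F^\mr{or}),\]
the observation that $f_\ell$ (your $\delta$) is a homomorphism because the $J_g$- and $\pi_{2n}(F^\mr{or})$-actions commute, and the reduction of the lemma to showing $\mr{im}(f_\ell)$ is finite. Your second step (reducing to $\ell$ of the form $\iota\circ\ell^{\mr{fr}}$) is a correct, if unnecessary, detour.

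The genuine gap is your third step. You assert that $\mr{im}(\delta)\subset\iota_*(\pi_{2n}(SO(2n)))$, which would indeed be finite, but the argument offered is not a proof. The crucial claim -- that the full-boundary derivative $\hat j\in\pi_0(\mr{map}_\partial(W_{g,1},SO(2n)))$ of an element $j\in J_g^{\theta,[[\ell]]}$ factors up to $\pi_{2n}(SO(2n))$-ambiguity through $SO(n)\hookrightarrow SO(2n)$, and that this forces $\psi(\hat j)\in\iota_*(\pi_{2n}(SO(2n)))$ -- is nowhere established; having $\pi_n$-data stabilised from $S\pi_n(SO(n))$ does not by itself control the secondary obstruction class you are trying to bound, and the parenthetical involving $\pi_{n+1}(B)\twoheadrightarrow\mr{im}[\pi_{n+1}(B)\to\pi_n(SO(2n))]$ does not parse as a mathematical claim. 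You flag yourself that ``making this precise'' is the hard part; that part is precisely what is missing, and without it the proof is a plan, not a proof.

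The paper takes an entirely different route that avoids any geometric understanding of $f_\ell$. It extends $f_\ell$ to a homomorphism $\hat f_\ell\colon\Lambda_g^{\theta,[[\ell]]}\to\pi_{2n}(F^\mr{or})$ by the same formula applied to the full $\Lambda_g$-action; since the target is abelian, $\hat f_\ell$ factors through $(\Lambda_g^{\theta,[[\ell]]})^{\mr{ab}}$. The five-term exact sequence (or low-degree Serre spectral sequence) for
\[1 \lra J_g^{\theta,[[\ell]]} \lra \Lambda_g^{\theta,[[\ell]]} \lra G_g^{\theta,[[\ell]]} \lra 1\]
reduces finiteness of $(\Lambda_g^{\theta,[[\ell]]})^{\mr{ab}}$ to finiteness of $\mr{Hom}(H_n,K_n)_{G_g^{\theta,[[\ell]]}}$ and of $(G_g^{\theta,[[\ell]]})^{\mr{ab}}$; the former follows from Zariski density (the rational coinvariants vanish because the rational invariants of $\GG(\bQ)$ do, and the group is finitely generated), and the latter from the vanishing of $H^1$ of arithmetic groups for $g\ge2$ (Margulis). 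This arithmetic-group argument is the missing ingredient in your proposal: rather than bounding the image of $\delta$ inside a finite subgroup of $\pi_{2n}(F^\mr{or})$, one bounds the source by showing $\delta$ factors through a finite quotient. If you want to rescue your approach you would need a genuinely new homotopy-theoretic argument for your factorisation claim, and I do not see how to supply one.
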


\begin{proof}
	Recall from Section \ref{sec:Relaxing} that there is a surjection $\mr{Str}^\theta_\partial(W_{g,1}) \to \mr{Str}^\theta_\ast(W_{g,1})$, which by a choice of framing is identified with the surjection
	\[\pi_0(\mr{map}_\partial(W_{g,1}, F^\mr{or})) \lra \mr{Hom}(H_n,\pi_n(F^\mr{or})).\]
	The group $\pi_{2n}(F^\mr{or})$ acts freely and transitively on the fibres of this map. Let us write $[[\ell]] \in \mr{Hom}(H_n,\pi_n(F^\mr{or}))$ for the image of $[\ell] \in \pi_0(\mr{map}_\partial(W_{g,1}, F^\mr{or}))$ under this map. 
	
	If $[\varphi] \in J_g$ fixes $[\ell]$ then it certainly fixes $[[\ell]]$. Conversely, there is defined a function $f_\ell \colon \mr{Stab}_{J_g}([[\ell]]) \to \pi_{2n}(F^\mr{or})$ by the property
	\[[\ell]\circledast [\varphi] = f_\ell([\varphi]) \cdot [\ell].\]
	It follows from this formula and the fact that the action of $J_g$ commutes with that of $\pi_{2n}(F^\mr{or})$ that $f_\ell$ is a homomorphism. Therefore we have an exact sequence
	\[1 \lra \mr{Stab}_{J_g}([\ell]) \lra J_g^{\theta,[[\ell]]} = \mr{Stab}_{J_g}([[\ell]]) \overset{f_\ell}\lra \pi_{2n}(F^\mr{or}),\]
	with $\mr{Stab}_{J_g}([\ell])$ the group of interest. To finish the proof of this lemma we must therefore show that the homomorphism $f_\ell$ has finite image.
	
	We can apply the same construction to the full $\Lambda_g = \Gamma_g/\Theta_{2n+1}$-action, giving an exact sequence
	\[1 \lra \mr{Stab}_{\Lambda_g}([\ell]) \lra \Lambda^{\theta,[[\ell]]}_g = \mr{Stab}_{\Lambda_g}([[\ell]]) \overset{\hat{f}_\ell}\lra \pi_{2n}(F^\mr{or}),\]
	where $\hat{f}_\ell$ extends $f_\ell$.
.
	The Serre spectral sequence for the extension
	\[1 \lra J^{\theta,[[\ell]]}_g = \mr{Stab}_{J_g}([[\ell]]) \lra \Lambda^{\theta,[[\ell]]}_g = \mr{Stab}_{\Lambda_g}([[\ell]]) \lra G^{\theta,[[\ell]]}_g \lra 1,\]
	gives an exact sequence
	\[\cdots \lra \mr{Hom}(H_n, K_n)_{G^{\theta,[[\ell]]}_g} \lra (\Lambda^{\theta,[[\ell]]}_g)^\mr{ab} \lra (G^{\theta,[[\ell]]}_g)^\mr{ab} \lra 0.\]
	
	\vspace{0.5ex}
	\noindent \textbf{Claim.} As long as $g \geq 2$ the two outer terms of this exact sequence are finite.
	\vspace{-0.5ex}
	\begin{proof}[Proof of claim]
		For the left-hand term, first note that the group $\mr{Hom}(H_n, K_n)$ is finitely-generated, hence so are the coinvariants. Then the $G^{\theta,[[\ell]]}_g$-coinvariants of $\mr{Hom}(H_n, K_n) \otimes \bQ$ are isomorphic to the $G^{\theta,[[\ell]]}_g$-invariants. The subgroup $G^{\theta,[[\ell]]}_g \leq G'_g$ has finite index by Lemma \ref{lem:g-emb-tang-arithmetic}. As $g \geq 2$ (this only needs $g \geq 1$ when $n$ is odd), it follows that $G^{\theta,[[\ell]]}_g$ is Zariski dense in either $\Sp_{2g}(\bQ)$, $\SO_{g,g}(\bQ)$, or $\OO_{g,g}(\bQ)$, and we may as well take invariants with respect to this larger group: it is evident this is $0$. 
		
		For the right-hand term, the abelian group $(G^{\theta,[[\ell]]}_g)^\mr{ab}$ is finitely generated since the arithmetic group $G^{\theta,[[\ell]]}_g$ is, and $H^1(G^{\theta,[[\ell]]}_g;\bQ)$ vanishes since $g \geq 2$ by \cite[Corollary 7.6.17]{margulis}.
	\end{proof}
	
	It follows that $(\Lambda^{\theta,[[\ell]]}_g)^\mr{ab}$ is also a finite group. We then proceed as follows. The homomorphism $f_\ell \colon J^{\theta,[[\ell]]}_g \to \pi_{2n}(F^\mr{or})$ extends through the homomorphism $\smash{\hat{f}_\ell}$ as discussed above, and this in turn factors through a homomorphism $(\Lambda^{\theta,[[\ell]]}_g)^\mr{ab} \to \pi_{2n}(F^\mr{or})$, as the target is an abelian group:
	\[\begin{tikzcd}J^{\theta,[[\ell]]}_g \rar{f_\ell} \dar[hook] &  \pi_{2n}(F^\mr{or}) \\
	 \Lambda^{\theta,[[\ell]]}_g \arrow{ru}[description]{\hat{f}_\ell} \rar & (\Lambda^{\theta,[[\ell]]}_g)^\mr{ab} \uar.\end{tikzcd}\]
	 But $(\Lambda^{\theta,[[\ell]]}_g)^\mr{ab}$ is a finite group, so $f_\ell$ has finite image.
\end{proof}

As we have $A \cong J^{\theta, [[\ell]]}_g / \mr{Stab}_{J_g}([\ell])$, and $G^{\theta,[[\ell]]}_g/G^{\theta,[\ell]}_g$ is finite by Lemma \ref{lem:g-emb-tang-arithmetic}, we deduce:

\begin{corollary}\label{cor:Afinite}
The groups $A$ and $B$ are finite.
\end{corollary}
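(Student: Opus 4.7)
The proof is essentially immediate from the preparation just above the corollary statement, so my plan is simply to assemble the pieces.

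First I would dispense with $A$. By the discussion preceding the corollary, $A$ fits into an exact sequence $I_g^{\theta,[\ell]} \to J_g^{\theta,[[\ell]]} \to A \to 0$, and the image of the left-hand map was identified (using Lemma \ref{lem:DiscActTriv}, which ensures the $\Theta_{2n+1}$-action on $\mr{Str}^\theta_\partial(W_{g,1})$ is trivial) with $\mr{Stab}_{J_g}([\ell])$. Thus $A \cong J^{\theta,[[\ell]]}_g / \mr{Stab}_{J_g}([\ell])$, which is finite by Lemma \ref{lem:jgtheta-finite}.

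For $B$, I would read off the top row of diagram \eqref{eq:BigThetaDiagram}. Since the two leftmost columns deloop, this top row is (as noted in the text) a fibration sequence of grouplike objects, whose third term $G^{\theta,[[\ell]]}_g / G^{\theta,[\ell]}_g$ is a discrete set of cosets—hence in particular has trivial $\pi_1$. Taking $\pi_0$ therefore yields an exact sequence
\[
A \lra B \lra G^{\theta,[[\ell]]}_g / G^{\theta,[\ell]}_g \lra 0.
\]
The rightmost term is finite by Lemma \ref{lem:g-emb-tang-arithmetic} (both $G^{\theta,[[\ell]]}_g$ and $G^{\theta,[\ell]}_g$ are finite-index in $G'_g$), and $A$ is finite by what we just proved, so $B$ is finite as well.

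There is no genuine obstacle here; the only thing to be careful about is correctly extracting the $\pi_0$-exact sequence from the fact that $G^{\theta,[[\ell]]}_g/G^{\theta,[\ell]}_g$ is discrete, rather than confusing it with a space of positive dimension. All the substantive work—identifying stabilisers, comparing the actions of $J_g$ on $\mr{Str}^\theta_\partial(W_{g,1})$ and $\mr{Str}^\theta_\ast(W_{g,1})$, and bounding the discrepancy by a finite quotient of $(\Lambda^{\theta,[[\ell]]}_g)^{\mr{ab}}$—was carried out in Lemmas \ref{lem:DiscActTriv}, \ref{lem:jgtheta-finite}, and \ref{lem:g-emb-tang-arithmetic}.
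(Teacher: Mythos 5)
Your argument is correct and is essentially identical to the paper's. The paper leaves the deduction implicit (stating only that $A \cong J^{\theta,[[\ell]]}_g/\mr{Stab}_{J_g}([\ell])$ and that $G^{\theta,[[\ell]]}_g/G^{\theta,[\ell]}_g$ is finite before the ``we deduce''), but what it is implicitly doing is precisely your two-step argument: $A$ is finite by Lemma~\ref{lem:jgtheta-finite}, and then $B$ is an extension of the finite group $G^{\theta,[[\ell]]}_g/G^{\theta,[\ell]}_g$ by (a quotient of) the finite group $A$, using $\pi_0$ of the top row of \eqref{eq:BigThetaDiagram}.
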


\subsubsection{Finishing the proof of Theorem  \ref{thm:maintangent}} 

We wish to analyse the Serre spectral sequence for the left-hand column of \eqref{eq:BigThetaDiagram}, and it is awkward that the fibre of this fibration is not connected. To deal with this we consider the subgroup
\[\overline{L^{\theta,\ell}_g} \coloneqq \mr{ker}(L^{\theta,\ell}_g \to A) = \mr{im}(\pi_1(B\mr{Tor}^\theta_\partial(W_{g,1} ;\ell_\partial), \ell) \to L^{\theta,\ell}_g)\]
of $L^{\theta,\ell}_g$ and let $\overline{B\mr{TorEmb}^{\cong,\theta}_{\half \partial}(W_{g,1};\ell_{\half \partial})_\ell}$ denote the corresponding covering space. There is then a fibration sequence
\begin{equation}\label{eq:connFibre}
B\mr{Diff}^\theta_\partial(D^{2n};\ell_{\partial_0})_{\ell_0} \lra B\mr{Tor}^\theta_\partial(W_{g,1} ;\ell_\partial)_\ell \lra \overline{B\mr{TorEmb}^{\cong,\theta}_{\half \partial}(W_{g,1};\ell_{\half \partial})_\ell},
\end{equation}
where the fibre is now path connected.

 Similarly, consider the subgroup $\overline{\check{\Lambda}^{\theta,\ell}_g} \coloneqq \mr{ker}(\check{\Lambda}^{\theta,\ell}_g \to B)$ of $\check{\Lambda}^{\theta,\ell}_g$ and the corresponding covering space $\overline{B\mr{Emb}^{\cong,\theta}_{\half \partial}(W_{g,1};\ell_{\half \partial})_\ell}$. It is easy to check that there is an induced fibration sequence
\[\overline{B\mr{TorEmb}^{\cong,\theta}_{\half \partial}(W_{g,1};\ell_{\half \partial})_\ell} \lra  \overline{B\mr{Emb}^{\cong, \theta}_{\half \partial}(W_{g,1};\ell_{\half \partial})_\ell} \lra BG^{\theta,[\ell]}_g,\]
so $G^{\theta,[\ell]}_g$ acts on the cohomology of $\overline{B\mr{TorEmb}^{\cong,\theta}_{\half \partial}(W_{g,1};\ell_{\half \partial})_\ell}$.

\begin{lemma}\label{lem:TorEmbThetaAlgebraic}
Suppose that $2 n \geq 6$ and that $g \geq 2$. Then the $G_g^{\theta,[\ell]}$-representations $H^i(\overline{B\mr{TorEmb}^{\cong,\theta}_{\half \partial}(W_{g,1};\ell_{\half \partial})_\ell};\bQ)$ are algebraic.
\end{lemma}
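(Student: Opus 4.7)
The plan is to apply Lemma \ref{lem:i-connective} to the fibration sequence
\[\overline{B\mr{TorEmb}^{\cong,\theta}_{\half \partial}(W_{g,1};\ell_{\half \partial})_\ell} \lra \overline{B\mr{Emb}^{\cong, \theta}_{\half \partial}(W_{g,1};\ell_{\half \partial})_\ell} \lra BG^{\theta,[\ell]}_g,\]
taking $\Gamma = \overline{\check\Lambda^{\theta,\ell}_g}$, $J = \overline{L^{\theta,\ell}_g}$, and $G = G^{\theta,[\ell]}_g$; these form a short exact sequence $1 \to \overline{L^{\theta,\ell}_g} \to \overline{\check\Lambda^{\theta,\ell}_g} \to G^{\theta,[\ell]}_g \to 1$ obtained by restricting the middle row of \eqref{eq:imidiagram}, and $G^{\theta,[\ell]}_g$ is an arithmetic subgroup by Proposition \ref{prop:theta-fin-index}. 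The argument parallels the proof of Corollary \ref{cor:algebraic-torelli-emb}, with $\overline{B\mr{Emb}^{\cong,\theta}}$ playing the role previously played by $B\mr{Emb}^{\mr{id}}_{\half \partial}$.

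For the first hypothesis of Lemma \ref{lem:i-connective}, we show that the $G^{\theta,[\ell]}_g$-representations $H^i(\overline{L^{\theta,\ell}_g};\bQ)$ are algebraic. Unpacking the construction of $\overline{L^{\theta,\ell}_g}$ as the preimage in $L^{\theta,\ell}_g$ of $\mr{Stab}_{J_g}([\ell]) \leq J^{\theta,[[\ell]]}_g$ yields a central extension (by Lemma \ref{lem:jtheta-triv})
\[1 \lra \mr{im}(i) \lra \overline{L^{\theta,\ell}_g} \lra \mr{Stab}_{J_g}([\ell]) \lra 1,\]
whose base group has finite index in $\mr{Hom}(H_n, K_n)$ by Lemma \ref{lem:jgtheta-finite} and hence the same rational cohomology $\Lambda^\ast[H_n \otimes (K_n \otimes \bQ)^\vee]$, which is an algebraic $G^{\theta,[\ell]}_g$-representation via restriction of the evident $\mr{GL}(H_n)$-action. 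The abelian group $\mr{im}(i)$ is a quotient of the algebraic $\mr{GL}(H_n)$-representation $H_n^\vee \otimes \pi_{n+1}(F^\mr{or}) \otimes \bQ$ identified in the proof of Proposition \ref{prop:emb-tang-gr}, so $H^\ast(\mr{im}(i);\bQ)$ is also algebraic. The Serre spectral sequence combined with Theorem \ref{thm:representations} (using $g \geq 2$) then gives algebraicity of $H^i(\overline{L^{\theta,\ell}_g};\bQ)$, exactly as in Lemma \ref{lem:l-alg}.

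For the second hypothesis, we must show $H^i(\widetilde{Y};\bQ)$ is $gr$-algebraic as an $\overline{\check\Lambda^{\theta,\ell}_g}$-representation, where $\widetilde{Y}$ denotes the universal cover of $\overline{B\mr{Emb}^{\cong, \theta}_{\half \partial}(W_{g,1};\ell_{\half \partial})_\ell}$. Since $\widetilde{Y}$ is also the universal cover of $B\mr{Emb}^{\cong,\theta}_{\half \partial}(W_{g,1};\ell_{\half \partial})_\ell$, its higher rational homotopy groups are $gr$-algebraic $\check\Lambda^{\theta,\ell}_g$-representations by Proposition \ref{prop:emb-tang-gr}. Restricting along the inclusion $\overline{\check\Lambda^{\theta,\ell}_g} \leq \check\Lambda^{\theta,\ell}_g$ preserves $gr$-algebraicity, since algebraic $G^{\theta,[[\ell]]}_g$-representations restrict to algebraic $G^{\theta,[\ell]}_g$-representations along the finite-index inclusion. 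As $\widetilde{Y}$ is simply-connected and hence simple, Lemma \ref{lem:filtered-algebraic-homotopy-to-homology} with $\bk = \bQ$ and $\cC$ the equivariant Serre class of $gr$-algebraic representations (valid by Lemma \ref{lem:gr-algebraic-serre}) gives $gr$-algebraicity of $H_i(\widetilde{Y};\bQ)$, which Lemma \ref{lem:homology-to-cohomology} converts to the cohomological statement.

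The main obstacle is bookkeeping rather than any new conceptual input: one must carefully track how the passage from the full groups $(\check\Lambda^{\theta,\ell}_g, L^{\theta,\ell}_g, G^{\theta,[[\ell]]}_g)$ to the bar'd versions $(\overline{\check\Lambda^{\theta,\ell}_g}, \overline{L^{\theta,\ell}_g}, G^{\theta,[\ell]}_g)$ interacts with the analysis of Lemma \ref{lem:l-alg}, in particular verifying that $\mr{Stab}_{J_g}([\ell])$ really is finite-index in $\mr{Hom}(H_n, K_n)$ so that the cohomology computation goes through unchanged. Once that is done, Lemma \ref{lem:i-connective} immediately delivers the algebraicity of $H^i(\overline{B\mr{TorEmb}^{\cong,\theta}_{\half \partial}(W_{g,1};\ell_{\half \partial})_\ell};\bQ)$ as a $G^{\theta,[\ell]}_g$-representation.
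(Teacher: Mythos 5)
Your proof is correct and follows essentially the same strategy as the paper: both apply Lemma \ref{lem:i-connective} to the cover $\overline{B\mr{Emb}^{\cong,\theta}_{\half \partial}(W_{g,1};\ell_{\half \partial})_\ell}$ over $BG^{\theta,[\ell]}_g$, both handle the universal-cover hypothesis via Proposition \ref{prop:emb-tang-gr} combined with Lemmas \ref{lem:filtered-algebraic-homotopy-to-homology} and \ref{lem:homology-to-cohomology}, and both reduce the $H^*(\overline{L^{\theta,\ell}_g};\bQ)$ computation to Lemma \ref{lem:l-alg} (the paper does so by identifying $H^*(L^{\theta,\ell}_g;\bQ) \cong H^*(\overline{L^{\theta,\ell}_g};\bQ)$ using that $A$ is finite and $L^{\theta,\ell}_g$ abelian, while you re-run the Serre spectral sequence argument directly on the extension $1 \to \mr{im}(i) \to \overline{L^{\theta,\ell}_g} \to \mr{Stab}_{J_g}([\ell]) \to 1$, invoking the finite-index observation from Lemma \ref{lem:jgtheta-finite}—a presentational difference only).
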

\begin{proof}
We consider the fibration
\[\tau_{>1} B\mr{Emb}^{\cong,\theta}_{\half \partial}(W_{g,1};\ell_{\half \partial})_\ell \lra \overline{B\mr{TorEmb}^{\cong,\theta}_{\half \partial}(W_{g,1};\ell_{\half \partial})_\ell} \lra B\overline{L^{\theta,\ell}_g},\]
where $\tau_{>1}$ denotes the 1-connected cover. Firstly, combining Proposition \ref{prop:emb-tang-gr} and Lemma \ref{lem:filtered-algebraic-homotopy-to-homology} shows that the $\check{\Lambda}_g^{\theta,\ell}$-representations \[H^i(\tau_{>1} B\mr{Emb}^{\cong,\theta}_{\half \partial}(W_{g,1};\ell_{\half \partial})_\ell;\bQ)\] are $gr$-algebraic. Secondly, we will show below that the $G_g^{\theta,\ell}$-representations $H^i(\overline{L_g^{\theta,\ell}};\bQ)$ are algebraic. Together these provide the input for Lemma \ref{lem:i-connective}, which gives the required result.

This second ingredient is proved using Lemma \ref{lem:l-alg}. As $L^{\theta, \ell}_g$ is abelian the action of the group $A$ on the kernel $\overline{L^{\theta, \ell}_g} = \mr{ker}(L^{\theta, \ell}_g \to A)$ is trivial so, as $A$ is finite by Corollary \ref{cor:Afinite}, the Serre spectral sequence shows that 
\[H^*({L^{\theta, \ell}_g};\bQ) \overset{\cong}\lra H^*(\overline{L^{\theta, \ell}_g};\bQ).\]
The second part of Lemma \ref{lem:l-alg} then gives the desired conclusion.
\end{proof}

We can now complete the proof of Theorem \ref{thm:maintangent}. We make two observations about the fibration \eqref{eq:connFibre}. Firstly, $B\mr{Diff}^\theta_\partial(D^{2n};\ell_{\partial_0})_{\ell_0}$ has degree-wise finite-dimensional rational cohomology. This follows from the fibre sequence \eqref{eqn:tang-disc} using that $F^\mr{or}$ has degree-wise finite-dimensional rational cohomology (as $B$ does, by assumption), and that $B\mr{Diff}_\partial(D^{2n})$ has degree-wise finite-dimensional rational cohomology by \cite[Theorem A]{kupersdisk}) and finite fundamental group. Secondly, the fundamental group of the base of \eqref{eq:connFibre} acts trivially on the cohomology of the fibre, as this fibration deloops. The rational cohomology Serre spectral sequence for this fibration and Lemma \ref{lem:TorEmbThetaAlgebraic} then gives the result.

\subsection{Proof of Theorem \ref{thm:nilp-tangential}}

Having established algebraicity for the cohomology of $B\mr{Tor}^{\theta}_\partial(W_{g,1};\ell_\partial)_\ell$, we now prove that it is a nilpotent space.

\begin{proposition} Let $2n \geq 6$ and $B$ be $n$-connected. Then $B\mr{TorEmb}^\theta_{\half \partial}(W_{g,1};\ell_{\half \partial})_\ell$ is nilpotent.\end{proposition}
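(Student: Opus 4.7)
The plan is to follow the template of Proposition \ref{prop:TorEmbNilpotent}, leveraging the already-established untwisted nilpotence together with the bundle-space analysis of Proposition \ref{prop:emb-tang-gr}, rather than redoing an embedding-calculus argument with $\theta$-structures.

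By Lemma \ref{lem:l-alg} the fundamental group
\[\pi_1(B\mr{TorEmb}^\theta_{\half \partial}(W_{g,1};\ell_{\half \partial})_\ell, \ell) = L^{\theta,\ell}_g\]
is abelian, hence nilpotent. Since $B\mr{TorEmb}^\theta_{\half \partial}(W_{g,1};\ell_{\half \partial})_\ell$ is the cover of the basepoint component of $B\mr{Emb}^{\cong,\theta}_{\half \partial}(W_{g,1};\ell_{\half \partial})_\ell$ classified by the quotient $\check{\Lambda}^{\theta,\ell}_g \twoheadrightarrow G^{\theta,[[\ell]]}_g$, its higher homotopy groups coincide with those of the latter, and the $\pi_1$-action on them is the restriction of the conjugation action along the inclusion $L^{\theta,\ell}_g \subset \check{\Lambda}^{\theta,\ell}_g$. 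It therefore suffices to show that $L^{\theta,\ell}_g$ acts nilpotently on $\pi_i(B\mr{Emb}^{\cong,\theta}_{\half \partial}(W_{g,1};\ell_{\half \partial})_\ell)$ for each $i \geq 2$.

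Next I would apply the fibration sequence \eqref{eq:ForgetThetaEmb}. The fibre-component $\mr{Bun}_{\half \partial}(TW_{g,1},\theta^*\gamma;\ell_{\half \partial})_\ell$ is simple, being a path component of a space weakly equivalent to $\prod_{2g} \Omega^n F^{\mr{or}}$, so the based long exact sequence on homotopy groups presents each $\pi_i(B\mr{Emb}^{\cong,\theta}_{\half \partial}(W_{g,1};\ell_{\half \partial})_\ell)$ for $i \geq 2$ as an extension of $L^{\theta,\ell}_g$-modules whose sub is a quotient of $\pi_i(\mr{Bun}_{\half \partial}(TW_{g,1},\theta^*\gamma;\ell_{\half \partial})_\ell)$ and whose quotient is a submodule of $\pi_i(B\mr{Emb}^{\cong}_{\partial}(W_{g,1}))$. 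Because nilpotent $\Gamma$-actions are closed under submodules, quotients, and extensions, it then remains to verify nilpotence on the two ends of this extension.

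For the fibre end, the action of $\check{\Lambda}^{\theta,\ell}_g$ on $\pi_i(\mr{Bun}_{\half \partial}(TW_{g,1},\theta^*\gamma;\ell_{\half \partial})_\ell)$ factors through $\Lambda^{\theta,[[\ell]]}_g$ (using simpleness of the fibre), and by diagram \eqref{eq:imidiagram} the image of $L^{\theta,\ell}_g$ lies inside $J^{\theta,[[\ell]]}_g$, which acts trivially by Lemma \ref{lem:jtheta-triv}. For the base end, the $\check{\Lambda}^{\theta,\ell}_g$-action on $\pi_i(B\mr{Emb}^{\cong}_{\partial}(W_{g,1}))$ factors through $\Lambda_g$, and the image of $L^{\theta,\ell}_g$ there lies in $J^{\theta,[[\ell]]}_g \subseteq J_g$ by the same diagram; for $i \geq 2$ one has $\pi_i(B\mr{Emb}^{\cong}_{\partial}(W_{g,1})) = \pi_i(B\mr{TorEmb}^{\cong}_{\half \partial}(W_{g,1}))$ as $J_g$-modules, and Proposition \ref{prop:TorEmbNilpotent} shows that $J_g$ acts nilpotently on the latter; nilpotence restricts to any subgroup. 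The main obstacle is the bookkeeping to confirm that the two different routes by which $L^{\theta,\ell}_g$ maps into $J^{\theta,[[\ell]]}_g$ and into $J_g$ agree with diagram \eqref{eq:imidiagram}; once this is clean, the argument is complete.
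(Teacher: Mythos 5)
Your argument is correct and uses the same key ingredients as the paper (Lemma~\ref{lem:l-alg} for the fundamental group, Lemma~\ref{lem:jtheta-triv} for the fibre, Proposition~\ref{prop:TorEmbNilpotent} for the base), differing only in presentation: you chase the long exact sequence of \eqref{eq:ForgetThetaEmb} directly, in the spirit of the proof of Proposition~\ref{prop:emb-tang-gr}, whereas the paper restricts to the connected cover $\overline{B\mr{TorEmb}^{\cong}_{\partial}(W_{g,1})}$ and invokes the standard criterion that a fibration with nilpotent base, simple fibre, and nilpotent $\pi_1$-action on the fibre's homotopy groups has nilpotent total space. Both routes amount to the same computation; the reduction to the covering at the start and the factoring of the $\check{\Lambda}^{\theta,\ell}_g$-action on $\pi_i$ of the (simple) fibre through $\Lambda^{\theta,[[\ell]]}_g$ are correctly handled and match the reasoning the paper itself uses in Proposition~\ref{prop:emb-tang-gr}.
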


\begin{proof}Its fundamental group $L^{\theta,\ell}_g$ is nilpotent by Lemma \ref{lem:l-alg} (in fact it is abelian), so it suffices to show that it acts nilpotently on the higher homotopy groups. To do so, let $\overline{ B\mr{TorEmb}^{\cong}_{\partial}(W_{g,1})}$ denote the covering space corresponding to the subgroup $J^{\theta,[[\ell]]}_g \leq J_g$, and consider the fibration sequence 
	\[\mr{Bun}_{\half \partial}(TW_{g,1},\theta^*\gamma;\ell_{\half \partial})_{\ell} \lra B\mr{TorEmb}^{\cong,\theta}_{\half \partial}(W_{g,1};\ell_{\half \partial})_\ell \lra \overline{B\mr{TorEmb}^{\cong}_{\partial}(W_{g,1})}.\]
As we remarked before the proof of Lemma \ref{lem:jtheta-triv}, the fibre is simple. Furthermore, the base is nilpotent by Proposition \ref{prop:TorEmbNilpotent} (and the fact that a cover of a nilpotent space is nilpotent). The fundamental group $J^{\theta,[[\ell]]}_g$ of the base acts on the fibre, and as the fibre is simple this gives an action on the homotopy groups of the fibre: it suffices to show that this action is nilpotent, but Lemma \ref{lem:jtheta-triv} shows that $J^{\theta,[[\ell]]}_g$ in fact acts trivially on the homotopy groups of the fibre.
\end{proof}

The proof of Theorem \ref{thm:nilp-tangential} is now analogous to that of Theorem \ref{thm:nilp}:

\begin{proof}[Proof of Theorem \ref{thm:nilp-tangential}] 
Delooping \eqref{eq:connFibre}, $B\mr{Tor}^\theta_\partial(W_{g,1};\ell_\partial)_\ell$ is the basepoint component of the homotopy fibre of a map
	\[\overline{B\mr{TorEmb}^\theta_{\half \partial}(W_{g,1};\ell_{\half \partial})_\ell} \lra B(B\mr{Diff}^\theta_\partial(D^{2n};\ell_{\partial_0})_{\ell_0}),\]
The domain is a nilpotent space by the previous proposition (again because a cover of a nilpotent space is nilpotent) so by Lemma \ref{lem:nilp-fib} the space $B\mr{Tor}^\theta_\partial(W_{g,1};\ell_\partial)_\ell$ is also nilpotent.
\end{proof}

\vspace{.5em}

\bibliographystyle{amsalpha}
\bibliography{../../cell}

\vspace{.5em}

\end{document}